\numberwithin{equation}{section}
\theoremstyle{norm}
\newtheorem{thm}{Theorem}[section]
\newtheorem{theorem}[thm]{Theorem}
\newtheorem{lem}[thm]{Lemma}
\newtheorem{prop}[thm]{Proposition}
\newtheorem{df}[thm]{Definition}
\newtheorem{definition}[thm]{Definition}
\newtheorem{cor}[thm]{Corollary}
\newtheorem{corollary}[thm]{Corollary}
\newtheorem{rem}[thm]{Remark}
\newtheorem{remark}[thm]{Remark}
\newtheorem{exam}[thm]{Example}
\newtheorem{quest}[thm]{Question}
\newtheorem{warn}[thm]{Warning}
\newtheorem{fact}[thm]{Fact}
\newtheorem*{slogan}{Slogan}
\newtheorem{notation}[thm]{Notation}
\newcommand{\done}{\bar{\mathfrak{d}}_1}
\newcommand{\dthree}{\bar{\mathfrak{d}}_3}
\newcommand{\sone}{\bar{s}_1}
\newcommand{\sthree}{\bar{s}_3}
\newcommand{\rone}{\bar{r}_1}
\newcommand{\grone}{\gamma \bar{r}_1}
\newcommand{\rthree}{\bar{r}_3}
\newcommand{\grthree}{\gamma \bar{r}_3}
\newcommand{\m}[1]{\underline{#1}}
\newcommand{\mZ}{\m{\mathbb Z}}
\newcommand{\HZ}{H\mZ}
\newcommand{\BP}{BP} 
\newcommand{\BPG}[1][G]{\BP^{(\!(#1)\!)}}
\newcommand{\BPC}{\BPG[C_{4}]} 
\newcommand{\BPR}{\BP_{\mathbb R}}
\newcommand{\BPone}{\BPC\!\langle 1 \rangle}
\newcommand{\BPtwo}{\BPC\!\langle 2 \rangle}
\newcommand{\BPCeightone}{\BP^{(\!(C_8)\!)} \langle 1 \rangle}
\newcommand{\MU}{MU}
\newcommand{\MUR}{\MU_{\mathbb R}}
\newcommand{\MUG}[1][G]{\MU^{(\!(#1)\!)}}
\newcommand{\BPGm}{\BPG\!\langle m\rangle}
\DeclareMathOperator{\Gal}{\textup{Gal}}
\DeclareMathOperator{\SliceSS}{\textup{SliceSS}}
\DeclareMathOperator{\HFPSS}{\textup{HFPSS}}
\DeclareMathOperator{\Spec}{\text{Spec}}
\title[The Slice Spectral Sequence of a Height-4 Theory]{The Slice Spectral Sequence of a $C_4$-Equivariant Height-4 Lubin--Tate Theory}
\author{Michael A. Hill}
\address{Department of Mathematics, UCLA, Los Angeles, CA 90095}
\email{mikehill@math.ucla.edu}
\author{XiaoLin Danny Shi}
\address{Department of Mathematics, University of Chicago, Chicago, IL 60637}
\email{dannyshi@math.uchicago.edu}
\author{Guozhen Wang}
\address{Shanghai Center for Mathematical Sciences, Fudan University, Shanghai, China 200433}
\email{wangguozhen@fudan.edu.cn}
\author{Zhouli Xu}
\address{Department of Mathematics, UC San Diego, La Jolla, CA 92093}
\address{Department of Mathematics, Massachusetts Institute of Technology, Cambridge, MA 02142}
\email{xuzhouli@ucsd.edu}
\begin{document}

\maketitle
\begin{abstract}
We completely compute the slice spectral sequence of the $C_4$-spectrum $BP^{(\!(C_4)\!)}\langle 2 \rangle$.  This spectrum provides a model for a height-4 Lubin--Tate theory with a $C_4$-action induced from the Goerss--Hopkins--Miller theorem.  In particular, our computation shows that $E_4^{hC_{12}}$ is 384-periodic.
\end{abstract}

\setcounter{tocdepth}{1}
\tableofcontents

\section{Introduction}
Chromatic homotopy theory is a powerful tool to study periodic phenomena in stable homotopy theory by analyzing the algebraic geometry of smooth one-parameter formal groups.  More precisely, the moduli stack of formal groups has a stratification by height, which corresponds to localization with respect to the Morava $K$-theories $K(n)$, $n \geq 0$.  As the height increases, this stratification carries increasingly more information about the stable homotopy category, but also becomes increasingly harder to understand.  

At height 0, localizing with respect to $K(0)$ corresponds to rationalization.  At height $n \geq 1$, the $K(n)$-local sphere $L_{K(n)}S^0$ is equivalent to $E_n^{h\mathbb{G}_n}$ \cite{DevinatzHopkins}, where $E_n$ is the height-$n$ Lubin--Tate theory and $\mathbb{G}_n$ is a profinite group called the Morava stabilizer group.  One can analyze $E_n^{h\mathbb{G}_n}$ by further decomposing it into smaller building blocks of the form $E_n^{hG}$, where $G$ is a finite subgroup of $\mathbb{G}_n$.  

At height 1, the $K(1)$-local sphere $L_{K(1)} S^0$ is completely understood via this method by works of Bousfield~\cite{Bousfield}, Adams--Baird, and Ravenel \cite{RavenelLocalization}.  At height 2, the $K(2)$-local sphere has been the subject of extensive research, starting from the chromatic spectral sequence of Miller--Ravenel--Wilson \cite{MillerRavenelWilson}.  Significant progress towards understanding the $K(2)$-local sphere have since been made by the works of Shimomura--Yabe~\cite{ShimomuraYabe} and Shimomura--Wang~\cite{ShimomuraWang1, ShimomuraWang2} (see also \cite{BehrensSE2}), the computation of topological modular forms (an important building block of $L_{K(2)}S^0$) by Hopkins--Mahowald \cite{HopkinsMahowald}, and the resolution of the $K(2)$-local sphere at the prime $p =3$ by Goerss--Henn--Mahowald--Rezk \cite{GoerssHennMahowaldRezk}.

Classically, the homotopy fixed point spectra $E_n^{hG}$ are computed by using the homotopy fixed point spectral sequence.  However, when $p =2$ and the height $n$ is bigger than 2, the spectra $E_n^{hG}$ are very difficult to compute: there is no convenient description of the $G$-action on $\pi_*E_n$ (other than the unpublished work of Hill--Hopkins--Ravenel), so it is hard to compute the $E_2$-page of its homotopy fixed point spectral sequence.  In fact, the only known cases before our result in this paper are when $|G| = 2\ell$, where $\ell \equiv 1 \pmod{2}$.  Even worse, the $G$-action on $E_n$ is constructed purely from obstruction theory \cite{HopkinsMiller, GoerssHopkins}, so there is no systematic method to compute the differentials.  There have been attempts to understanding this by using topological automorphic forms \cite{BehrensLawsonTAF}, but with limited computational success.  

The main result of this paper is the first height 4 computation of a building block to the $K(4)$-local sphere $L_{K(4)}S^0$ at the prime $p=2$.  Our computation uses Hill--Hopkins--Ravenel's slice spectral sequence in equivariant homotopy theory, which is a crucial tool in their solution of the Kervaire invariant one problem \cite{HHR}.  
\begin{thm}\label{thm:MainTheorem1}
\hfill
\begin{enumerate}
\item There exists a height-4 Lubin--Tate theory $E_4$ with coefficient ring 
\begin{eqnarray*} 
\pi_*E_4 &\cong& W(\mathbb{F}_{2^4})[C_4 \cdot r_1, C_4 \cdot u][C_4 \cdot u^{-1}]^{\wedge}_{\mathfrak{m}},
\end{eqnarray*}
where $|r_1| = |u| = 2$, $\mathfrak{m} = (C_4 \cdot r_1, C_4 \cdot (u - \gamma u))$, and $C_4 \cdot x$ denotes the set $\{ x, \gamma x\}$ with the generator $\gamma \in C_4$ sending $x \mapsto \gamma x$ and ${\gamma x \mapsto -x}$.  Furthermore, there is a subgroup ${G = C_4 \times \left(\Gal(\mathbb{F}_{2^4}/\mathbb{F}_2) \ltimes C_3\right)}$ inside the Morava stabilizer group $\mathbb{G}_4$ such that the isomorphism above is a $G$-equivariant isomorphism. 
\item There is a $C_4$-equivariant homotopy commutative ring map 
\[\phi: \BPC \longrightarrow E_4\]
such that $\pi_*^e \phi: \pi_*^e \BPC \to \pi_* E_4$ is the map determined by sending
\[r_{2^i -1} \longmapsto \left\{\begin{array}{ll} r_1 & i = 1, \\ 
u^3 & i = 2, \\ 
0 & i \geq 3.  \end{array}\right. \]
\item After inverting the element 
\[D_2:= N(\bar{v}_4)N(\rthree)N(\bar{r}_3^2 + \bar{r}_3 (\gamma \bar{r}_3) + (\gamma \bar{r}_3)^2) \in \pi_{24\rho_4}^{C_4} \BPC,\]
there is a factorization 
\[\begin{tikzcd}
\BPC \ar[r] \ar[d] & E_4 \\ 
D_2^{-1}\BPC \ar[ru, dashed]&
\end{tikzcd}\]
of the $C_4$-equivariant orientation through $D_2^{-1}\BPC$.  
\end{enumerate}
\end{thm}

\begin{thm}\label{thm:MainTheorem2}
We compute all the differentials in the slice spectral sequence of $\BPtwo$ (see Figure~\ref{fig:E4C4}).  The spectral sequence terminates after the $E_{61}$-page and has a horizontal vanishing line of filtration 61.
\end{thm}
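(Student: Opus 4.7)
The plan is to follow the template that Hill--Hopkins--Ravenel developed for $\MUG[C_4]$ and which the authors previously executed for $\BPone$, now scaled to accommodate the substantially larger $E_2$-page of $\BPtwo$. First, I identify the slices: by the general slice theorem for spectra of this form, the odd slices of $\BPtwo$ vanish and the $(n\rho_4)$-slices are wedges of copies of $\HZ$ smashed with representation spheres, indexed by $C_4$-orbits of monomials in the four generators $\rone, \grone, \rthree, \grthree$. Taking $\pi_\star^{C_4}$ termwise, using HHR's explicit description of $\pi_\star^{C_4}\HZ$, yields a slice $E_2$-page that is polynomial over this RO$(C_4)$-graded coefficient ring in the monomial generators together with their norms and transfers.

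Next, I feed in HHR's slice differential theorem, which supplies the generating differentials attached to $\rone$ (a $d_3$) and to $\rthree$ (a $d_{15}$), along with their $\gamma$-conjugate versions. Multiplicative (Leibniz) propagation produces the entire $d_3$- and $d_{15}$-families and eliminates every $\rone$- or $\rthree$-divisible class. Longer differentials are then obtained by a bootstrapping argument using two main tools: (i) restriction to the slice spectral sequence of the underlying $C_2$-equivariant spectrum, which is far simpler and already accessible via $\BPR$-theoretic methods the authors have developed at lower heights; and (ii) internal consistency under the multiplicative norm $N_{C_2}^{C_4}$, the transfer--restriction relation $\Res \circ \text{tr} = 1 + \gamma$, and the RO$(C_4)$-graded identities in $\pi_\star^{C_4}\HZ$ (in particular the gold relations relating $a_\sigma$ and $a_\lambda$). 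Iterating page by page produces families of differentials on the intermediate pages and, ultimately, the longest differential $d_{61}$.

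Termination after $E_{61}$ and the horizontal vanishing line of filtration $61$ are then established by exhibiting $D_2$ of Theorem~\ref{thm:MainTheorem1} as a permanent cycle in filtration zero: the resulting $D_2$-periodicity forces $E_\infty$ to be determined by a finite range in which no differential longer than $d_{61}$ can occur, and every surviving class is seen to lie in filtration at most $61$.

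The principal obstacle is the combinatorial complexity of a height-$4$ $E_2$-page rather than any conceptual novelty. The latest differentials (past $d_{31}$) are not direct consequences of HHR's generating differentials and must be extracted by a delicate interplay of $C_2$-restriction with multiplicative and norm-theoretic arguments. Managing the RO$(C_4)$-graded bookkeeping so that the target of each such differential is unambiguously pinned down is the central technical challenge, and it is precisely this bookkeeping that motivates the explicit form of $D_2$ given in Theorem~\ref{thm:MainTheorem1}.
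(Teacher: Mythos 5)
There is a genuine gap, in two places. First, your generating-differential input is misidentified. The HHR slice differential theorem does not attach a $d_3$ to $\rone$ and a $d_{15}$ to $\rthree$: in the $C_4$-slice spectral sequence it gives differentials on the orientation classes $u_{2^k\sigma}$ (namely $d_5(u_{2\sigma})=\done a_\lambda a_{3\sigma}$ and $d_{13}(u_{4\sigma})=\dthree a_{3\lambda}a_{7\sigma}$), and on the $C_2$-side the differentials are attached to $\bar{v}_1,\dots,\bar{v}_4$, so one first needs the formal-group-law computation (Theorem~\ref{thm:fglFormulas}) expressing $\bar{v}_2\equiv \rone^3+\rthree+\grthree$, $\bar{v}_3\equiv \rone(\rthree^2+\rthree\grthree+\grthree^2)$, $\bar{v}_4\equiv\rthree^4\grthree$; this produces \emph{four} generating families $d_3,d_7,d_{15},d_{31}$ in $C_2$-$\SliceSS(i_{C_2}^*\BPtwo)$, not two. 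Moreover Leibniz propagation of these certainly does not ``eliminate every $\rone$- or $\rthree$-divisible class'': most of the $E_\infty$-page is built from $\done,\dthree,\sone,\sthree$, and the long differentials $d_{19}$ through $d_{61}$ are not multiplicative consequences of the generating ones. In the paper they are obtained by (i) showing all differentials of length $\le 13$ are induced from the quotient $\BPtwo\to\BPone$ (including ruling out ``interfering'' differentials between the $\BPone$- and $i_{C_2}^*\BPone$-truncations), (ii) applying the norm formula (Theorem~\ref{thm:NormFormula}) to the $C_2$-differentials to get the $d_{13}$, $d_{29}$ and $d_{61}$ on $u_{4\lambda}a_\sigma$, $u_{8\lambda}a_\sigma$, $u_{16\lambda}a_\sigma$, and (iii) the Vanishing Theorem described below; your sketch contains no substitute for (iii), which is what actually pins down the targets you acknowledge are the hard part.

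Second, your termination argument does not work. $D_2$ is not inverted in $\BPtwo$, so $\SliceSS(\BPtwo)$ has no $D_2$-periodicity; the periodicities of Theorem~\ref{thm:MainTheorem3} concern $\Psi=D_2^{-1}\BPtwo$, and even there they are equivalences of the localized spectrum, not a bound on the length of differentials or on filtration in the unlocalized slice spectral sequence. The paper's mechanism is different: the class $\alpha=\dthree^8u_{24\sigma}a_{24\lambda}$ at $(48,48)$ is a permanent cycle, its square is killed by the $d_{61}$-differential normed up from the $C_2$-differential $d_{31}(u_{16\sigma_2})=\rthree^4\grthree a_{31\sigma_2}$ (Theorem~\ref{thm:NormedDiffd61}), and the $\alpha$-divisibility lemma (Lemma~\ref{lem:BPtwoMainLem}) then yields the Vanishing Theorem (Theorem~\ref{thm:aboveFiltration61die}): every class divisible by $\alpha^2$ dies on or before $E_{61}$. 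It is this statement, combined with degree arguments and multiplication by permanent cycles such as $\alpha$, $\beta$, $\gamma^4$, that forces all the differentials from $d_{19}$ to $d_{61}$, shows nothing longer than $d_{61}$ can occur, and produces the horizontal vanishing line at filtration $61$. Without proving something like the normed $d_{61}$ and the Vanishing Theorem, your proposed bootstrap cannot establish either the termination at $E_{61}$ or the vanishing line.
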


\begin{figure}
\begin{center}
\makebox[\textwidth]{\includegraphics[trim={0cm 20cm 0cm 19.5cm}, clip, page = 1, scale = 0.23]{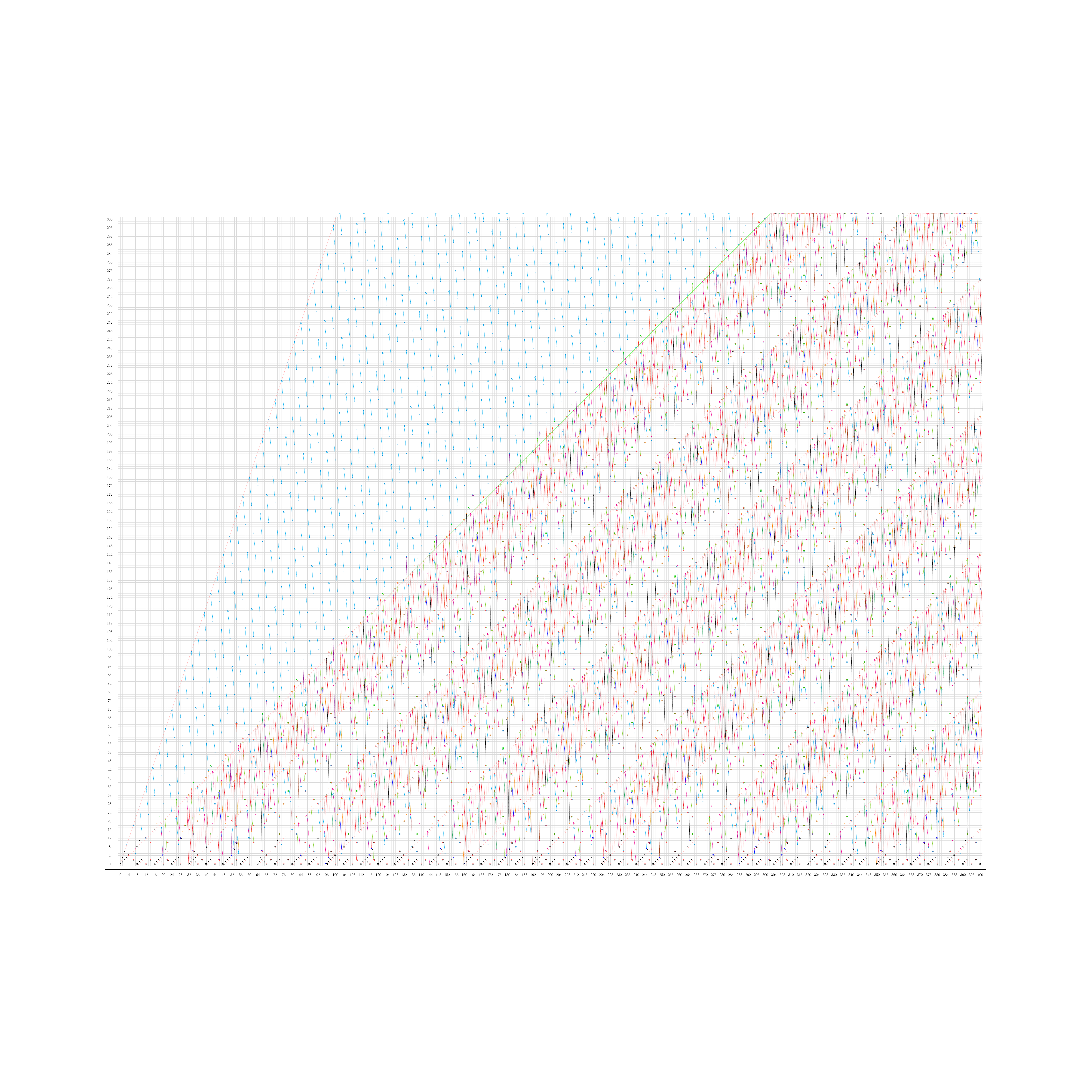}}
\caption{The slice spectral sequence of $BP^{((C_4))}\langle 2 \rangle$.} 
\hfill
\label{fig:E4C4}
\end{center}
\end{figure}

\begin{thm}\label{thm:MainTheorem3}
After inverting the element $D_2 \in \pi_{24\rho_4}^{C_4} \BPC$ in Theorem~\ref{thm:MainTheorem1}, the $C_4$-spectrum $\Psi:= D_2^{-1}\BPtwo$ has three periodicities: 
\begin{enumerate}
\item $S^{3\rho_4} \wedge \Psi \simeq \Psi$;
\item $S^{8-8\sigma} \wedge \Psi \simeq \Psi$;
\item $S^{32+32\sigma-32\lambda} \wedge \Psi \simeq \Psi$.
\end{enumerate}
Together, these three periodicities imply that $\Psi^{C_4}$ and $E_4^{hC_{12}}$ are 384-periodic theories.
\end{thm}

The spectrum $E_4^{hC_{12}}$ is one of the building blocks of the $K(4)$-local sphere $L_{K(4)}S^0$.  Our approach bypasses the previously mentioned difficulties surrounding the homotopy fixed point spectral sequence by using the $C_4$-equivariant spectra $\BPC$ and $\BPtwo$.  These equivariant spectra exploit the connections between the geometry of Real bordism theories and the obstruction-theoretic actions on the Lubin--Tate theories.  Roughly speaking, the $C_4$-spectrum $\BPtwo$ encodes the universal example of a height-4 formal group law with a $C_4$-action extending the formal inversion action.


At height 2, Hill, Hopkins, and Ravenel \cite{HHRKH} studied the slice spectral sequence of the spectrum $D_1^{-1}\BPone$.  They showed that $D_1^{-1}\BPone$ is 32-periodic and is closely related to a height-2 Lubin--Tate theory, which has also been studied by Behrens--Ormsby \cite{BehrensOrmsby} as $\textup{TMF}_0(5)$.  

The spectrum $\Psi$, which is 384-periodic, is a height-4 generalization of $D_1^{-1}\BPone$.  This can be viewed as a different approach than that of Behrens and Lawson \cite{BehrensLawsonTAF} to generalizing $\textup{TMF}$ with level structures to higher heights, with the advantage of being completely computable. 


\subsection{Motivation and main results}\label{subsec:motivationMainResults}
In 2009, Hill, Hopkins, and Ravenel \cite{HHR} proved that the Kervaire invariant elements $\theta_j$ do not exist for $j \geq 7$.  A key construction in their proof is the spectrum $\Omega$, which detects all the Kervaire invariant elements in the sense that if $\theta_j \in \pi_{2^{j+1}-2} S^0$ is an element of Kervaire invariant 1, then the Hurewicz image of $\theta_j$ under the map $\pi_*S^0 \to \pi_*\Omega$ is nonzero (see also \cite{HaynesKervaire, HHRCDM1, HHRCDM2} for surveys on the result).  

The detecting spectrum $\Omega$ is constructed using equivariant homotopy theory as the fixed points of a $C_8$-spectrum $\Omega_\mathbb{O}$, which in turn is a chromatic-type localization of $\MU^{(\!(C_8)\!)} := N_{C_2}^{C_8} \MU_\mathbb{R}$.  Here, $N_{C_2}^{C_8}(-)$ is the Hill--Hopkins--Ravenel norm functor and $\MU_\mathbb{R}$ is the Real cobordism spectrum of Landweber \cite{LandweberMUR}, Fujii \cite{FujiiMUR}, and Araki \cite{Araki}.  The underlying spectrum of $\MU_\mathbb{R}$ is $\MU$, with the $C_2$-action coming from the complex conjugation action on complex manifolds. 

To analyze the $G$-equivariant homotopy groups of $\MUG$, Hill, Hopkins, and Ravenel generalized the $C_2$-equivariant filtration of Hu--Kriz \cite{HuKriz} and Dugger \cite{DuggerKR} to a $G$-equivariant Postnikov filtration for all finite groups $G$.  They called this the \textit{slice filtration}.  Given any $G$-equivariant spectrum $X$, the slice filtration produces the slice tower $\{P^*X\}$, whose associated slice spectral sequence strongly converges to the $RO(G)$-graded homotopy groups $\pi_\bigstar^G X$.  

For $G = C_{2^n}$, the $G$-spectrum $\MUG$ are amenable to computations.  Hill, Hopkins, and Ravenel proved that the slice spectral sequences for $\MUG$ and its equivariant localizations have especially simple \(E_2\)-terms.  Furthermore, they proved the Gap Theorem and the Periodicity Theorem, which state, respectively, that $\pi_i^{C_8} \Omega_\mathbb{O} = 0$ for $-4 < i < 0$, and that there is an isomorphism $\pi_*^{C_8} \Omega_\mathbb{O} \cong \pi_{*+256}^{C_8} \Omega_\mathbb{O}.$
The two theorems together imply that
$$\displaystyle \pi_{2^{j+1}-2} \Omega = \pi_{2^{j+1}-2}^{C_8} \Omega_\mathbb{O} = 0$$
for all $j \geq 7$, from which the nonexistence of the corresponding Kervaire invariant elements follows.


The solution of the Kervaire invariant one problem gives us a motivating slogan: 
\begin{slogan}
The homotopy groups of the fixed points of \(\MUG\) as \(|G|\) grows are increasingly good approximations to the stable homotopy groups of spheres.
\end{slogan}


To explain the slogan, we unpack some of the algebraic geometry around \(\MUG\) when $G = C_{2^n}$. The spectrum underlying \(\MUG\) is the smash product of \(2^{n-1}\)-copies of \(\MU\), and so the underlying homotopy ring co-represents the functor which associates to a (graded) commutative ring a formal group law and a sequence of \((2^{n-1}-1)\) isomorphisms:
\[
F_1\xrightarrow{f_1}F_2\xrightarrow{f_2}\dots\xrightarrow{f_{2^{n-1}-1}} F_{2^{n-1}}.
\]
The underlying homotopy ring has an action of \(C_{2^n}\), and by canonically enlarging our moduli problem, we can record this as well. We extend our sequence of isomorphisms by one final isomorphism from the final formal group law back to the first, composing the inverses to the isomorphisms already given with the formal inversion. This gives us our moduli problem: a map from the underlying homotopy of \(\MU^{(\!(C_{2^n})\!)}\) to a graded commutative \(C_{2^n}\)-equivariant ring \(R\) is given by a formal group law \(F\) together with isomorphisms
\[
f_{i+1}\colon{\gamma^{i}}^\ast F\to {\gamma^{(i+1)}}^\ast F, \,\,\, 0 \leq i \leq 2^{n-1}-1
\]
such that the composite of all of the \(f_{i}\) is the formal inversion on \(F\). 

If \(F\) is a formal group law over a ring \(R\) that has an action of \(C_{2^{k}}\) extending the action of \(C_{2}\) given by formal inversion, then \(F\) canonically defines a sequence of formal groups as above. Simply take all of the maps \(f_{i}\) to be the identity unless we pass a multiple of  \(2^{n-k}\), in which case, take the corresponding element of \(C_{2^{k}}\). In this way, we see that the stack \(\Spec(\pi_{\ast}^{e}\MU^{(\!(C_{2^n})\!)})/\!/C_{2^{n}}\)  provides a cover of the moduli stack of formal groups in a way that reflects the automorphism groups which extend the formal inversion action and which are isomorphic to subgroups of \(C_{2^{n}}\). 

As an immediate, important example, we consider the universal deformation \(\Gamma_{m}\) of a fixed height-\(m\) formal group law \(F_{m}\) over an algebraically closed field \(k\) of characteristic \(p\). Lubin and Tate \cite{LubinTate} showed that the space of deformations is Ind-representable by a pro-ring abstractly isomorphic to 
\[
\mathbb W(k)[\![u_{1},\dots,u_{m-1}]\!][u^{\pm 1}]=:{E_{m}}_\ast,
\]
over which \(\Gamma_{m}\) is defined.  Here, \(\mathbb W(k)\) is the \(p\)-typical Witt vectors of \(k\), \(|u_{i}|=0\), and \(|u|=2\). 

By naturality, the ring ${E_m}_\ast$ is acted on by the Morava stabilizer group \(\mathbb G_{m}\), the automorphism group of \(F_{m}\).  Hewett \cite{Hewett} showed that if \(m=2^{n-1}(2r+1)\), then there is a subgroup of the Morava stabilizer group isomorphic to \(C_{2^{n}}\). In particular, associated to \(\Gamma_{m}\) and the action of a generator of \(C_{2^{n}}\), we have a \(C_{2^{n}}\)-equivariant map 
\[
\pi_{\ast}^{e}\MU^{(\!(C_{2^n})\!)} \longrightarrow {E_m}_\ast.
\]

Topologically, this entire story can be lifted.  The formal group law \(\Gamma_{m}\) is Landweber exact, and hence there is a complex orientable spectrum \(E_{m}\) which carries the universal deformation \(\Gamma_{m}\).  The Goerss--Hopkins--Miller Theorem \cite{HopkinsMiller, GoerssHopkins} proves that \(E_{m}\) is a commutative ring spectrum and that the automorphism group of \(E_{m}\) as a commutative ring spectrum is homotopy equivalent to the Morava stabilizer group. In particular, we may view \(E_{m}\) as a commutative ring object in naive \(G\)-spectra. The functor 
\[
X\longmapsto F(EG_{+},X)
\] 
takes naive equivalences to genuine equivariant equivalences, and hence allows us to view \(E_{m}\) as a genuine \(G\)-equivariant spectrum. The commutative ring spectrum structure on \(E_{m}\) gives an action of a trivial \(E_{\infty}\)-operad on \(F(EG_{+},E_{m})\).  Work of Blumberg--Hill \cite{BlumbergHill} shows that this is sufficient to ensure that \(F(EG_{+},E_{m})\) is actually a genuine equivariant commutative ring spectrum, and hence it has norm maps.

The spectra $E_m^{hG}$ are the building blocks of the $p$-local stable homotopy category.  In particular, the homotopy groups $\pi_* E_m^{hG}$ assemble to the stable homotopy groups of spheres.  To be more precise, the chromatic convergence theorem \cite{RavenelOrangeBook} exhibits the $p$-local sphere spectrum $S^0_{(p)}$ as the inverse limit of the chromatic tower 
$$\cdots \longrightarrow L_{E_m} S^0 \longrightarrow L_{E_{m-1}} S^0 \longrightarrow \cdots \longrightarrow L_{E_0} S^0,$$
where each $L_{E_m}S^0$ is assembled via the chromatic fracture square
$$\begin{tikzcd}
L_{E_m}S^0 \ar[d] \ar[r] & L_{K(m)}S^0 \ar[d] \\ 
L_{E_{m-1}} S^0 \ar[r] & L_{E_{m-1}}L_{K(m)} S^0.
\end{tikzcd}$$
Here, $K(m)$ is the $m$th Morava $K$-theory.

Devinatz and Hopkins \cite{DevinatzHopkins} proved that $L_{K(m)}S^0 \simeq E_m^{h\mathbb{G}_m}$, and, furthermore, that the Adams--Novikov spectral sequence computing $L_{K(m)}S^0$ can be identified with the associated homotopy fixed point spectral sequence for $E_m^{h\mathbb{G}_m}$.  One may further analyze $L_{K(m)}S^0$ by using the spectra $\{E_m^{hG}\, |\, G \subset \mathbb{G}_m \text{ finite}\}$.  A comprehensive description of these techniques can be found in \cite{Henn2007} and \cite{AgnesToby}.  

At height 1, $E_1^{hC_2}$ is $KO_2^\wedge$, the 2-adic completion of real $K$-theory.  The Morava stabilizer group $\mathbb{G}_1$ is isomorphic to $\mathbb{Z}_2^\times$.  

At height 2, the homotopy fixed points $E_2^{hG}$ are the $K(2)$-localizations of topological modular forms and variants of topological modular forms with level structures.  Computations of the homotopy groups of these spectra are done by Hopkins--Mahowald~\cite{HopkinsMahowald}, Bauer~\cite{BauerTMF}, Mahowald--Rezk~\cite{MahowaldRezk}, Behrens--Ormsby~\cite{BehrensOrmsby}, Hill--Hopkins--Ravenel~\cite{HHRKH}, and Hill--Meier~\cite{HillMeier}.  

At height $(p -1)$ and at the prime $p$, $E_{p-1}^{hC_p}$ has been computed by Hopkins, Miller, and Nave.  The result of this computation is used by Nave to prove the nonexistence of certain Smith--Toda complexes \cite{NaveThesis, Nave}. 
 
For higher heights $m >2$ and when $p=2$, the homotopy fixed points $E_m^{hG}$ are notoriously difficult to compute.  One of the chief reasons that these homotopy fixed points are so difficult to compute is because the group actions are constructed purely from obstruction theory.  This stands in contrast to the norms of $\MU_{\mathbb{R}}$, whose actions are induced from geometry.  

Recent work of Hahn--Shi \cite{HahnShi} establishes the first known connection between the obstruction-theoretic actions on Lubin--Tate theories and the geometry of complex conjugation.  More specifically, there is a Real orientation for any of the \(E_{m}\): there are \(C_{2}\)-equivariant maps
\[
\MU_{\mathbb R}\longrightarrow i_{C_{2}}^{\ast}E_{m}.
\]
Using the norm-forget adjunction, such a map can be promoted to a $G$-equivariant map
\[
\MUG\longrightarrow N_{C_{2}}^{G}i_{C_{2}}^{\ast}E_{m}\longrightarrow E_{m}.
\]
By construction, since the original map \(\MU_{\mathbb R}\to E_{m}\) classified \(\Gamma_{m}\) as a Real formal group law, this \(G\)-equivariant map recovers the algebraic map ${\pi_{\ast}^{e}\MU^{(\!(C_{2^n})\!)} \longrightarrow {E_m}_\ast}$.  

As a consequence of the Real orientation theorem, the fixed point spectra $(\MU^{(\!(C_{2^n})\!)})^{C_{2^n}}$ and $E_{2^{n-1}m}^{hC_{2^n}}$ can be assembled into the following diagram:

\begin{equation}\label{diagram:DetectionTower}
\begin{tikzcd}
&& \vdots \ar[d]&& \\
&&(\MU^{(\!(C_{2^n})\!)})^{C_{2^n}} \ar[d] \ar[r] &  E_{2^{n-1}m}^{hC_{2^n}}\\
&& \vdots \ar[d] &\\ 
S^0 \ar[rr] \ar[rrd] \ar[rrdd] \ar[rruu]&&(\MU^{(\!(C_8)\!)})^{C_{8}} \ar[d] \ar[r]  & E_{4m}^{hC_8}\\ 
&&(\MU^{(\!(C_4)\!)})^{C_{4}} \ar[d] \ar[r] &  E_{2m}^{hC_4}\\ 
&&(\MU_\mathbb{R})^{C_{2}} \ar[r] & E_m^{hC_2}.
\end{tikzcd}
\end{equation}

The existence of equivariant orientations renders computations that rely on the slice spectral sequence tractable.  Using differentials in the slice spectral sequence of $\MU_\mathbb{R}$ and the Real orientation $\MU_\mathbb{R} \to E_m$, Hahn--Shi computed $E_m^{hC_2}$ at arbitrarily large heights $m$.  

An example of a Real orientable theory that was previously known is Atiyah's Real $K$-theory.  In 1966, Atiyah \cite{AtiyahKR} formalized the connection between complex $K$-theory ($KU$) and real $K$-theory ($KO$).  Analogous as in the case of $MU_\mathbb{R}$, the complex conjugation action on complex vector bundles induces a natural $C_2$-action on $KU$, and this produces a $C_2$-spectrum $K_\mathbb{R}$ called Atiyah's Real $K$-theory.  The theory $K_\mathbb{R}$ interpolates between complex and real $K$-theory in the sense that the underlying spectrum of $K_\mathbb{R}$ is $KU$, and its $C_2$-fixed points is $KO$.  The $RO(C_2)$-graded homotopy groups $\pi_\bigstar^{C_2} K_\mathbb{R}$ has two periodicities: a $\rho_2$-periodicity that corresponds to the complex Bott-periodicity, and a 8-periodicity that corresponds to the real Bott-periodicity. 

In \cite{HHRKH}, Hill, Hopkins, and Ravenel computed the slice spectral sequence of a $C_4$-equivariant height-2 theory that is analogous to $K_\mathbb{R}$.  To introduce this theory, note that the height of the formal group law \(\Gamma_{m}\) is at most \(m\) and the ring \({E_m}_\ast\) is \(2\)-local. We can therefore pass to \(2\)-typical formal group laws (and hence \(\BP\)), and our map
\[
\BP\longrightarrow E_{m}
\]
classifying the formal group law descends to a map
\[
v_m^{-1}\BP \longrightarrow E_{m}.
\]
Equivariantly, we have a similar construction, which we will review in more detail in Section~\ref{sec:Preliminaries}.  The \(C_{2^{n}}\)-equivariant map 
\[
\BP^{(\!(C_{2^n})\!)}\longrightarrow E_{m}
\]
will factor through a localization of $\BP^{(\!(C_{2^n})\!)}$.  To study the Hurewicz images of $E_m$, it therefore suffices to study the various localization of the quotients of $\BP^{(\!(C_{2^n})\!)}$.  

Hill--Hopkins--Ravenel computed the homotopy Mackey functors of $\BPone$.  This spectrum gives a model of a height-2 Lubin--Tate theory with $C_4$-action coming from the automorphisms of its formal group law.  More precisely, there exists a height-2 Lubin--Tate theory $E_2$ with coefficient ring 
\[\pi_*E_2 \cong W(\mathbb{F}_{2^2})[C_4 \cdot u][C_4 \cdot u^{-1}]^\wedge_{\mathfrak{m}}, \]
where $|u| = 2$ and $\mathfrak{m} = (C_4 \cdot (u - \gamma u))$.  Furthermore, there is a subgroup $C_4$ inside the Morava stabilizer group $\mathbb{G}_2$ such that the isomorphism above is a $C_4$-equivariant isomorphism.  

There is a $C_4$-equivariant homotopy commutative ring map 
\[\phi: \BPC \longrightarrow E_2\]
such that $\pi_*^e \phi: \pi_*^e \BPC \to \pi_*E_2$ is the map determined by sending 
\[r_{2^i -1} \longmapsto \left\{\begin{array}{ll} r_1 & i = 1, \\ 
0 & i \geq 2.  \end{array}\right. \]
After inverting the element 
\[D_1: = N(\bar{v}_2) N(\rone) \in \pi_{4\rho_4}^{C_4} \BPC,\]
there is a factorization 
\[\begin{tikzcd}
\BPC \ar[r, "\phi"] \ar[d] & E_2 \\ 
D_1^{-1}\BPC \ar[ru, dashed]&
\end{tikzcd}\]
of the $C_4$-equivariant orientation $\phi$ through $D_1^{-1}\BPC$.  

The slice spectral sequence of $\BPone$ degenerates after the $E_{13}$-page and has a horizontal vanishing line of filtration 13.  The $C_4$-spectrum $D_1^{-1}\BPone$ has three periodicities: 
\begin{enumerate}
\item $S^{\rho_4} \wedge D_1^{-1}\BPone \simeq D_1^{-1}\BPone$;
\item $S^{4-4\sigma}\wedge D_1^{-1}\BPone \simeq D_1^{-1}\BPone$;
\item $S^{8+8\sigma-8\lambda} \wedge D_1^{-1}\BPone \simeq D_1^{-1}\BPone$.
\end{enumerate}
Together, these three periodicities combine to imply that $D_1^{-1}\BPone$ and $E_2^{hC_4}$ are 32-periodic theories.

To this end, Theorem~\ref{thm:MainTheorem1}, Theorem~\ref{thm:MainTheorem2}, and Theorem~\ref{thm:MainTheorem3} show that $\BPtwo$ provides a model for a height-4 Lubin--Tate theory with a $C_4$-action coming from the automorphisms of its formal group law and give a complete computation of the slice spectral sequence of $\BPtwo$.

When $G = C_2$, Li--Shi--Wang--Xu \cite{HurewiczImages} analyzed the bottom layer of tower~(\ref{diagram:DetectionTower}) and showed that the Hopf-, Kervaire-, and $\bar{\kappa}$-families in the stable homotopy groups of spheres are detected by the map 
$$S^0 \longrightarrow (\MU_\mathbb{R})^{C_2}.$$
As we increase the height $m$, an increasing subset of the elements in these families is detected by the map 
$$S^0 \longrightarrow E_m^{hC_2}.$$

Since $(\BPone)^{C_4}$ is closely related to $\textup{TMF}_0(5)$, one can study its Hurewicz images via the Hurewicz images of $\textup{TMF}$ (see \cite{BehrensOrmsby, HHRKH}).  In particular, there are elements detected by the $C_4$-fixed points $(\BPone)^{C_4}$ that are not detected by $(\MU_\mathbb{R})^{C_2}$.  

In general, it is difficult to determine all the Hurewicz images of $(\BPG)^G$.  Computations of Hill \cite{HillEtaCubed} have shown that the class $\eta^3 \in \pi_3 S^0$ is not detected by $(\BP^{(\!(C_{2^n})\!)})^{C_{2^n}}$ for any $n \geq 1$.  However, this element is detected by $(\BP^{(\!(Q_8)\!)})^{Q_8}$, where $Q_8$ is the quaternion group.  It is a current project to understand the Hurewicz images of the $G$-fixed points of $\BPG$ and its various quotients when $G = C_{2^n}$ and $Q_8$.  

Note that after inverting a specific generator $D \in \pi_{19\rho_{C_8}}^{C_8} \BP^{(\!(C_8)\!)}$, $D^{-1} \BP^{(\!(C_8)\!)}$ is the detecting spectrum $\Omega_{\mathbb{O}}$ of Hill--Hopkins--Ravenel \cite{HHR}.  Since we have completely computed the slice spectral sequence of $(\BPtwo)^{C_4}$, the following questions are of immediate interest: 

\begin{quest}
What are the Hurewicz images of $(\BPtwo)^{C_4}$ and $E_4^{hC_4}$?
\end{quest}

\begin{quest}
What are the homotopy groups and the Hurewicz images of $(\BP^{(\!(C_4)\!)}\langle n \rangle)^{C_4}$ and $E_{2n}^{hC_4}$ for $n \geq 3$? 
\end{quest}

\begin{quest}
What are the $C_8$-fixed points of $\BPCeightone$? 
\end{quest}

\begin{quest}
What are the Hurewicz images of $(\BPCeightone)^{C_8}$ and $E_4^{hC_8}$? 
\end{quest}

\begin{quest}
What are the Hurewicz images of the inverse limit $\varprojlim (\MU^{(\!(C_{2^n})\!)})^{C_{2^n}}$ of the detection tower (see \cite{HillEtaCubed} and \cite{HurewiczImages})? 
\end{quest}



\subsection{Summary of the contents}
We now turn to a summary of the contents of this paper.  Section~\ref{sec:Preliminaries} provides the necessary background on $\MUG$.  In particular, we define the Hill--Hopkins--Ravenel theories $\BPG\langle m \rangle$ (Defintion~\ref{df:HHRTheories}), describe the $E_2$-pages of their slice spectral sequences, and prove Theorem~\ref{thm:MainTheorem1} and Theorem~\ref{thm:MainTheorem3}.

The rest of the paper are dedicated to proving Theorem~\ref{thm:MainTheorem2}.  In Section~\ref{sec:SliceSSBPone}, we review Hill--Hopkins--Ravenel's computation of $\SliceSS(\BPone)$.  Our proofs for some of the differentials are slightly different than those appearing in \cite{HHRKH}.  The computation is presented in a way that will resemble our subsequent computation for $\SliceSS(\BPtwo)$.  

Section~\ref{sec:slicesBPtwo} describes the slice filtration of $\BPtwo$.  We organize the slice cells of $\BPtwo$ into collections called \textit{$\BPone$-truncations} and \textit{$i_{C_2}^*\BPone$-truncations}.  This is done to facilitate later computations.  In Section~\ref{sec:C2BPtwoSliceSS}, we compute the $C_2$-slice spectral sequence of $i_{C_2}^*\BPtwo$.  

From Section~\ref{sec:InducedDiffBPone} forward, we focus our attention on computing the $C_4$-slice spectral sequence of $\BPtwo$.  Section~\ref{sec:InducedDiffBPone} proves that all the differentials in $C_4$-$\SliceSS(\BPtwo)$ of length $\leq 12$, as well as some of the $d_{13}$-differentials, can be induced from $C_4$-$\SliceSS(\BPone)$ via the quotient map $\BPtwo \to \BPone$.  In Section~\ref{sec:higherDifferentialsI} we prove all the $d_{13}$ and $d_{15}$ differentials by using the restriction map, the transfer map, and multiplicative structures. 

In Section~\ref{sec:Norm}, we prove differentials on the classes $u_{2\lambda}a_\sigma$, $u_{4\lambda}a_\sigma$, $u_{8\lambda}a_\sigma$, and $u_{16\lambda}a_\sigma$ by norming up $C_2$-equivariant differentials in $C_2$-$\SliceSS(\BPtwo)$.  Using these differentials, we prove the Vanishing Theorem (Theorem~\ref{thm:aboveFiltration61die}), which states that a large portion of the classes that are above filtration 96 on the $E_2$-page must die on or before the $E_{61}$-page.  The Vanishing Theorem is of great importance for us because it establishes a bound on the differentials that can possibly occur on a class.  

Sections~\ref{sec:HigherDifferentialsIV}, \ref{sec:HigherDifferentialsV}, and \ref{sec:HigherDifferentialsVI} prove all of the the remaining differentials in the slice spectral sequence.  The slice spectral sequence degenerates after the $E_{61}$-page and has a horizontal vanishing line of filtration 61 at the $E_\infty$-page.  Section~\ref{sec:SummaryOfDifferentials} gives a summary of all the differentials.  

\subsection{Acknowledgements}
The authors would like to thank Agn\`{e}s Beaudry, Mark Behrens, Irina Bobkova, Mike Hopkins, Hana Jia Kong, Lennart Meier, Haynes Miller, Doug Ravenel, and Mingcong Zeng for helpful conversations.  The authors are grateful to Hood Chatham, both for helpful conversations and for his spectral sequence program, which greatly facilitated our computations and the production of this manuscript.  Finally, the authors would like to thank the anonymous referee for many helpful comments and suggestions.  The first author was supported by the National Science Foundation under Grant No. DMS-1811189.  The fourth author was supported by the National Science Foundation under Grant No. DMS-1810638 and DMS-2043485.

\section{Preliminaries}\label{sec:Preliminaries}
\subsection{The slice spectral sequence of $\MUG$}
Let $\MUR$ be the Real cobordism spectrum, and $G$ be the cyclic group of order $2^n$.  The spectrum $\MUG$ is defined as 
$$\MUG := N_{C_2}^{G} \MUR,$$
where $N_H^G(-)$ is the Hill--Hopkins--Ravenel norm functor \cite{HHR}.  The underlying spectrum of $\MUG$ is the smash product of $2^{n-1}$-copies of $MU$.  

Hill, Hopkins, and Ravenel \cite[Section 5]{HHR} constructed generators 
\[
\bar{r}_i \in \pi_{i\rho_2}^{C_2} \MUG
\]
such that 
\[
\pi_{\ast\rho_{2}}^{C_{2}}\MUG\cong\mathbb Z_{(2)}[\bar{r}_{1},\gamma\bar{r}_{1},\dots,\gamma^{2^{n-1}-1}\bar{r}_{1},\bar{r}_{2},\dots].
\]
Here \(\gamma\) is a generator of \(C_{2^{n}}\), and the Weyl action is given by 
\[
\gamma\cdot \gamma^{j}\bar{r}_{i}=\begin{cases}
\gamma^{j+1}\bar{r}_{i} & 0\leq j\leq 2^{n-1}-2 \\
(-1)^{i}\bar{r}_{i} & j=2^{n-1}-1.
\end{cases}
\]


Adjoint to the maps 
\[
\bar{r}_{i}\colon S^{i\rho_{2}}\to i_{C_{2}}^{\ast}\MUG
\]
are associative algebra maps from free associative algebras
\[
S^{0}[\bar{r}_{i}]=\bigvee_{j\geq 0} \big(S^{i\rho_{2}}\big)^{\wedge j}\to i_{C_{2}}^{\ast}\MUG,
\]
and hence \(G\)-equivariant associative algebra maps
\[
S^{0}[G\cdot\bar{r}_{i}]=N_{C_{2}}^{G}S^{0}[\bar{r}_{i}]\to \MUG.
\]
Smashing these all together gives an associative algebra map
\[
A:=S^{0}[G\cdot\bar{r}_{1},\dots]=\bigwedge_{i=1}^{\infty} S^{0}[G\cdot\bar{r}_{i}]\to\MUG.
\]

For \(\MUG\) and the quotients below, the slice filtration is the filtration associated to the powers of the augmentation ideal of \(A\), by the Slice Theorem of \cite{HHR}.

The classical Quillen idempotent map $\MU \longrightarrow \BP$ can be lifted to a $C_2$-equivariant map 
\[
\MUR \longrightarrow \BP_{\mathbb R},
\]
where $\BP_{\mathbb R}$ is the Real Brown--Peterson spectrum.  Taking the norm $N_{C_2}^G(-)$ of this map produces a $G$-equivariant map 
\[
\MUG \to \BPG=: N_{C_2}^G \BP_{\mathbb R}.
\]
Using the techniques developed in \cite{HHR}, it follows that $\BPG$ has refinement 
\[
S^0[G \cdot \bar{r}_1, G \cdot \bar{r}_3, G \cdot \bar{r}_7, \ldots] \longrightarrow \BPG.
\]
We can also produce truncated versions of these norms of \(\BPR\), wherein we form quotients by all of the \(\bar{r}_{2^{m}-1}\) for all \(m\) sufficiently large. For each \(m> 0\), let
\[
A_{m}=\bigwedge_{j=m}^{\infty}S^0[G\cdot \bar{r}_{2^{j}-1}].
\]

\begin{definition}[Hill--Hopkins--Ravenel theories] \label{df:HHRTheories}
For each \(m\geq 0\), let 
\[
\BPGm=\BPG\wedge_{A_{m+1}}S^{0}.
\]
\end{definition}
The Reduction Theorem of \cite{HHR} says that for all \(G\), \(\BPG\!\langle0\rangle=\HZ\), and \cite{HHRKH} studied the spectrum \(\BPone\) (a computation we review below). 
\begin{remark}
Although the underlying homotopy groups of \(\BPGm\) is a polynomial ring:
\[
\pi_{\ast}^{e}\BPGm\cong \mathbb Z_{(2)}[r_{1},\gamma r_{1},\dots, \gamma^{2^{n-1}-1} r_{1},\dots,\gamma^{2^{n-1}-1}r_{2^{m}-1}],
\]
we do not know that \(\BPGm\) has even an associative multiplication. It is, however, canonically an \(\MUG\)-module, and hence the slice spectral sequence will be a spectral sequence of modules over the slice spectral sequence for  \(\MUG\).
\end{remark}

The same arguments as for \(\BPG\) allow us to determine the slice associated graded for \(\BPGm\) for any \(m\). 

\begin{theorem}
The slice associated graded for \(\BPGm\) is the graded spectrum
\[
S^0[G\cdot\bar{r}_{1},\dots, G\cdot\bar{r}_{2^{m}-1}]\wedge \HZ,
\]
where the degree of a summand corresponding to a polynomial in the \(\bar{r}_{i}\) and their conjugates is just the underlying degree.
\end{theorem}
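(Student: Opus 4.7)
The plan is to follow the strategy HHR used for $\BPG$, producing a refinement by regular slice cells and invoking the Slice Theorem. The starting point is the refinement
$$\phi \colon \mathbb{S}^{0}[G\cdot \bar{r}_{1},G\cdot \bar{r}_{3},\dots]\longrightarrow \BPG$$
already recorded in the excerpt. Since the source decomposes as a smash product of free associative algebras indexed by the individual generators, we may factor it as the smash of $A_{m}$ with the free algebra on the finitely many remaining orbits $G\cdot \bar{r}_{1},\dots,G\cdot \bar{r}_{2^{m}-1}$.

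The first step is to smash $\phi$ with $S^{0}$ over $A_{m}$ to obtain the induced map
$$\bar\phi \colon \mathbb{S}^{0}[G\cdot \bar{r}_{1},\dots,G\cdot \bar{r}_{2^{m}-1}]\longrightarrow \BPGm.$$
The second step is to observe that the source is a wedge of regular slice cells: under the norm decomposition $\mathbb S^0[G\cdot \bar{r}_i] = N_{C_2}^G \mathbb S^0[\bar{r}_i]$, each monomial summand takes the form $G_{+}\wedge_{H} S^{k\rho_{H}}$ for some subgroup $H\subseteq G$ and some $k\geq 0$, and these are known to be pure slice cells of slice dimension equal to their underlying dimension. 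The third step is to verify that $\bar\phi$ is a slice refinement. This reduces to a check on underlying homotopy groups: both sides yield the polynomial ring on the $\gamma^{j}\bar{r}_{i}$ with $1\leq i\leq 2^{m}-1$, and $\bar\phi$ induces the tautological identification (as flagged in the remark preceding the theorem). Once $\bar\phi$ is recognized as a refinement, the Slice Theorem of HHR identifies the slice associated graded of $\BPGm$ with the smash product
$$\mathbb{S}^{0}[G\cdot \bar{r}_{1},\dots,G\cdot \bar{r}_{2^{m}-1}]\wedge \HZ,$$
each monomial summand carrying filtration equal to its underlying dimension, as claimed.

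The main obstacle is that $\BPGm$ is only known to be an $\MUG$-module, not a ring spectrum, so the refinement $\bar\phi$ and the Slice Theorem must be applied in the setting of $\MUG$-modules. Concretely, one tracks the augmentation-ideal filtration of $A=\mathbb S^0[G\cdot \bar{r}_1,\dots]$ used in HHR's proof, checks that smashing with $S^{0}$ over $A_{m}$ produces a well-defined quotient filtration on $\BPGm$ with the correct associated graded pieces, and verifies that this quotient filtration is equivalent to the slice filtration. The compatibility of this descent with the regular slice cell decomposition of the source of $\bar\phi$ is the technical heart of the argument, but is a direct adaptation of the argument HHR give for $\BPG$ itself.
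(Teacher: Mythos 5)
Your proposal is correct and follows essentially the route the paper intends: the paper gives no separate argument beyond asserting that ``the same arguments as for $\BPG$'' apply, namely descending the refinement along $-\wedge_{A_{m}}S^{0}$, filtering by powers of the augmentation ideal of the truncated twisted monoid ring, and identifying that filtration with the slice filtration in the $\MUG$-module setting. The one step worth making explicit (which you leave implicit in ``the correct associated graded pieces'') is that identifying each filtration quotient as a wedge of regular slice cells smashed with $\HZ$ uses the Reduction Theorem $\BPG\langle 0\rangle \simeq \HZ$ already cited in the paper.
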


\begin{corollary}
The slice spectral sequence for the \(RO(G)\)-graded homotopy of \(\BPGm\) has \(E_{2}\)-term the \(RO(G)\)-graded homology of \(S^0[G\cdot\bar{r}_{1},\dots,G\cdot\bar{r}_{2^{m}-1}]\), with coefficients in the constant Mackey functor \(\underline{\mathbb Z}\).
\end{corollary}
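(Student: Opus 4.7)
The corollary is essentially a direct translation of the preceding theorem into the language of the $E_2$-page of the slice spectral sequence, so the plan is short: identify the $E_2$-term with the equivariant homotopy of the slice associated graded, then apply the theorem.

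First I would recall the general structure of the slice spectral sequence: for any $G$-spectrum $X$ with slice tower $\{P^\ast X\}$, the $E_2$-page (in $RO(G)$-graded Mackey functor form) is $\pi_\bigstar^G$ of the slice associated graded spectrum $\bigvee_n P_n^n X$, where $P_n^n X$ denotes the $n$-slice. By strong convergence of the slice filtration (established in \cite{HHR}), this identification converges to $\pi_\bigstar^G \BPGm$. So the problem reduces to computing $\pi_\bigstar^G$ of the slice associated graded.

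Next I would invoke the preceding theorem, which identifies the slice associated graded of $\BPGm$ as the graded spectrum
\[
\mathbb S^{0}[G\cdot\bar{r}_{1},\dots, G\cdot\bar{r}_{2^{m}-1}]\wedge \HZ,
\]
with the grading on the free associative algebra factor specifying the slice degree of each summand. The crucial point is that smashing with $\HZ$ converts equivariant homotopy into $\underline{\mathbb Z}$-homology: by definition, for any $G$-spectrum $Y$ one has $\pi_\bigstar^G(Y \wedge \HZ) = H_\bigstar^G(Y;\underline{\mathbb Z})$. Applying this with $Y = \mathbb S^{0}[G\cdot\bar{r}_{1},\dots, G\cdot\bar{r}_{2^{m}-1}]$ gives exactly the $RO(G)$-graded homology claimed in the corollary, and the slice filtration degree matches the homological grading by the theorem.

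The only subtle point, and the one I would want to verify explicitly, is that the slice grading lines up properly so that the stated $E_2$-term lives in the correct bidegree — i.e.\ that the summand of $\mathbb S^{0}[G\cdot\bar{r}_{1},\dots,G\cdot\bar{r}_{2^{m}-1}]$ corresponding to a monomial of underlying degree $n$ genuinely contributes to filtration $n$ of the slice tower, rather than some shift. This is precisely what the ``underlying degree'' clause in the theorem statement guarantees, so there is nothing further to prove; the corollary follows at once. I would therefore present the argument as a one-line deduction, citing the theorem and the tautological identification $\pi_\bigstar^G(-\wedge \HZ) = H_\bigstar^G(-;\underline{\mathbb Z})$.
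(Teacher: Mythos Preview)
Your proposal is correct and matches the paper's approach: the corollary is stated there without proof, as an immediate consequence of the preceding theorem together with the tautological identification $\pi_\bigstar^G(Y\wedge\HZ)=H_\bigstar^G(Y;\underline{\mathbb Z})$. Your write-up makes explicit exactly the two ingredients the paper leaves implicit, and there is nothing further to add.
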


Since the slice filtration is an equivariant filtration, the slice spectral sequence is a spectral sequence of \(RO(G)\)-graded Mackey functors. Moreover, the slice spectral sequence for \(\MUG\) is a multiplicative spectral sequence, and the slice spectral sequence for \(\BPGm\) is a spectral sequence of modules over it in Mackey functors. 

\subsection{The slice spectral sequence for \texorpdfstring{\(\BPtwo\)}{BPC4Two}}
From now on, we restrict attention to the case \(G=C_{4}\) and \(\BPtwo\). We will use the slice spectral sequence to compute the integer graded homotopy Mackey functors of \(\BPtwo\). To describe this, we describe in more detail the \(E_{2}\)-term of the slice spectral sequence.

\begin{notation}
Let \(\sigma\) denote the \(1\)-dimensional sign representation of \(C_{4}\), and let \(\lambda\) denote the \(2\)-dimensional irreducible representation of \(C_{4}\) given by rotation by \(\pi/2\). Let \(\sigma_{2}\) denote the \(1\)-dimensional sign representation of \(C_{2}\). Finally, let \(1\) denote the trivial representation of dimension \(1\).
\end{notation}

The homology groups of a representation sphere with coefficients in \(\underline{\mathbb Z}\) are generated by certain products of Euler classes and orientation classes for irreducible representations. 

\begin{definition}
For any representation \(V\) for which \(V^{G}=\{0\}\), let \(a_{V}\colon S^{0}\to S^{V}\) denote the Euler class of the representation \(V\). Let \(a_{V}\) also denote the corresponding Hurewicz image in \(\pi_{-V}\HZ\).
\end{definition}

The following definition can be found in \cite[Definition~3.12]{HHR}. 
\begin{definition} \label{def:defininguV}
If \(V\) is an orientable representation of \(G\), then a choice of orientation for $V$ gives an isomorphism 
$\underline{H}_{\dim V}^G(S^V; \mZ) \cong \mZ$.  In particular, the restriction map 
\[H_{\dim V}^G(S^V; \mZ) \longrightarrow H_{\dim V}(S^{\dim V}; \mathbb Z) \cong \mathbb{Z}\]
is an isomorphism.  Let
\[
u_{V}\in H_{\dim V}^G(S^{V};\mZ)
\]
be the generator which restricts to the element \(1 \in \mathbb{Z} \) under the restriction isomorphism.
\end{definition}

For the group $C_{4}$, the sign representation $\sigma$ is not orientable.  However, its restriction to $C_2$ is the trivial representation, which is orientable.  This gives an element $u_\sigma \in \pi_{1-\sigma} H\mZ(C_4/C_2)$.  The action of the generator $\gamma \in C_4/C_2$ on $\pi_{1-\sigma} H\mZ(C_4/C_2)$ sends $u_\sigma$ to $-u_\sigma$ (see \cite[Theorem~2.13]{HHRKH}).

The element $u_\sigma$ is useful for computations involving the transfer classes.  Let $X$ be a $C_4$-spectrum.  For any $V \in RO(C_4)$ that restricts to $W \in RO(C_2)$, we have maps 
$$\begin{tikzcd}
\pi_V^{C_4} X \ar[r, shift left, "res"] & \ar[l, shift left, "tr_V"] \pi_W^{C_2} X. 
\end{tikzcd}$$
Note that we include $V$ as an index in $tr_V$ because its target is not uniquely determined by its source.  More precisely, given an element $x \in \pi_W^{C_2} X$, we can transfer $x$ to an element in $\pi_{V+k(1-\sigma)}^{C_4}X$ for any integer $k$.  In the later sections of the paper, the element $u_\sigma$ will often be used as a placeholder corresponding to the element $1 \in \pi_0 H\mZ(C_4/C_2)$ in order to indicate the target degree of the transfer.  In particular, if $V$ does not have any copies of $\sigma$, then we will use $tr(u_\sigma^k x)$ to denote the transfer $tr_{V + k(1-\sigma)}(x)$.

At $C_4$, the $a_V$ and $u_V$ classes satisfy a number of relations:

\begin{enumerate}
\item $2a_\sigma = 2a_{\sigma_2} = 4a_\lambda = 0$;
\item $res_{C_{2}}^{C_{4}}(a_\sigma) = 0$, $res_{C_{2}}^{C_{4}}(a_\lambda) = a_{2\sigma_2}$, $res_{C_{2}}^{C_{4}}(u_{2\sigma}) = u_{\sigma}^2 = 1$, $res_{C_{2}}^{C_{4}}(u_\lambda) = u_{2\sigma_2}$; 
\item $u_\lambda a_{2\sigma}= 2a_\lambda u_{2\sigma}$ (gold relation);
\end{enumerate}

These allow us to identify all of the elements in the homology groups of representation spheres.


\subsection{Tambara structure}
%
A multiplicative spectral sequence of Mackey functors can equivalently be thought of a kind of Mackey functor object in spectral sequences. In particular, we can view this as being \(3\) spectral sequences:
\begin{enumerate}
\item a multiplicative spectral sequence computing the \(C_{4}\)-fixed points,
\item a multiplicative spectral sequence computing the \(C_{2}\)-fixed points, and
\item a (collapsing) multiplicative spectral sequence computing the underlying homotopy.
\end{enumerate}
The restriction and transfer maps in the Mackey functors can then be viewed as maps of spectral sequences connecting these, with the restriction maps being maps of DGAs, and the transfer maps being maps of DGMs over these DGAs.

For commutative ring spectra like \(\MUG\), we have additional structure on the \(RO(G)\)-graded homotopy groups given by the norms. If \(R\) is a \(G\)-equivariant commutative ring spectrum, then we have a multiplicative map
\[
N_{H}^{G}\colon \pi_{V}^{H}(R)\to \pi_{Ind_{H}^{G}V}^{G}(R)
\]
which takes a map
\[
S^{V}\to i_{H}^{\ast}R
\]
to the composite
\[
S^{Ind_{H}^{G}V}\cong N_{H}^{G}(S^{V})\to N_{H}^{G}i_{H}^{\ast}R\to R,
\]
where the final map is the counit of the norm-forget adjunction. The norm maps are not additive, but they do satisfy certain explicitly describable formulae which encode the norms of sums and of transfers. At the level of \(\pi_{0}\), this data is traditionally called a ``Tambara functor'', studied by Brun for equivariant commutative ring spectra, and more generally, this \(RO(G)\)-graded version was used by Hill, Hopkins, and Ravenel in their analysis of the slice spectral sequence \cite{Brun, HHR, HHRKH}.

In the slice spectral sequence, the norms play a more subtle role. The norm from \(H\) to \(G\) scales slice filtration by \(|G/H|\), just as multiplication scales degree. In particular, it will not simply commute with the differentials. We have a formula, however, for the differentials on key multiples of norms.

\begin{thm}
\label{thm:NormFormula}
Let $R$ be a commutative $G$-spectrum.  Let $d_r(x) = y$ be a $d_r$-differential in the $C_2$-slice spectral sequence.  If both $a_\sigma N_{C_2}^{C_4} x$ and $N_{C_2}^{C_4}y$ survive to the $E_{2r-1}$-page, then $d_{2r-1}(a_\sigma N_{C_2}^{C_4} x) = N_{C_2}^{C_4}y$ in $C_4$-$\SliceSS(R)$ (see \cite[Corollary 4.8]{HHRKH}).  
\end{thm}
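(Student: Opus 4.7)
The plan is to deduce the $C_4$-differential from its $C_2$-source by chasing a chain-level witness for $d_r(x) = y$ through the norm, and using the Euler class $a_\sigma$ to eliminate the transfer terms that obstruct $N_{C_2}^{C_4}$ from commuting with the slice tower. Three structural facts set up the argument. First, the norm $N := N_{C_2}^{C_4}$ doubles slice filtration: a $C_2$-class in filtration $s$ norms to a $C_4$-class in filtration $2s$, since $N$ sends the $C_2$-slice cell $S^{k\rho_2}$ to the $C_4$-slice cell $S^{k\rho_4}$. Second, $a_\sigma$ has $C_4$-slice filtration $1$, and Frobenius reciprocity together with $\mathrm{res}_{C_2}^{C_4}(a_\sigma) = 0$ yields $a_\sigma \cdot \mathrm{tr}_{C_2}^{C_4}(w) = \mathrm{tr}_{C_2}^{C_4}(\mathrm{res}(a_\sigma) \cdot w) = 0$ for every class $w \in \pi_*^{C_2}(X)$. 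Third, the filtration bookkeeping then forces the differential length to be $(2s + 2r) - (2s + 1) = 2r - 1$, matching the claim.

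The key additivity computation is then straightforward. The Tambara relations express the non-additivity of the norm by
\[
N(a + b) = N(a) + N(b) + \mathrm{tr}_{C_2}^{C_4}\!\bigl(T(a, b)\bigr)
\]
for an explicit correction term $T(a,b)$ built from $a$, $b$, and the Weyl conjugate. Multiplying through by $a_\sigma$ annihilates the transfer, so $a_\sigma \cdot N(-)$ is additive on $C_2$-homotopy classes. The same mechanism controls the interaction of $N$ with other operations in the slice tower: the failure of $N$ to commute with cofiber sequences, suspensions, and the attaching maps between successive slice layers is valued in the image of $\mathrm{tr}_{C_2}^{C_4}$, and is therefore killed after multiplication by $a_\sigma$.

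The main step is to realize $d_r(x) = y$ as a chain-level equation. Choose a lift $\tilde x$ of $x$ through $r - 1$ further stages of the $C_2$-slice tower, so that the image of $\tilde x$ in the $(s+r)$-slice is $y$. Apply $a_\sigma \cdot N(-)$ to this witness: by the filtration doubling, $N(\tilde x)$ gives a lift of $N(x)$ through $C_4$-filtration $2s, 2s+1, \dots, 2(s+r-1)$, so $a_\sigma N(\tilde x)$ lifts $a_\sigma N(x)$ through $2r - 2$ additional filtrations. Its coboundary in the next $C_4$-slice layer is $N(y)$ up to transfer corrections, and the corrections vanish because of the $a_\sigma$ prefactor. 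This is exactly the statement that $d_{2r-1}(a_\sigma N(x)) = N(y)$, provided no shorter differential preempts the computation, which is precisely the hypothesis that both $a_\sigma N(x)$ and $N(y)$ survive to $E_{2r-1}$.

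The main obstacle is the careful bookkeeping of the correction terms: one must show, at each step of the chain-level argument, that every defect to $N$ being a map of filtered spectra lies in the image of $\mathrm{tr}_{C_2}^{C_4}$. Once this is in hand, the $a_\sigma$-multiplication argument cleans up the formula, and the conclusion follows. The survival hypotheses exclude earlier differentials that would otherwise cancel or redirect the targeted $d_{2r-1}$.
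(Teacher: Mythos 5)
Your numerology is right (the norm doubles slice filtration, $a_\sigma$ contributes one more, so the predicted length is $(2s+2r)-(2s+1)=2r-1$), and your reading of the survival hypotheses is right, but the heart of the argument is missing. Everything is concentrated in the sentence you yourself flag as ``the main obstacle'': that every defect to $N_{C_2}^{C_4}$ being compatible with the filtered structure --- its failure to preserve cofiber sequences, suspensions, and the attaching data of the slice tower --- is valued in the image of $\mathrm{tr}_{C_2}^{C_4}$ and hence annihilated by $a_\sigma$. You never establish this, and the tool you invoke for it (Tambara reciprocity for sums of homotopy classes, together with $a_\sigma\cdot \mathrm{tr}=0$) controls only the additivity of the norm on homotopy elements; it does not control the failure of $N_{C_2}^{C_4}$ to carry the cofiber sequences $P^{C_2}_{s+r}X\to P^{C_2}_sX\to P^{C_2}_sX/P^{C_2}_{s+r}X$ to cofiber sequences, which is the actual obstruction to ``norming a chain-level witness.'' Note also that the paper's proof never uses the Frobenius relation $a_\sigma\cdot\mathrm{tr}=0$ at all (that relation only enters later, in applications of the theorem such as identifying $N(\bar r_1+\gamma\bar r_1)$); so the mechanism you propose is not merely unproved, it is not what makes the statement true.

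In the paper, $a_\sigma$ enters geometrically rather than as a transfer-killer: one norms the standard representative $S^V\to D(1+V)\to S^{1+V}$ of $d_r(x)=y$, obtaining $S^W\to D(1+\sigma+W)\to S^{1+\sigma+W}$ mapping to $N P^{C_2}_{s+r}X\to N P^{C_2}_sX\to N(P^{C_2}_sX/P^{C_2}_{s+r}X)$, which is no longer a cofiber sequence; the repair is to map the genuine cofiber sequence $S^W\to D(1+W)\to S^{1+W}$ into it via $(\mathrm{id},a_\sigma,a_\sigma)$, and then to compare with the $C_4$-slice tower using the honest cofiber sequence $NP_{s+r}\to NP_s\to NP_s/NP_{s+r}$ and the natural maps $N_{C_2}^{C_4}P^{C_2}_tX\to P^{C_4}_{2t}X$ coming from the norm's behaviour on slice connectivity. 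No transfer-valued correction terms ever appear, so nothing needs to be killed. If you insist on your route, the statement you must prove is of the form: for a cofiber sequence of $C_2$-spectra, the deviation of $N_{C_2}^{C_4}$ from exactness is built from spectra induced up from $C_2$ (on which $a_\sigma$ acts trivially), and this must then be threaded through all $r-1$ stages of the lift; that is a genuine theorem in the style of the Slice/Reduction Theorem arguments, not a consequence of the Tambara sum formula. As written, the proposal states the correct result and the correct filtration count but defers exactly the step that constitutes the proof.
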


\begin{proof}
The $d_r$-differential can be represented by the diagram 
$$\begin{tikzcd}
S^V \ar[r] \ar[d, "y"]  & D(1+V) \ar[r] \ar[d] & S^{1+V} \ar[d, "x"] \\ 
P_{s+r-1}^{C_2} R \ar[r] &P_s^{C_2} R \ar[r] & P_{s}^{C_2}X/P_{s+r-1}^{C_2}R. 
\end{tikzcd}$$
Let $W = \text{Ind}_H^G V$.  Applying the norm functor $N_{C_2}^{C_4}(-)$ yields the new diagram 
$$\begin{tikzcd}
S^W \ar[r] \ar[d, "N_{C_2}^{C_4} y"]  & D(1+\sigma+W) \ar[r] \ar[d] & S^{1+\sigma+W} \ar[d, "N_{C_2}^{C_4}x"] \\ 
N_{C_2}^{C_4} P_{s+r-1}^{C_2} R \ar[r] &N_{C_2}^{C_4} P_s^{C_2} R \ar[r] & N_{C_2}^{C_4}(P_{s}^{C_2}X/P_{s+r-1}^{C_2} R). 
\end{tikzcd}$$
Both rows of the this diagram are no longer cofiber sequences.  We can enlarge this diagram so that both the top and the bottom rows are cofiber sequences: 
$$\begin{tikzcd}
S^W \ar[d,"="] \ar[r] \ar[ddd, bend right = 40]& D(1+W) \ar[d, "a_\sigma"] \ar[r] \ar[ddd, bend right = 40]& S^{1+W} \ar[d, "a_\sigma"] \ar[ddd, bend right = 40] &\\ 
S^W \ar[r] \ar[d, "N_{C_2}^{C_4} y"]  & D(1+\sigma+W) \ar[r] \ar[d] & S^{1+\sigma+W} \ar[d, "N_{C_2}^{C_4}x"]& \\ 
N_{C_2}^{C_4} P_{s+r-1}^{C_2} R \ar[r]  &N_{C_2}^{C_4} P_s^{C_2} R \ar[r] & N_{C_2}^{C_4}(P_{s}^{C_2}R/P_{s+r-1}^{C_2}R)  \ar[rd] &\\
N_{C_2}^{C_4} P_{s+r-1}^{C_2} R \ar[r] \ar[u,swap, "id"] \ar[d] &N_{C_2}^{C_4} P_s^{C_2} R \ar[r] \ar[d] \ar[u,swap, "id"]& N_{C_2}^{C_4}(P_{s}^{C_2}R)/N_{C_2}^{C_4}(P_{s+r-1}^{C_2}R) \ar[u] \ar[d] & P_{2s}^{C_4} R/P_{2s+r-1}^{C_4} R\\
P_{2s+2r-2}^{C_4} R \ar[r] &P_{2s}^{C_4} R \ar[r] &P_{2s}^{C_4}R/P_{2s+2r-2}^{C_4} R \ar[ru] &
\end{tikzcd}$$
The first, fourth, and fifth rows are cofiber sequences.  The third vertical map from the fourth row to the third row is induced by the first two vertical maps.  The third long vertical map from the first row to the fourth row is induced from the first two long vertical maps.  

The composite map from the first row to the fifth row predicts a $d_{2r-1}$-differential in the $C_4$-slice spectral sequence.  The predicted target is $N_{C_2}^{C_4}y$.  Therefore, this class must die on or before the $E_{2r-1}$-page.  If both this class and $a_\sigma N_{C_2}^{C_4} x$ survive to the $E_{2r-1}$-page, then 
$$d_{2r-1}(a_\sigma N_{C_2}^{C_4} x) = N_{C_2}^{C_4}y.$$
\end{proof}

\begin{remark}
The slice spectral sequence is actually a spectral sequence of graded Tambara functors in the sense that the differentials are actually genuine equivariant differentials in the sense of \cite{HillAQ}. We will not need this in what follows, however.
\end{remark}

\subsection{Formal group law formulas} \label{subsec:fglFormulas}
Consider the $C_2$-equivariant map 
$$\BPR \longrightarrow i_{C_2}^* N_{C_2}^{C_4} \BPR = i_{C_2}^* \BPC$$
coming from the norm-restriction adjunction.  Post-composing with the quotient map $\BPC \to \BPtwo$ produces the $C_2$-equivariant map 
$$\BPR \to i_{C_2}^* \BPtwo,$$
which, after taking $\pi_{*\rho_2}^{C_2}(-)$, is a map 
$$\mathbb{Z}_{(2)}[\bar{v}_1, \bar{v}_2, \ldots] \longrightarrow \mathbb{Z}_{(2)}[\rone, \grone, \rthree, \grthree]$$
of polynomial algebras.  Here, the $\bar{v}_i$-generators are the Araki generators. 

Let $S :=  \pi^{C_2}_{*\rho_2} \BPtwo= \mathbb{Z}_{(2)}[\rone, \grone, \rthree, \grthree]$.  By an abuse of notation, let $\bar{v}_i \in S$ denote the image of $\bar{v}_i \in \pi_{(2^i-1)\rho_2}^{C_2} \BPR$ under the map above.  Our next goal is to relate the $\bar{v}_i$-generators to the $\bar{r}_i$-generators. 

Let $\bar{F}$ be the $C_2$-equivariant formal group law corresponding to the map $\pi_{*\rho_2}^{C_2} \BPR \to \pi_{*\rho_2}^{C_2} \BPtwo$.  By definition, its 2-series is 
$${[}2]_{\bar{F}}(\bar{x}) = 2\bar{x} +_{\bar{F}} \bar{v}_1 \bar{x}^2 +_{\bar{F}} \bar{v}_2 \bar{x}^4 +_{\bar{F}} \bar{v}_3 \bar{x}^8 +_{\bar{F}} \bar{v}_4 \bar{x}^{16} + \cdots.$$
Let $\bar{m}_i \in 2^{-1}S$ be the coefficients of the logarithm of $\bar{F}$: 
$$\log_{\bar{F}}(\bar{x}) = \bar{x} + \bar{m}_1 \bar{x}^2 + \bar{m}_2 \bar{x}^4 + \bar{m}_3 \bar{x}^8 + \bar{m}_4 \bar{x}^{16} + \cdots.$$
Taking the logarithm of both sides of the 2-series produces the equation
$$2 \log_{\bar{F}}(\bar{x}) = \log_{\bar{F}}(2 \bar{x}) + \log_{\bar{F}}( \bar{v}_1 \bar{x}^2) + \log_{\bar{F}}(\bar{v}_2 \bar{x}^4) + \log_{\bar{F}}(\bar{v}_3 \bar{x}^8) + \cdots.$$
Expanding both sides of the equation using the power series expansion of the logarithm and comparing coefficients, we obtain the equations
\begin{eqnarray}
2 \bar{m}_1 &=& 4 \bar{m}_1 + \bar{v}_1 \label{eq:miCoefficientsvi} \\
2 \bar{m}_2 &=& 16 \bar{m}_2 + \bar{m}_1 \bar{v}_1^2 + \bar{v}_2 \nonumber \\ 
2 \bar{m}_3 &=& 2^8 \bar{m}_3 + \bar{m}_2 \bar{v}_1^4 + \bar{m}_1 \bar{v}_2^2 + \bar{v}_3 \nonumber\\ 
2 \bar{m}_4 &=& 2^{16} \bar{m}_4 + \bar{m}_3 \bar{v}_1^8 + \bar{m}_2 \bar{v}_2^4 + \bar{m}_1 \bar{v}_3^2 + \bar{v}_4 \nonumber\\
&\vdots& \nonumber
\end{eqnarray}
Rearranging, we obtain the relation
\begin{equation} \label{eqn:v_iequals2m_i}
\bar{v}_i = 2\bar{m}_i \pmod{M_{i}}
\end{equation}
for all $i \geq 1$.  Here, $M_{i}$ is the $S$-submodule of $2^{-1}S$ (regarded as a $S$-module) that is generated by the elements $2$, $\bar{m}_1$, $\bar{m}_2$, $\ldots$, $\bar{m}_{i-1}$.  In other words, an element in $M_{i}$ is of the form 
$$s_0 \cdot 2 + s_1 \cdot \bar{m}_1 + \cdots + s_{i-1} \bar{m}_{i-1}$$
where $s_j \in S$ for all $0 \leq j \leq i-1$. 

\begin{lem}\label{lem:miSameAsvi}
Let $I_{i} \subset S$ denote the ideal $(2, \bar{v}_1, \cdots \bar{v}_{i-1})$.  Then 
$$M_{i} \cap S = I_{i}.$$
\end{lem}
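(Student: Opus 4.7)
The forward containment $I_i \subseteq M_i \cap S$ is straightforward: by equation~\eqref{eqn:v_iequals2m_i}, each $\bar{v}_j$ for $j < i$ lies in $2\bar{m}_j + M_j \subseteq M_i$, and $2 \in M_i$ by definition, so $I_i \subseteq M_i$ and hence $I_i \subseteq M_i \cap S$.

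For the reverse containment I will argue by induction on $i$. The base case $i = 1$ is immediate since $M_1 = 2S = (2) = I_1$.

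For the inductive step, assume $M_{i-1} \cap S = I_{i-1}$, and take $x \in M_i \cap S$. Write
$$x = y + t\,\bar{m}_{i-1}, \qquad y \in M_{i-1},\ t \in S.$$
Equation~\eqref{eq:miCoefficientsvi} at index $i-1$ gives $(2 - 2^{2^{i-1}})\,\bar{m}_{i-1} = \bar{v}_{i-1} + m$, where the sum $m := \sum_{j=1}^{i-2}\bar{m}_j \bar{v}_{i-1-j}^{2^j}$ lies in $M_{i-1}$. Factoring $2 - 2^{2^{i-1}} = 2u$ with $u := 1 - 2^{2^{i-1}-1}$ odd, hence invertible $2$-locally, we obtain the key congruence $2u\,\bar{m}_{i-1} \equiv \bar{v}_{i-1} \pmod{M_{i-1}}$. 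Multiplying $x$ through by $2u$ produces $2ux - t\bar{v}_{i-1} = 2uy + tm \in M_{i-1}$; since the left side lies in $S$, the inductive hypothesis forces $2ux - t\bar{v}_{i-1} \in I_{i-1}$. Reducing modulo $I_{i-1}$ and using $2 \in I_{i-1}$ yields $t\bar{v}_{i-1} \equiv 0 \pmod{I_{i-1}}$. The sequence $\bar{v}_1, \ldots, \bar{v}_{i-1}$ is regular on $S$ (a standard property of the Araki generators for $BP$-type theories at $p = 2$), so $\bar{v}_{i-1}$ is a non-zero-divisor in $S/I_{i-1}$ and we conclude $t \in I_{i-1}$.

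Writing $t = 2 a_0 + \sum_{j=1}^{i-2} a_j \bar{v}_j$ with $a_j \in S$, and substituting back, the term $2a_0\bar{m}_{i-1}$ becomes $a_0(\bar{v}_{i-1} + m) \in I_i + M_{i-1}$. Each term $a_j \bar{v}_j \bar{m}_{i-1}$ can similarly be rewritten, using the congruence $2\bar{m}_{i-1} \equiv \bar{v}_{i-1} \pmod{M_{i-1}}$ together with the analogous lower-index congruences $2\bar{m}_k \equiv \bar{v}_k \pmod{M_k}$, as an element of $I_i + M_{i-1}$. Combining with $y \in M_{i-1}$, we get $x \in (I_i + M_{i-1}) \cap S$. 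Finally, since $I_i \subseteq S$, any element of $(I_i + M_{i-1}) \cap S$ may be written $a + b$ with $a \in I_i \subseteq S$ and $b \in M_{i-1}$ forced into $S$; thus $b \in M_{i-1} \cap S = I_{i-1}$ by the inductive hypothesis, and $x = a + b \in I_i + I_{i-1} = I_i$.

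\emph{Main obstacle.} The principal technical difficulty is the bookkeeping of $2$-adic denominators in the final substitution: each $\bar{m}_j$ lies in $2^{-1}S \setminus S$, so the products $\bar{v}_j \bar{m}_{i-1}$ appearing after substituting for $t$ carry fractional contributions. The recursion~\eqref{eq:miCoefficientsvi} is what allows these denominators to be absorbed iteratively into $I_i$ and $M_{i-1}$; the regularity of the Araki sequence $\bar{v}_1, \bar{v}_2, \ldots$ in $S$ is used crucially at each stage to ensure that the induction closes cleanly rather than leaving behind torsion obstructions.
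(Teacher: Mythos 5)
Your argument is fine up to the conclusion $t\,\bar{v}_{i-1}\in I_{i-1}$, but it has two genuine problems. First, the regularity input is not available and is in fact false in general: the $\bar{v}_j$ here are not the polynomial generators of $\pi_*BP$ but their images in $S=\mathbb{Z}_{(2)}[\bar{r}_1,\gamma\bar{r}_1,\bar{r}_3,\gamma\bar{r}_3]$, a ring of Krull dimension $5$. The quotient $S/(2,\bar{v}_1,\dots,\bar{v}_4)$ is a finite graded $\mathbb{F}_2$-algebra, so every $\bar{v}_j$ with $j\geq 5$ is a zero divisor there, and your key step collapses for $i\geq 6$, while the lemma is stated for all $i$. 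Even in the low range where $(2,\bar{v}_1,\dots,\bar{v}_{i-1})$ is regular, the paper only knows this via the formulas of Theorem~\ref{thm:fglFormulas}, whose proof relies on this very lemma, so invoking it here would at best need an independent verification to avoid circularity.

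Second, and decisively, knowing only $t\in I_{i-1}$ does not close the induction. Your claim that each term $a_j\bar{v}_j\bar{m}_{i-1}$ ($j\geq 1$) lies in $I_i+M_{i-1}$ is false: already $\bar{v}_1\bar{m}_2=(\bar{v}_1^4-2\bar{v}_1\bar{v}_2)/28$ is not in $I_3+M_2$, since writing it as $a+2b+\tfrac{\bar{v}_1}{2}c$ with $a\in I_3$, $b,c\in S$ and clearing denominators would force $\bar{v}_1^4\equiv 0\pmod 2$ in $S$, which fails because $S/2$ is a domain and $\bar{v}_1\not\equiv 0$. Substituting the congruences $2\bar{m}_k\equiv\bar{v}_k\pmod{M_k}$ produces products $\bar{m}_j\bar{m}_{i-1}$ that are not controlled by $M_{i-1}$ (this is exactly the $\bar{v}_1^4/4$-type obstruction above), so the passage from $t\in I_{i-1}$ to $x\in I_i+M_{i-1}$ is a non sequitur, even though your very last reduction from $(I_i+M_{i-1})\cap S$ to $I_i$ would be fine. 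What the induction actually requires is the stronger fact that the coefficient of $\bar{m}_{i-1}$ is divisible by $2$, not merely that it lies in $I_{i-1}$. This is how the paper argues: $\bar{m}_{i-1}$ has denominator exactly $2^{i-1}$ while every other term of an element of $M_i$ has denominator at most $2^{i-2}$, so $x\in S$ forces $s_{i-1}=2s_{i-1}'$; then $s_{i-1}\bar{m}_{i-1}=s_{i-1}'(2\bar{m}_{i-1})\equiv s_{i-1}'\bar{v}_{i-1}\pmod{M_{i-1}}$, and the inductive hypothesis applied to $x-s_{i-1}'\bar{v}_{i-1}$ finishes. That route needs no regularity hypothesis and works for all $i$; to repair your proof you would have to replace the conclusion $t\in I_{i-1}$ by $t\in 2S$, which your multiplication-by-$2u$ trick does not give.
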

\begin{proof}
We will prove the claim by using induction on $i$.  The base case when $i \geq1$ is straight forward: an element in $M_1$ is of the form $s_0 \cdot 2$, where $s_0 \in S$.  Therefore $M_1 \cap S = (2) = I_1$.  

Now, suppose that $M_{i-1} \cap S = I_{i-1}$.  Furthermore, suppose that the element 
$$m = s_0 \cdot 2 + s_1 \cdot \bar{m}_1 + \cdots + s_{i-1} \bar{m}_{i-1} \in M_i$$
is also in $S$.  From the equations in (\ref{eq:miCoefficientsvi}), it is straightforward to see that $\bar{m}_k$ has denominator exactly $2^k$ for all $k \geq 1$.  In the expression for $m$, only the last term $s_{i-1} \bar{m}_{i-1}$ has denominator $2^{i-1}$.  All the other terms have denominators at most $2^{i-2}$.  Since $m \in S$, $s_{i-1}$ must be divisible by 2.  In other words, $s_{i-1} = 2s_{i-1}'$ for some $s_{i-1}' \in S$.  Using equation~(\ref{eqn:v_iequals2m_i}), $m$ can be rewritten as 
\begin{eqnarray*}
m &=& s_0 \cdot 2 + s_1 \cdot \bar{m}_1 + \cdots + s_{i-2}\bar{m}_{i-2}+ 2s_{i-1}' \bar{m}_{i-1} \\ 
&=& s_0 \cdot 2 + s_1 \cdot \bar{m}_1 + \cdots + s_{i-2}\bar{m}_{i-2}+ s_{i-1}' (2\bar{m}_{i-1}) \\ 
&\in& s_0 \cdot 2 + s_1 \cdot \bar{m}_1 + \cdots + s_{i-2}\bar{m}_{i-2} + s_{i-1}' (\bar{v}_{i-1} + M_{i-1}) \\
&\in& M_{i-1} + s_{i-1}' (\bar{v}_{i-1} + M_{i-1}) \\
&=& M_{i-1} + s_{i-1}' \bar{v}_{i-1}.
\end{eqnarray*}
Therefore, $m = x + s_{i-1}' v_{i-1}$ for some $x \in M_{i-1}$.  Since $m \in S$ and $s_{i-1}' v_{i-1} \in S$, $x \in S$ as well.  The induction hypothesis now implies that $x \in I_{i-1}$.  It follows from this that $m \in I_i$, as desired. 
\end{proof}

\begin{thm}\label{thm:fglFormulas}
We have the following relations: 
\begin{eqnarray*}
\bar{v}_1 &=& \bar{r}_1 + \gamma \bar{r}_1 \pmod{2}, \\ 
\bar{v}_2 &=& \bar{r}_1^3 + \bar{r}_3 + \gamma \bar{r}_3 \pmod{2, \bar{v}_1}, \\
\bar{v}_3 &=& \bar{r}_1 (\bar{r}_3^2 + \bar{r}_3 (\gamma \bar{r}_3) + (\gamma \bar{r}_3)^2) \pmod{2, \bar{v}_1, \bar{v}_2}, \\ 
\bar{v}_4 &=& \bar{r}_3^4 (\gamma \bar{r}_3) \pmod{2, \bar{v}_1, \bar{v}_2, \bar{v}_3}.
\end{eqnarray*}
\end{thm}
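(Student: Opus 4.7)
The plan is to combine two ingredients. The first, already established by Lemma~\ref{lem:miSameAsvi} and equation~(\ref{eqn:v_iequals2m_i}), is the congruence $\bar{v}_k \equiv 2\bar{m}_k \pmod{I_k}$, where $I_k = (2, \bar{v}_1, \ldots, \bar{v}_{k-1}) \subset S$. The second is an explicit polynomial expression for $2\bar{m}_k$ in the generators $\bar{r}_1, \gamma\bar{r}_1, \bar{r}_3, \gamma\bar{r}_3$ of $S$, extracted from the $C_4$-equivariant formal group law structure on $\BPtwo$.

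To produce the second ingredient, I would use the strict isomorphism
$$f(x) = x + \bar{r}_1 x^2 + \bar{r}_3 x^4 \in S[[x]]$$
supplied by the $C_4$-structure (with all higher $\bar{r}_{2^k-1}$ killed by the truncation from $\BPC$ to $\BPtwo$), together with its Weyl conjugate. Applying the logarithm of $\bar{F}$ to the compatibility relating $f$, $\gamma^* f$, and the formal inversion produces a power series identity whose $x^{2^k}$-coefficient is a polynomial relation in the $\bar{r}_i$, the $\bar{m}_j$ for $j \leq k$, and their $\gamma$-conjugates. Symmetrizing by applying $\gamma$ and summing, and using that $\gamma^2$ acts on $\pi^{C_2}_{i\rho_2} \otimes \mathbb{Q}$ by the sign $(-1)^i$ (coming from the $C_2$-action on the representation sphere $S^{i\rho_2}$), produces an expression of the shape
$$2\bar{m}_k = P_k\bigl(\bar{r}_1, \gamma\bar{r}_1, \bar{r}_3, \gamma\bar{r}_3;\ \bar{m}_1, \ldots, \bar{m}_{k-1}\bigr).$$
Reducing modulo $I_k$ using the previously established identities for $\bar{v}_1, \ldots, \bar{v}_{k-1}$ collapses $P_k$ dramatically; the main simplifying tool is the Frobenius identity $f(x)^{2^j} \equiv x^{2^j} + \bar{r}_1^{2^j} x^{2^{j+1}} + \bar{r}_3^{2^j} x^{2^{j+2}} \pmod{2}$, which wipes out nearly every cross term.

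Carrying out this scheme for $k = 1, 2, 3, 4$ in sequence yields the four identities. At each step one substitutes the congruences extracted from the earlier lines of the theorem, for instance $\gamma\bar{r}_1 \equiv \bar{r}_1 \pmod{2, \bar{v}_1}$ and $\gamma\bar{r}_3 \equiv \bar{r}_1^3 + \bar{r}_3 \pmod{2, \bar{v}_1, \bar{v}_2}$, both of which follow formally from the assertion that $\bar{v}_1$ and $\bar{v}_2$ lie in the respective ideals. The $k = 4$ case is especially clean: after Frobenius simplification, the only surviving contribution to the coefficient of $x^{16}$ is $\gamma\bar{r}_3 \cdot \bar{r}_3^4$, yielding $\bar{v}_4 \equiv \bar{r}_3^4(\gamma\bar{r}_3) \pmod{2, \bar{v}_1, \bar{v}_2, \bar{v}_3}$. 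The main obstacle lies in the setup rather than in the computation itself: one has to carefully track the direction of the isomorphism $f$, the distinction between the Weyl action of $\gamma$ and the sign-twist action of $\gamma^2$ on each $\bar{m}_i$, and the fact that $\bar{m}_i$ lives in $2^{-1}S$ whereas $2\bar{m}_k$ must land in $S$. Once these sign conventions are pinned down, all four identities follow from a single calculational template.
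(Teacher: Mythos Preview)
Your proposal is correct and follows essentially the same approach as the paper: both extract relations among the $\bar{m}_k$, $\gamma\bar{m}_k$, and the $\bar{r}_i$ by comparing coefficients in the logarithm identity coming from the strict isomorphism $f\colon \bar{F}\to \bar{F}^\gamma$, then apply $\gamma$ and add to obtain $2\bar{m}_k$ modulo lower $\bar{m}_j$'s, invoking Lemma~\ref{lem:miSameAsvi} to pass from $M_k$ to $I_k$. The paper carries out the four cases explicitly rather than packaging them via the Frobenius trick you describe, but the underlying mechanism is identical.
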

\begin{proof}
To obtain the formulas in the statement of the theorem, we need to establish relations between the generators $\{\bar{r}_1, \gamma \bar{r}_1, \bar{r}_3, \gamma \bar{r}_3\}$ and the $\bar{m}_i$-generators.  The $\bar{r}_i$ generators, by definition, are the coefficients of the strict isomorphism from $\bar{F}$ to $\bar{F}^\gamma$ (see \cite[Section 5]{HHR}):
$$\begin{tikzcd}
&\bar{F}^{\text{add}}& \\ 
\bar{F} \ar[ru, "\log_{\bar{F}}"] \ar[rr, swap, "\bar{x} +_{\bar{F}^\gamma} \bar{r}_1 \bar{x}^2 +_{\bar{F}^\gamma} \bar{r}_3 \bar{x}^4" ]& & \bar{F}^\gamma \ar[lu, swap, "\log_{\bar{F}^\gamma}"]
\end{tikzcd}$$
Here, $\log_{\bar{F}^\gamma}$ is the logarithm for the formal group law $\bar{F}^\gamma$, and its power-series expansion is 
$$\log_{\bar{F}^\gamma}(x) = \bar{x} + (\gamma \bar{m}_1) \bar{x}^2 + (\gamma\bar{m}_2 )\bar{x}^4 + (\gamma\bar{m}_3) \bar{x}^8 + (\gamma \bar{m}_4) \bar{x}^{16} + \cdots.$$

The commutativity of the diagram implies that 
\begin{eqnarray*}\label{eqn:log}
\log_{\bar{F}}(\bar{x}) &=& \log_{\bar{F}^\gamma}(\bar{x} +_{\bar{F}^\gamma} \bar{r}_1 \bar{x}^2 +_{\bar{F}^\gamma} \bar{r}_3 \bar{x}^4) \\ 
&=& \log_{\bar{F}^\gamma}(\bar{x}) + \log_{\bar{F}^\gamma}(\bar{r}_1 \bar{x}^2) + \log_{\bar{F}^\gamma}(\bar{r}_3 \bar{x}^4)
\end{eqnarray*}
Expanding both sides according to the logarithm formulas, we get 
\begin{eqnarray*}
\bar{x}+\bar{m}_1 \bar{x}^2 + \bar{m}_2 \bar{x}^4 + \bar{m}_3 \bar{x}^8 + \bar{m}_4 \bar{x}^{16} + \cdots &=& \bar{x} + (\gamma \bar{m}_1 + \bar{r}_1) \bar{x}^2 + (\gamma \bar{m}_2 + (\gamma \bar{m}_1) \bar{r}_1^2 + \bar{r}_3) \bar{x}^4 \\ 
&+& (\gamma \bar{m}_3 + (\gamma \bar{m}_2) \bar{r}_1^4 + (\gamma \bar{m}_1)\bar{r}_3^2) \bar{x}^8 \\ 
&+& (\gamma \bar{m}_4 + (\gamma \bar{m}_3) \bar{r}_1^8 + (\gamma \bar{m}_2) \bar{r}_3^4 )\bar{x}^{16} + \cdots
\end{eqnarray*}
Comparing coefficients, we obtain the relations
\begin{eqnarray*}
\bar{m}_1 - \gamma \bar{m}_1 &=& \bar{r}_1 \\ 
\bar{m}_2 - \gamma \bar{m}_2 &=& (\gamma \bar{m}_1)\bar{r}_1^2 + \bar{r}_3 \\ 
\bar{m}_3 - \gamma \bar{m}_3 &=& (\gamma \bar{m}_2) \bar{r}_1^4 + (\gamma \bar{m}_1) \bar{r}_3^2 \\ 
\bar{m}_4 - \gamma \bar{m}_4 &=& (\gamma \bar{m}_3) \bar{r}_1^8 + (\gamma \bar{m}_2) \bar{r}_3^4 
\end{eqnarray*}
We can also apply $\gamma$ to the relations above to obtain more relations
\begin{eqnarray*}
\gamma \bar{m}_1 + \bar{m}_1 &=& \gamma \bar{r}_1 \\ 
\gamma \bar{m}_2 + \bar{m}_2 &=& -\bar{m}_1(\gamma \bar{r}_1)^2 + \gamma \bar{r}_3 \\ 
\gamma \bar{m}_3 + \bar{m}_3 &=& -\bar{m}_2 (\gamma \bar{r}_1)^4 - \bar{m}_1 (\gamma \bar{r}_3)^2 \\
\gamma \bar{m}_4 + \bar{m}_4 &=& - \bar{m}_3 (\gamma \bar{r}_1)^8 - \bar{m}_2 (\gamma \bar{r}_3)^4.  
\end{eqnarray*}

These relations together produce the following formulas: 
\begin{eqnarray*} 
\bar{v}_1 &=& 2 \bar{m}_1 \pmod{M_1}  \\
&=& \bar{r}_1 + \gamma \bar{r}_1 \pmod{M_1}; \\ 
\bar{v}_2 &=& 2 \bar{m}_2 \pmod{M_2} \\ 
&=& (\gamma \bar{m}_1) \bar{r}_1^2 + \bar{r}_3 + \gamma \bar{r}_3 \pmod{M_2}\\ 
&=& (\gamma \bar{m}_1 - \bar{m}_1) \bar{r}_1^2 + \bar{r}_3 + \gamma \bar{r}_3 \pmod{M_2}\\ 
&=& -\bar{r}_1^3 + \bar{r}_3 + \gamma \bar{r}_3 \pmod{M_2}\\ 
&=& \bar{r}_1^3 + \bar{r}_3 + \gamma \bar{r}_3 \pmod{M_2}; \\ 
\bar{v}_3&=& 2 \bar{m}_3 \pmod{M_3} \\ 
&=& (\gamma \bar{m}_2) \bar{r}_1^4 + (\gamma \bar{m}_1) \bar{r}_3^2 -\bar{m}_2 (\gamma \bar{r}_1)^4 - \bar{m}_1 (\gamma \bar{r}_3)^2  \pmod{M_3} \\
&=& (\gamma \bar{m}_2) \bar{r}_1^4 + (\gamma \bar{m}_1) \bar{r}_3^2 \pmod{M_3} \\ 
&=& (\gamma \bar{m}_2 + \bar{m}_2 ) \bar{r}_1^4 + (\gamma \bar{m}_1 + \bar{m}_1)\bar{r}_3^2  \pmod{M_3}\\ 
&=& (\gamma \bar{r}_3) \bar{r}_1^4 + (\gamma \bar{r}_1)\bar{r}_3^2 \pmod{M_3}\\ 
&=& \bar{r}_1 \left((\gamma \bar{r}_3) (\bar{r}_3 + \gamma \bar{r}_3) + \bar{r}_3^2\right) \pmod{2M_3}\\ 
&=& \bar{r}_1(\bar{r}_3^2 + \bar{r}_3 (\gamma \bar{r}_3) + (\gamma \bar{r}_3)^2) \pmod{M_3}; \\
\bar{v}_4 &=& 2\bar{m}_4 \pmod{M_4} \\ 
&=& (\gamma \bar{m}_3) \bar{r}_1^8 + (\gamma \bar{m}_2) \bar{r}_3^4 - \bar{m}_3 (\gamma \bar{r}_1)^8 - \bar{m}_2 (\gamma \bar{r}_3)^4  \pmod{M_4} \\ 
&=& (\gamma \bar{m}_3) \bar{r}_1^8 + (\gamma \bar{m}_2) \bar{r}_3^4  \pmod{M_4} \\ 
&=& (\gamma \bar{m}_3 + \bar{m}_3) \bar{r}_1^8 + (\gamma \bar{m}_2 + \bar{m}_2) \bar{r}_3^4 \pmod{M_4} \\ 
&=& (\gamma \bar{r}_3) \bar{r}_3^4  \pmod{M_4}. 
\end{eqnarray*}
These formulas, combined with Lemma~\ref{lem:miSameAsvi}, give the desired formulas. 
\end{proof}

\subsection{Lubin--Tate Theories}
We will now study the relationship between $\BPtwo$ and Lubin--Tate theories.  By analyzing the $C_4$-equivariant formal group law associated to $\BPtwo$, the following theorem shows that $\BPtwo$ provides a model for a height-4 Lubin--Tate theory with a $C_4$-action coming from the automorphisms of its formal group law. 

\begin{thm}[Theorem~\ref{thm:MainTheorem1}]\label{thm:BPC4-2isE4hC12} \hfill
\begin{enumerate}
\item There exists a height-4 Lubin--Tate theory $E_4$ with coefficient ring 
\begin{eqnarray*}
\pi_*E_4 &\cong& W(\mathbb{F}_{2^4})[C_4 \cdot r_1, C_4 \cdot u][C_4 \cdot u^{-1}]^{\wedge}_{\mathfrak{m}},
\end{eqnarray*}
where $|r_1| = |u| = 2$ and $\mathfrak{m} = (C_4 \cdot r_1, C_4 \cdot (u - \gamma u))$.  Furthermore, there is a subgroup ${G = C_4 \times \left(\Gal(\mathbb{F}_{2^4}/\mathbb{F}_2) \ltimes C_3\right)}$ inside the Morava stabilizer group $\mathbb{G}_4$ such that the isomorphism above is a $G$-equivariant isomorphism. 
\item There is a $C_4$-equivariant homotopy commutative ring map 
\[\phi: \BPC \longrightarrow E_4\]
such that $\pi_*^e \phi: \pi_*^e \BPC \to \pi_* E_4$ is the map determined by sending
\[r_{2^i -1} \longmapsto \left\{\begin{array}{ll} r_1 & i = 1, \\ 
u^3 & i = 2, \\ 
0 & i \geq 3.  \end{array}\right. \]
\item After inverting the element 
\[D_2:= N(\bar{v}_4)N(\rthree)N(\bar{r}_3^2 + \bar{r}_3 (\gamma \bar{r}_3) + (\gamma \bar{r}_3)^2) \in \pi_{24\rho_4}^{C_4} \BPC,\]
there is a factorization 
\[\begin{tikzcd}
\BPC \ar[r] \ar[d] & E_4 \\ 
D_2^{-1}\BPC \ar[ru, dashed]&
\end{tikzcd}\]
of the $C_4$-equivariant orientation through $D_2^{-1}\BPC$.  
\end{enumerate}
\end{thm}

\begin{remark}\rm
Since the appearance of the first draft of this paper, Beaudry, the first author, the second author and Zeng proved a general formula for the images of the $v_i$-generators in $\pi_*^e BP^{(\!(n)\!)} \langle m \rangle$ \cite[Theorem~1.1]{BHSZ}.  This is a generalization of Theorem~\ref{thm:fglFormulas}.  As a result, they generalized Theorem~\ref{thm:BPC4-2isE4hC12} and proved that the formal group law associated to $\BP^{(\!(C_{2^n})\!)}\langle m \rangle$ gives a model of a height $h =2^{n-1}m$ Lubin--Tate theory with a $C_{2^n}$-action coming from the automorphism of its formal group law. 
\end{remark}

\begin{proof}[Proof of Theorem~\ref{thm:BPC4-2isE4hC12}]
We will prove (1) following the framework developed in the proof of \cite[Theorem~1.5]{BHSZ} (of which (1) is a special case of) and refer the readers to the corresponding theorems in \cite{BHSZ} for the proofs of (2) and (3).  

Recall that the underlying homotopy group of $\BPtwo$ is 
\[\pi_*^e \BPtwo = \mathbb{Z}_{(2)}[C_4 \cdot r_1, C_4 \cdot r_3].\]  
The composite map 
\[\begin{tikzcd}
BP_* \ar[r, "\pi_*\eta_L"]& \pi_*^e \BPC \ar[r] & \pi_*^e \BPtwo
\end{tikzcd}\]
defines a formal group law on $\pi_*^e \BPtwo$ which we will denote by $\mathcal{F}$.  

Define $R$ to be the ring 
\[R := W(\mathbb{F}_{2^4})[C_4 \cdot r_1, C_4 \cdot u][C_4 \cdot u^{-1}]^\wedge_{\mathfrak{m}},\]
where $|r_1| = |u| = 2$ and $\mathfrak{m} = (C_4 \cdot r_1, C_4 \cdot (u - \gamma u))$.  Note that $R$ is the completion of an extension of $\pi_* \BPtwo$, and there is a map $\pi_*\BPtwo \to R$ which sends $r_1 \mapsto r_1$ and $r_3 \mapsto u^3$.  The composite map 
\[BP_* \longrightarrow \pi_*^e \BPtwo \longrightarrow R\]
defines a formal group law over $R$ which we will continue to denote by $\mathcal{F}$.  

The formulas established in Theorem~\ref{thm:fglFormulas} imply that 
\begin{itemize}
\item $(2, v_1, v_2, \ldots)$ forms a regular sequence in $R$.
\item In the ring $R$, the ideal $I_4 = (2, v_1, v_2, v_3)$ is equal to the maximal ideal $\mathfrak{m}$. 
\item $R/I_4 = R/\mathfrak{m} \cong K := \mathbb{F}_{2^4}[u^\pm]$. 
\item The formal group law $\Gamma_4 := p^*\mathcal{F}$, where $p: R \to R/I_4 = K$ is the quotient map, has height exactly 4 over $K$. 
\end{itemize}
All together, these facts imply that $(R, \mathcal{F})$ is a universal deformation of $(K, \Gamma_4)$.  

The action of $C_4$ on $R$ is defined by 
\begin{eqnarray*}
f_\gamma(x) &=& \gamma x, \\
f_\gamma(\gamma x) &=& -x 
\end{eqnarray*}
for $x = r_1, u$.  The group $\Gal(\mathbb{F}_{2^4}/\mathbb{F}_2)$ acts on $R$ via its action on the coefficients $W(\mathbb{F}_{2^4})$.  The group $C_3$ acts on $R$ by 
\begin{eqnarray*}
f_{\zeta}(u) &=& \zeta^{-1}u, \\ 
f_{\zeta}(r_1) &=& r_1 
\end{eqnarray*}
for every $\zeta \in C_3$.  All together, these three actions combine to give an action of $G$ on $R$.  Applying the Landweber exact functor theorem and the Goerss--Hopkins--Miller theorem finishes the proof of (1).  

Statement (2) follows from \cite[Theorem~1.7]{BHSZ}.  Statement (3) follows from \cite[Theorem~1.8]{BHSZ}. 
\end{proof}

\begin{remark}\label{rem:psiComputingHFPSS}\rm
It can be shown \cite[Proposition~5.4]{BHSZ} that the ${(\Gal(\mathbb{F}_{2^4}/\mathbb{F}_2) \ltimes C_3)}$-homotopy fixed point spectral sequence (HFPSS) for $E_4$ collapses, and there is an isomorphism 
\[\pi_* E_4^{h\Gal(\mathbb{F}_{2^4}/\mathbb{F}_2) \ltimes C_3} \cong \mathbb{Z}_2[C_4 \cdot r_1, C_4 \cdot r_3][C_4 \cdot r_3^{-1}]^\wedge_{\mathfrak{m}'}, \]
where $\mathfrak{m}' = (C_4 \cdot r_1, C_4 \cdot (r_3 - \gamma r_3))$.  Furthermore, there is a $C_4$-equivariant homotopy commutative ring map (see \cite[Theorem~5.5]{BHSZ})
\[\psi: \BPC \longrightarrow E_4^{h\Gal(\mathbb{F}_{2^4}/\mathbb{F}_2) \ltimes C_3}\]
such that $\pi_*^e \psi: \pi_*^e \BPC \to \pi_*^e E_4^{h\Gal(\mathbb{F}_{2^4}/\mathbb{F}_2) \ltimes C_3}$ is the map determined by 
\[r_{2^i -1} \longmapsto \left\{\begin{array}{ll} r_{2^i-1} & i = 1,2, \\ 
0 & i \geq 3.  \end{array}\right. \]
Therefore, $E_4^{h \,C_4 \times (\Gal(\mathbb{F}_{2^4}/\mathbb{F}_2) \ltimes C_3)}$ can be computed by using the diagram 
\[\begin{tikzcd}
\SliceSS(\BPC) \ar[r] \ar[d] & \HFPSS(\BPC) \ar[r] & \HFPSS(E_4^{h\Gal(\mathbb{F}_{2^4}/\mathbb{F}_2) \ltimes C_3}) \\
\SliceSS(\BPtwo)
\end{tikzcd}\]
In the diagram above, $\SliceSS(\BPtwo)$ has the advantage that it contains all the information about the differentials in $\HFPSS(E_4^{h\Gal(\mathbb{F}_{2^4}/\mathbb{F}_2) \ltimes C_3})$ while having all of its classes being finite ($\mathbb{Z}/2$, $\mathbb{Z}/4$, and $\mathbb{Z}$) instead of power series. 
\end{remark}

We will now prove Theorem~\ref{thm:MainTheorem3}.

\begin{thm}\label{thm:BPtwo384}
The spectrum $D_2^{-1}\BPtwo$ is $384$-periodic. 
\end{thm}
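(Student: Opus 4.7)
The plan is to deduce the $384$-periodicity by combining the three $RO(C_4)$-graded periodicities of Theorem~\ref{thm:MainTheorem3} and showing that their product lives in an integer degree. Concretely, I would exhibit three invertible classes in $\pi_\bigstar^{C_4}\Psi$: one in degree $3\rho_4$, one in degree $24 - 24\sigma$, and one in degree $32 + 32\sigma - 32\lambda$. The first is essentially automatic: the class $N(\bar{r}_3) \in \pi_{3\rho_4}^{C_4}\BPtwo$ is literally a factor of $D_2$, so after inverting $D_2 = N(\bar{v}_4) \cdot N(\bar{r}_3) \cdot N(\bar{r}_3^2 + \bar{r}_3(\gamma \bar{r}_3) + (\gamma \bar{r}_3)^2)$, the class $N(\bar{r}_3)$ has inverse $N(\bar{v}_4) \cdot N(\bar{r}_3^2 + \bar{r}_3(\gamma \bar{r}_3) + (\gamma \bar{r}_3)^2) \cdot D_2^{-1}$. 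The latter two periodicities are represented by the specific monomials $u_{2\sigma}^{12}$ and $u_\lambda^{32} u_{2\sigma}^{-16}$, which have the stated $RO(C_4)$-degrees; the point is that in the slice spectral sequence these monomials only support differentials whose targets contain a factor of $D_2$, and hence become permanent cycles in $\SliceSS(\Psi)$.

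With the three invertible classes in place, the claim reduces to a short representation-theoretic calculation. We seek integers $a, b, c$ such that
\[
a \cdot 3\rho_4 + b(24 - 24\sigma) + c(32 + 32\sigma - 32\lambda) = n \cdot 1
\]
in $RO(C_4)$. Expanding $\rho_4 = 1 + \sigma + \lambda$ and setting the coefficients of $\sigma$ and $\lambda$ to zero gives
\[
3a - 24b + 32c = 0, \qquad 3a - 32c = 0,
\]
whose smallest positive solution is $(a,b,c) = (32, 8, 3)$. The coefficient of the trivial summand is then $3\cdot 32 + 24\cdot 8 + 32\cdot 3 = 96 + 192 + 96 = 384$. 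Consequently, the product
\[
N(\bar{r}_3)^{32} \cdot \bigl(u_{2\sigma}^{12}\bigr)^{8} \cdot \bigl(u_\lambda^{32} u_{2\sigma}^{-16}\bigr)^{3} \;=\; N(\bar{r}_3)^{32} \cdot u_\lambda^{96} \cdot u_{2\sigma}^{48} \;\in\; \pi_{384}^{C_4}\Psi
\]
is a unit, and multiplication by it provides the equivalence $S^{384} \wedge \Psi \simeq \Psi$ in the category of $C_4$-spectra.

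The main obstacle is the second step of the first paragraph: showing that the chosen powers $u_{2\sigma}^{12}$ and $u_\lambda^{32} u_{2\sigma}^{-16}$ genuinely survive as permanent cycles in $\SliceSS(\Psi)$. In the unlocalized $\SliceSS(\BPtwo)$ both classes support nontrivial differentials, many of which arise via the norm formula of Theorem~\ref{thm:NormFormula} from $C_2$-equivariant differentials hitting combinations of $\bar{r}_1, \gamma \bar{r}_1, \bar{r}_3, \gamma \bar{r}_3$. One would need to verify, using the complete differential analysis of the paper's later sections, that each such potential differential target contains a power of one of the three factors of $D_2$, so that after inverting $D_2$ the target becomes zero and the corresponding monomial survives. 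Once this vanishing check is performed, the monomials promote from slice-filtration cycles to honest invertible elements in $\pi_\bigstar^{C_4}\Psi$, and the combinatorial computation above closes the proof.
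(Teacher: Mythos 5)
Your overall strategy is the same as the paper's: the same three $RO(C_4)$-graded periodicities, combined with the same exponents $(a,b,c)=(32,8,3)$ and the same arithmetic yielding $384$, and your treatment of the degree-$3\rho_4$ class $N(\bar{r}_3)$ (invertible because it is a factor of $D_2$) is exactly the paper's. The problem is with the other two periodicities, and it is twofold. First, the verification you defer as "the main obstacle" is aimed at the wrong target: your premise that $u_{2\sigma}^{12}$ and $u_{\lambda}^{32}u_{2\sigma}^{-16}$ support nontrivial differentials in the unlocalized $\SliceSS(\BPtwo)$ is not correct. By the Slice Differentials Theorem the only possible differential on $u_{8\sigma}$ has target a multiple of $N_{C_2}^{C_4}(\bar{r}_7)$, which vanishes in the truncation $\BPtwo$, so $u_{8\sigma}$, and hence $u_{24\sigma}=u_{2\sigma}^{12}$, is a permanent cycle before any localization; likewise the paper realizes the class in degree $32+32\sigma-32\lambda$ as $N(u_{32\sigma_2})=u_{32\lambda}/u_{32\sigma}$, the norm of a class that is already a permanent cycle in $C_2$-$\SliceSS(i_{C_2}^*\BPtwo)$. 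So no $D_2$-divisibility check of differential targets is needed, and carrying one out would not be the substance of the proof.

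Second, and more seriously, the step you treat as automatic — that permanent cycles "promote \ldots to honest invertible elements in $\pi_\bigstar^{C_4}\Psi$" — is precisely the nontrivial content, and it is missing. A filtration-zero permanent cycle only represents a homotopy class, not a unit: $u_{8\sigma}$ is a permanent cycle in the unlocalized $\SliceSS(\BPtwo)$, yet $\BPtwo$ is certainly not $(8-8\sigma)$-periodic. The inversion of $D_2$ has to enter not by killing differential targets but through the mechanism of \cite[Section~9]{HHR}, which is what the paper's proof cites: for suitable localizations of $\MUG$-modules in which the relevant norm classes are inverted, that machinery converts the permanence of these $u$-classes into genuine equivalences $S^{24-24\sigma}\wedge\Psi\simeq\Psi$ and $S^{32+32\sigma-32\lambda}\wedge\Psi\simeq\Psi$. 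Without invoking that input, or supplying a replacement argument for why the represented classes become units in $\pi_\bigstar^{C_4}\Psi$ after inverting $D_2$, your proof does not close; with it, your argument collapses onto the paper's.
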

\begin{proof}
This is a direct consequence of the discussion in \cite[Section~9]{HHR}.  There are three periodicities for $D_2^{-1}\BPtwo$: 
\begin{enumerate}
\item $\displaystyle S^{3\rho_4} \wedge D_2^{-1}\BPtwo \simeq D_2^{-1}\BPtwo$. 
\item $\displaystyle S^{8-8\sigma} \wedge D_2^{-1}\BPtwo \simeq D_2^{-1}\BPtwo$. 
\item $S^{32 + 32\sigma - 32\lambda} \wedge D_2^{-1}\BPtwo \simeq D_2^{-1}\BPtwo$. 
\end{enumerate}
The first periodicity is induced from $N(\bar{r}_3)$, which has been inverted.  For the second periodicity, the Slice Differentials Theorem of Hill--Hopkins--Ravenel \cite[Theorem~9.9]{HHR} shows that there are differentials 
$$d_{2^{i+2}-3}(u_{2\sigma}^{2^{k-1}}) = N(\bar{r}_{2^{k}-1})a_\sigma^{2^{k+1}-1}, i \geq 1$$
in the slice spectral sequence of $\BPC$.  These differentials will produce all the differentials in the region of the slice spectral sequence where there are only contributions from the regular slice cells.  Since we have inverted $N(\bar{r}_3) \in D_2$, the arguments in \cite[Theorem~9.16]{HHR} show that the classes $u_{2\sigma}^{2^{k-1}}$ for $k \geq 3$ are all permanent cycles in the slice spectral sequence of $D_2^{-1}\BPC$ (their predicted targets have all been killed by $d_{13}$-differentials).  In particular, when $k = 3$, the class $u_{8\sigma}$ is a permanent cycle.  This induces a $(8-8\sigma)$-periodicity in $D_2^{-1}\BPC$ and therefore in $D_2^{-1}\BPtwo$.  

For the third periodicity, note that since $N(\bar{v}_4)$ has been inverted, Theorem~9.16 in \cite{HHR} shows that the class $u_{32\sigma_2}$ is a permanent cycle in the $C_2$-slice spectral sequence of $D_2^{-1}\BPtwo$.  Therefore, the norm 
$$N(u_{32\sigma_2}) = \frac{u_{32\lambda}}{u_{32\sigma}}$$
is a permanent cycle in the $C_4$-slice spectral sequence (for the norm formula, see \cite[Lemma~4.9]{HHRKH}).  Here, note that the class $u_{32\sigma}$ is not invertible, so we cannot divide by it in general.  The formula above is a notational convention to reflect the fact that $N(u_{32\sigma_2})u_{32\sigma} = u_{32\lambda}$.  Combining these three periodicities produces the desired 384-periodicity: 
\begin{eqnarray*}
&&32\cdot (3 \rho_4) + 24 \cdot (8- 8\sigma) + 3 \cdot (32 + 32 \sigma - 32\lambda) \\
&=& 32\cdot (3 + 3\sigma + 3\lambda) + 24 \cdot (8- 8\sigma) + 3 \cdot (32 + 32 \sigma - 32\lambda) \\
&=& 384. 
\end{eqnarray*}
\end{proof}

\begin{remark}\rm
After computing the $E_\infty$-page of $C_4$-$\SliceSS(\BPtwo)$ completely (see Figure~\ref{fig:E4C4EinftyPage}), we learn that 384 is the minimal periodicity for $D_2^{-1}\BPtwo$.  In particular, it is not 128 or 192 periodic.
\end{remark}

The proof of Theorem~\ref{thm:BPtwo384} also shows that the elements $N(\rthree)$, $u_{8\sigma}$, and $\frac{u_{32\lambda}}{u_{32\sigma}}$ are permanent cycles in the the slice spectral sequence of $D_2^{-1}\BPC$.  The map 
\[\psi:\BPC \longrightarrow E_4^{h\Gal(\mathbb{F}_{2^4}/\mathbb{F}_2) \ltimes C_3}\]
in Remark~\ref{rem:psiComputingHFPSS} factors through $D_2^{-1}\BPC$ (see \cite[Remark~6.1]{BHSZ}), and the maps of spectral sequences 
\[\SliceSS(D_2^{-1}\BPC) \longrightarrow \HFPSS(D_2^{-1}\BPC) \longrightarrow \HFPSS(E_4^{h\Gal(\mathbb{F}_{2^4}/\mathbb{F}_2) \ltimes C_3})\]
imply that the elements $N(\rthree)$, $u_{8\sigma}$, and $\frac{u_{32\lambda}}{u_{32\sigma}}$ are permanent cycles in the homotopy fixed point spectral sequence of $E_4^{h\Gal(\mathbb{F}_{2^4}/\mathbb{F}_2) \ltimes C_3}$.  Therefore, we have $(3\rho_4)$, ${(8-8\sigma)}$, and ${(32+32\sigma - 32\lambda)}$-periodicities.  These combine to give a 384-periodicity for $E_4^{h\, C_4 \times (\Gal(\mathbb{F}_{2^4}/\mathbb{F}_2) \ltimes C_3)}$.  In particular, the spectrum $E_4^{hC_{12}}$ is 384-periodic.  This finishes the proof of Theorem~\ref{thm:MainTheorem3}.

\begin{remark}\rm
The careful reader may worry about the choices present in the construction of \(\BPtwo\) or the more general quotients of \(\BPG\). The terse answer is that the slice spectral sequence only cares about the indecomposables in the underlying homotopy ring of \(\BPG\), not the particular lifts. As an example of this, consider the class \(\bar{r}_{3}\) for \(C_{2^{n}}\). This is only well-defined modulo the ideal generated by 2, \(\bar{r}_{1}\), and its conjugates. Consider now the differential on the class \(u_{2\sigma}^{2}\) coming from \cite[Theorem~9.9]{HHR}:
\[
u_{2\sigma}^{2}\mapsto N_{C_{2}}^{C_{2^{n}}}(\bar{r}_{3}) a_{\sigma}^{4}a_{3\bar{\rho}}.
\]
Since multiplication by \(a_{\sigma}\) annihilates the transfer, the norm is additive after being multiplied by \(a_{\sigma}\). Moreover, the norm of \(2\) is killed by \(a_{\sigma}\) and the norm of \(\bar{r}_{1}\) is killed by \(a_{\sigma}^{3}\), so any possible indeterminacy in the definition of \(\bar{r}_{3}\) results in the exact same differentials.  Our computation applies to any form of $\BPtwo$. 
\end{remark}

\section{The slice spectral sequence of $\BPC\langle 1 \rangle$} \label{sec:SliceSSBPone}
The $C_4$-equivariant refinement of $\BPone$ is 
$$S^0[\bar{r}_1, \gamma \bar{r}_1] \longrightarrow \BPone.$$
(See \cite[Section~5.3]{HHR} for the definition of a refinement.)  The proofs of the slice theorem and the reduction theorem in \cite{HHR} apply to $\BPone$ as well, from which we deduce its slices:
$$\left\{\begin{array}{ll}
\bar{r}_1^i \gamma \bar{r}_1^i: S^{i\rho_4} \wedge \HZ, & i \geq 0 \, \, \, \text{($4i$-slice)},\\
\bar{r}_1^i \gamma \bar{r}_1^i (\bar{r}_1^j, \gamma \bar{r}_1^j): {C_4}_+ \wedge_{C_2} S^{(2i+j)\rho_2} \wedge \HZ, & i \geq 0, j \geq 1 \, \, \, \text{(induced $(4i+2j)$-slice)}.
\end{array} \right.$$

\subsection{The $C_2$-slice spectral sequence}
The $C_2$-spectrum $i_{C_2}^*\BPone$ has no odd slice cells, and its $(2k)$-slice cells are indexed by the monomials
$$\{\bar{r}_1^i \gamma \bar{r}_1^j \,|\, i, j \geq 0, \, i + j = k \}.$$
Let $\bar{v}_i \in \pi_{i \rho_2}^{C_2} \BPR$ be the $C_2$-equivariant lifts of the Araki $v_i$-generators for $\pi_*BP$.  We can also regard them as elements in $\pi_{i\rho_2}^{C_2} \BPone$ via the map 
$$\BPR \stackrel{\eta_L}{\longrightarrow} i_{C_2}^* \BPC \longrightarrow i_{C_2}^* \BPone.$$
In \cite[Section 7]{HHRKH}, Hill, Hopkins, and Ravenel proved
\begin{eqnarray*}
\bar{v}_1 \pmod{2} &=& \bar{r}_1 + \gamma \bar{r}_1, \\ 
\bar{v}_2 \pmod{2, \bar{v}_1} &=&  \bar{r}_1^3, \\
\bar{v}_i \pmod{2, \bar{v}_1, \ldots, \bar{v}_{i-1}}&=& 0, \,\,\, i \geq 3. 
\end{eqnarray*}

In $C_2$-$\SliceSS(\BPR)$, all the differentials are known.  They are determined by the differentials
$$d_{2^{i+1}-1}(u_{2^i\sigma_2}) = \bar{v}_i a_{\sigma_2}^{2^{i+1}-1}, \,\,\, i \geq 1,$$
and multiplicative structures \cite[Theorem~9.9]{HHR}.  This, combined with the formulas above, implies that in $C_2$-$\SliceSS(i_{C_2}^*\BPone)$, all the differentials are determined by
\begin{eqnarray*}
d_3(u_{2\sigma_2}) &=& \bar{v}_1 a_{\sigma_2}^3 = (\bar{r}_1 + \gamma \bar{r}_1)a_{\sigma_2}^3, \\
d_7(u_{4\sigma_2}) &=& \bar{v}_2 a_{\sigma_2}^7 = \bar{r}_1^3 a_{\sigma_2}^7,
\end{eqnarray*}
and multiplicative structures.  The class $u_{8\sigma_2}$ is a permanent cycle.  The $C_2$-slice spectral sequence of $i_{C_2}^*\BPone$ is shown in Figure~\ref{fig:E2C2}.  
\begin{figure}
\begin{center}
\makebox[\textwidth]{\includegraphics[trim={0cm 11cm 0cm 4cm}, clip, scale = 0.8]{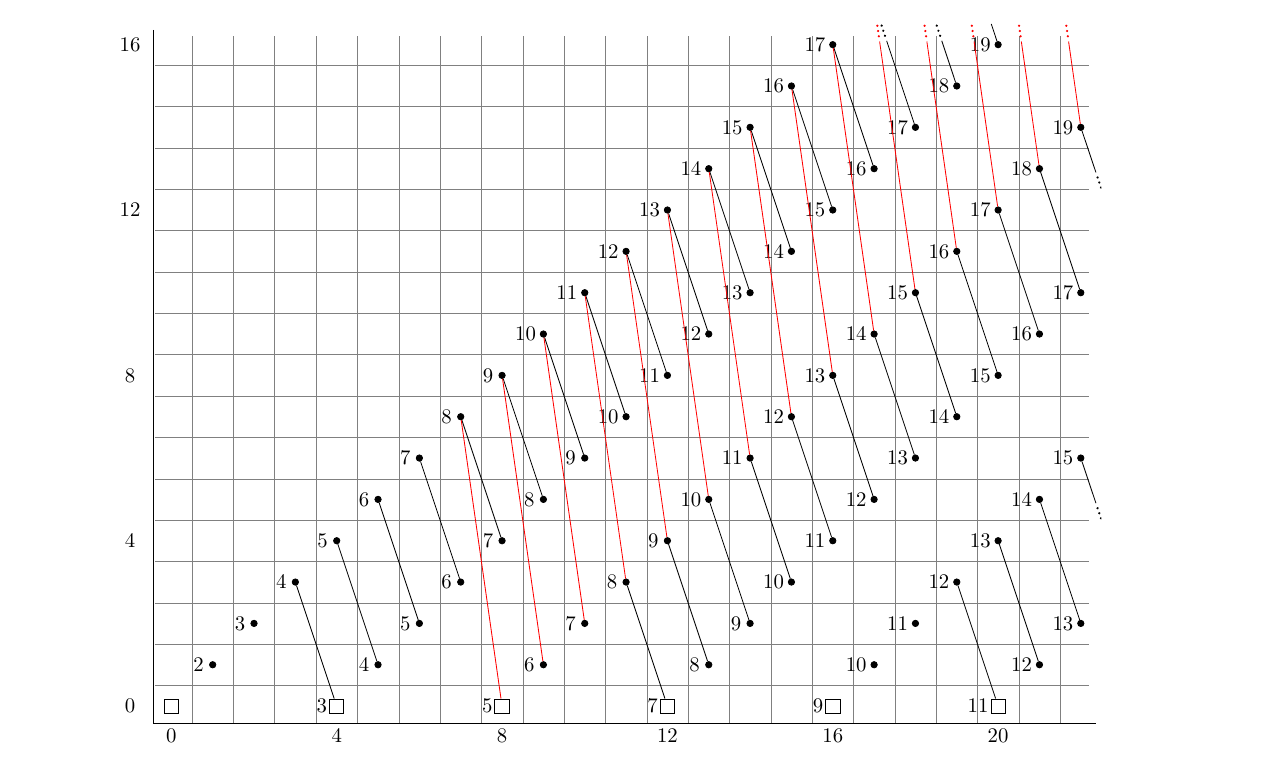}}
\end{center}
\begin{center}
\caption{The $C_2$-slice spectral sequence for $i_{C_2}^*\BPone$.  The $\square$-classes are $\mathbb{Z}$'s, the $\bullet$-classes are $\mathbb{Z}/2$'s.  The numbers next to the classes indicate the number of copies of such classes. }
\hfill
\label{fig:E2C2}
\end{center}
\end{figure}

\subsection{Organizing the slices, $d_3$-differentials}
We can organize the slices into the following table 
{\renewcommand{\arraystretch}{1.3}
\begin{equation}
\begin{array}{c|c|c|c} \label{array:BPoneSlices}
\bar{\mathfrak{d}}_1^0 &\bar{\mathfrak{d}}_1^1 &\bar{\mathfrak{d}}_1^2 & \cdots \\ \hline 
\color{blue}  \bar{\mathfrak{d}}_1^0 \bar{s}_1^1 & \color{blue}  \bar{\mathfrak{d}}_1^1 \bar{s}_1^1 & \color{blue}  \bar{\mathfrak{d}}_1^2 \bar{s}_1^1 & \cdots \\ 
\color{blue} \bar{\mathfrak{d}}_1^0 \bar{s}_1^2 &\color{blue}  \bar{\mathfrak{d}}_1^1 \bar{s}_1^2 &\color{blue}  \bar{\mathfrak{d}}_1^2 \bar{s}_1^2 & \cdots \\ 
\vdots & \vdots & \vdots & \ddots
\end{array}
\end{equation}
where $\done:= N(\bar{r}_1)$, and $\bar{s}_1^i := tr(\rone^i)$.  The first row consists of non-induced slices and the rest of the rows are all induced slices.  Also note that with the definition above, $res(\done) = \bar{r}_1 \gamma \bar{r}_1$ and $res(\bar{s}_1^i) = (1 + \gamma) \rone^i = \rone^i + \grone^i$.  As a warning, note that $\bar{s}_1^i \neq (\bar{s}_1)^i$. 

\begin{thm}\label{thm:BPoned3}
$d_3(u_\lambda) = tr(\rone a_{\sigma_2}) a_\lambda$.  
\end{thm}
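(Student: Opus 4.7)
The plan is to deduce this $C_4$-differential from the corresponding $C_2$-differential reviewed just above, by naturality of the restriction map $res_{C_2}^{C_4}$, which is a map of slice spectral sequences (it is one component of the Mackey-functor-valued slice spectral sequence, with $res$ being a map of DGAs).

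From the recalled $C_2$-computation we have
\[
d_3^{C_2}(u_{2\sigma_2}) = \bar{v}_1 a_{\sigma_2}^3 = (\bar{r}_1 + \gamma \bar{r}_1)\,a_{\sigma_2}^3
\]
in $C_2$-$\SliceSS(i_{C_2}^*\BPone)$. Combined with the restriction formula $res_{C_2}^{C_4}(u_\lambda) = u_{2\sigma_2}$ (from the relations listed in Section~\ref{sec:Preliminaries}), naturality forces
\[
res_{C_2}^{C_4}\bigl(d_3(u_\lambda)\bigr) = (\bar{r}_1 + \gamma \bar{r}_1)\,a_{\sigma_2}^3.
\]

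Next I would identify a class in the corresponding bidegree of the $C_4$-$E_3$-page whose restriction realizes this formula. Using the organization of slices in array (\ref{array:BPoneSlices}), the only slice cells contributing to the relevant bidegree are induced. The expression $\bar{s}_1\,a_\lambda\,a_{\sigma_2}$ denotes a class from the induced $\rho_2$-slice summand $\bar s_1\colon(C_4)_+ \wedge_{C_2} S^{\rho_2}\wedge\HZ$: via the Wirth\-m\"uller identification
\[
\pi_*^{C_4}\bigl((C_4)_+\wedge_{C_2} Y\bigr) \cong \pi_*^{C_2}(Y)
\]
and Frobenius reciprocity with $res(a_\lambda) = a_{\sigma_2}^2$, it corresponds to $\bar r_1\,a_{\sigma_2}^3$ on the $C_2$-side. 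The double-coset formula then yields
\[
res_{C_2}^{C_4}\bigl(\bar s_1\,a_\lambda\,a_{\sigma_2}\bigr) = \bar r_1 a_{\sigma_2}^3 + \gamma(\bar r_1 a_{\sigma_2}^3) = (\bar r_1 + \gamma \bar r_1)\,a_{\sigma_2}^3,
\]
matching the previous paragraph.

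To finish, I would verify that the restriction is injective on the relevant $E_3$-bigrading, so that this match pins down $d_3(u_\lambda)$ uniquely. A direct inspection of the $E_2$-page using array (\ref{array:BPoneSlices}), combined with the fact that $a_\sigma$-multiplication annihilates transferred (induced-slice) classes while acting nontrivially on non-induced contributions, rules out competing candidates from the non-induced $4$-slice $S^{\rho_4}\wedge\HZ$. The main obstacle is precisely this last bookkeeping step: one must confirm that every other class of the same filtration, stem, and $RO(C_4)$-grading either restricts trivially or has already been killed by an earlier differential (here there are no earlier differentials to worry about, since $d_3$ is the first possible differential on $u_\lambda$), so that the restriction argument genuinely determines the target.
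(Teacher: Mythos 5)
Your proposal is correct and follows essentially the same route as the paper: restrict $u_\lambda$ to $u_{2\sigma_2}$, use the known $C_2$-differential $d_3(u_{2\sigma_2})=(\bar r_1+\gamma\bar r_1)a_{\sigma_2}^3$ together with naturality (and degree reasons) to force a $d_3$ on $u_\lambda$, and identify the target as $\bar s_1 a_\lambda a_{\sigma_2}$. The extra detail you supply on the Wirthm\"uller/double-coset identification of the induced-slice class and the uniqueness check is a fuller write-up of what the paper compresses into ``for degree reasons'' and ``naturality implies,'' but it is not a different argument.
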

\begin{proof}
The restriction of $u_\lambda$ is $res(u_\lambda) = u_{2\sigma_2}$.  In the $C_2$-slice spectral sequence, the class $u_{2\sigma_2}$ supports a nonzero $d_3$-differential 
$$d_3(u_{2\sigma_2}) = (\bar{r}_1 + \gamma \bar{r}_1)a_{3\sigma_2}.$$  
Therefore, $u_\lambda$ must support a differential of length at most 3.  For degree reasons, this differential must be a $d_3$-differential.  Natuality implies that 
$$d_3(u_\lambda) = tr(\rone a_{3\sigma_2}) = tr(\rone a_{\sigma_2}res(a_\lambda)) = tr(\rone a_{\sigma_2}) a_\lambda,$$
as desired. 
\end{proof}

To organize the $C_4$-slices in table~\ref{array:BPoneSlices}, we separate them into columns.  Each column consists of one non-induced slice cell, $\done^i$, and all the induced slice cells of the form $\done^i \bar{s}_1^j$, where $j \geq 1$.  

In light of Theorem~\ref{thm:BPoned3}, each column can be treated as an individual unit with respect to the $d_3$-differentials.  More precisely, the leading terms of any of the $d_3$-differentials are classes coming from the homotopy groups of slices belonging to the same column.  When drawing the slice spectral sequence of $\BPone$, we first produce the $E_2$-page of each column individually, together with their $d_3$-differentials (See Figure~\ref{fig:E2C4d3diffType1} and ~\ref{fig:E2C4d3diffType2}).  Afterwards, we combine the $E_5$-pages of every column all together into one whole spectral sequence.

\begin{figure}
\begin{center}
\makebox[\textwidth]{\includegraphics[trim={0cm 11cm 0cm 4cm}, clip, scale = 0.8, page = 1]{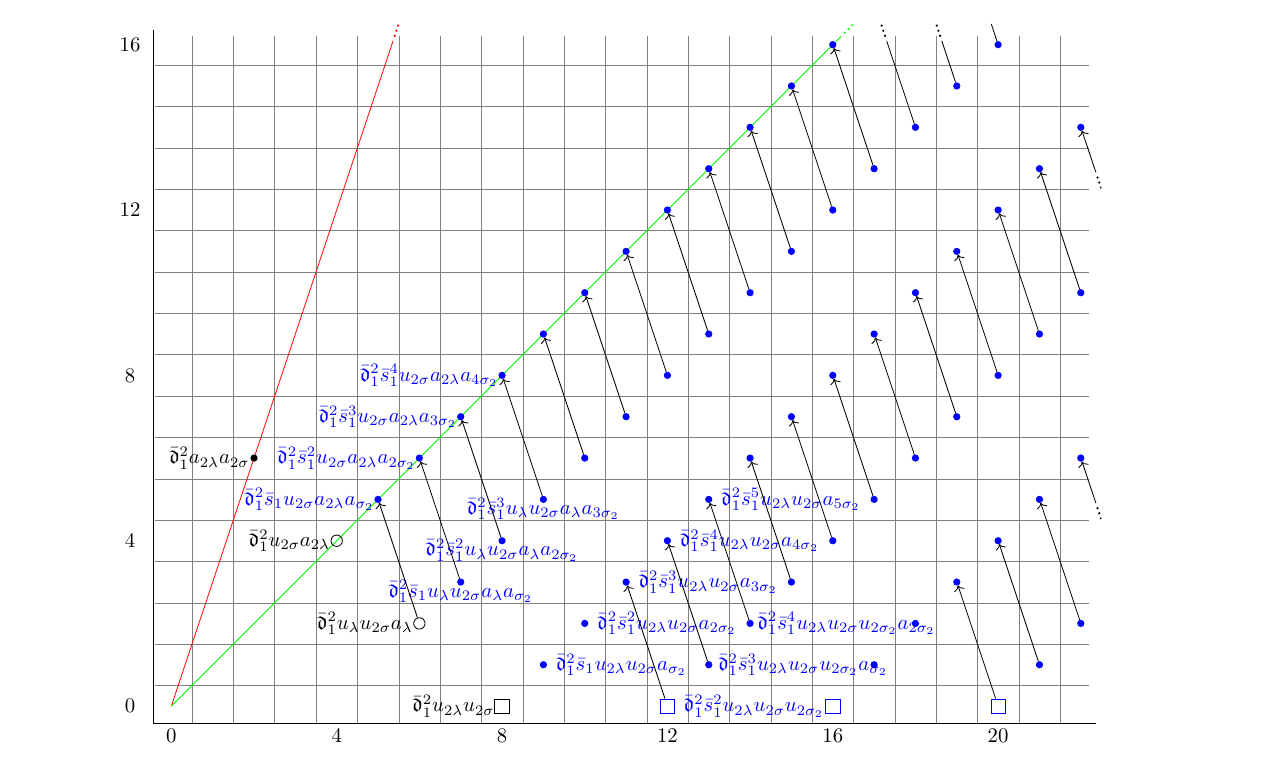}}
\end{center}
\begin{center}
\caption{$d_3$-differentials within the column containing $\done^2$.}
\hfill
\label{fig:E2C4d3diffType1}
\end{center}
\end{figure}

\begin{figure}
\begin{center}
\makebox[\textwidth]{\includegraphics[trim={0cm 11cm 0cm 4cm}, clip, scale = 0.8, page = 2]{E2C4d3diff}}
\end{center}
\begin{center}
\caption{$d_3$-differentials within the column containing $\done^3$.}
\hfill
\label{fig:E2C4d3diffType2}
\end{center}
\end{figure}

\begin{notation}[Naming the transfer classes]\rm
For classes that are associated with the induced slice cells, we denote
\[\done^m \sone^n u_\lambda^i u_\sigma^j u_{2\sigma_2}^k a_\lambda^r a_{\sigma_2}^\ell :=  tr_V(\rone^{m+n} \grone^m u_{2\sigma_2}^{i+k}a_{\sigma_2}^{2r+\ell}).\]  
Here, $n \equiv \ell \pmod{2}$ and 
\begin{eqnarray*}
V &=& m(1 + \sigma + \lambda) + n \left(1 + \frac{\lambda}{2}\right) + i (2 - \lambda) + j (1 - \sigma) + k (2 - \lambda) - r\lambda - \frac{\ell}{2}\lambda  \\
&=& (m + n + 2i + j + 2k)  + (m - j) \sigma + \left(m + \frac{n-\ell}{2} - i - k - r \right)\lambda.
\end{eqnarray*}
In the notation above, the term $\done^m \sone^n$ indicates the slice cell that the class is associated to.  

This notation has the advantage that it is easy to see all the different possible expressions for the same transfer class (in light of the Frobenius relation ${tr(res(a)\cdot b) = a\cdot tr(b)}$).  More specifically, for any $m'$, $m''$, $i'$, $i''$, $j'$, $j''$ even, $r'$, and $r''$ satisfying the equalities $m' + m'' = m$, $i' + i'' = i$, $j' + j'' = j$, and $r' + r'' = r$, we can rewrite the expression above as 
\begin{eqnarray*}
\done^m \sone^n u_\lambda^i u_\sigma^j u_{2\sigma_2}^k a_\lambda^r a_{\sigma_2}^\ell &=& tr_V(\rone^{m+n} \grone^m u_{2\sigma_2}^{i+k}a_{\sigma_2}^{2r+\ell}) \\
&=& tr_V(res(\done^{m''}u_\lambda^{i''}u_\sigma^{j''}a_\lambda^{r''}) \cdot \rone^{m'+n}\grone^{m'}u_{2\sigma_2}^{i'+k}a_{\sigma_2}^{2r'+\ell}) \\
&=& (\done^{m''}u_\lambda^{i''}u_\sigma^{j''}a_\lambda^{r''}) \cdot tr_{V'}(\rone^{m'+n}\grone^{m'}u_{2\sigma_2}^{i'+k}a_{\sigma_2}^{2r'+\ell}) \\ 
&=& (\done^{m''}u_\lambda^{i''}u_\sigma^{j''}a_\lambda^{r''}) \cdot (\done^{m'}\sone^n u_\lambda^{i'}u_\sigma^{j'}u_{2\sigma_2}^{k}a_\lambda^{r'}a_{\sigma_2}^{\ell}),
\end{eqnarray*}
where
\[V' = (m' + n + 2i' + j' + 2k)  + (m' - j') \sigma + \left(m' + \frac{n-\ell}{2} - i' - k - r' \right)\lambda. \]
For example, consider the class 
\[\done^2 \sone^2 u_{2\lambda}u_{2\sigma}u_{2\sigma_2} = tr_{12}(\rone^4 \grone^2 u_{6\sigma_2})\]
in bidegree $(12,0)$.  There are many ways to rewrite this class, two of which are
\begin{eqnarray*}
\done^2 u_{2\lambda}u_{2\sigma} \cdot \sone^2 u_{2\sigma_2} &=& \done^2 u_{2\lambda}u_{2\sigma} tr_4(\rone^2 u_{2\sigma_2}) \\
&=& tr_{12}(res(\done^2 u_{2\lambda}u_{2\sigma} )\rone^2 u_{2\sigma_2})
\end{eqnarray*}
and
\begin{eqnarray*}
\done u_{2\lambda} \cdot \done \sone^2 u_{2\sigma} u_{2\sigma_2}  &=& \done u_{2\lambda} tr_{7- \sigma+\lambda}(\rone^3\grone u_{2\sigma_2}) \\
&=& tr_{12}(res(\done u_{2\lambda})\rone^3\grone u_{2\sigma_2}).
\end{eqnarray*}
\end{notation}

\begin{exam}[Computing the leading term]\rm
Some classes support $d_3$-differentials with targets the sum of two classes.  For example, consider the class 
\[{\done^2\sone^2u_{2\lambda}u_{2\sigma}u_{2\sigma_2} = tr_{12}(\rone^4 \grone^2 u_{6\sigma_2})}\]
in bidegree $(12,0)$.  In the $C_2$-slice spectral sequence, the class $\rone^4 \grone^2 u_{6\sigma_2}$ supports the $d_3$-differential 
\[d_3(\rone^4 \grone^2 u_{6\sigma_2}) = \rone^4 \grone^2 (\rone + \grone) u_{4\sigma_2}a_{3\sigma_2} = \rone^5 \grone^2 u_{4\sigma_2}a_{3\sigma_2} + \rone^4 \grone^3 u_{4\sigma_2}a_{3\sigma_2}. \]
Applying the transfer to the target shows that the class 
\[tr(\rone^5 \grone^2 u_{4\sigma_2}a_{3\sigma_2}) + tr(\rone^4 \grone^3 u_{4\sigma_2}a_{3\sigma_2}) = \done^2 \sone^3 u_{2\lambda}a_{3\sigma_2} + \done^3 \sone u_{2\lambda}a_{\lambda}a_{\sigma_2}\]
must be killed by a differential of length at most 3.  By naturality, the source of this differential is $tr(\rone^4 \grone^2 u_{6\sigma_2})$, and so 
\[d_3(\done^2\sone^2u_{2\lambda}u_{2\sigma}u_{2\sigma_2}) = \done^2 \sone^3 u_{2\lambda}a_{3\sigma_2} + \done^3 \sone u_{2\lambda}a_{\lambda}a_{\sigma_2}.\]
Note that the first term of the target is in the same column as the source, but the second term is not (it belongs to a slice cell in the next column)

This $d_3$-differential is introducing the relation $\done^2 \sone^3 u_{2\lambda} u_{2\sigma} a_{3\sigma_2} =  \done^3 \sone u_{2\lambda}u_{2\sigma}a_\lambda a_{\sigma_2}$ after the $E_3$-page.  As a convention, when we are drawing the slice spectral sequence, we only kill the leading term of the target:
$$d_3(\done^2\sone^2u_{2\lambda}u_{2\sigma}u_{2\sigma_2}) =  \done^2 \sone^3 u_{2\lambda} u_{2\sigma} a_{3\sigma_2}.$$
\end{exam}

\subsection{$d_5$-differentials}
\begin{thm}
$d_5(u_{2\sigma}) = \done a_\lambda a_{\sigma}^3$.
\end{thm}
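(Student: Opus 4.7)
Since $\mathrm{res}^{C_4}_{C_2}(u_{2\sigma}) = 1$ is a permanent cycle in the $C_2$-slice spectral sequence, the differential on $u_{2\sigma}$ cannot be detected by restriction, as was done for $u_\lambda$ in Theorem~\ref{thm:BPoned3}. My plan is instead to deduce it via the norm formula (Theorem~\ref{thm:NormFormula}) applied to the key $C_2$-slice differential $d_3(u_{2\sigma_2}) = (\bar{r}_1 + \gamma\bar{r}_1)\,a_{\sigma_2}^3$ in $C_2$-$\SliceSS(i_{C_2}^*\BPone)$, combined with the gold relation $u_\lambda a_{2\sigma} = 2 a_\lambda u_{2\sigma}$ and a bidegree uniqueness argument.

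The first step is to verify that the source $a_\sigma\cdot N_{C_2}^{C_4}(u_{2\sigma_2})$ and the target $N_{C_2}^{C_4}\bigl((\bar{r}_1+\gamma\bar{r}_1)a_{\sigma_2}^3\bigr)$ both survive to the $E_5$-page. The source survives because $a_\sigma$ annihilates any class supported on an induced slice (since $\mathrm{res}^{C_4}_{C_2}(a_\sigma) = 0$), which kills the potential $d_3$-differential; the target survives by a similar argument. The norm formula then produces the $d_5$-differential
\[
d_5\bigl(a_\sigma\cdot N_{C_2}^{C_4}(u_{2\sigma_2})\bigr) = N_{C_2}^{C_4}(\bar{r}_1+\gamma\bar{r}_1)\cdot a_\lambda^3,
\]
and the Tambara formula for the norm of a sum, applied with $\gamma^2 \bar{r}_1 = -\bar{r}_1$, rewrites $N_{C_2}^{C_4}(\bar{r}_1+\gamma\bar{r}_1)$ as $2\done - \mathrm{tr}^{C_4}_{C_2}(\bar{r}_1^2)$. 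The transfer summand is annihilated by multiplication by $a_\sigma$, leaving the leading contribution $2\done\, a_\lambda^3$ on the right-hand side.

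The second step is to transport this differential to $u_{2\sigma}$ using Leibniz and the gold relation. The main obstacle is that the relation $2a_\sigma = 0$ in the slice $E_2$-page causes both sides of the natural identity $a_\sigma \cdot d_5(a_\sigma N u_{2\sigma_2}) = d_5(2a_\lambda\cdot (\text{gold rewriting})) = 2a_\lambda\, d_5(u_{2\sigma})$ to vanish identically, so the norm formula alone does not pin down $d_5(u_{2\sigma})$. To close the argument, I would combine the norm-formula input with a bidegree/filtration analysis on the $E_5$-page: in the bidegree of $d_5(u_{2\sigma})$, the class $\done a_\lambda a_\sigma^3$ is, up to higher-filtration corrections, the unique nonzero candidate whose restriction vanishes and which is compatible with the Mackey-functor/Tambara structure of the spectral sequence. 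Coupled with the observation that $u_{2\sigma}$ is not a permanent cycle (a consequence of the horizontal vanishing line and the $384$-periodicity structure that will be established later in the paper), this forces $d_5(u_{2\sigma}) = \done a_\lambda a_\sigma^3$.
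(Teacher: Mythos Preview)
The paper's proof is a one-line citation: this differential is exactly the content of the Hill--Hopkins--Ravenel Slice Differential Theorem \cite[Theorem~9.9]{HHR}, which produces the differentials on the classes $u_{2^k\sigma}$ directly from the structure of the slice tower of $\MUG$. No further argument is needed.

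Your approach has a genuine gap. The norm formula applied to $d_3(u_{2\sigma_2}) = (\bar r_1+\gamma\bar r_1)a_{\sigma_2}^3$ yields a $d_5$-differential on $a_\sigma N_{C_2}^{C_4}(u_{2\sigma_2}) = a_\sigma\, u_{2\lambda}/u_{2\sigma}$, not on $u_{2\sigma}$ itself. Via Leibniz this gives a single relation between the two unknowns $d_5(u_{2\sigma})$ and $d_5(u_{2\lambda})$, and indeed the paper uses exactly this norm computation in the alternative argument following Corollary~\ref{cor:BPoned5} --- but there $d_5(u_{2\sigma})$ is taken as a \emph{known input} (from the Slice Differential Theorem) in order to determine $d_5(u_{2\lambda})$. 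The information does not flow in the direction you need. Moreover, your Tambara identification $N(\bar r_1+\gamma\bar r_1)=2\done-\mathrm{tr}(\bar r_1^2)$ is off: the two sides live in different $RO(C_4)$-degrees ($1+\sigma+\lambda$ versus $2+\lambda$), and the paper's computation gives instead $N(\bar r_1+\gamma\bar r_1)=-\mathrm{tr}(\bar r_1^2 u_\sigma^{-1})$.

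Your attempt to close the argument by invoking the vanishing line and the $384$-periodicity is circular: those results are downstream of the full slice spectral sequence computation, which in turn rests on the slice differentials on the $u_{2^k\sigma}$ classes. The differential $d_5(u_{2\sigma})=\done a_\lambda a_\sigma^3$ is foundational input from \cite{HHR}, not a consequence one can bootstrap from within $\SliceSS(\BPone)$.
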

\begin{proof}
This differential is given by Hill--Hopkins--Ravenel's Slice Differentials Theorem \cite[Theorem~9.9]{HHR}. 
\end{proof}
The Slice Differentials Theorem, combined with the fact that $N(\bar{r}_i) = 0$ for $i \geq 2$, implies that this $d_5$-differential produces all the $d_5$-differentials in the region that only have classes corresponding to the homotopy groups of regular slice cells (those of the form $\done^i$).  In particular, the class $u_{4\sigma}$ is a permanent cycle.  In the integer graded slice spectral sequence, this $d_5$-differential produces all the $d_5$-differentials between the line of slope 1 and the line of slope 3.  

\begin{thm}\label{thm:BPoned5}
The class $\done^2u_{2\lambda}u_{2\sigma}$ at $(8, 0)$ supports the $d_5$-differential
$$d_5(\done^2u_{2\lambda}u_{2\sigma}) = \done^3u_{\lambda}u_{2\sigma} a_{2\lambda}a_\sigma.$$
\end{thm}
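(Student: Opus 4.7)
My plan is to prove this $d_5$-differential by a Leibniz expansion on the $E_5$-page, combining the HHR slice differential $d_5(u_{2\sigma}) = \done a_\lambda a_\sigma^3$ with a computation of $d_5(u_{2\lambda})$ obtained from the norm formula.

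First I would verify that $\done^2 u_{2\lambda} u_{2\sigma}$ survives to the $E_5$-page. Since $\done^2$ is a permanent cycle and $u_{2\sigma}$ supports no differential before $d_5$, the only potential $d_3$ comes from $u_{2\lambda}$. Writing $u_{2\lambda} = u_\lambda^2$ and applying Leibniz gives $d_3(u_{2\lambda}) = 2 u_\lambda \cdot d_3(u_\lambda)$. By Theorem~\ref{thm:BPoned3} the target $\sone a_\lambda a_{\sigma_2}$ is the transfer $tr_{C_2}^{C_4}(\bar{r}_1 a_{\sigma_2}^3)$, and Frobenius reciprocity together with $\mathrm{res}(u_\lambda) = u_{2\sigma_2}$ yields
\[
d_3(u_{2\lambda}) = 2\, tr(u_{2\sigma_2}\bar{r}_1 a_{\sigma_2}^3) = tr(u_{2\sigma_2}\bar{r}_1 \cdot 2 a_{\sigma_2}^3) = 0,
\]
using that $2 a_{\sigma_2} = 0$ in the $C_2$-equivariant homotopy of $H\underline{\mathbb{Z}}$.

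With $u_{2\lambda}$ now surviving, the Leibniz rule on the $E_5$-page gives
\[
d_5(\done^2 u_{2\lambda} u_{2\sigma}) = \done^3 u_{2\lambda} a_\lambda a_\sigma^3 + \done^2 u_{2\sigma} \cdot d_5(u_{2\lambda}).
\]
The first summand vanishes by iterating the gold relation $u_\lambda a_{2\sigma} = 2 a_\lambda u_{2\sigma}$, which yields $u_{2\lambda} a_\sigma^2 = 2 u_\lambda u_{2\sigma} a_\lambda$ and hence $u_{2\lambda} a_\sigma^3 = 2 u_\lambda u_{2\sigma} a_\lambda a_\sigma = 0$ since $2a_\sigma = 0$. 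The problem therefore reduces to determining $d_5(u_{2\lambda})$.

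For this, I would apply Theorem~\ref{thm:NormFormula} to the $C_2$-equivariant differential $d_3(u_{2\sigma_2}) = \sone a_{\sigma_2}^3$, which produces a formula for $d_5(a_\sigma \cdot N_{C_2}^{C_4}(u_{2\sigma_2})) = N_{C_2}^{C_4}(\sone a_{\sigma_2}^3)$. Expanding the right-hand side via the Tambara sum formula $N(\sone) = 2\done + tr_{C_2}^{C_4}(\bar{r}_1^2)$ together with $N(a_{\sigma_2}) = a_\lambda$, and then comparing multiplicatively inside $\pi^{C_4}_\star H\underline{\mathbb{Z}}$ to relate $N(u_{2\sigma_2})$ with $u_{2\lambda}$ (both restrict to $u_{4\sigma_2}$), a careful manipulation using Leibniz, Frobenius, and the gold relation produces $d_5(u_{2\lambda}) = \done u_\lambda a_{2\lambda} a_\sigma$. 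Substituting back gives the claimed differential $d_5(\done^2 u_{2\lambda} u_{2\sigma}) = \done^3 u_\lambda u_{2\sigma} a_{2\lambda} a_\sigma$. The main obstacle is this last step: the norm formula naturally computes $d_5(a_\sigma N(u_{2\sigma_2}))$ rather than $d_5(u_{2\lambda})$, and extracting the latter requires careful control of the transfer correction term $tr(\bar{r}_1^2 a_{\sigma_2}^6)$ coming from the non-additivity of $N$.
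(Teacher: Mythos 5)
Your overall strategy --- reduce to $d_5(u_{2\lambda})$ by the Leibniz rule and obtain that differential from the norm formula --- is viable, and it is essentially the ``alternative'' argument the paper records right after Corollary~\ref{cor:BPoned5}; the paper's actual proof of Theorem~\ref{thm:BPoned5} is different: it restricts $\done^2u_{2\lambda}u_{2\sigma}$ to the $C_2$-slice spectral sequence, where $\rone^2\grone^2u_{4\sigma_2}$ supports a $d_7$, so the class supports a differential of length at most $7$; the only candidate targets are the asserted one at $(7,5)$ and $\done^3\sone u_{3\sigma}a_{3\lambda}a_{\sigma_2}$ at $(7,7)$, and the $d_7$ option is excluded because its target restricts to $\rone^3\grone^3(\rone+\grone)a_{7\sigma_2}=0$, contradicting naturality. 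Your preliminary steps are fine: $u_{2\lambda}=u_\lambda^2$ is a $d_3$-cycle since $2a_{\sigma_2}=0$, and the summand $\done^3u_{2\lambda}a_\lambda a_{3\sigma}$ vanishes by the gold relation together with $2a_\sigma=0$, so everything indeed hinges on $d_5(u_{2\lambda})$.

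That is where the proposal has a genuine gap. First, the ``Tambara sum formula'' $N(\sone)=2\done+tr(\rone^2)$ is false (and is not even in the right $RO(C_4)$-degree without a $u_\sigma^{-1}$ twist): computing restrictions, $res(N(\rone+\grone))=(\rone+\grone)(\grone-\rone)=\grone^2-\rone^2=-\rone^2u_\sigma^{-1}(1+\gamma)$, so the cross term cancels because of the twisted Weyl action $\gamma\cdot\grone=-\rone$, and $N(\sone)=-tr(\rone^2u_\sigma^{-1})$ is a pure transfer class with no $\done$ summand. This error is not harmless: if you expand the predicted target as $2\done u_{2\sigma}a_{3\lambda}+tr(\rone^2u_\sigma a_{6\sigma_2})$, then, since the transfer term itself equals $2\done u_{2\sigma}a_{3\lambda}$ on the $E_5$-page, your target would be $4\done u_{2\sigma}a_{3\lambda}=0$, and Theorem~\ref{thm:NormFormula} then yields no differential at all (it only produces one when the predicted target is nonzero on the $E_{2r-1}$-page). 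Second, the step you yourself call ``the main obstacle'' is exactly the content of the proof and is left unexecuted: one must identify $tr(\rone^2u_\sigma a_{6\sigma_2})$ with $tr(\rone\grone u_\sigma a_{6\sigma_2})=2\done u_{2\sigma}a_{3\lambda}=\done u_\lambda a_{2\lambda}a_{2\sigma}$ using the Frobenius relation $tr(\rone^2u_\sigma a_{6\sigma_2})+tr(\rone\grone u_\sigma a_{6\sigma_2})=tr(\rone u_\sigma a_{\sigma_2})tr(\rone a_{\sigma_2})a_{2\lambda}$, which vanishes on $E_5$ only because $d_3(u_\lambda)=\sone a_\lambda a_{\sigma_2}$ forces $tr(\rone a_{\sigma_2})a_\lambda=0$; one must check this class is nonzero on $E_5$; and one must then cancel the extraneous $a_\sigma$ from $a_\sigma\bigl(d_5(u_{2\lambda})-\done u_\lambda a_{2\lambda}a_\sigma\bigr)=0$. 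Until these points are supplied, the proposal does not establish $d_5(u_{2\lambda})=\done u_\lambda a_{2\lambda}a_\sigma$, and hence not the theorem; with them supplied, your route works and is precisely the paper's alternative derivation, whose payoff is the finer $RO(C_4)$-graded differential (Corollary~\ref{cor:BPoned5}) and its direct generalization to $\BPtwo$, at the cost of the Mackey/Tambara bookkeeping that the paper's restriction argument avoids.
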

\begin{proof}
The restriction of $\done^2u_{2\lambda}u_{2\sigma}$ is $res(\done^2u_{2\lambda}u_{2\sigma}) = \bar{r}_1^2\gamma\bar{r}_1^2 u_{4\sigma_2}$, which supports the $d_7$-differential 
$$d_7(\bar{r}_1^2\gamma\bar{r}_1^2 u_{4\sigma_2}) = \bar{r}_1^5 \gamma \bar{r}_1^2 a_{7\sigma_2}$$
in $C_2$-$\SliceSS(\BPone)$.  This implies that the class $\done^2u_{2\lambda}u_{2\sigma}$ must support a differential of length at most 7 in $C_4$-$\SliceSS(\BPone)$.  The only possible targets are the classes $\done^3u_\lambda u_{2\sigma} a_{2\lambda} a_\sigma$ at $(7,5)$ and $\done^3\sone u_{3\sigma} a_{3\lambda} a_{\sigma_2}$ at $(7,7)$ (see Figure~\ref{fig:E2C4d5}). 

\begin{figure}
\begin{center}
\makebox[\textwidth]{\includegraphics[trim={4cm 16cm 4.5cm 4cm}, clip, page = 1, scale = 1]{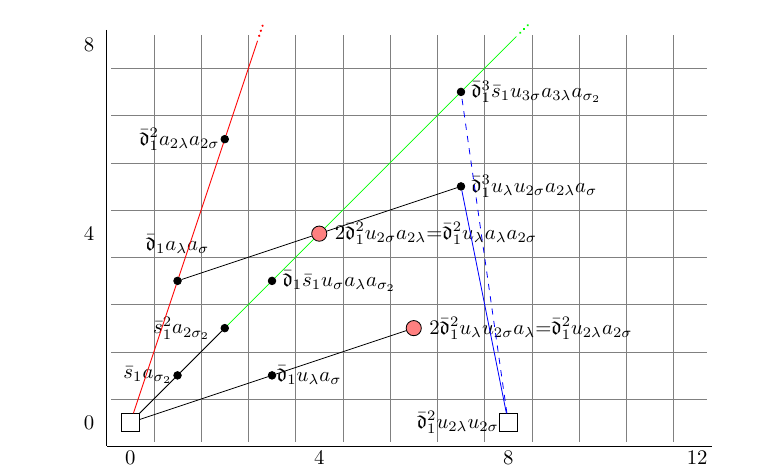}}
\end{center}
\begin{center}
\caption{$d_5$-differential on $\done^2u_{2\lambda}u_{2\sigma}$.}
\hfill
\label{fig:E2C4d5}
\end{center}
\end{figure}

To prove the desired $d_5$-differential, it suffices to show that the $d_7$-differential 
$$d_7(\bar{\mathfrak{d}}_1^2u_{2\lambda} u_{2\sigma}) = \bar{\mathfrak{d}}_1^3 \bar{s}_1 u_{3\sigma} a_{3\lambda} a_{\sigma_2}$$
does not exist.  For the sake of contradiction, suppose that this $d_7$-differential does occur.  By natuality, this differential must be compatible with the restriction map.  The left-hand side restricts to $\bar{r}_1^2 \gamma \bar{r}_1^2 u_{4\sigma_2}$, but the right-hand side restricts to 
$$res(\bar{\mathfrak{d}}_1^3 \bar{s}_1 u_{3\sigma} a_{3\lambda} a_{\sigma_2}) = \bar{r}_1^3 \gamma \bar{r}_1^3(\bar{r}_1+\gamma \bar{r}_1) a_{7\sigma_2} = 0 \neq \bar{r}_1^5 \gamma \bar{r}_1^2 a_{7\sigma_2}$$
because the $d_3$-differential $d_3(u_{2\sigma_2}) = (\bar{r}_1+\gamma \bar{r}_1) a_{3\sigma_2}$ introduced the relation $(\bar{r}_1+\gamma \bar{r}_1) a_{3\sigma_2} = 0$.  This is a contradiction.  
\end{proof}

\begin{cor}\label{cor:BPoned5}
$d_5(u_{2\lambda}) = \done u_\lambda a_{2\lambda}a_\sigma$. 
\end{cor}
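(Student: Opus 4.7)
The plan is to derive Corollary~\ref{cor:BPoned5} directly from Theorem~\ref{thm:BPoned5} via the Leibniz rule, using the already established differential $d_5(u_{2\sigma}) = \done\, a_\lambda a_\sigma^3$. Since $\done = N(\bar{r}_1)$ is a norm of a refinement generator living at filtration $0$, it is a $d_5$-cycle, so I would expand
\[
d_5(\done^2 u_{2\lambda} u_{2\sigma}) \;=\; \done^2 u_{2\sigma}\cdot d_5(u_{2\lambda}) \,+\, \done^2 u_{2\lambda}\cdot d_5(u_{2\sigma}).
\]
Substituting the value of the left-hand side from Theorem~\ref{thm:BPoned5} and the known value of $d_5(u_{2\sigma})$ yields the identity
\[
\done^3 u_\lambda u_{2\sigma} a_{2\lambda} a_\sigma \;=\; \done^2 u_{2\sigma}\cdot d_5(u_{2\lambda}) \,+\, \done^3 u_{2\lambda} a_\lambda a_\sigma^3.
\]

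Next I would show that the extra term $\done^3 u_{2\lambda} a_\lambda a_\sigma^3$ vanishes. Using the gold relation $u_\lambda a_{2\sigma} = 2 a_\lambda u_{2\sigma}$ together with $2 a_\sigma = 0$, one computes
\[
u_{2\lambda} a_\sigma^3 \;=\; u_\lambda \bigl(u_\lambda a_{2\sigma}\bigr) a_\sigma \;=\; 2 u_\lambda a_\lambda u_{2\sigma} a_\sigma \;=\; 0.
\]
The identity therefore collapses to
\[
\done^2 u_{2\sigma}\cdot d_5(u_{2\lambda}) \;=\; \done^3 u_\lambda u_{2\sigma} a_{2\lambda} a_\sigma,
\]
and the desired differential $d_5(u_{2\lambda}) = \done u_\lambda a_{2\lambda} a_\sigma$ is read off by cancelling $\done^2 u_{2\sigma}$ from both sides.

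The only point requiring genuine care, and hence the main (mild) obstacle, is justifying this cancellation on the $E_5$-page: one must verify that multiplication by $\done^2 u_{2\sigma}$ is injective in the bidegree of $d_5(u_{2\lambda})$, so that the equation above uniquely determines the differential. I would do this by a short inspection of the $E_5$-page near bidegree $(3,5)$, using the explicit column-by-column description of the slices of $\BPone$ in table~\eqref{array:BPoneSlices} together with the $d_3$-differentials already computed to rule out any alternative target killed by $\done^2 u_{2\sigma}$.
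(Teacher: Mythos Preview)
Your proposal is correct and follows essentially the same argument as the paper: expand $d_5(\done^2 u_{2\lambda} u_{2\sigma})$ via the Leibniz rule, kill the $\done^2 u_{2\lambda}\cdot d_5(u_{2\sigma})$ term using the gold relation $u_\lambda a_{2\sigma} = 2 u_{2\sigma} a_\lambda$ together with $2a_\sigma = 0$, and then cancel $\done^2 u_{2\sigma}$, noting that multiplication by this class is faithful on the $E_5$-page. The paper states the faithfulness without further comment, so your brief indication of how one would check it is already a bit more explicit than the original.
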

\begin{proof}
Using the Leibniz rule, we have
\begin{eqnarray*}
d_5(\done^2u_{2\lambda}u_{2\sigma}) &=& \done^2 u_{2\lambda} d_5(u_{2\sigma}) + \done^2 u_{2\sigma} d_5(u_{2\lambda}) \\ 
&=& \done^2u_{2\lambda} \cdot \done a_\lambda a_{3\sigma} + \done^2 u_{2\sigma} d_5(u_{2\lambda}) \\ 
&=& 0 + \done^2 u_{2\sigma} d_5(u_{2\lambda}) \\ 
&=& \done^2 u_{2\sigma} d_5(u_{2\lambda}),
\end{eqnarray*}
where we have used the gold relation $u_{\lambda} a_{3\sigma} = 2u_{2\sigma} a_{\lambda} a_\sigma = 0$.  Theorem~\ref{thm:BPoned5} implies that $\done^2 u_{2\sigma} d_5(u_{2\lambda}) = \done^3u_{\lambda}u_{2\sigma} a_{2\lambda} a_\sigma$.  Rearranging, we obtain the equality 
$$\done^2u_{2\sigma}(d_5(u_{2\lambda}) - \done u_{\lambda} a_{2\lambda} a_\sigma) = 0,$$
from which the desired differential follows (multiplication by $\done^2 u_{2\sigma}$ is faithful on the $E_5$-page).
\end{proof}

All the other $d_5$-differentials are obtained from Theorem~\ref{thm:BPoned5} via multiplication with the classes 
\begin{enumerate}
\item $\done a_\lambda a_\sigma$ at $(1,3)$ (permanent cycle); 
\item $\done u_\lambda a_\sigma$ at $(3,1)$ (permanent cycle); 
\item $\done^2 u_{2\sigma}a_{2\lambda}$ at $(4,4)$ ($d_5(\done^2u_{2\sigma} a_{2\lambda}) = \done^3 a_{3\lambda}a_{3\sigma_2}$); 
\item $\done^4 u_{4\lambda}u_{4\sigma}$ at $(16, 0)$ ($d_5$-cycle).  
\end{enumerate}
and using the Leibniz rule (see Figure~\ref{fig:E2C4topd5}). 

\begin{figure}
\begin{center}
\makebox[\textwidth]{\includegraphics[trim={0cm 5cm 0cm 3cm}, clip, page = 2, scale = 0.9]{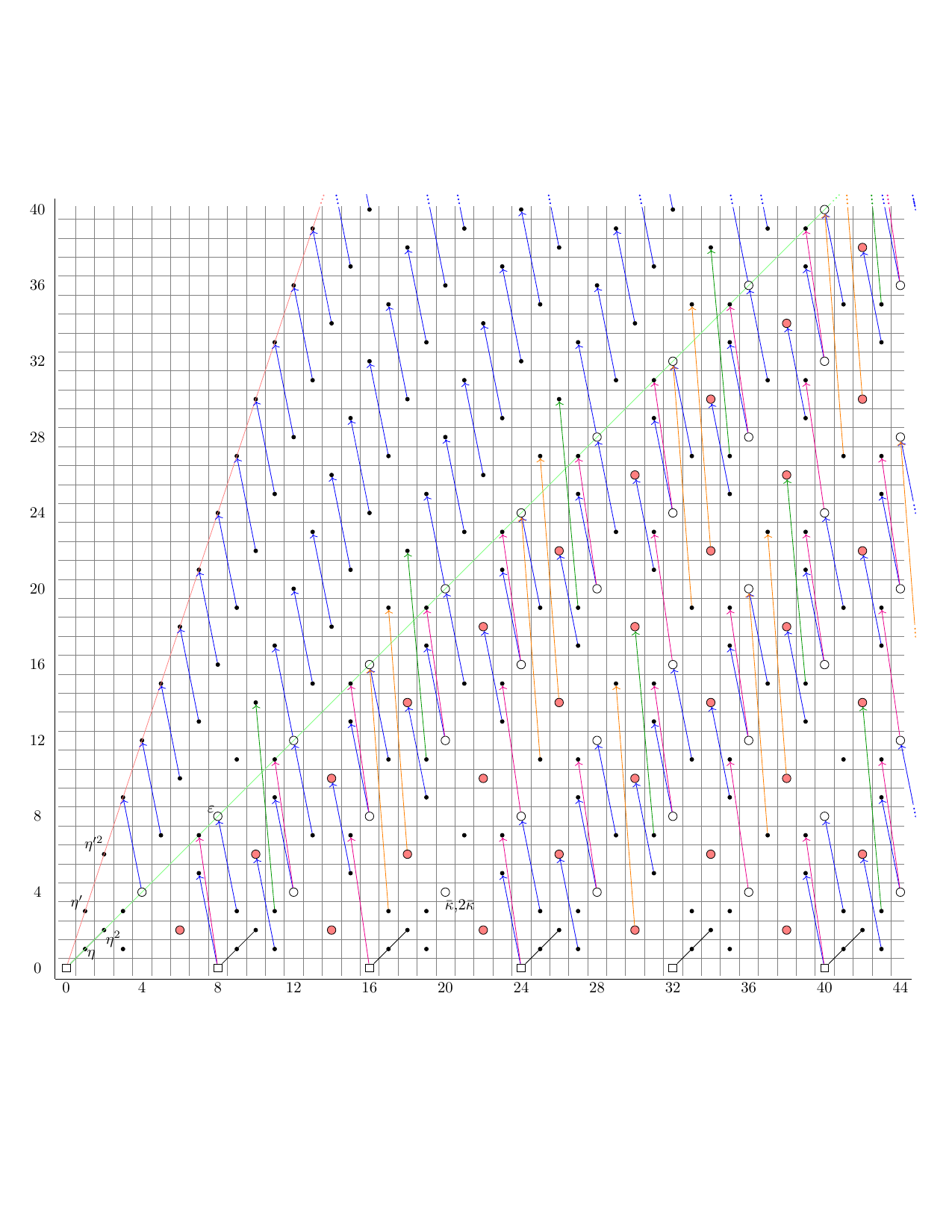}}
\end{center}
\begin{center}
\caption{$d_5$-differentials in $C_4$-$\SliceSS(\BPone)$.}
\hfill
\label{fig:E2C4topd5}
\end{center}
\end{figure}

\vspace{0.1in}

There is an alternative way to prove Corollary~\ref{cor:BPoned5} by using the norm formula.  Start with the $d_3$-differential 
$$d_3(u_{2\sigma_2}) = (\bar{r}_1 + \gamma \bar{r}_1) a_{3\sigma_2}$$
in the $C_2$-slice spectral sequence.  Since $N(u_{2\sigma_2}) = u_{2\lambda}/u_{2\sigma}$ by \cite[Lemma~4.9]{HHRKH}, Theorem~\ref{thm:NormFormula} predicts the $d_5$-differential
$$d_5\left(a_\sigma \cdot \frac{u_{2\lambda}}{u_{2\sigma}}\right) = N_{C_2}^{C_4}(\bar{r}_1 + \gamma \bar{r}_1) a_{3\lambda}$$
in the $C_4$-slice spectral sequence.  Using the Leibniz rule, this formula predicts the $d_5$-differential 
\begin{eqnarray*} 
d_5 (a_\sigma u_{2\lambda}) = d_5\left(u_{2\sigma} \left(a_\sigma\cdot \frac{u_{2\lambda}}{u_{2\sigma}}\right)\right) &=& d_5(u_{2\sigma}) \left( a_\sigma \cdot\frac{ u_{2\lambda}}{u_{2\sigma}} \right)+ u_{2\sigma} d_5\left(a_\sigma \cdot\frac{u_{2\lambda}}{u_{2\sigma}}\right) \\ 
&=& (\bar{\mathfrak{d}}_1 a_\lambda a_{3\sigma})\left( a_\sigma \cdot \frac{u_{2\lambda}}{u_{2\sigma}} \right) + N_{C_2}^{C_4}(\bar{r}_1 + \gamma \bar{r}_1) u_{2\sigma} a_{3\lambda}\\
&=&  0 + N_{C_2}^{C_4}(\bar{r}_1 + \gamma \bar{r}_1) u_{2\sigma} a_{3\lambda}\\
&=& N_{C_2}^{C_4}(\bar{r}_1 + \gamma \bar{r}_1) u_{2\sigma} a_{3\lambda}.
\end{eqnarray*}

To compute $N_{C_2}^{C_4}(\bar{r}_1 + \gamma \bar{r}_1)$, note that
$$res(N_{C_2}^{C_4}(\bar{r}_1 + \gamma \bar{r}_1)) = (\bar{r}_1 + \gamma \bar{r}_1)(\gamma \bar{r}_1 - \bar{r}_1) = -(\bar{r}_1^2 - \gamma \bar{r}_1^2) = - (1 + \gamma) \bar{r}_1^2 u_{\sigma}^{-1}.$$
Therefore, $N_{C_2}^{C_4}(\bar{r}_1 + \gamma \bar{r}_1) = -tr(\bar{r}_1^2 u_{\sigma}^{-1})$.  Note that $u_\sigma^{-1}$ is used as a placeholder here to indicate that the target of the transfer is in degree $(2 + \lambda)- (1 - \sigma) = 1 + \sigma + \lambda$ (see discussion after Definition~\ref{def:defininguV}).  The target of the normed $d_5$-differential is 
$$-tr(\bar{r}_1^2 u_{-\sigma}) u_{2\sigma}a_{3\lambda} = -tr(\bar{r}_1^2u_\sigma a_{6\sigma_2}) = tr(\bar{r}_1^2u_\sigma a_{6\sigma_2}).$$
The last equality holds because multiplication by 2 kills transfer of classes with filtration at least 1.  

To identity this target with a more familiar expression, we add $tr(\rone \grone u_\sigma a_{6\sigma_2})$ to it and use the Frobenius relation: 
\begin{eqnarray*}
tr(\bar{r}_1^2u_\sigma a_{6\sigma_2}) + tr(\bar{r}_1 \grone u_\sigma a_{6\sigma_2}) &=& tr(\bar{r}_1(\bar{r}_1 + \grone)u_\sigma a_{6\sigma_2}) \\
&=& tr(\bar{r}_1u_\sigma a_{\sigma_2} res(tr(\bar{r}_1a_{\sigma_2})a_{2\lambda}) ) \\ 
&=& tr(\bar{r}_1 u_\sigma a_{\sigma_2}) tr(\bar{r}_1 a_{\sigma_2}) a_{2\lambda} \\
&=& 0. 
\end{eqnarray*}
The last expression is 0 on the $E_5$-page because $d_3(u_{\lambda}) = \sone a_\lambda a_{\sigma_2} = tr(\bar{r}_1 a_{\sigma_2}) a_\lambda$.  Therefore,
\begin{eqnarray*}
tr(\bar{r}_1^2u_\sigma a_{6\sigma_2}) &=& tr(\bar{r}_1 \grone u_\sigma a_{6\sigma_2}) \\ 
&=& tr(res(\done u_{2\sigma}a_{3\lambda}))  \, \, \, \, \, \, \,(res(\bar{\mathfrak{d}}_1) = \bar{r}_1 \gamma \bar{r}_1 u_\sigma^{-1})\\ 
&=& 2 \done u_{2\sigma} a_{3\lambda} \\ 
&=& \done u_{\lambda}a_{2\lambda} a_{2\sigma} \, \, \, \, \, \, \, (2u_{2\sigma}a_\lambda = u_{\lambda}a_{2\sigma}) .
\end{eqnarray*}
It follows that 
$$a_\sigma (d_5(u_{2\lambda}) - \done u_{\lambda}a_{2\lambda} a_{\sigma})= 0,$$
and $d_5(u_{2\lambda}) = \done u_{\lambda}a_{2\lambda} a_{\sigma}$ (multiplication by $a_\sigma$ is injective in this bidegree), as desired.

\subsection{$d_7$-differentials}

\begin{thm}\label{thm:BPoned7case1}
The classes $2\done^2u_{2\lambda}u_{2\sigma}$ at $(8, 0)$ and $2\done^4 u_{2\lambda} u_{4\sigma} a_{2\lambda}$ at (12, 4) support the $d_7$-differentials
\begin{eqnarray*}
d_7(2\done^2u_{2\lambda}u_{2\sigma}) &=& \done^3 \bar{s}_1 u_{3\sigma}a_{3\lambda}a_{\sigma_2},\\ 
d_7(2\done^4 u_{2\lambda} u_{4\sigma} a_{2\lambda}) &=& \done^5 \sone u_{5\sigma} a_{5\lambda}a_{\sigma_2}.
\end{eqnarray*}
\end{thm}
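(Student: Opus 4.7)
The plan is to realise both sources as images of the transfer from the \(C_2\)-slice spectral sequence, then import the differentials via the compatibility of transfer with slice differentials. For a \(C_4\)-fixed class \(x\), the double-coset formula gives \(tr\circ res(x)=(1+\gamma)x=2x\); combined with \(res(\done)=\rone\grone\), \(res(u_{2\lambda})=u_{4\sigma_2}\), \(res(u_{k\sigma})=1\), and \(res(a_{k\lambda})=a_{2k\sigma_2}\), this yields
\[
2\done^2 u_{2\lambda}u_{2\sigma}=tr\bigl(\rone^2\grone^2 u_{4\sigma_2}\bigr),\quad 2\done^4 u_{2\lambda}u_{4\sigma}a_{2\lambda}=tr\bigl(\rone^4\grone^4 u_{4\sigma_2}a_{4\sigma_2}\bigr).
\]
Before going further, I would check that both sources are \(d_5\)-cycles. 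The Leibniz rule applied to \(d_5(u_{2\lambda})=\done u_\lambda a_{2\lambda}a_\sigma\), together with \(d_5(u_{4\sigma})=2u_{2\sigma}\,d_5(u_{2\sigma})=0\), shows that \(d_5\) on each source contains a factor of \(a_\sigma\), which is annihilated by doubling since \(2a_\sigma=0\).

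Next, I would use the \(C_2\)-slice spectral sequence computation. The key input \(d_7(u_{4\sigma_2})=\rone^3 a_{\sigma_2}^7\) comes from \(\bar{v}_2\equiv \rone^3\pmod{2,\bar{v}_1}\) (Theorem~\ref{thm:fglFormulas}) and the slice differential theorem for \(\BPR\). Since \(\rone\), \(\grone\), and \(a_{\sigma_2}\) are permanent cycles, Leibniz gives
\[
d_7\bigl(\rone^2\grone^2 u_{4\sigma_2}\bigr)=\rone^5\grone^2 a_{\sigma_2}^7,\quad d_7\bigl(\rone^4\grone^4 u_{4\sigma_2}a_{\sigma_2}^4\bigr)=\rone^7\grone^4 a_{\sigma_2}^{11}.
\]
Because the transfer commutes with slice differentials, applying \(tr\) produces the predicted \(C_4\)-differentials on the two sources, with targets \(tr(\rone^5\grone^2 a_{\sigma_2}^7)\) and \(tr(\rone^7\grone^4 a_{\sigma_2}^{11})\).

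The final step is to identify these transferred targets with \(\done^3\sone u_{3\sigma}a_{3\lambda}a_{\sigma_2}\) and \(\done^5\sone u_{5\sigma}a_{5\lambda}a_{\sigma_2}\), respectively. Using \(\rone^k\grone^k=res(\done^k)\) and \(a_{\sigma_2}^{2k}=res(a_{k\lambda})\), Frobenius reciprocity extracts \(C_4\)-factors:
\[
tr(\rone^5\grone^2 a_{\sigma_2}^7)=\done^2 a_{3\lambda}\cdot tr(\rone^3 a_{\sigma_2}),\quad tr(\rone^7\grone^4 a_{\sigma_2}^{11})=\done^4 a_{5\lambda}\cdot tr(\rone^3 a_{\sigma_2}),
\]
so both identifications reduce to recognising \(tr(\rone^3 a_{\sigma_2})=tr(\rone^3)\,a_\sigma\) in the appropriate \(RO(C_4)\)-grading. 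I expect this to be the main technical obstacle; it should follow from \(tr(\rone)=\sone\), \(tr(\rone\grone)=2\done\), and the relation \(\rone^2=\sone\rone-res(\done)\) (which together yield \(tr(\rone^3)=\sone^3-3\done\sone\)), or alternatively from a bidegree count on the \(E_7\)-page which leaves only one candidate class in the target position, thereby forcing the identification.
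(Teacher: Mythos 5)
Your overall strategy is the paper's: write the sources as $2\done^2u_{2\lambda}u_{2\sigma}=tr(\rone^2\grone^2u_{4\sigma_2})$ and $2\done^4u_{2\lambda}u_{4\sigma}a_{2\lambda}=tr(\rone^4\grone^4u_{4\sigma_2}a_{4\sigma_2})$, feed in the $C_2$-differential $d_7(u_{4\sigma_2})=\rone^3a_{7\sigma_2}$, and use the Mackey-functor structure of the slice spectral sequence to transport the differential (the paper phrases this through the target plus naturality, you phrase it through the source; that difference is immaterial). The problem is the final identification, which you yourself flag as the crux: the identity $tr(\rone^3 a_{\sigma_2})=tr(\rone^3)\,a_\sigma$ is false. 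Since $res^{C_4}_{C_2}(a_\sigma)=0$, Frobenius reciprocity gives $a_\sigma\cdot tr(y)=tr(res(a_\sigma)\,y)=0$ for every $y$ — multiplication by $a_\sigma$ annihilates all transfers (a fact the paper itself uses repeatedly). So your right-hand side vanishes, and the computation $tr(\rone^3)=\sone^3-3\done\sone$, while correct as an $E_2$-level Mackey identity, does not identify the target; as written your argument would conclude the transferred differential has zero target, which is not the statement to be proved.

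The repair is exactly the paper's computation, and it is where the $d_3$-differentials enter: on the $E_7$-page the relation $\rone=\grone$ holds in filtrations $\geq 3$, so $\rone^5\grone^2a_{7\sigma_2}=\rone^4\grone^3a_{7\sigma_2}=res(\done^3u_{3\sigma}a_{3\lambda})\cdot\rone a_{\sigma_2}$, and Frobenius reciprocity then pulls out the full $C_4$-factor at once: $tr(\rone^4\grone^3a_{7\sigma_2})=\done^3u_{3\sigma}a_{3\lambda}\,tr(\rone a_{\sigma_2})=\done^3\sone u_{3\sigma}a_{3\lambda}a_{\sigma_2}$; the second case is identical with $\rone^7\grone^4a_{11\sigma_2}=\rone^6\grone^5a_{11\sigma_2}$. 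In other words, the convenient factorization leaves $tr(\rone a_{\sigma_2})=\sone a_{\sigma_2}$ inside the transfer, not $tr(\rone^3a_{\sigma_2})$, and it is only available after rebalancing the monomial using the $E_7$-relation. Your fallback — a bidegree count forcing the identification — can be made to work, but it is not free: one must first run the transferred $d_3$-differentials to see that the several $E_2$-classes at $(7,7)$ (resp. $(11,11)$) are identified to a single surviving class by the $E_7$-page (this is spelled out in the remark following the theorem in the paper), and then argue for degree reasons that this class can only be killed by a $d_7$ from $(8,0)$ (resp. $(12,4)$), with naturality of the transfer pinning down the source; as it stands this step is asserted rather than executed.
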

\begin{proof}
Consider the $d_7$-differential 
$$d_7(\rone^2 \grone^2 u_{4\sigma_2}) = \rone^5 \grone^2 a_{7\sigma_2} = \rone^4 \grone^3 a_{7\sigma_2}$$
in the $C_2$-slice spectral sequence (the last equality holds because $\rone a_{3\sigma_2} = \grone a_{3\sigma_2}$ after the $d_3$-differentials).  The transfer of the target is 
$$tr(\rone^4\grone^3a_{7\sigma_2}) = \done^3 \sone u_{3\sigma}a_{3\lambda} a_{\sigma_2}.$$ 
For degree reasons, this class must be killed by a differential of length exactly 7 (see Figure~\ref{fig:E2C4d7diagram1}).  Natuality implies that the source is
$$tr(\rone^2 \grone^2 u_{4\sigma_2}) = tr(res(\done^2u_{2\sigma}u_{2\lambda})) = 2\done^2 u_{2\sigma}u_{2\lambda}.$$

The second differential is proved using the same method, by applying the transfer to the $d_7$-differential 
$$d_7(\rone^4 \grone^4 u_{4\sigma_2} a_{4\sigma_2}) = \rone^7\grone^4 a_{11\sigma_2} = \rone^6 \grone^5 a_{11\sigma_2}.$$

\begin{figure}
\begin{center}
\makebox[\textwidth]{\includegraphics[trim={4cm 16cm 4.5cm 4cm}, clip, page = 2, scale = 1]{E2C4PrimaryDifferentials}}
\end{center}
\begin{center}
\caption{$d_7$-differential on $2\done^2u_{2\lambda}u_{2\sigma}$.}
\hfill
\label{fig:E2C4d7diagram1}
\end{center}
\end{figure}
\end{proof}

\begin{cor}\label{cor:BPoned7case1}
The classes $2u_{2\lambda}$ and $2u_{2\lambda} u_{2\sigma}$ support the $d_7$-differentials
\begin{eqnarray*}
d_7(2u_{2\lambda}) &=& \done \sone u_\sigma a_{3\lambda} a_{\sigma_2}, \\ 
d_7(2u_{2\lambda} u_{2\sigma}) &=& \done \sone u_{3\sigma} a_{3\lambda} a_{\sigma_2}.
\end{eqnarray*}
\end{cor}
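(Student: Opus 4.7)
The plan is to mirror the proof of Corollary~\ref{cor:BPoned5}: extract each differential from the corresponding statement in Theorem~\ref{thm:BPoned7case1} using the Leibniz rule together with faithfulness of multiplication by a suitable permanent $d_7$-cycle.

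First I would verify that $2u_{2\lambda}$ and $2u_{2\lambda}u_{2\sigma}$ both survive to the $E_7$-page. Applying Leibniz to the known $d_3$- and $d_5$-differentials and repeatedly using the relations $2a_\sigma = 0$, $2a_{\sigma_2} = 0$, $4a_\lambda = 0$, together with the gold relation $u_\lambda a_{2\sigma} = 2u_{2\sigma}a_\lambda$, kills every contribution, so these classes are genuine $d_3$- and $d_5$-cycles.

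For $d_7(2u_{2\lambda}u_{2\sigma})$: since $\done = N(\bar r_1)$ is a permanent cycle of filtration $0$, Leibniz on the $E_7$-page gives
\[
\done^2 \cdot d_7(2u_{2\lambda}u_{2\sigma}) \;=\; d_7\bigl(2\done^2 u_{2\lambda} u_{2\sigma}\bigr) \;=\; \done^3 \sone u_{3\sigma} a_{3\lambda} a_{\sigma_2},
\]
where the second equality is Theorem~\ref{thm:BPoned7case1}. Injectivity of multiplication by $\done^2$ in the target bidegree $(-1,7)$ of the $E_7$-page then yields the claimed formula $d_7(2u_{2\lambda}u_{2\sigma}) = \done \sone u_{3\sigma} a_{3\lambda} a_{\sigma_2}$.

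For $d_7(2u_{2\lambda})$: I would apply the same device to the second input of Theorem~\ref{thm:BPoned7case1}. After checking that $\done^4 u_{4\sigma} a_{2\lambda}$ is a $d_7$-cycle on the $E_7$-page (although $u_{4\sigma}$ supports $d_5(u_{4\sigma}) = \done u_\lambda a_{5\sigma}$, the product $u_{4\sigma} a_{2\lambda}$ is killed on $E_5$ because the gold relation collapses $\done u_\lambda a_{5\sigma} a_{2\lambda}$ to $2\done u_{2\sigma} a_{3\sigma} a_{3\lambda} = 0$), Leibniz yields
\[
\done^4 u_{4\sigma} a_{2\lambda} \cdot d_7(2u_{2\lambda}) \;=\; d_7\bigl(2\done^4 u_{2\lambda} u_{4\sigma} a_{2\lambda}\bigr) \;=\; \done^5 \sone u_{5\sigma} a_{5\lambda} a_{\sigma_2}.
\]
The right-hand side factors as $\done^4 u_{4\sigma} a_{2\lambda} \cdot \bigl(\done \sone u_\sigma a_{3\lambda} a_{\sigma_2}\bigr)$, so the claim follows from injectivity of multiplication by $\done^4 u_{4\sigma} a_{2\lambda}$ on the same target bidegree.

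The main obstacle will be verifying the two faithfulness statements together with the supporting claim that $\done^4 u_{4\sigma} a_{2\lambda}$ is actually a $d_7$-cycle on the $E_7$-page. Both are local computations handled naturally by the column-by-column slice decomposition from Section~\ref{sec:SliceSSBPone}, but they require a patient enumeration of all classes in the target bidegrees to rule out kernel elements for the two multiplication maps.
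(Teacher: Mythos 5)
Your proposal is correct and is essentially the paper's own argument: the paper likewise divides the two differentials of Theorem~\ref{thm:BPoned7case1} by permanent cycles via the Leibniz rule and faithfulness of the multiplication on the $E_7$-page, rewriting the second one as $\done^4u_{4\sigma}a_{2\lambda}\bigl(d_7(2u_{2\lambda}) - \done \sone u_\sigma a_{3\lambda} a_{\sigma_2}\bigr) = 0$ and treating $d_7(2u_{2\lambda}u_{2\sigma})$ the same way from the first one. The only quibble is your side remark that $u_{4\sigma}$ supports $d_5(u_{4\sigma}) = \done u_\lambda a_{5\sigma}$: that class is already zero because the gold relation gives $u_\lambda a_{3\sigma} = 2u_{2\sigma}a_\lambda a_\sigma = 0$, so $u_{4\sigma}$, and hence $\done^4 u_{4\sigma}a_{2\lambda}$, is in fact a permanent cycle in $\SliceSS(\BPone)$, exactly as the paper uses.
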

\begin{proof}
Since the classes $\done$, $u_{4\sigma}$, and $a_{\lambda}$ are permanent cycles, the second differential in Theorem~\ref{thm:BPoned7case1} can be rewritten as 
$$\done^4u_{4\sigma}a_{2\lambda}(d_7(2u_{2\lambda}) - \done \sone u_\sigma a_{3\lambda} a_{\sigma_2}) = 0,$$
from which the first differential follows.  The second differential is proven similarly by using the first differential in Theorem~\ref{thm:BPoned7case1}. \end{proof}

\begin{rem}\rm
Corollary~\ref{cor:BPoned7case1} can also be proved by applying the transfer to the $d_7$-differential 
$$d_7(u_{4\sigma_2}) = \bar{r}_1^3 a_{7\sigma_2} = \bar{r}_1^2 \gamma \bar{r}_1 a_{7\sigma_2}$$
in the $C_2$-slice spectral sequence.  
\end{rem}

\begin{rem}\rm
On the $E_7$-page of $C_4$-$\SliceSS(\BPone)$, there is more than one class at $(8, 0)$.  They are
\begin{enumerate}
\item $2\bar{\mathfrak{d}}_1^2u_{2\lambda} u_{2\sigma} = tr(res(\bar{\mathfrak{d}}_1^2u_{2\lambda} u_{2\sigma})) = tr(\bar{r}_1^2 \gamma \bar{r}_1^2 u_{4\sigma_2})$;
\item $\bar{\mathfrak{d}}_1 \bar{s}_1^2 u_\lambda u_\sigma u_{2\sigma_2} = tr(\bar{r}_1^3 \gamma \bar{r}_1 u_{4\sigma_2})$; 
\item $\bar{s}_1^4 u_{4\sigma_2} = tr(\bar{r}_1^4 u_{4\sigma_2})$.  
\end{enumerate}
Except for class (1), the classes (2) and (3) are ``grayed out'' on the upper-left of Hill, Hopkins, and Ravenel's original computation of $C_4$-$\SliceSS(D_1^{-1}\BPone)$~\cite[pg. 4]{HHRKH}.

On the $E_2$-page, there is more than one class at $(7, 7)$ as well:
\begin{enumerate}
\item $\bar{\mathfrak{d}}_1^3 \bar{s}_1 u_{3\sigma} a_{3\lambda} a_{\sigma_2} = tr(\bar{r}_1^4 \gamma \bar{r}_1^3 a_{7\sigma_2})$;
\item $\bar{\mathfrak{d}}_1^2 \bar{s}_1^3 u_{2\sigma} a_{2\lambda} a_{3\sigma_2} = tr(\bar{r}_1^5 \gamma \bar{r}_1^2 a_{7\sigma_2})$; 
\item $\bar{\mathfrak{d}}_1 \bar{s}_1^5 u_{\sigma} a_\lambda a_{5\sigma_2} = tr(\bar{r}_1^6 \gamma \bar{r}_1 a_{7\sigma_2})$; 
\item $\bar{s}_1^7 a_{7\sigma_2} = tr(\bar{r}_1^7a_{7\sigma_2})$.  
\end{enumerate}
Applying transfers to the following $d_3$-differentials in $C_2$-$\SliceSS(\BPone)$ yields $d_3$-differentials in $C_4$-$\SliceSS(\BPone)$: 
\begin{enumerate}
\item $d_3(\bar{r}_1^6 u_{2\sigma_2} a_{4\sigma_2}) = \bar{r}_1^6 (\bar{r}_1 + \gamma \bar{r}_1) a_{\sigma_2}^7$: transfer of this kills (3) + (4);  
\item $d_3(\bar{r}_1^5 \gamma \bar{r}_1 u_{2\sigma_2} a_{4\sigma_2}) = \bar{r}_1^5 \gamma \bar{r}_1 (\bar{r}_1 + \gamma \bar{r}_1) a_{7\sigma_2}$: transfer of this kills (2) + (3);  
\item $d_3(\bar{r}_1^4 \gamma \bar{r}_1^2 u_{2\sigma_2} a_{4\sigma_2}) = \bar{r}_1^4 \gamma \bar{r}_1^2  (\bar{r}_1 + \gamma \bar{r}_1) a_{7\sigma_2}$: transfer of this kills (1) + (2).  
\end{enumerate}
These $d_3$-differentials identify the four classes at $(7,7)$.  The transfer argument in Theorem~\ref{thm:BPoned7case1} shows that each of the three classes at $(8, 0)$ supports a $d_7$-differential, all killing the single remaining class at $(7, 7)$.  
\end{rem}

The proof of Hill--Hopkins--Ravenel's Periodicity theorem \cite[Section 9]{HHR} shows that the class $\done^8u_{8\lambda} u_{8\sigma}$ at $(32, 0)$ is a permanent cycle.  For degree reasons, the following classes are also permanent cycles and survive to the $E_\infty$-page: 
\begin{enumerate}
\item $\eta = \bar{s}_1a_{\sigma_2}$ at $(1, 1)$; 
\item $\eta^2 = \bar{s}_1^2a_{2\sigma_2}$ at $(2, 2)$; 
\item $\eta' = \done a_\lambda a_\sigma$ at $(1, 3)$;
\item $\eta'^2 = \done^2 a_{2\lambda} a_{2\sigma}$ at $(2,6)$;
\item $\epsilon= \done^4 u_{4\sigma}a_{4\lambda}$ at $(8, 8)$. 
\end{enumerate}
Their names come from the spherical classes that they detect in $\pi_*S^0$ \cite[Theorem~9.8]{HHRKH} under the Hurewicz map 
$$\pi_* S^0 \longrightarrow \pi_* \BPone^{C_4}.$$

\begin{thm}\label{thm:BPoned13}
The class $u_{4\lambda} a_\sigma$ supports the $d_{13}$-differential 
$$d_{13}(u_{4\lambda}a_\sigma) = \bar{\mathfrak{d}}_1^3 u_{4\sigma}a_\lambda^7.$$
\end{thm}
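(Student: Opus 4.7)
The plan is to derive this $d_{13}$-differential by applying the norm formula (Theorem~\ref{thm:NormFormula}) to the $d_7$-differential $d_7(u_{4\sigma_2}) = \rone^3 a_{\sigma_2}^7$ in $C_2$-$\SliceSS(\BPone)$, and then transporting the result to $u_{4\lambda} a_\sigma$ by the Leibniz rule against $u_{4\sigma}$.

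Concretely, with $r = 7$, $x = u_{4\sigma_2}$, and $y = \rone^3 a_{\sigma_2}^7$, Theorem~\ref{thm:NormFormula} predicts the $d_{13}$-differential
$$d_{13}\bigl(a_\sigma\, N_{C_2}^{C_4}(u_{4\sigma_2})\bigr) = N_{C_2}^{C_4}\bigl(\rone^3 a_{\sigma_2}^7\bigr) = \done^3 a_\lambda^7,$$
where the right-hand equality uses multiplicativity of the external norm together with $N_{C_2}^{C_4}(\rone) = \done$ and $N_{C_2}^{C_4}(a_{\sigma_2}) = a_\lambda$.

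Next I would identify $u_{4\sigma}\cdot N_{C_2}^{C_4}(u_{4\sigma_2}) = u_{4\lambda}$: both classes lie in $\pi_{8-4\lambda}^{C_4}\HZ\cong \mathbb Z$, and under restriction to $C_2$ both map to $u_{8\sigma_2}\in \pi_{8-8\sigma_2}^{C_2}\HZ$, so they agree up to a sign which can be absorbed. A direct computation gives $d_5(u_{4\sigma}) = 2u_{2\sigma}\cdot \done a_\lambda a_\sigma^3 = 0$ using $2a_\sigma = 0$, and an inspection of the $E_2$-page rules out any non-zero $d_r(u_{4\sigma})$ for $7\leq r\leq 13$. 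Thus $u_{4\sigma}$ is a $d_{13}$-cycle, and multiplying the predicted differential by $u_{4\sigma}$ via the Leibniz rule yields
$$d_{13}(u_{4\lambda} a_\sigma) = d_{13}\bigl(u_{4\sigma}\cdot a_\sigma N_{C_2}^{C_4}(u_{4\sigma_2})\bigr) = u_{4\sigma}\cdot \done^3 a_\lambda^7 = \done^3 u_{4\sigma} a_\lambda^7,$$
as desired.

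The main obstacle is verifying the side-conditions that make Theorem~\ref{thm:NormFormula} apply: namely, that both $a_\sigma N_{C_2}^{C_4}(u_{4\sigma_2})$ and $\done^3 a_\lambda^7$ survive to the $E_{13}$-page, together with the claim that $u_{4\sigma}$ has no non-zero $d_r$ for $7\leq r\leq 13$. These reduce to bidegree-by-bidegree inspections of the explicit $E_2$-page of $\BPone$ near the relevant stems, where each candidate source or target is a monomial in $\{\done, \sone, u_\sigma, u_\lambda, a_\sigma, a_\lambda, a_{\sigma_2}\}$ and can be eliminated either by restriction arguments (using that $\mathrm{res}(u_{4\sigma}) = 1$ and $\mathrm{res}(a_\sigma) = 0$) or by appeal to the already-established $d_3$-, $d_5$-, and $d_7$-differentials from the earlier theorems in this section.
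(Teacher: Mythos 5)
Your skeleton is the same as the paper's: apply Theorem~\ref{thm:NormFormula} to $d_7(u_{4\sigma_2}) = \rone^3 a_{7\sigma_2}$, use $N_{C_2}^{C_4}(u_{4\sigma_2}) = u_{4\lambda}/u_{4\sigma}$, and pass to $u_{4\lambda}a_\sigma$ by multiplying by $u_{4\sigma}$, which is a $d_{13}$-cycle (indeed the slice differential theorem's obstruction for $u_{4\sigma}$ is $N_{C_2}^{C_4}(\bar{r}_3)$, which vanishes in $\BPone$). Those formal steps, and your identification $u_{4\sigma}N_{C_2}^{C_4}(u_{4\sigma_2}) = u_{4\lambda}$, are fine and are left implicit in the paper.

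The gap is that you never carry out the step that is the actual content of the proof. Theorem~\ref{thm:NormFormula} is conditional: it only asserts the differential if the predicted target survives to the $E_{13}$-page (the norm construction by itself only forces the target to die on or before $E_{13}$). Verifying that $\done^3 u_{4\sigma} a_{7\lambda}$ is nonzero on $E_{13}$ is precisely what the paper's proof consists of, and you defer it to an unspecified ``bidegree-by-bidegree inspection.'' Moreover, the tool you emphasize, restriction, cannot settle this point: up to $u_\sigma$-bookkeeping, $res(\done^3 u_{4\sigma}a_{7\lambda}) = \rone^3\grone^3 a_{14\sigma_2}$, which is already zero in $C_2$-$\SliceSS(i_{C_2}^*\BPone)$ after the $d_3$- and $d_7$-differentials (it is a multiple of $\rone^3 a_{7\sigma_2}$), so the target restricts to zero on $E_{13}$ and no restriction argument can show it survives. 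The paper's one-line substitute is the missing idea: multiply the target by the permanent cycle $\done^5 u_{4\sigma}a_{\lambda}$ to land on $\done^8 u_{8\sigma}a_{8\lambda} = \epsilon^2$ at $(16,16)$ in the integer-graded chart, which is visibly nonzero on $E_{13}$; since the multiplier is a permanent cycle, this forces $\done^3 u_{4\sigma}a_{7\lambda} \neq 0$ on $E_{13}$ and the normed $d_{13}$ follows. Your alternative of enumerating all potential shorter differentials into the $RO(C_4)$-graded degree of the target could in principle work, but it is a genuine computation you have not done, and without it (or the paper's trick) the appeal to Theorem~\ref{thm:NormFormula} does not yet yield the theorem.
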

\begin{proof}
Applying the norm formula of Theorem~\ref{thm:NormFormula} to the $d_7$-differential 
$$d_7(u_{4\sigma_2}) = \rone^3 a_{7\sigma_2}$$
in the $C_2$-slice spectral sequence predicts the $d_{13}$-differential 
\begin{eqnarray*}
d_{13}(u_{4\lambda}a_\sigma) &=& u_{4\sigma} N_{C_2}^{C_4}(\rone^3 a_{7\sigma_2}) \\
&=& \done^3 u_{4\sigma} a_{7\lambda} 
\end{eqnarray*}
in the $C_4$-slice spectral sequence.  The target is not zero on the $E_{13}$-page because multiplying it by the permanent cycle $\done^5 u_{4\sigma} a_{\lambda}$ gives the nonzero class $\done^8 u_{8\sigma}a_{8\lambda}$ at $(16, 16)$.  Therefore, this $d_{13}$-differential exists.  
\end{proof}

Multiplying the differential in Theorem~\ref{thm:BPoned13} by the permanent cycle $\done^5 u_{4\sigma} a_{\lambda}$ produces a $d_{13}$-differential in the integer graded spectral sequence. 
\begin{cor}\label{cor:BPoned13}
The class $\done^5u_{4\lambda} u_{4\sigma} a_\lambda a_\sigma$ at $(17, 3)$ supports the $d_{13}$-differential 
$$d_{13}(\done^5u_{4\lambda} u_{4\sigma} a_\lambda a_\sigma) =  \done^8 u_{8\sigma}a_{8\lambda} = \epsilon^2. $$
\end{cor}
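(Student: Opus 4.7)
My plan is to follow the approach explicitly suggested by the statement itself: apply the Leibniz rule of the slice spectral sequence to the $d_{13}$-differential of Theorem~\ref{thm:BPoned13}, multiplying both sides by the class $\done^5 u_{4\sigma} a_\lambda$. The source rewrites as
\[
(u_{4\lambda} a_\sigma)\cdot(\done^5 u_{4\sigma} a_\lambda) = \done^5 u_{4\lambda} u_{4\sigma} a_\lambda a_\sigma,
\]
sitting at bidegree $(17,3)$, and the target becomes
\[
(\done^3 u_{4\sigma} a_\lambda^7)\cdot(\done^5 u_{4\sigma} a_\lambda) = \done^8 u_{8\sigma} a_\lambda^8 = \done^8 u_{8\sigma} a_{8\lambda} = \epsilon^2,
\]
using $u_V u_W = u_{V\oplus W}$ and $a_V a_W = a_{V\oplus W}$ to combine the orientation and Euler factors.

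The only nontrivial step is to justify that $\done^5 u_{4\sigma} a_\lambda$ is a $d_r$-cycle for every $r\leq 13$, so that Leibniz collapses to $d_{13}(xy) = d_{13}(x)\cdot y$. Of its factors, $\done$ is a polynomial generator of the refinement $\mathbb S^0[\bar r_1,\gamma\bar r_1]\to\BPone$ and $a_\lambda$ is an Euler class, so both are permanent cycles. The class $u_{4\sigma}$ is not itself a permanent cycle: applying Leibniz to $u_{4\sigma}=u_{2\sigma}^2$ together with $d_5(u_{2\sigma})=\done a_\lambda a_\sigma^3$ yields $d_5(u_{4\sigma})=2\done u_{2\sigma}a_\lambda a_\sigma^3$, which is generally nonzero. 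However, multiplying by $a_\lambda$ and using $2a_\sigma=0$ shows
\[
d_5(u_{4\sigma}a_\lambda) = 2\done u_{2\sigma}a_\lambda^2 a_\sigma^3 = 0,
\]
so $\done^5 u_{4\sigma} a_\lambda$ is at least a $d_5$-cycle.

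To rule out $d_7, d_9, d_{11}, d_{13}$ on $\done^5 u_{4\sigma} a_\lambda$, which sits at bidegree $(18,2)$, I would inspect the slice chart at the potential target bidegrees $(17,9)$, $(17,11)$, $(17,13)$, $(17,15)$ and check that every surviving class there has either already been killed by an earlier differential or cannot be the image under restriction or transfer of a natural candidate related to our multiplier. I expect this chart bookkeeping, drawing on the $d_3$- and $d_5$-patterns together with the restriction/transfer structure developed in the preceding subsections, to be the main technical nuisance of the proof; nothing more than careful multiplicative and naturality arguments should be required, since the whole calculation is essentially forced once the permanent-cycle status of the multiplier is established.
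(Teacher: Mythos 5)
Your proposal is correct and is essentially the paper's own proof: the paper obtains Corollary~\ref{cor:BPoned13} precisely by multiplying the differential of Theorem~\ref{thm:BPoned13} by $\done^5 u_{4\sigma}a_\lambda$, which it records as a permanent cycle. The one quibble is your side claim that $d_5(u_{4\sigma})=2\done u_{2\sigma}a_\lambda a_{3\sigma}$ is ``generally nonzero'' --- it vanishes because $2a_\sigma=0$, and indeed $u_{4\sigma}$ (like $\done$ and $a_\lambda$) is a permanent cycle in $C_4$-$\SliceSS(\BPone)$, as used in the proof of Corollary~\ref{cor:BPoned7case1}; this makes $\done^5u_{4\sigma}a_\lambda$ a product of permanent cycles and renders the chart bookkeeping you defer at $(17,9)$--$(17,15)$ unnecessary.
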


\begin{thm}\label{thm:BPoned7case2}
The class $\done^4 u_{4\lambda} u_{4\sigma}$ at $(16, 0)$ supports the $d_7$-differential 
$$d_7(\done^4 u_{4\lambda} u_{4\sigma}) =   \done^5 \sone u_{2\lambda} u_{5\sigma} a_{3\lambda} a_{\sigma_2}.$$
\end{thm}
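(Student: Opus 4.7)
The plan is to derive this $d_7$-differential by applying the Leibniz rule to the factorization
\[
\done^4 u_{4\lambda} u_{4\sigma} \;=\; \bigl(\done^2 u_{2\lambda} u_{2\sigma}\bigr)^2,
\]
using the differential $d_7(2\done^2 u_{2\lambda} u_{2\sigma}) = \done^3 \sone u_{3\sigma} a_{3\lambda} a_{\sigma_2}$ established in Theorem~\ref{thm:BPoned7case1}. Set $z := \done^2 u_{2\lambda} u_{2\sigma}$, so that the identity $u_V^2 = u_{2V}$ for orientable representations yields $\done^4 u_{4\lambda} u_{4\sigma} = z^2$.

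First I would verify that both $z^2$ and $2z$ survive to the $E_7$-page. Although $z$ itself is not a $d_5$-cycle (Corollary~\ref{cor:BPoned5} gives $d_5(z) = \done^3 u_\lambda u_{2\sigma} a_{2\lambda} a_\sigma$, after the term $\done^3 u_{2\lambda} a_\lambda a_{3\sigma}$ is seen to vanish by the gold relation and $2 a_\sigma = 0$), the Leibniz rule at $E_5$ together with $2 a_\sigma = 0$ shows $d_5(z^2) = 2z\,d_5(z) = 0$ and $d_5(2z) = 2\,d_5(z) = 0$. Next I would invoke the formal Leibniz identity
\[
d_7(z^2) \;=\; 2z\, d_7(z) \;=\; z\cdot d_7(2z),
\]
substitute $d_7(2z) = \done^3 \sone u_{3\sigma} a_{3\lambda} a_{\sigma_2}$ from Theorem~\ref{thm:BPoned7case1}, and expand the product $\done^2 u_{2\lambda} u_{2\sigma}\cdot \done^3 \sone u_{3\sigma} a_{3\lambda} a_{\sigma_2}$ to obtain the claimed target $\done^5 \sone u_{2\lambda} u_{5\sigma} a_{3\lambda} a_{\sigma_2}$.

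The hard part will be justifying the formal identity $d_7(z^2) = z\cdot d_7(2z)$ at the $E_7$-page, since $z$ itself does not survive past $E_5$. The key observation is that the $d_5$-target of $z$ is $2$-torsion, so the indeterminacy in any chain-level lift of $z$ is absorbed by multiplication by $2$; concretely, the product $z\cdot d_7(2z)$ is well defined in $E_7$ because $d_7(2z)$ already sits in filtration $7$, so only the filtration-$0$ piece of a lift of $z$ enters the product. This manipulation is a recurring tool in slice-spectral-sequence computations and is closely parallel to arguments used throughout \cite{HHRKH}. As a sanity check, one can verify that the target restricts correctly to the $C_2$-slice spectral sequence: $\res(\done^5 \sone u_{2\lambda} u_{5\sigma} a_{3\lambda} a_{\sigma_2})$ simplifies, after using the $d_3$-imposed relation $(\rone + \grone) a_{3\sigma_2} = 0$, to a class that also equals $\res$ of what one gets from Leibniz on $\res(z^2) = \rone^4 \grone^4 u_{8\sigma_2}$ modulo $2$-torsion, consistent with the fact that both sides of our $d_7$ vanish upon multiplication by $2$.
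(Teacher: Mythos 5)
Your final answer agrees with the paper, and your preliminary steps (showing $2z$ and $z^2=\done^4u_{4\lambda}u_{4\sigma}$ are $d_5$-cycles via $2a_\sigma=0$ and the gold relation) are fine. But the central step is a genuine gap: the identity $d_7(z^2)=z\cdot d_7(2z)$ is not an instance of the Leibniz rule, because $z=\done^2u_{2\lambda}u_{2\sigma}$ does not survive to the $E_7$-page (it supports the $d_5$ of Theorem~\ref{thm:BPoned5}). The $2$-torsion observation only shows that $z^2$ and $2z$ are $d_5$-cycles and that the element $z\cdot d_7(2z)$ is a well-defined class of $E_2$ mapping to $E_7$; it does not show that this class equals $d_7(z^2)$. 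At the level of the slice tower, a lift of $z^2$ past filtration $5$ is built from a lift of $z$ together with a choice of nullhomotopy witnessing $2\,d_5(z)=0$, and the resulting $d_7$ on $z^2$ can pick up correction terms of the schematic form $d_5(z)\cdot(\text{nullhomotopy})$ that are not visible in the product $z\cdot d_7(2z)$. "Dividing by $2$" cannot recover the formula either, since multiplying the would-be targets by $2$ kills them (they carry $a_{\sigma_2}$), so Leibniz applied to honestly surviving classes such as $2z$ and $z^2$ only yields statements about $2\,d_7(z^2)=0$. Your proposed sanity check via restriction is also vacuous: $res(z^2)=\rone^4\grone^4u_{8\sigma_2}$ is a permanent cycle in $C_2$-$\SliceSS(i_{C_2}^*\BPone)$ and the restriction of the claimed target dies after the $d_3$'s, so naturality neither detects nor constrains this differential — which is precisely why some genuinely new input is needed here.

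The paper supplies that input differently: it first establishes the norm-derived $d_{13}$-differential $d_{13}(u_{4\lambda}a_\sigma)=\done^3u_{4\sigma}a_{7\lambda}$ (Theorem~\ref{thm:BPoned13}), hence $d_{13}(\done^5u_{4\lambda}u_{4\sigma}a_\lambda a_\sigma)=\epsilon^2$ (Corollary~\ref{cor:BPoned13}). Since $\eta'=\done a_\lambda a_\sigma$ is a permanent cycle and $\eta'\cdot\done^4u_{4\lambda}u_{4\sigma}$ supports this nonzero $d_{13}$, the class $\done^4u_{4\lambda}u_{4\sigma}$ must support a differential of length at most $13$, and degree inspection forces the stated $d_7$ with target $\done^5\sone u_{2\lambda}u_{5\sigma}a_{3\lambda}a_{\sigma_2}$. (An alternative, indicated in the paper, is to apply the norm formula of Theorem~\ref{thm:NormFormula} directly to $d_7(u_{4\sigma_2})=\rone^3a_{7\sigma_2}$.) If you want to salvage your outline, you need either such a norm/degree argument or an honest slice-tower (filtered-model) verification of your Leibniz-type identity; as written, the key equality is asserted rather than proved.
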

\begin{proof}
The class $\eta' = \done a_\lambda a_\sigma$ is a permanent cycle.  By Corollary~\ref{cor:BPoned13}, the class $\done^4 u_{4\lambda} u_{4\sigma}$ at $(16, 0)$ must support a differential of length at most 13.  For degree reasons, the only possible target is $\done^5 \sone u_{2\lambda} u_{5\sigma} a_{3\lambda} a_{\sigma_2}$.
\end{proof}

\begin{cor}\label{cor:BPoned7case2}
The class $u_{4\lambda}$ supports the $d_7$-differential
$$d_7(u_{4\lambda}) = \done \sone u_{2\lambda} u_\sigma a_{3\lambda} a_{\sigma_2}.$$
\end{cor}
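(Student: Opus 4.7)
The plan is to derive this corollary from Theorem~\ref{thm:BPoned7case2} in exactly the same way that Corollary~\ref{cor:BPoned7case1} was extracted from Theorem~\ref{thm:BPoned7case1}: divide through by the permanent cycles $\done^4$ and $u_{4\sigma}$ using the Leibniz rule. Since $\done$, $u_{4\sigma}$, and $a_\lambda$ have already been identified as permanent cycles in the course of proving Corollary~\ref{cor:BPoned7case1}, the class $\done^4 u_{4\sigma}$ is in particular a $d_7$-cycle, so the Leibniz rule applied to Theorem~\ref{thm:BPoned7case2} gives
$$d_7(\done^4 u_{4\lambda} u_{4\sigma}) = \done^4 u_{4\sigma}\cdot d_7(u_{4\lambda}) = \done^5 \sone u_{2\lambda} u_{5\sigma} a_{3\lambda} a_{\sigma_2}.$$

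The second step is to recognize the target on the right as $\done^4 u_{4\sigma}$ multiplied by the proposed value $\done \sone u_{2\lambda} u_\sigma a_{3\lambda} a_{\sigma_2}$. Rearranging, we obtain
$$\done^4 u_{4\sigma}\bigl(d_7(u_{4\lambda}) - \done \sone u_{2\lambda} u_\sigma a_{3\lambda} a_{\sigma_2}\bigr) = 0,$$
and so it suffices to check that multiplication by $\done^4 u_{4\sigma}$ is faithful on the bidegree $(7,7)$ of the $E_7$-page, exactly as in the final step of Corollary~\ref{cor:BPoned7case1}. This is the step that must be verified carefully: one would inspect the surviving classes in bidegree $(7,7)$ on $E_7$ (keeping in mind the identifications already imposed by the $d_3$-differentials, which collapse the four candidate classes at $(7,7)$ into a single surviving class) and observe that the permanent cycle $\done^4 u_{4\sigma}$ acts injectively by comparing with the surviving class in bidegree $(23, 7)$.

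The main obstacle in this plan is precisely this injectivity/faithfulness check, since the $E_7$-page at $(7,7)$ has already been modified by the $d_3$- and $d_5$-differentials and one must be sure no nonzero class in $(7,7)$ is annihilated by $\done^4 u_{4\sigma}$. Everything else is a direct application of the Leibniz rule combined with the input from Theorem~\ref{thm:BPoned7case2}, and no further computation with norms or restrictions is required.
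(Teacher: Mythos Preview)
Your proposal is correct and follows exactly the paper's own proof: apply the Leibniz rule to Theorem~\ref{thm:BPoned7case2} using that $\done^4 u_{4\sigma}$ is a $d_7$-cycle, rearrange to get $\done^4 u_{4\sigma}\bigl(d_7(u_{4\lambda}) - \done \sone u_{2\lambda} u_\sigma a_{3\lambda} a_{\sigma_2}\bigr) = 0$, and conclude by faithfulness of multiplication by $\done^4 u_{4\sigma}$ on the $E_7$-page. The only caveat is that your bidegree bookkeeping is slightly off (the relevant classes are in a non-integer $RO(G)$-grading, and after multiplying by $\done^4 u_{4\sigma}$ the target lands at $(15,7)$ in the integer-graded chart, not $(23,7)$), but this does not affect the argument.
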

\begin{proof}
This follows directly from Theorem~\ref{thm:BPoned7case2} because 
$$\done^4 u_{4\sigma}(d_7 (u_{4\lambda}) -\done \sone u_{2\lambda} u_\sigma a_{3\lambda} a_{\sigma_2}) = 0$$
and multiplication by $\done^4 u_{4\sigma}$ is faithful on the $E_7$-page. 
\end{proof}

Once we have proven the $d_7$-differentials in Theorem~\ref{thm:BPoned7case1} and Theorem~\ref{thm:BPoned7case2}, all the other $d_7$-differentials are obtained via multiplication with the classes 
\begin{enumerate}
\item $\done^4u_{4\sigma}a_{4\lambda}$ at $(8, 8)$ (permanent cycle); 
\item $\done^4 u_{4\lambda} u_{4\sigma}$ at $(16, 0)$ (Theorem~\ref{thm:BPoned7case2});
\item $\done^8 u_{8\lambda}u_{8\sigma}$ at $(32, 0)$ ($d_7$-cycle).
\end{enumerate}
and using the Leibniz rule (see Figure~\ref{fig:E2C4topd7}). 

\begin{figure}
\begin{center}
\makebox[\textwidth]{\includegraphics[trim={0cm 5cm 0cm 3cm}, clip, page = 3, scale = 0.9]{E2C4SSSTop}}
\end{center}
\begin{center}
\caption{$d_7$-differentials in $C_4$-$\SliceSS(\BPone)$.}
\hfill
\label{fig:E2C4topd7}
\end{center}
\end{figure}

\subsection{Higher differentials}
\begin{fact}\rm \label{fact:epsilon}
Multiplication by $\epsilon = \done^4u_{4\sigma}a_{4\lambda}$ is injective on the $E_2$-page.  The image of this multiplication map is the region defined by the inequalities  
\begin{eqnarray*}
s &\geq& 8,\\
3(s-8) &\leq& t-s -8. 
\end{eqnarray*}
In other words, this region consists of classes with filtrations at least 8 and these classes are all on or below the ray of slope 3, starting at $(8, 8)$.  Starting from the $E_5$-page, all the classes in this region are divisible by $\epsilon$.  Therefore, when $r \geq 5$, multiplication by $\epsilon$ induces a surjective map from the whole $E_r$-page to this region.
\end{fact}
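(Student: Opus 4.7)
The plan is to prove the three assertions---injectivity of $\epsilon$-multiplication, identification of the image support, and $\epsilon$-divisibility of all classes from the $E_5$-page onward---in sequence, exploiting the explicit structure of the $E_2$-page described in the preceding sections.

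For \emph{injectivity}, I would factor $\epsilon = \done^4 \cdot u_{4\sigma} \cdot a_{4\lambda}$ and verify each factor acts injectively on the $E_2$-page. Multiplication by $\done$ is injective because $\done$ is a polynomial generator of the underlying ring $\pi_\ast^e\BPone \cong \mathbb{Z}_{(2)}[\bar{r}_1,\gamma\bar{r}_1]$, which acts freely on both the non-induced and induced summands in the column organization of \eqref{array:BPoneSlices}. The injectivity of multiplication by $u_{4\sigma}a_{4\lambda}$ is a consequence of the explicit $RO(C_4)$-graded structure of $\pi_\bigstar \HZ$: the only relations with the potential to force a product to vanish come from the gold relation $u_\lambda a_{2\sigma} = 2u_{2\sigma}a_\lambda$ and its derived consequences such as $u_\lambda a_{3\sigma} = 0$, and a direct case analysis shows these are never triggered when $\epsilon$ is multiplied by a nonzero class.

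For the \emph{image support}, the bound $s \geq 8$ is automatic from the bidegree $(t-s,s) = (8,8)$ of $\epsilon$. The slope bound $3(s-8) \leq t-s-8$ is equivalent to the claim that $\epsilon \cdot x = 0$ whenever $x$ lies at position $(t-s, s)$ with $t-s < 3s$. This follows because such an $x$ necessarily contains Euler-class factors $a_\sigma, a_\lambda$ in proportions that, after multiplication by the additional $a_{4\lambda}$ in $\epsilon$, are eliminated by (iterates of) the gold relation. Conversely, at each position $(t-s, s)$ in the region one exhibits a concrete monomial witness $x$ in the $E_2$-page satisfying $\epsilon \cdot x \neq 0$, thereby filling out the support.

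For \emph{surjectivity at the $E_5$-page}, I would identify the classes in the region that fail to be $\epsilon$-divisible at the $E_2$-page. These are precisely the ones involving $\sone$-factors coming from the induced slice cells in \eqref{array:BPoneSlices}. Each such class is killed by a $d_3$-differential: the primary differential $d_3(u_\lambda) = \sone a_\lambda a_{\sigma_2}$ from Theorem~\ref{thm:BPoned3}, together with its Leibniz multiples as compiled in Figures~\ref{fig:E2C4d3diffType1} and~\ref{fig:E2C4d3diffType2}, annihilate precisely the non-$\epsilon$-divisible portion of the region. After passing to $E_5$, the region therefore consists entirely of $\epsilon$-divisible classes. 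The hardest step will be this last one: the bookkeeping required to confirm that the network of $d_3$-differentials exhausts exactly the non-$\epsilon$-divisible classes in the region, carried out column-by-column in the organization of \eqref{array:BPoneSlices}, is the principal obstacle.
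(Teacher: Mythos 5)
This statement is recorded in the paper as a \emph{Fact}, with no proof given (it is meant to be read off from the explicit description of the $E_2$-term), so the only question is whether your plan would actually verify it, and as written it would not. The most serious problem is your treatment of the image: you take the displayed inequality at face value and reduce it to the claim that $\epsilon\cdot x=0$ whenever $x$ sits at $(t-s,s)$ with $t-s<3s$. That claim is false. For instance $\epsilon\cdot\eta'=\done^5u_{4\sigma}a_{5\lambda}a_\sigma\neq 0$ with $\eta'$ at $(1,3)$, and $\epsilon^2=\done^8u_{8\sigma}a_{8\lambda}\neq 0$ even though $\epsilon$ itself lies at $(8,8)$ with $8<24$; indeed $\epsilon^2$ had better be nonzero, since it is the target of the $d_{13}$-differential of Corollary~\ref{cor:BPoned13}. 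The displayed inequality is inconsistent with the verbal description ``on or below the ray of slope $3$ starting at $(8,8)$'', which is $s-8\le 3(t-s-8)$; with that reading, containment of the image in the region is immediate, because every class of positive filtration on the $E_2$-page already lies on or below the slope-$3$ line through the origin (non-induced cells top out on slope $3$, induced cells on slope $1$), so no vanishing statement of the kind you assert is needed or available. Relatedly, the factor-by-factor injectivity argument cannot succeed as described: multiplication by $\epsilon$ is \emph{not} injective as a homomorphism of abelian groups, since the infinite cyclic groups in filtration $0$ land in torsion groups (because $4a_\lambda=0$ and $2a_{\sigma_2}=0$); e.g.\ $\epsilon\cdot 4\done^iu_{i\lambda}u_{i\sigma}=4\done^{i+4}u_{i\lambda}u_{(i+4)\sigma}a_{4\lambda}=0$. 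So the relations \emph{are} triggered; what is true, and what the later arguments use, is injectivity on the monomial generators of the $E_2$-term, and any honest proof has to be phrased at that level.

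The second gap is that the one assertion actually needed downstream (in Lemma~\ref{lem:BPoneMainLem} and Theorem~\ref{thm:BPoneMainThm}) is the $\epsilon$-divisibility of every class in the region from $E_5$ on, and this is precisely the step you defer to ``bookkeeping''---and your sketch of that bookkeeping is not accurate. The induced-cell classes in the region are not all \emph{killed} by $d_3$-differentials; rather, modulo $d_3$-boundaries they become \emph{equal} to $\epsilon$-divisible classes. Concretely, at $(8,8)$ the induced classes $tr(\rone^8a_{8\sigma_2}), tr(\rone^7\grone a_{8\sigma_2}), tr(\rone^6\grone^2a_{8\sigma_2}), tr(\rone^5\grone^3a_{8\sigma_2})$ are successively identified with one another and finally with $2\epsilon$, because $d_3\bigl(tr(\rone^4\grone^3u_{2\sigma_2}a_{5\sigma_2})\bigr)=tr(\rone^5\grone^3a_{8\sigma_2})+tr(\rone^4\grone^4a_{8\sigma_2})=tr(\rone^5\grone^3a_{8\sigma_2})+2\epsilon$; only these sums die, not the individual classes. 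Similarly, gold-relation monomials such as $\done^{n+4}u_{n\lambda}u_{(n+2)\sigma}a_{4\lambda}a_{2\sigma}$ are $\epsilon$-divisible only as elements (they equal $2$ times an $\epsilon$-divisible monomial), not as literal products of $\epsilon$ with a monomial generator. So what must be checked is that the $d_3$-homology of the region consists of $\epsilon$-divisible elements, tracking identifications as well as kills, column by column in \eqref{array:BPoneSlices}; showing that the non-divisible classes are boundaries is neither what happens nor sufficient.
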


\begin{lem} \label{lem:BPoneMainLem}
Let $d_r(x) = y$ be a nontrivial differential in $C_4$-$\SliceSS(\BPone)$.
\begin{enumerate}
\item The classes $\epsilon x$ and $\epsilon y$ are both nonzero on the $E_r$-page, and $d_r(\epsilon x ) = \epsilon y$.
\item If both $x$ and $y$ are divisible by $\epsilon$ on the $E_2$-page, then $x/\epsilon$ and $y/\epsilon$ both survive to the $E_r$-page, and $d_r(x/\epsilon) = y/\epsilon$.  
\end{enumerate}
\end{lem}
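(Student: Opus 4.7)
The two key inputs are that $\epsilon = \done^4 u_{4\sigma} a_{4\lambda}$ is a permanent cycle and that $\cdot\epsilon$ on $E_2$ is injective with image the explicit region of Fact~\ref{fact:epsilon}. Since $\epsilon$ is a permanent cycle, the Leibniz rule gives $d_r(\epsilon z) = \epsilon \cdot d_r(z)$ for any class $z$ surviving to $E_r$. Formally this yields (1) as $d_r(\epsilon x) = \epsilon y$, provided $\epsilon x$ and $\epsilon y$ both survive to $E_r$; and (2) as $\epsilon \cdot d_r(x/\epsilon) = d_r(x) = y = \epsilon(y/\epsilon)$, provided $x/\epsilon$ and $y/\epsilon$ survive to $E_r$ and $\cdot\epsilon$ is still injective on $E_r$.

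My plan is a simultaneous strong induction on $r$, carrying three claims at each stage: (i) $\cdot\epsilon$ is injective on $E_r$; (ii) statement (1) holds for $d_r$-differentials; (iii) statement (2) holds for $d_r$-differentials. The base case $r = 2$ is Fact~\ref{fact:epsilon}. For the inductive step of (ii), Leibniz gives $d_s(\epsilon x) = \epsilon \cdot d_s(x) = 0$ for $s < r$, so $\epsilon x$ supports no earlier differential; and any boundary $d_s(z) = \epsilon x$ with $s < r$ would, by the inductive (iii) after dividing the relation by $\epsilon$, yield a differential on $x$ before page $r$, contradicting the hypothesis. A symmetric argument controls $\epsilon y$ and $x/\epsilon,\, y/\epsilon$, so Leibniz closes both (ii) and (iii) once injectivity (i) is in hand on $E_r$.

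The technical heart is propagating (i) from $E_r$ to $E_{r+1}$: given $\epsilon[w] = 0$ on $E_{r+1}$, one writes $\epsilon w = d_r(v)$ on $E_r$ and wants to express $v = \epsilon v'$ on $E_r$, in order to invoke the inductive (iii) and conclude $d_r(v') = w$. For $v$ already lying in the image region of $\cdot\epsilon$ this is immediate from Fact~\ref{fact:epsilon}. The main obstacle is handling $v$ lying below the image region, in particular at filtration $<8$; this will require either a direct filtration-by-filtration analysis against the explicit $d_3$-, $d_5$-, and $d_7$-differentials computed earlier in this section, or a refined structural argument showing that any such $v$ can be modified by a boundary so as to lie in the image region. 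This boundary analysis is the crux that must be verified to complete the simultaneous induction.
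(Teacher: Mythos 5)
Your plan is not yet a proof, and the gap is exactly where you flag it, but it is more pervasive than you acknowledge. The simultaneous induction stands or falls with claim (i), that multiplication by $\epsilon$ remains injective on every page, and you leave its propagation (the case of a class $v$ with $d_r(v)=\epsilon w$ lying outside the image region of Fact~\ref{fact:epsilon}) unverified. Worse, the same ``below the region'' problem already breaks the step you do sketch for statement (1): to rule out an early boundary $d_s(z)=\epsilon x$ with $s<r$ you propose to apply the inductive statement (2) ``after dividing by $\epsilon$,'' but that requires the source $z$ to be divisible by $\epsilon$, and $z$ sits in filtration $\mathrm{fil}(x)+8-s$, which can be far below $8$ (for instance when $x$ has filtration $0$ and supports a long differential). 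So $z$ need not lie in the region of Fact~\ref{fact:epsilon}, and the division is not justified; the difficulty is not confined to the injectivity claim.

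The paper's proof shows that the injectivity bookkeeping is unnecessary, because Fact~\ref{fact:epsilon} is used asymmetrically. The only differential ever divided by $\epsilon$ is one killing $\epsilon y$: if $d_k(z)=\epsilon y$ with $k<r$, then since $y$ is a $d_r$-target its filtration is at least $r$, so $\mathrm{fil}(z)=\mathrm{fil}(y)+8-k\geq 9$, and $z$ lies on or below the slope-$3$ ray through $(8,8)$ because $\epsilon y$ does and the source of a differential sits one stem to the right and $k$ filtrations lower; hence $z$ lies in the region of Fact~\ref{fact:epsilon}, is divisible by $\epsilon$ there, and the inductive statement (2) gives $d_k(z/\epsilon)=y$, contradicting the survival of $y$ to $E_r$. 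Once $\epsilon y\neq 0$ on $E_r$ is known, a premature death of $\epsilon x$ is excluded purely formally: it would force $\epsilon x=0$ on $E_r$, while the Leibniz rule (with $\epsilon$ a permanent cycle) gives $d_r(\epsilon x)=\epsilon y\neq 0$. Part (2) is then handled by multiplying hypothetical early differentials on $x/\epsilon$ or $y/\epsilon$ by $\epsilon$ and invoking part (1) of the induction, never by dividing. If you want to salvage your scheme, replace claim (i) by this pattern: divide only those differentials whose target is visibly $\epsilon$-divisible and whose source is forced into the region by the filtration estimate above, and treat all other obstructions by multiplying by $\epsilon$ and using Leibniz against a class already known to be nonzero on $E_r$.
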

\begin{proof}
We will prove both statements by using induction on $r$, the length of the differential.  Both claims are true in the base case when $r=3$. 

Now suppose that both statements hold for all differentials with length $k<r$.  Given a nontrivial differential $d_r(x) = y$, we will first show that $\epsilon y$ survives to the $E_r$-page.  

If $\epsilon y$ supports a differential, then $y$ must support a differential as well.  This is a contradiction because $y$ is the target of a differential.  Therefore if $\epsilon y$ does not survive to the $E_r$-page, it must be killed by a differential $d_k(z) = \epsilon y$ where $k <r$.  By Fact~\ref{fact:epsilon}, $z$ is divisible by $\epsilon$.  The inductive hypothesis, applied to the differential $d_k(z) = \epsilon y$, shows that $d_k(z/\epsilon) = y$.  This is a contradiction because $d_r(x) = y$ is a nontrivial $d_r$-differential.  Therefore, $\epsilon y$ survives to the $E_r$-page.  

$$\begin{tikzcd}
& \epsilon y &\\
y \ar[ru, dashed, dash, "\cdot \epsilon"] && \\
&& \\ 
&&z \ar[luuu, swap, "d_k"]\\ 
&z/\epsilon \ar[ru, dashed, dash, "\cdot \epsilon"]\ar[luuu, swap, "d_k"]& \epsilon x \\ 
& x \ar[luuuu, "d_r"] \ar[ru, dashed, dash, "\cdot \epsilon"]& 
\end{tikzcd}$$

If $\epsilon x$ does not survive to the $E_r$-page, then it must be killed by a shorter differential as well.  This shorter differential introduces the relation $\epsilon x = 0$ on the $E_r$-page.  However, the Leibniz rule, applied to the differential $d_r(x) = y$, shows that 
$$d_r(\epsilon x) = \epsilon y \neq 0$$
on the $E_r$-page.  This is a contradiction.  It follows that $\epsilon x$ survives to the $E_r$-page as well, and it supports the differential 
$$d_r(\epsilon x) = \epsilon y.$$ 
This proves (1). 

To prove (2), note that if $y/\epsilon$ supports a differential of length smaller than $r$, then the induction hypothesis would imply that $y$ also supports a differential of the same length.  Similarly, if $y/\epsilon$ is killed by a differential of length smaller than $r$, then the induction hypothesis would imply that $y$ is also killed a by a differential of the same length.  Both scenarios lead to contradictions.  Therefore, $y/\epsilon$ survives to the $E_r$-page.  

We will now show that $x/\epsilon$ survives to the $E_r$-page as well.  Since $x$ supports a $d_r$-differential, $x/\epsilon$ must also support a differential of length at most $r$.  Suppose that $d_k (x/\epsilon) = z$, where $k < r$.  The induction hypothesis, applied to this $d_k$-differential, implies the existence of the differential $d_k(x) = \epsilon z$.  This is a contradiction.  
$$\begin{tikzcd}
&y& \\
y/\epsilon \ar[ru, dashed, dash, "\cdot \epsilon"]&\epsilon z&  \\ 
z\ar[ru, dashed, dash, "\cdot \epsilon"]&& \\
&&\\
&& x \ar[luuu, "d_k"]\ar[luuuu, swap, "d_r"]\\
&x/\epsilon \ar[luuu, "d_k"]\ar[ru, dashed, dash, "\cdot \epsilon"]&
\end{tikzcd}$$

It follows that $x/\epsilon$ survives to the $E_r$-page, and it supports a nontrivial $d_r$-differential.  Since $y/\epsilon$ also survives to the $E_r$-page, the Leibniz rule shows that 
$$d_r(x/\epsilon) = y/\epsilon,$$
as desired. 
\end{proof}

\begin{thm}\label{thm:BPoneMainThm}
Any class $x = \epsilon^2 a$ on the $E_2$-page of $C_4$-$\SliceSS(\BPone)$ must die on or before the $E_{13}$-page.   
\end{thm}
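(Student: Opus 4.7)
The key input is Corollary~\ref{cor:BPoned13}, which furnishes the $d_{13}$-differential
$$d_{13}(w) = \epsilon^2, \qquad w := \done^5 u_{4\lambda} u_{4\sigma} a_\lambda a_\sigma,$$
so that $\epsilon^2$ itself is killed on the $E_{14}$-page. The plan is to bootstrap from this single differential to the statement that every $\epsilon^2$-multiple on the $E_2$-page dies on or before the $E_{13}$-page. By Fact~\ref{fact:epsilon} multiplication by $\epsilon$ is injective on $E_2$, so given $x = \epsilon^2 a$ the cofactor $a$ is unambiguously determined on the $E_2$-page.

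First I would dispose of the cases in which $a$ itself does not survive to $E_{13}$. If $d_r(a) = b$ is a nontrivial differential for some $r \leq 13$, then two applications of Lemma~\ref{lem:BPoneMainLem}(1) yield a nontrivial differential $d_r(\epsilon^2 a) = \epsilon^2 b$, so $x$ supports a differential of length at most $13$ and dies. If instead $a$ is hit, say $d_r(c) = a$ with $r < 13$, the same lemma applied twice produces $d_r(\epsilon^2 c) = \epsilon^2 a = x$, so $x$ is a boundary on $E_r$. In either situation $x$ has died on or before $E_{13}$.

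The remaining case is that $a$ survives to the $E_{13}$-page. Since $w$ also survives to $E_{13}$ (as the source of the differential above), the product $[wa]$ is a well-defined class on $E_{13}$, and the Leibniz rule gives
$$d_{13}(wa) \;=\; \epsilon^2 \cdot a \;+\; (-1)^{|w|}\, w \cdot d_{13}(a).$$
If $d_{13}(a) = 0$, the right-hand side is precisely $x$, exhibiting $x$ as a $d_{13}$-boundary. If on the other hand $d_{13}(a) \neq 0$, then Lemma~\ref{lem:BPoneMainLem}(1) applied twice to $d_{13}(a)$ produces the nontrivial differential $d_{13}(\epsilon^2 a) = \epsilon^2 \, d_{13}(a)$, so $x$ supports a $d_{13}$-differential. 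Either way, $x$ dies on or before $E_{13}$.

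The only real obstacle is bookkeeping: at each step one must verify that the relevant classes are genuinely alive on the appropriate page and that the targets of the $\epsilon^2$-multiplied differentials remain nonzero. Both of these are built into the nontriviality clause of Lemma~\ref{lem:BPoneMainLem}(1), combined with Fact~\ref{fact:epsilon}, which pins down the $\epsilon$-divisibility structure on $E_r$ for $r \geq 5$ and guarantees that the case analysis is exhaustive.
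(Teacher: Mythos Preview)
Your proof is correct and follows essentially the same approach as the paper's: split on whether $a$ is a $d_{13}$-cycle and invoke Lemma~\ref{lem:BPoneMainLem} together with Corollary~\ref{cor:BPoned13}. Your treatment is slightly more explicit in that you construct the source $wa$ of the $d_{13}$-differential killing $\epsilon^2 a$ via the Leibniz rule, whereas the paper simply observes that $\epsilon^2 a$ must vanish on $E_{14}$ since $\epsilon^2$ does.
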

\begin{proof}
If the class $a$ is a $d_{13}$-cycle, then $x$ is a $d_{13}$-cycle as well.  Since $\epsilon^2$ is killed by a $d_{13}$-differential by Corollary~\ref{cor:BPoned13}, $\epsilon^2 a$ must be killed by a differential of length at most 13.  

Now suppose that the class $a$ is not a $d_{13}$-cycle and it supports the differential $d_r(a) = b$, where $r \leq 13$.  Applying Lemma~\ref{lem:BPoneMainLem}, we deduce that the class $x = \epsilon^2a$ must support the nontrivial $d_r$-differential 
$$d_r(\epsilon^2 a) = \epsilon^2 b,$$
and therefore  cannot survive to the $E_{13}$-page.  
\end{proof}

\begin{thm} \label{thm:BPoned11}
The class $\done^7 \bar{s}_1 u_{2\lambda} u_{7\sigma} a_{5\lambda} a_{\sigma_2}$ at $(19, 11)$ supports the $d_{11}$-differential 
$$d_{11}(\done^7 \bar{s}_1 u_{2\lambda} u_{7\sigma} a_{5\lambda} a_{\sigma_2}) =  \done^{10} u_{8\sigma} a_{10\lambda}a_{2\sigma}.$$
\end{thm}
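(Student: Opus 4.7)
The target factors as $\done^{10}u_{8\sigma}a_{10\lambda}a_{2\sigma} = (\done^4 u_{4\sigma}a_{4\lambda})^2 \cdot (\done\, a_\lambda a_\sigma)^2 = \epsilon^2\eta'^2$. Since $\eta'$ is a permanent cycle, Theorem~\ref{thm:BPoneMainThm} applied with $a=\eta'^2$ forces $\epsilon^2\eta'^2$ to die on or before the $E_{13}$-page. The plan is to pin down the killing differential as a $d_{11}$ originating from $\done^7 \sone u_{2\lambda} u_{7\sigma} a_{5\lambda} a_{\sigma_2}$ at bidegree $(19,11)$, rather than a $d_{13}$ from $\done^7 u_{4\lambda} u_{4\sigma} a_{3\lambda} a_{3\sigma}$ at $(19,9)$ obtained by multiplying Corollary~\ref{cor:BPoned13} by $\eta'^2$.

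The main tool is Lemma~\ref{lem:BPoneMainLem}. Both the proposed source and target are divisible by $\epsilon$: the source equals $\epsilon\cdot x$ with $x = \done^3\sone u_{2\lambda} u_{3\sigma} a_\lambda a_{\sigma_2}$ at $(11,3)$, and the target equals $\epsilon\cdot(\epsilon\eta'^2)$ with $\epsilon\eta'^2 = \done^6 u_{4\sigma} a_{6\lambda} a_{2\sigma}$ at $(10,14)$. By Lemma~\ref{lem:BPoneMainLem}(1), it suffices to establish the reduced differential $d_{11}(x) = \epsilon\eta'^2$; this descent localizes the problem to a much smaller tridegree region and avoids enumerating every class in stem $19$.

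To prove the reduced differential, I would combine the Leibniz rule with $d_7(u_{4\lambda}) = \done\sone u_{2\lambda} u_\sigma a_{3\lambda} a_{\sigma_2}$ of Corollary~\ref{cor:BPoned7case2} and $d_5(u_{2\sigma}) = \done\, a_\lambda a_{3\sigma}$, together with naturality under $\Res^{C_4}_{C_2}$. The Leibniz rule applied to these differentials and the permanent cycles $\eta = \sone a_{\sigma_2}$, $\eta' = \done a_\lambda a_\sigma$, $\done$ would show that $x$ is a $d_r$-cycle for $r \leq 9$ by decomposing it through known cycles. To identify $d_{11}(x)$, I would restrict $x$ to $C_2$-$\SliceSS(i_{C_2}^*\BPone)$, where the relevant $C_2$-differentials are governed by $d_7(u_{4\sigma_2})=\rone^3 a_{\sigma_2}^7$ together with propagation of $d_3(u_{2\sigma_2})=(\rone+\grone)a_{\sigma_2}^3$; Mackey-functor compatibility between the $C_2$ and $C_4$ spectral sequences, combined with the fact that $\Res(\epsilon\eta'^2)$ picks up a specific class in $C_2$-filtration $14$, should pin down the target.

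The main obstacle is that $11 = 2\cdot 6 - 1$ does not correspond to a differential length occurring in $C_2$-$\SliceSS(\BPR)$ (which carries only $d_3, d_7, d_{15}, \ldots$), so Theorem~\ref{thm:NormFormula} cannot produce the $d_{11}$ from a single $C_2$-differential. The differential is therefore ``exotic'' and must be synthesized from multiple shorter differentials assembled through the Leibniz rule and the Vanishing Theorem. Once the $d_{11}$ is established, $\epsilon^2\eta'^2$ becomes zero on the $E_{12}$-page, automatically rendering the potential competing $d_{13}$ vacuous; hence the delicate point is not ruling out the $d_{13}$, but rather producing enough control over $d_{11}(x)$ in the descended region $(11,3)$ via restriction and Leibniz to positively identify its target.
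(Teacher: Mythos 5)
Your opening move coincides with the paper's: the target is $\epsilon^2\eta'^2 = \epsilon^2(\done^2a_{2\lambda}a_{2\sigma})$, and Theorem~\ref{thm:BPoneMainThm} forces it to die on or before the $E_{13}$-page. But at that point the paper is essentially done: the target is a product of permanent cycles, so it cannot support a differential and must be \emph{killed}, and an inspection of what survives in stem $19$ shows the only possible source is the class at $(19,11)$. In particular the competing $d_{13}$ you fear is not there: its would-be source $\eta'^2\cdot\done^5u_{4\lambda}u_{4\sigma}a_\lambda a_\sigma = \done^7u_{4\lambda}u_{4\sigma}a_{3\lambda}a_{3\sigma}$ is already zero on the $E_2$-page, since the gold relation gives $u_{4\lambda}a_{3\sigma} = 2u_{3\lambda}u_{2\sigma}a_\lambda a_\sigma$ and $2a_\sigma = 0$ (the same relation disposes of the other $a_\sigma$-divisible candidates at $(19,9)$, and the induced cells contribute nothing there for parity reasons). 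So the ``delicate point'' you defer is in fact the easy one, and the paper's two-line degree argument closes it; the divided differential of Corollary~\ref{cor:BPoned11} is then \emph{deduced} from the theorem by faithfulness of multiplication by $\done^7u_{4\sigma}a_{5\lambda}$, not the other way around.

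The genuine gap is in your replacement strategy: you propose to establish the reduced differential $d_{11}(x)=\epsilon\eta'^2$ at $(11,3)$ ``positively'' via the Leibniz rule and restriction, and then multiply by $\epsilon$ using Lemma~\ref{lem:BPoneMainLem}(1). That last multiplication step is legitimate, but the proposed proof of the reduced differential cannot work. The $C_2$-slice spectral sequence of $i_{C_2}^*\BPone$ collapses after its $d_7$-differentials, so naturality under $\Res^{C_4}_{C_2}$ (or under the transfer, noting $x = tr(\rone^4\grone^3u_{4\sigma_2}a_{3\sigma_2})$) yields no positive information about any $d_{11}$; it only imposes the constraint that the target restrict to zero, which is automatic because $\epsilon\eta'^2 = \done^6u_{4\sigma}a_{6\lambda}a_{2\sigma}$ is divisible by $a_{2\sigma}$ and $res(a_\sigma)=0$. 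For the same reason your claim that $\Res(\epsilon\eta'^2)$ ``picks up a specific class in $C_2$-filtration $14$'' is false. Likewise the Leibniz rule cannot synthesize a $d_{11}$ out of $d_3(u_\lambda)$, $d_5(u_{2\sigma})$, $d_5(u_{2\lambda})$, $d_7(u_{4\lambda})$: there is no factorization of $x$ through a class already known to support a $d_{11}$, and, as you yourself observe, $11$ is not of the form $2r-1$ for any $C_2$-differential, so Theorem~\ref{thm:NormFormula} is unavailable. This family of $d_{11}$'s is precisely the one that must come from the vanishing argument you set up and then abandoned; without it, your plan has no independent proof of $d_{11}(x)=\epsilon\eta'^2$ and is circular relative to Corollary~\ref{cor:BPoned11}.
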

\begin{proof}
The class $\done^{10} u_{8\sigma} a_{10\lambda}a_{2\sigma}$ at $(18, 22)$ is equal to 
$$\done^{10} u_{8\sigma} a_{10\lambda}a_{2\sigma} = \epsilon^2 (\done^2 a_{2\lambda}a_{2\sigma}). $$
By Theorem~\ref{thm:BPoneMainThm}, this class must die on or before the $E_{13}$-page.  For degree reasons, the only possibility is for it to be killed by a $d_{11}$-differential coming from the class $\done^7 \bar{s}_1 u_{2\lambda} u_{7\sigma} a_{5\lambda} a_{\sigma_2}$. 
\end{proof}

\begin{cor} \label{cor:BPoned11}
The class $\sone u_{2\lambda}u_{3\sigma}a_{\sigma_2}$ supports the $d_{11}$-differential 
$$d_{11}(\sone u_{2\lambda}u_{3\sigma}a_{\sigma_2}) = \done^3 u_{4\sigma}a_{5\lambda}a_{2\sigma}.$$
\end{cor}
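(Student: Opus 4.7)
The plan is to deduce this corollary from Theorem~\ref{thm:BPoned11} in the same style as Corollary~\ref{cor:BPoned7case2}: multiply the proposed differential by a suitable cycle, identify the result with the known differential from the theorem, and then invoke faithfulness. Specifically, I would set
\[
c := \epsilon \cdot \done^3 a_\lambda = \done^7 u_{4\sigma} a_{5\lambda},
\]
and observe that $c$ is a permanent cycle, since $\epsilon$, $\done$, and $a_\lambda$ are all permanent cycles (the first as it detects $\bar\epsilon$, the others by construction). A direct bookkeeping shows that
\[
c \cdot \sone u_{2\lambda}u_{3\sigma}a_{\sigma_2} = \done^7 \sone u_{2\lambda} u_{7\sigma} a_{5\lambda} a_{\sigma_2}
\quad\text{and}\quad
c \cdot \done^3 u_{4\sigma}a_{5\lambda}a_{2\sigma} = \done^{10} u_{8\sigma} a_{10\lambda} a_{2\sigma},
\]
which are precisely the source and target of the $d_{11}$-differential supplied by Theorem~\ref{thm:BPoned11}.

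Next, I would apply the Leibniz rule (valid since $c$ is a permanent cycle, hence in particular a $d_{11}$-cycle) to Theorem~\ref{thm:BPoned11}. This gives the identity
\[
c \cdot \bigl(d_{11}(\sone u_{2\lambda}u_{3\sigma}a_{\sigma_2}) - \done^3 u_{4\sigma}a_{5\lambda}a_{2\sigma}\bigr) = 0
\]
on the $E_{11}$-page, at the target bidegree of the differential.

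The remaining step, which I expect to be the main obstacle, is verifying that multiplication by $c$ is faithful at this bidegree on the $E_{11}$-page. The natural approach is to split $c$ as $\epsilon \cdot (\done^3 a_\lambda)$ and apply Lemma~\ref{lem:BPoneMainLem} together with Fact~\ref{fact:epsilon}: multiplication by $\epsilon$ is injective on the $E_2$-page onto the region below the slope-3 line starting at $(8,8)$, and by the lemma this injectivity is preserved through the later pages for any class that survives. It then suffices to check, by inspection of the $E_2$-page and the previously established $d_3$, $d_5$, $d_7$ differentials, that multiplication by the permanent cycle $\done^3 a_\lambda$ carries the candidate difference at bidegree $(0,12)$ nontrivially to its target bidegree $(10, 14)$ on the $E_{11}$-page (i.e. the image has not been truncated by a prior differential). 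Since the image $\done^{10} u_{8\sigma} a_{10\lambda}a_{2\sigma} = \epsilon^2 \cdot \done^2 a_{2\lambda}a_{2\sigma}$ is manifestly nonzero on the $E_{11}$-page (it is killed only later by a $d_{11}$, which is the content of Theorem~\ref{thm:BPoned11}), this faithfulness check goes through and the desired differential follows.
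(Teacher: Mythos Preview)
Your approach is essentially the same as the paper's: both multiply by the permanent cycle $c = \done^7 u_{4\sigma} a_{5\lambda}$ (which you correctly identify as $\epsilon \cdot \done^3 a_\lambda$), apply the Leibniz rule to reduce to Theorem~\ref{thm:BPoned11}, and then invoke injectivity of multiplication by $c$ on the $E_{11}$-page. The paper simply asserts this injectivity in one line, whereas you spell out a justification via Fact~\ref{fact:epsilon} and Lemma~\ref{lem:BPoneMainLem}; the extra detail is fine but not a different argument.
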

\begin{proof}
The $d_{11}$-differential in Theorem~\ref{thm:BPoned11} can be rewritten as 
$$\done^7 u_{4\sigma} a_{5\lambda} (d_{11}(\sone u_{2\lambda}u_{3\sigma}a_{\sigma_2}) - \done^3 u_{4\sigma}a_{5\lambda}a_{2\sigma})= 0.$$
Since multiplication by $\done^7 u_{4\sigma} a_{5\lambda}$ is injective on the $E_{11}$-page, the claim follows.  
\end{proof}

\begin{cor}
The class $\done^6 u_{4\lambda} u_{6\sigma} a_{2\lambda}$ at $(20,4)$ is a permanent cycle that survives to the $E_\infty$-page.  In homotopy, it detects the class $\bar{\kappa} \in \pi_{20}S^0$.  
\end{cor}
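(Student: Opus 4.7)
The strategy has two parts: show the class is a permanent cycle, then identify it with $\bar\kappa\in\pi_{20}\mathbb{S}$. Since the nonzero differentials in $C_4$-$\SliceSS(\BPone)$ have length in $\{3,5,7,11,13\}$, it suffices to verify that $\done^6 u_{4\lambda} u_{6\sigma} a_{2\lambda}$ is a $d_r$-cycle for each such $r$; it is automatically a $d_3$-cycle for bidegree reasons.

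For $d_5$, I would apply the Leibniz rule. Both $d_5(u_{4\lambda}) = 2\done u_{3\lambda} a_{2\lambda} a_\sigma$ and $d_5(u_{4\sigma}) = 2\done u_{2\sigma} a_\lambda a_\sigma^3$ vanish because $2a_\sigma = 0$, so the only surviving Leibniz contribution comes from $d_5(u_{2\sigma}) = \done a_\lambda a_\sigma^3$ applied to the factor $u_{6\sigma} = u_{4\sigma} u_{2\sigma}$. The resulting term $\done^7 u_{4\lambda} u_{4\sigma} a_{3\lambda} a_\sigma^3$ vanishes once we observe, via the gold relation $u_\lambda a_{2\sigma} = 2 u_{2\sigma} a_\lambda$, that $u_\lambda a_\sigma^3 = a_\sigma \cdot 2 u_{2\sigma} a_\lambda = 0$, and therefore $u_{4\lambda} a_\sigma^3 = 0$.

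For $d_7$, Leibniz combined with the generator $d_7(u_{4\lambda})$ of Corollary~\ref{cor:BPoned7case2} formally produces the class $\done^7 \sone u_{2\lambda} u_{7\sigma} a_{5\lambda} a_{\sigma_2}$ at $(19,11)$, which by Theorem~\ref{thm:BPoned11} is nonzero on the $E_7$-page. For our class to be a $d_7$-cycle, the second Leibniz contribution coming from $d_7(u_{6\sigma})$ must cancel this term; I would verify this cancellation either by computing $d_7(u_{6\sigma})$ from the norm formula of Theorem~\ref{thm:NormFormula} applied to the $C_2$-differential $d_7(u_{4\sigma_2}) = \rone^3 a_{7\sigma_2}$, or by a restriction argument to $C_2$-$\SliceSS(i_{C_2}^*\BPone)$ where the corresponding target manifestly vanishes. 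For $d_{11}$ and $d_{13}$, the potential targets at $(19,15)$ and $(19,17)$ lie well within the $\epsilon$-image region of Fact~\ref{fact:epsilon}; by Lemma~\ref{lem:BPoneMainLem} any such differential would pull back through division by $\epsilon$ to a lower-filtration differential that has already been ruled out.

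For the identification with $\bar\kappa$, I would invoke the Hurewicz map $\pi_*\mathbb{S}\to\pi_*\BPone^{C_4}$ together with the Hurewicz-image results of \cite[Theorem~9.8]{HHRKH}: $\bar\kappa$ has order $2$ and a nonzero image in filtration $4$, and since $\done^6 u_{4\lambda} u_{6\sigma} a_{2\lambda}$ is the unique permanent cycle surviving in bidegree $(20,4)$, it must be the detecting class. The main obstacle is the $d_7$-case: the two Leibniz contributions must cancel despite neither vanishing individually, and verifying this cancellation is the technical core of the argument.
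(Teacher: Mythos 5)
The main gap is in the detection half. You assert, citing \cite[Theorem~9.8]{HHRKH}, that $\bar{\kappa}$ has order $2$ and a nonzero Hurewicz image in filtration $4$, and then conclude by uniqueness of the surviving class at $(20,4)$. But that theorem is invoked in this paper only for $\eta$, $\eta'$, $\epsilon$; it does not tell you that $\bar{\kappa}$ maps nontrivially to $\pi_{20}\BPone^{C_4}$, let alone that it is detected in filtration exactly $4$ --- that is precisely the content of the corollary, so as written your argument assumes what it needs to prove. The paper's actual mechanism is a naturality argument through the restriction: by \cite[Section~6]{HurewiczImages}, $\bar{\kappa}$ is detected in $C_2$-$\SliceSS(\BPR)$ by $\bar{v}_2 u_{8\sigma_2}a_{4\sigma_2}$, hence in $C_2$-$\SliceSS(i_{C_2}^*\BPone)$ by $\rone^{12}u_{8\sigma_2}a_{4\sigma_2}$, and one checks $res(\done^6 u_{4\lambda}u_{6\sigma}a_{2\lambda}) = \rone^6\grone^6 u_{8\sigma_2}a_{4\sigma_2} = \rone^{12}u_{8\sigma_2}a_{4\sigma_2}$; the commutative square relating the two Hurewicz maps then forces the detection. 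Some input of this kind at the $C_2$ level is indispensable.

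The permanent-cycle half also has problems, although the conclusion is easy: the corollary is stated after all differentials $d_3$ through $d_{13}$ have been determined, so the paper simply reads off the chart that nothing can hit filtration $4$ and no target survives ("degree reasons"). Your Leibniz analysis leaves the decisive $d_7$ case open, and neither proposed repair works as stated: Theorem~\ref{thm:NormFormula} produces differentials on classes of the form $a_\sigma N_{C_2}^{C_4}(x)$ (applied to $d_7(u_{4\sigma_2})$ it yields the $d_{13}$ on $u_{4\lambda}a_\sigma$, not a formula for $d_7(u_{6\sigma})$; moreover $u_{6\sigma}$ does not survive to $E_7$, so the two ``Leibniz contributions'' are not separately defined there), and showing that the restriction of the would-be target vanishes cannot exclude a differential, since restriction is not injective. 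The clean argument, used verbatim by the paper later in the $\BPtwo$ setting, is that the only possible target at $(19,11)$ supports the nonzero $d_{11}$ of Theorem~\ref{thm:BPoned11}, hence is alive on $E_{11}$ and cannot have been killed by a $d_7$. Finally, your exclusion of $d_{11}$ and $d_{13}$ via Lemma~\ref{lem:BPoneMainLem} does not apply: part (2) of that lemma requires both the source and the target to be divisible by $\epsilon$, and your source sits in filtration $4 < 8$, outside the region of Fact~\ref{fact:epsilon}.
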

\begin{proof}
For degree reasons, this class is a permanent cycle that survives to the $E_\infty$-page.  To show this detects $\bar{\kappa}$, consider the commutative diagram 
$$
\begin{tikzcd}
\pi_*S^0 \ar[r] \ar[d] \ar[rd] & \pi_* \BPone^{C_4} \ar[d, "res"] \\ 
\pi_* {\BPR^{C_2}} \ar[r] & {\pi_* (i_{C_2}^*\BPone)^{C_2}},
\end{tikzcd}
$$
where the bottom horizontal map is the composition 
$$\BPR \stackrel{\iota_L}{\longrightarrow} i_{C_2}^* \BPC \longrightarrow i_{C_2}^* \BPone.$$
It is proven in \cite[Section 6]{HurewiczImages} that $\bar{\kappa}$ is detected in the $C_2$-slice spectral sequence of $\BPR$ by the class $\bar{v}_2 u_{8\sigma_2}a_{4\sigma_2}$.  Since $\bar{v}_2 = \rone^3$, $\bar{\kappa}$ is detected in $C_2$-$\SliceSS(i_{C_2}^*\BPone)$ by the class $\rone^{12}u_{8\sigma_2}a_{4\sigma_2}$.  This is exactly the restriction of the class $\done^6 u_{4\lambda} u_{6\sigma} a_{2\lambda}$ because 
$$res(\done^6 u_{4\lambda} u_{6\sigma} a_{2\lambda}) = \rone^6 \grone^6 u_{8\sigma_2}a_{4\sigma_2} = \rone^{12} u_{8\sigma_2}a_{4\sigma_2}. $$
Therefore, $\bar{\kappa}$ is detected by $\done^6 u_{4\lambda} u_{6\sigma} a_{2\lambda}$, as desired. 
\end{proof}

As shown in Figure~\ref{fig:E2C4topd11}, all the other $d_{11}$-differentials are obtained from the $d_{11}$-differential in Theorem~\ref{thm:BPoned11} via multiplication with the permanent cycles $\epsilon$, $\bar{\kappa}$, and $\done^8u_{8\lambda} u_{8\sigma}$ (at $(32, 0)$).  

Similarly, all the other $d_{13}$-differentials are obtained from Corollary~\ref{cor:BPoned13} by using multiplicative structures with the classes $\eta'$, $\epsilon$, $\bar{\kappa}$, and $\done^8u_{8\lambda} u_{8\sigma}$ (see Figure~\ref{fig:E2C4topd13}).

\begin{figure}
\begin{center}
\makebox[\textwidth]{\includegraphics[trim={0cm 5cm 0cm 3cm}, clip, page = 4, scale = 0.9]{E2C4SSSTop}}
\end{center}
\begin{center}
\caption{$d_{11}$-differentials in $C_4$-$\SliceSS(\BPone)$.}
\hfill
\label{fig:E2C4topd11}
\end{center}
\end{figure}

\begin{figure}
\begin{center}
\makebox[\textwidth]{\includegraphics[trim={0cm 5cm 0cm 3cm}, clip, page = 5, scale = 0.9]{E2C4SSSTop}}
\end{center}
\begin{center}
\caption{$d_{13}$-differentials in $C_4$-$\SliceSS(\BPone)$.}
\hfill
\label{fig:E2C4topd13}
\end{center}
\end{figure}

\begin{figure}
\begin{center}
\makebox[\textwidth]{\includegraphics[trim={0cm 5cm 0cm 3cm}, clip, page = 1, scale = 0.9]{E2C4SSSTop}}
\end{center}
\begin{center}
\caption{Differentials in $C_4$-$\SliceSS(\BPone)$.}
\hfill
\label{fig:E2C4top1}
\end{center}
\end{figure}

\begin{figure}
\begin{center}
\makebox[\textwidth]{\includegraphics[trim={0cm 5cm 0cm 3cm}, clip, page = 6, scale = 0.9]{E2C4SSSTop}}
\end{center}
\begin{center}
\caption{$E_\infty$-page of $C_4$-$\SliceSS(\BPone)$.}
\hfill
\label{fig:E2C4top2}
\end{center}
\end{figure}

\subsection{Summary of differentials}
\begin{center}
\begin{tabular}{|c |l |l |}
\hline
Differential &Formula & Proof \\ \hline
$d_3$ & $d_3(u_\lambda) = \bar{s}_1a_\lambda a_{\sigma_2}$& Theorem~\ref{thm:BPoned3} (restriction) \\ 
& $d_3(u_{2\sigma_2}) = \bar{s}_1 a_{3\sigma_2}$ & \\ \hline
{\color{blue} $d_5$} & {\color{blue}$d_5(u_{2\sigma}) = \done a_\lambda a_{3\sigma}$} & \cite[Theorem~9.9]{HHR} (Slice Differentials Theorem)\\ \hline
{\color{blue} $d_5$} & {\color{blue}$d_5(u_{2\lambda}) = \done u_\lambda a_{2\lambda}a_\sigma$ }& Theorem~\ref{thm:BPoned5} and Corollary~\ref{cor:BPoned5} (restriction)\\ 
&{\color{blue}$d_5(u_{3\lambda}a_\sigma) = 2\done u_\lambda u_{2\sigma} a_{3\lambda}$ }& \\ \hline
{\color{magenta} $d_7$} & {\color{magenta}$d_7(2u_{2\lambda}) = \done \sone u_\sigma a_{3\lambda} a_{\sigma_2}$} & Theorem~\ref{thm:BPoned7case1} and Corollary~\ref{cor:BPoned7case1} (transfer) \\ 
&{\color{magenta}$d_7(2u_{2\lambda} u_{2\sigma}) = \done \sone u_{3\sigma} a_{3\lambda} a_{\sigma_2}$}& \\ \hline
{\color{magenta} $d_7$} & {\color{magenta}$d_7(u_{4\lambda}) = \done \sone u_{2\lambda} u_\sigma a_{3\lambda} a_{\sigma_2}$}& Theorem~\ref{thm:BPoned7case2} and Corollary~\ref{cor:BPoned7case2} (norm) \\ \hline
{\color{green!60!black} $d_{11}$} & {\color{green!60!black}$d_{11}(\sone u_{2\lambda}u_{3\sigma}a_{\sigma_2}) = \done^3 u_{4\sigma}a_{5\lambda}a_{2\sigma}$} & Theorem~\ref{thm:BPoned11} and Corollary~\ref{cor:BPoned11} \\
&{\color{green!60!black}$d_{11}(\sone u_{6\lambda}u_\sigma a_{\sigma_2}) = 2\done^3 u_{3\lambda} u_{4\sigma}a_{6\lambda}$} & (uses Theorem~\ref{thm:BPoneMainThm})\\ \hline
{\color{orange} $d_{13}$} & {\color{orange}$d_{13}(u_{4\lambda}a_\sigma) = \bar{\mathfrak{d}}_1^3 u_{4\sigma}a_{7\lambda}$} & Theorem~\ref{thm:BPoned13} and Corollary~\ref{cor:BPoned13} (norm)\\ 
&{\color{orange}$d_{13}(u_{8\lambda}u_{2\sigma}a_\sigma)= \done^3 u_{4\lambda}u_{6\sigma} a_{7\lambda}$}& \\ \hline
\end{tabular}
\end{center}

\section{The slice filtration of $\BPC\langle 2 \rangle$} \label{sec:slicesBPtwo}
The refinement of $\BPtwo$ is 
$$S^0[\rone, \grone, \rthree, \grthree] \longrightarrow \BPtwo.$$
Its slices are the following: 
$$\left\{\begin{array}{ll}
\rthree^i \grthree^i \rone^k \grone^k: S^{(3i+k)\rho_4} \wedge \HZ, & i, k \geq 0 \\
(\rthree^i \grthree^j \rone^k \grone^l, \rthree^j \grthree^i \rone^l \grone^k): {C_4}_+ \wedge_{C_2} S^{(3i+3j+k+l)\rho_2} \wedge \HZ, & i \neq j \text{ or } k \neq l.
\end{array} \right.$$
Similar to the slices of $\BPone$, we organize the slices for $\BPtwo$ in order to facilitate our computation.  

Consider the monomial $\rthree^i \grthree^j \rone^k \grone^l$.  When $i = j$, this monomial can be written as $\dthree^i \rone^k \grone^l$.  Fix a non-negative integer $i$.  After the $d_3$-differentials, the classes of filtration $\geq 3$ that are contributed by these slice cells are exactly the same as the classes on the $E_5$-page of $C_4$-$\SliceSS(\BPone)$, truncated at the line $(t-s) + s = 12i$.  For this reason, we will call this collection of slices $\dthree^i\BPone$.  Figure \ref{fig:E2C4trunclines} shows the truncation lines for the slices $\dthree^i \BPone$ and Figures \ref{fig:E2C4trunclines12} and \ref{fig:E2C4trunclines24} illustrate the classes contributed by the slices in $\dthree \BPone$ and $\dthree^2 \BPone$.  

\begin{figure}
\begin{center}
\makebox[\textwidth]{\includegraphics[trim={0cm 5cm 0cm 3cm}, clip, page = 1, scale = 0.9]{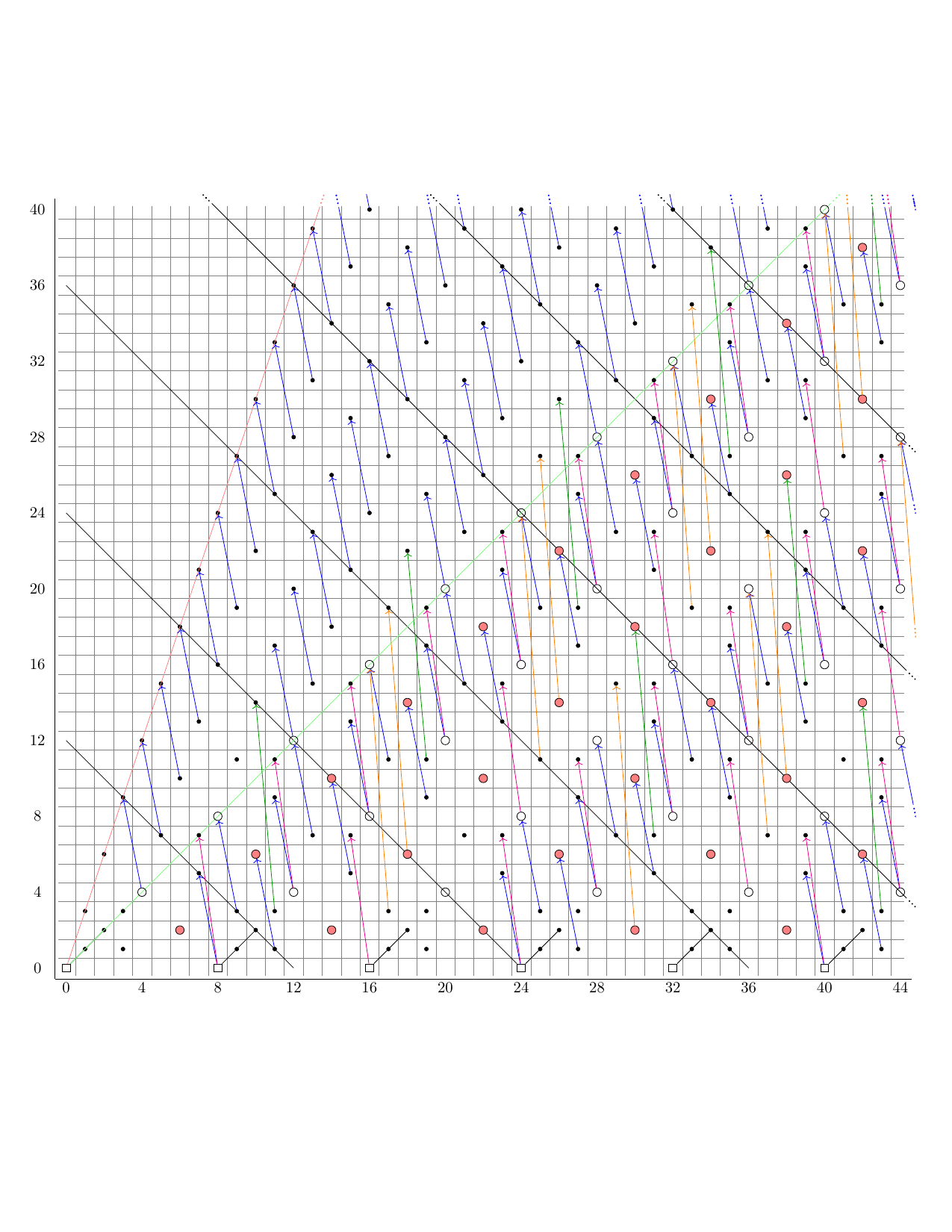}}
\end{center}
\begin{center}
\caption{Truncations lines for the slices $\dthree^i \BPone$. }
\hfill
\label{fig:E2C4trunclines}
\end{center}
\end{figure}

\begin{figure}
\begin{center}
\makebox[\textwidth]{\includegraphics[trim={0cm 5cm 0cm 3cm}, clip, page = 2, scale = 0.9]{E2C4SSSTopTrunc}}
\end{center}
\begin{center}
\caption{Classes contributed by the slices in $\dthree \BPone$. }
\hfill
\label{fig:E2C4trunclines12}
\end{center}
\end{figure}

\begin{figure}
\begin{center}
\makebox[\textwidth]{\includegraphics[trim={0cm 5cm 0cm 3cm}, clip, page = 3, scale = 0.9]{E2C4SSSTopTrunc}}
\end{center}
\begin{center}
\caption{Classes contributed by the slices in $\dthree^2 \BPone$.}
\hfill
\label{fig:E2C4trunclines24}
\end{center}
\end{figure}

When $i \neq j$, the monomial $\rthree^i \grthree^j \rone^k \grone^l$ contributes an induced slice of the form $(\rthree^i \grthree^j \rone^k \grone^l, \rthree^j \grthree^i \rone^l \grone^k)$.  By symmetry, let $i < j$.  The $d_3$-differential $d_3(u_{2\sigma_2}) = \bar{s}_1 a_{\sigma_2}^3 = (\rone + \grone)a_{\sigma_2}^3$ identifies $\rone$ and $\grone$ when the filtration is at least 3.  Similar to the situation for $\BPone$, define $\dthree:= N(\rthree)$ and $\sthree^i := tr(\rthree^i)$.  By an abuse of notation, we can rewrite this slice cell as
$$\rthree^i \grthree^j \rone^k \grone^l + \rthree^j \grthree^i \rone^k \grone^l = (\rthree^i \grthree^i) (\rthree^{j-i} + \grthree^{j-i})\rone^k \grone^l = \dthree^i \bar{s}_3^{j-i} \rone^k \grone^l.$$  
For a fixed pair $\{i, j\}$ with $i < j$, the classes of filtration $\geq 3$ that are contributed by these slice cells after the $d_3$-differentials are exactly the same as the classes on the $E_5$-page of $C_2$-$\SliceSS(i_{C_2}^*\BPone)$, truncated at the line $(t-s) + s = 6i + 6j$.  For this reason, we will call this collection of slices $\dthree^i \bar{s}_3^j  i_{C_2}^*\BPone$.  Figure~\ref{fig:E2C2SSSTrunc} shows the truncation lines for these slices.  Figures~\ref{fig:E2C2SSSTrunc6}, \ref{fig:E2C2SSSTrunc12}, and \ref{fig:E2C2SSSTrunc18} illustrate the classes contributed by some of these slices.  

\begin{figure}
\begin{center}
\makebox[\textwidth]{\includegraphics[trim={0cm 11cm 0cm 4cm}, clip, scale = 0.8, page = 1]{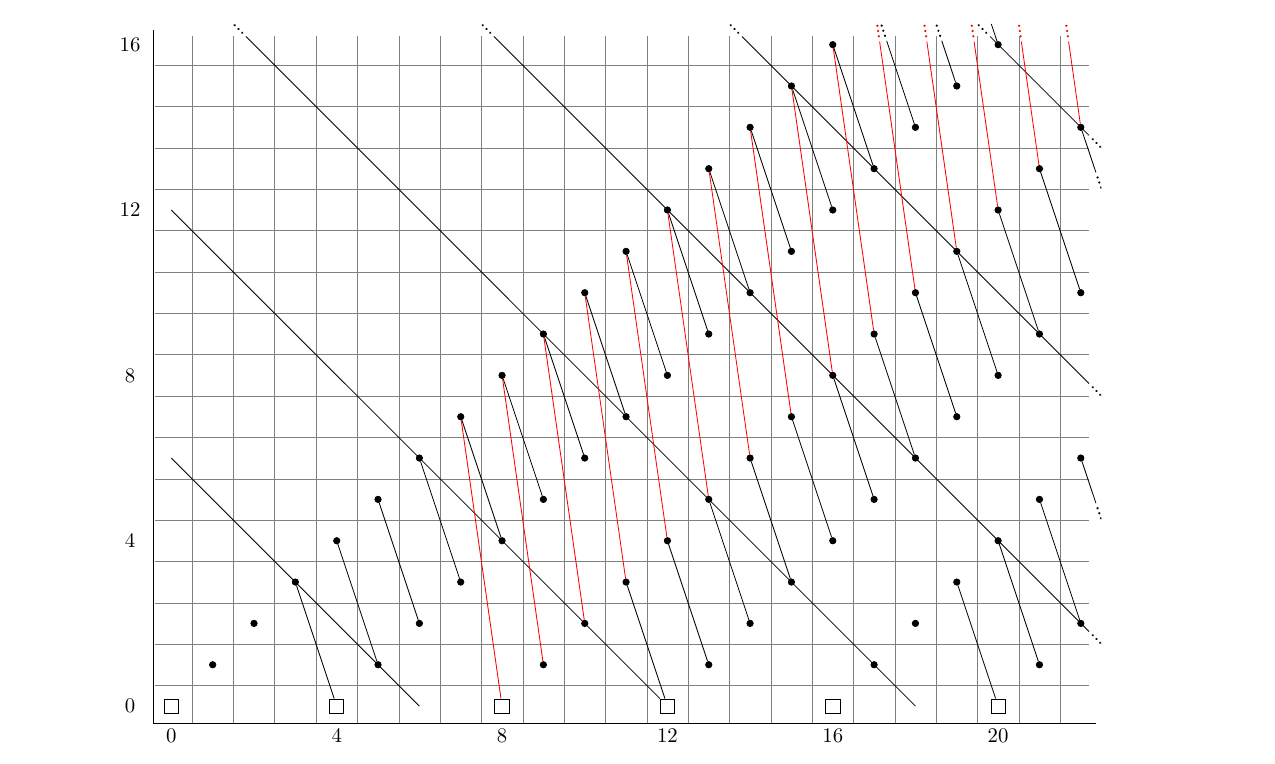}}
\end{center}
\begin{center}
\caption{Truncation lines for the slices $\dthree^i \bar{s}_3^j  i_{C_2}^*\BPone$.}
\hfill
\label{fig:E2C2SSSTrunc}
\end{center}
\end{figure}

\begin{figure}
\begin{center}
\makebox[\textwidth]{\includegraphics[trim={0cm 11cm 0cm 4cm}, clip, scale = 0.8, page = 2]{E2C2SSSTrunc}}
\end{center}
\begin{center}
\caption{Classes contributed by the slices in $\sthree i_{C_2}^* \BPone$. }
\hfill
\label{fig:E2C2SSSTrunc6}
\end{center}
\end{figure}

\begin{figure}
\begin{center}
\makebox[\textwidth]{\includegraphics[trim={0cm 11cm 0cm 4cm}, clip, scale = 0.8, page = 3]{E2C2SSSTrunc}}
\end{center}
\begin{center}
\caption{Classes contributed by the slices in $\sthree^2 i_{C_2}^* \BPone$. }
\hfill
\label{fig:E2C2SSSTrunc12}
\end{center}
\end{figure}

\begin{figure}
\begin{center}
\makebox[\textwidth]{\includegraphics[trim={0cm 11cm 0cm 4cm}, clip, scale = 0.8, page = 4]{E2C2SSSTrunc}}
\end{center}
\begin{center}
\caption{Classes contributed by the slices in $\sthree^3 i_{C_2}^* \BPone$ or $\dthree \sthree i_{C_2}^* \BPone$. }
\hfill
\label{fig:E2C2SSSTrunc18}
\end{center}
\end{figure}

All the slices of $\BPtwo$ are organized into the following table, where the number inside the parenthesis indicates the truncation line.  For convenience, we will refer to each of the collections on the top row as a \textbf{$\BPone$-truncation}, and each of the other collections as a {\color{blue} $i_{C_2}^* \BPone$-truncation}. 

\begin{equation}
\begin{array}{c|c|c|c} \label{array:OrganizeSliceCells}
\bar{\mathfrak{d}}_3^0 \BPone - (0)  &\bar{\mathfrak{d}}_3^1 \BPone -(12) &\bar{\mathfrak{d}}_3^2 \BPone -(24)& \cdots \\ \hline
\color{blue} \bar{\mathfrak{d}}_3^0 \bar{s}_3^1 i_{C_2}^*\BPone -(6) & \color{blue}  \bar{\mathfrak{d}}_3^1 \bar{s}_3^1 i_{C_2}^*\BPone -(18)& \color{blue}  \bar{\mathfrak{d}}_3^2 \bar{s}_3^1 i_{C_2}^*\BPone -(30)& \cdots \\ 
\color{blue}  \bar{\mathfrak{d}}_3^0 \bar{s}_3^2 i_{C_2}^*\BPone -(12)& \color{blue} \bar{\mathfrak{d}}_3^1 \bar{s}_3^2 i_{C_2}^*\BPone -(24) & \color{blue}  \bar{\mathfrak{d}}_3^2 \bar{s}_3^2 i_{C_2}^*\BPone-(36)& \cdots \\ 
\vdots & \vdots & \vdots & \ddots
\end{array}
\end{equation}

\section{The $C_2$-slice spectral sequence of $i_{C_2}^* \BPC\langle 2 \rangle$} \label{sec:C2BPtwoSliceSS}
In this section, we will compute the $C_2$-slice spectral sequence for $i_{C_2}^*\BPC \langle 2 \rangle$.  The composition map 
$$\BPR \stackrel{i_L}{\longrightarrow} i_{C_2}^* \BPC \longrightarrow i_{C_2}^* \BPtwo$$
induces a map 
$$C_2\text{-}\SliceSS(\BPR) \longrightarrow C_2\text{-}\SliceSS(i_{C_2}^*\BPtwo)$$
of $C_2$-slice spectral sequences.  The formulas in Theorem~\ref{thm:fglFormulas} translate the differentials in $C_2\text{-}\SliceSS(\BPR)$ to the following differentials in $C_2\text{-}\SliceSS(i_{C_2}^*\BPtwo)$: 
\begin{eqnarray*}
d_3(u_{2\sigma_2}) &=& (\rone + \grone)a_{\sigma_2}^3\\
d_7(u_{4\sigma_2}) &=& (\rone^3 + \rthree + \grthree) a_{\sigma_2}^7\\
d_{15}(u_{8\sigma_2}) &=& \rone(\rthree^2 + \rthree \grthree + \grthree^2) a_{\sigma_2}^{15}\\
d_{31}(u_{16\sigma_2}) &=& \rthree^4 \grthree a_{\sigma_2}^{31}
\end{eqnarray*}
The class $u_{32\sigma_2}$ is a permanent cycle.  

On the $E_2$-page, the refinement 
$$S^0[\rone, \grone, \rthree, \grthree] \longrightarrow i_{C_2}^* \BPtwo$$
implies that all the slice cells are indexed by monomials of the form $\rthree^i \grthree^j \rone^k \grone^l$, where $i, j, k, l \geq 0$.  

We now give a step-by-step description of the surviving classes after each differential: 
\begin{enumerate}
\item After the $d_3$-differentials, the relation $\rone = \grone$ is introduced for classes with filtrations $\geq 3$.  Therefore, the slice cells corresponding to these classes can be written as a sum of monomials from the set $\{\rthree^i \grthree^j \rone^k \,|\, i, j, k \geq 0\}$.  
\item After the $d_7$-differentials, the relation $\rone^3 + \rthree + \grthree = 0$ is introduced for classes with filtrations $\geq 7$.  Therefore given a class with filtration at least 7, depending on its bidegree, its corresponding slice cell can be written as a sum of monomials from the set $\{ \rthree^i \grthree^j \rone \,|\, i, j \geq 0\}$, $\{ \rthree^i \grthree^j \rone^2 \,|\, i, j \geq 0\}$, or $\{ \rthree^i \grthree^j \rone^3, \rthree^{i+j} \grthree \,|\, i, j \geq 0\}$.
\item After the $d_{15}$-differentials, the relation $\rone(\rthree^2 + \rthree \grthree + \grthree^2) = 0$ is introduced for classes with filtrations $ \geq 15$.  Given a class with filtration at least 15, depending on its bidegree, its corresponding slice cell can be written as a sum of monomials from the set $\{\rthree^{i+1} \grthree^1 \rone, \rthree^{i} \grthree^2 \rone, \,|\, i \geq 4 \}$, $\{\rthree^{i+1} \grthree^1 \rone^2, \rthree^{i} \grthree^2 \rone^2, \,|\, i \geq 4 \}$, or $\{\rthree^{i+1} \grthree^1 \rone^3, \rthree^{i} \grthree^2 \rone^3, \rthree^{i+2} \grthree \,|\, i \geq 4 \}$. 
\item After the $d_{31}$-differentials, the relation $\rthree^4 \grthree = 0$ is introduced for classes with filtrations $\geq 31$.  Since all the classes with filtrations $\geq 31$ have slice cells divisible by $\rthree^4 \grthree$ on the $E_{31}$-page, they are all wiped out by the $d_{31}$-differentials.  The spectral sequence collapses afterwards and there is a horizontal vanishing line with filtration 31 on the $E_\infty$-page.  
\end{enumerate} 

\begin{exam}\rm
Consider all the classes at $(31, 31)$.  On the $E_2$-page, their names are of the form $x \cdot a_{31\sigma_2}$, where $x$ is a sum of slice cells of the form $\{\rthree^i \grthree^j \rone^k \grone^l\,|\, 3i + 3j + k + l = 31\}$.  
\begin{enumerate}
\item After the $d_3$-differentials, $x$ can be written as a sum of of classes whose corresponding slice cells are from the set $\{\rthree^i \grthree^j \rone^k \,|\, 3i + 3j + k = 31 \}$. 
\item After the $d_7$-differentials, $x$ can be written as a sum of of classes whose corresponding slice cells are from the set $\{\rthree^i \grthree^j \rone \,|\, i+j = 10 \}$.
\item After the $d_{15}$-differentials, $x$ can be written as a sum of of classes whose corresponding slice cells are from the set $\{\rthree^9\grthree\rone, \rthree^8 \grthree^2 \rone\}$. 
\item After the $d_{31}$-differentials, all the remaining classes are killed. 
\end{enumerate}
\end{exam}

\begin{exam}\rm
Consider all the classes at $(33, 33)$.  On the $E_2$-page, their names are of the form $x \cdot a_{33\sigma_2}$, where $x$ is a sum of classes whose corresponding slice cells are of the form $\{\rthree^i \grthree^j \rone^k \grone^l\,|\, 3i + 3j + k + l = 33\}$.  
\begin{enumerate}
\item After the $d_3$-differentials, $x$ can be written as a sum of of classes whose corresponding slice cells are from the set $\{\rthree^i \grthree^j \rone^k \,|\, 3i + 3j + k = 33 \}$. 
\item After the $d_7$-differentials, $x$ can be written as a sum of of classes whose corresponding slice cells are from the set $\{\rthree^i \grthree^j \rone^3, \rthree^{10} \grthree \,|\, i+j = 10 \}$.
\item After the $d_{15}$-differentials, $x$ can be written as a sum of of classes whose corresponding slice cells are from the set $\{\rthree^9\grthree\rone^3, \rthree^8 \grthree^2 \rone^3, \rthree^{10} \grthree \}$. 
\item After the $d_{31}$-differentials, all the remaining classes are killed. 
\end{enumerate}
\end{exam}

\section{Induced differentials from $\BPone$} \label{sec:InducedDiffBPone}
In section~\ref{sec:slicesBPtwo}, the slices of $\BPtwo$ are subdivided into collections of the form $\dthree^i \BPone$ ($\BPone$-truncation) and $\dthree^i \sthree^j i_{C_2}^* \BPone$ ($i_{C_2}^* \BPone$-truncation), where $i \geq 0$ and $j \geq 1$.  On the $E_2$-page of the $C_4$-slice spectral sequence, the classes contributed by the slices in $\dthree^i \BPone$ is a truncation of the $E_2$-page of $C_4\text{-}\SliceSS(\BPone)$, and the classes contributed by the slices in $\dthree^i \sthree^j i_{C_2}^*\BPone$ is a truncation of the $E_2$-page of $C_2\text{-}\SliceSS(i_{C_2}^* \BPone)$.  

Recall that in the computation of $\SliceSS(\BPone)$, we have also divided the slices into collections (they are the columns in Table~\ref{array:BPoneSlices}).  The computation was simplified by treating each collection individually with respect to the $d_3$-differentials.  After the $d_3$-differentials, we combined the $E_5$-pages of every collection together to form the $E_5$-page of $\SliceSS(\BPone)$.  

In light of this simplification for $\SliceSS(\BPone)$, it is natural to expect that in $\SliceSS(\BPtwo)$, each collection can be treated individually with respect to differentials of lengths up to 13 (the longest differential in $\BPone$).  Knowing this will allow us to compute the $E_{13}$-page of each collection individually, and then combine them together to form the $E_{13}$-page of $\SliceSS(\BPtwo)$.  

\begin{df}\label{df:predictedDiff}\rm
A \textit{predicted differential} is a differential whose leading terms for the source and the target belong to slices in the same collection and the position of that differential matches with a differential in $C_4\text{-}\SliceSS(\BPone)$ or $C_2\text{-}\SliceSS(i_{C_2}^*\BPone)$.  
\end{df}

For example, all of the differentials whose source and target are on or above the truncation lines in Figures~\ref{fig:E2C4trunclines12}, \ref{fig:E2C4trunclines24}, \ref{fig:E2C2SSSTrunc6}, \ref{fig:E2C2SSSTrunc12}, \ref{fig:E2C2SSSTrunc18} are predicted differentials.  

\begin{df}\label{df:interferingDiff}\rm
An \textit{interfering differential} is a differential whose source and target are in different collections.  
\end{df}

Given the definitions above, 
\begin{thm}\label{thm:inducedDiffBPCfourone}
The collections can be treated individually with respect to differentials of lengths up to 13.  More specifically: 
\begin{enumerate}
\item For $3 \leq r \leq 11$, all the predicted $d_r$-differentials occur and there are no interfering $d_r$-differentials. 
\item All the predicted $d_{13}$-differentials occur.  
\end{enumerate}
\end{thm}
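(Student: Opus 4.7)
The plan is to prove the theorem in two phases. The first establishes that every predicted differential is actually realized in $C_4\text{-}\SliceSS(\BPtwo)$, and the second rules out, for $r \le 11$, any additional interfering differentials.

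For the $\BPone$-truncations $\dthree^i\BPone$, the strategy is naturality plus multiplicativity. The quotient $\BPtwo \to \BPone$ killing $\rthree$ and $\grthree$ induces a map of slice towers and hence of slice spectral sequences, under which the $i=0$ collection $\dthree^0\BPone$ maps isomorphically onto $C_4\text{-}\SliceSS(\BPone)$. Every differential of length at most $13$ computed in Section~\ref{sec:SliceSSBPone} therefore lifts to the corresponding predicted differential on classes belonging to $\dthree^0\BPone$. Because $\dthree = N_{C_2}^{C_4}\rthree$ lies in $\pi_{3\rho_4}^{C_4}\BPtwo$ and is a permanent cycle for slice-filtration reasons, multiplication by $\dthree^i$ transports these differentials into each collection $\dthree^i\BPone$ with the predicted leading terms.

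For the $i_{C_2}^*\BPone$-truncations $\dthree^i \sthree^j\, i_{C_2}^*\BPone$ with $j \ge 1$, the relevant slice cells are induced from $C_2$, so the corresponding classes in $\pi_\star^{C_4}\BPtwo$ arise as transfers and their differentials are controlled by the restriction map to the $C_2$-slice spectral sequence of $i_{C_2}^*\BPtwo$. That $C_2$-spectral sequence was fully computed in Section~\ref{sec:C2BPtwoSliceSS}; after imposing $\rthree=\grthree=0$ its differentials on the $i_{C_2}^*\BPone$-part coincide with those in $C_2\text{-}\SliceSS(i_{C_2}^*\BPone)$ truncated at the appropriate line. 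Naturality of restriction together with multiplication by $\dthree^i$ and by the transferred classes $\sthree^j$ then yields all predicted $d_r$-differentials in these truncations for $r \le 13$.

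The harder half, and the main obstacle, is the absence of interfering differentials of length at most $11$. Here I would argue bidegree by bidegree: for a putative $d_r$-differential with $3 \le r \le 11$ connecting $(t-s,s)$ to $(t-s-1,s+r)$, enumerate the monomials in $\{\rone,\grone,\rthree,\grthree\}$ contributing to each bidegree after accounting for the $d_3$-identification $\rone=\grone$ above filtration $3$, and track their $(\dthree,\sthree)$-weights. The truncation lines of Figures~\ref{fig:E2C4trunclines}, \ref{fig:E2C2SSSTrunc} show that a jump across collections would require a change in $\sthree$- or $\dthree$-multiplicity whose associated filtration shift exceeds $11$; this rules out interfering differentials in the allowed range. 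For the $d_{13}$-differentials the same multiplicative propagation establishes the predicted ones, while non-predicted $d_{13}$-differentials are not excluded by this argument and are handled in Sections~\ref{sec:higherDifferentialsI} and following. The technical delicacy lies in the uniformity of the bidegree bookkeeping near the intersections of truncation lines, where two collections come close to one another.
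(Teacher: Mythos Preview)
Your treatment of the \emph{predicted} differentials is essentially correct and matches the paper: the quotient map $\BPtwo \to \BPone$ together with multiplication by the permanent cycle $\dthree$ handles the $\BPone$-truncations, and transfer/restriction from the $C_2$-slice spectral sequence handles the $i_{C_2}^*\BPone$-truncations.

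The genuine gap is in your non-interference argument. The claim that ``a jump across collections would require a change in $\sthree$- or $\dthree$-multiplicity whose associated filtration shift exceeds $11$'' is false. The truncation lines for adjacent collections are spaced by $6$ (in the $(t-s)+s$ coordinate), and the collections overlap substantially; there \emph{are} classes in different collections at bidegrees compatible with interfering $d_5$, $d_7$, and $d_{11}$-differentials. The paper exhibits these explicitly (see the figures of possible interfering differentials) and must exclude each family by a separate, substantive argument. For example: the potential $d_7$ from classes at $(20,4)$ in a $\BPone$-truncation to classes at $(19,11)$ in an $i_{C_2}^*\BPone$-truncation is ruled out only by showing that all classes at $(19,11)$ must support nontrivial $d_{11}$-differentials (so cannot be $d_7$-targets); the potential $d_{11}$-interfering differentials split into four color-coded families, each killed by a different mechanism (transfer naturality, multiplication by permanent cycles at $(8,8)$ producing contradictions, or explicit factorization of the source as a product of $d_{11}$-cycles such as $u_{4\lambda}a_\sigma$ and $u_{4\sigma}$).

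Your bookkeeping sketch does not see any of this: it would need to recognize that the non-interference is not a pure degree argument but requires forward knowledge of which classes support longer differentials and which decompose multiplicatively. The paper's proof is irreducibly case-by-case at this step.
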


The rest of this section is dedicated to the proof of Theorem~\ref{thm:inducedDiffBPCfourone}.  At each step of the proof, once we have proven that all the predicted $d_r$-differentials occur and there are no interfering $d_r$-differentials, a \textit{$d_r$-truncation class} is a class that is produced from a $d_r$-differential in $C_4$-$\SliceSS(\BPone)$ crossing a truncation line in $C_4$-$\SliceSS(\BPtwo)$.  Roughly speaking, this is a class that given its bidegree, would have been killed by a $d_r$-differential in $C_4$-$\SliceSS(\BPone)$ but survives on the $E_r$-page of $C_4$-$\SliceSS(\BPtwo)$ because the source of the $d_r$-differential has been truncated off. 

\subsection{$d_3$-differentials} \label{subsec:d3induced}
The quotient map $\BPtwo \to \BPone$ induces a map 
$$\SliceSS(\BPtwo) \longrightarrow \SliceSS(\BPone)$$
of $C_4$-slice spectral sequences.  In $\SliceSS(\BPone)$, the $d_3$-differentials are generated under multiplication by
$$d_3(u_\lambda) = \sone a_\lambda a_{\sigma_2}.$$
For natuality and degree reasons, the same differential occurs in $\SliceSS(\BPtwo)$ as well.  Moreover, by considering the restriction map 
$$C_4\text{-}\SliceSS(\BPtwo) \longrightarrow C_2\text{-}\SliceSS(i_{C_2}^* \BPtwo),$$
we deduce the $d_3$-differential 
$$d_3(u_{2\sigma_2}) = \sone a_{3\sigma_2}$$
as well. 
(Even though we are working with a $C_4$-slice spectral sequence, this differential applies to some of the classes in $i_{C_2}^*\BPone$-truncations because of our naming conventions). 
All the predicted differentials are generated by these two differentials.  Afterwards, there are no more $d_3$-differentials by degree reasons. 

\subsection{$d_5$-differentials}\label{subsec:d5induced}
In $\SliceSS(\BPone)$, all the $d_5$-differentials are generated under multiplication by the differentials 
$$d_5(u_{2\sigma})=\done a_\lambda a_{3\sigma}$$ and $$d_5(u_{2\lambda}) = \done u_\lambda a_{2\lambda} a_\sigma.$$
In $\SliceSS(\BPtwo)$, the first differential still exists by Hill--Hopkins--Ravenel's Slice Differential Theorem \cite[Theorem~9.9]{HHR}.  To prove that the second differential exists as well, consider again the map 
$$\SliceSS(\BPtwo) \longrightarrow \SliceSS(\BPone).$$
For natuality reasons, $u_{2\lambda}$ must support a differential of length at most 5 in $\SliceSS(\BPtwo)$.  Since $u_{\lambda}$ supports a nonzero $d_3$-differential, $u_{2\lambda}$ is a $d_3$-cycle.  This implies that $u_{2\lambda}$ must support a $d_5$-differential whose target maps to $\done u_\lambda a_{2\lambda} a_\sigma$ under the quotient map (which sends $\rthree$ and $\gamma \rthree$ to zero).  It follows that the only possible target is $\done u_\lambda a_{2\lambda} a_\sigma$, and the same $d_5$-differential on $u_{2\lambda}$ exists in $\SliceSS(\BPtwo)$. 

All the predicted $d_5$ differentials in $\SliceSS(\BPtwo)$ are generated by these two differentials.  

It remains to show that there are no interfering $d_5$-differentials.  There are two cases to consider: 

\noindent (1) \textit{The source is in a $i_{C_2}^* \BPone$-truncation.}  Every class in a $i_{C_2}^*\BPone$-truncation is in the image of the transfer map 
$$C_2\text{-}\SliceSS(i_{C_2}^*\BPtwo) \stackrel{tr}{\longrightarrow} C_4\text{-}\SliceSS(\BPtwo).$$ 
On the $E_5$-page of $C_2\text{-}\SliceSS(i_{C_2}^*\BPtwo)$, every class is a $d_5$-cycle because there are no $d_5$-differentials.  Therefore after applying the transfer map, all the images must be $d_5$-cycles as well.   

\noindent (2) \textit{The source is in a $\BPone$-truncation.}  If the source is in the image of the transfer, then by the same reasoning as above, it must be a $d_5$-cycle.  If the source is not in the image of the transfer, then it can be written as $\dthree^i \done^j u_\lambda^au_\sigma^b a_\lambda^c a_\sigma^d$ for some $i, j, a, b, c, d \geq 0$.  The only possibilities are the {\color{blue} blue classes} in Figure~\ref{fig:d5Interfering}.  These classes might support $d_5$-differentials whose targets are classes in $i_{C_2}^*\BPone$-truncations.  However, using the differentials $d_5(u_{2\lambda}) = \done u_\lambda a_{2\lambda} a_\sigma$ and $d_5(u_{2\sigma}) = \done a_\lambda a_{3\sigma}$, we can easily show that all of these classes are $d_5$-cycles.  
\begin{figure}
\begin{center}
\makebox[\textwidth]{\includegraphics[trim={0cm 5cm 0cm 3cm}, clip, page = 1, scale = 0.9]{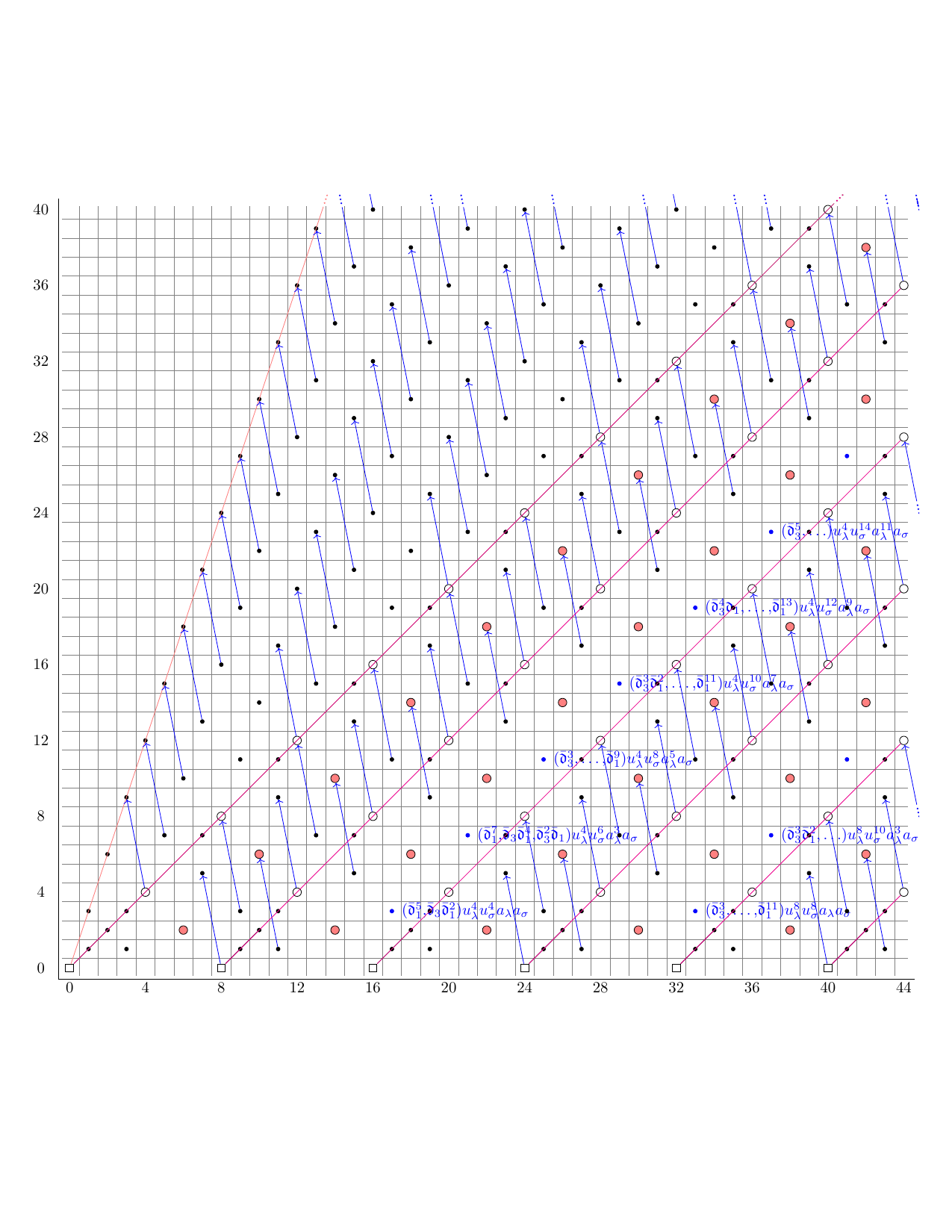}}
\end{center}
\begin{center}
\caption{Possible sources in $\BPone$-truncations that could support $d_5$-interfering differentials.  The {\color{magenta} magenta} lines indicate the locations of the classes in $i_{C_2}^*\BPone$-truncations. }
\hfill
\label{fig:d5Interfering}
\end{center}
\end{figure}

\subsection{$d_7$-differentials} \label{subsec:d7induced}
In the slice spectral sequence for $\BPone$, the $d_7$-differentials are generated under multiplicative structure by three differentials: 
\begin{enumerate}
\item $d_7(2u_{2\lambda}) = \done \sone u_\sigma a_{3\lambda} a_{\sigma_2}$;
\item $d_7(2u_{2\lambda}u_{2\sigma}) = \done \sone u_{3\sigma} u_{3\lambda} a_{\sigma_2}$; 
\item $d_7(u_{4\lambda}) = \done \sone u_{2\lambda} u_{2\sigma} a_{3\lambda} a_{\sigma_2}$. 
\end{enumerate}
Using the natuality of the quotient map 
$$\SliceSS(\BPtwo) \longrightarrow \SliceSS(\BPone),$$
we deduce that the classes $2u_{2\lambda}$, $2u_{2\lambda}u_{2\sigma}$, and $u_{4\lambda}$ must all support differentials of length at most 7 in $\SliceSS(\BPtwo)$.  The formulas for the $d_5$-differentials on $u_{2\lambda}$ and $u_{2\sigma}$ imply that all three classes above are $d_5$-cycles.  Therefore, they must all support $d_7$-differentials.  It follows by natuality that we have the exact same $d_7$-differentials in $\SliceSS(\BPtwo)$.  These differentials generate the predicted $d_7$-differentials in all the $\BPone$-truncations.  

All of the predicted $d_7$-differentials in $i_{C_2}^*\BPone$-truncations are obtained by using the transfer map 
$$C_2\text{-}\SliceSS(\BPtwo) \stackrel{tr}{\longrightarrow} C_4\text{-}\SliceSS(\BPtwo).$$
More precisely, the transfer map takes in a $d_7$-differential in $\BPtwo$, which is generated by $d_7(u_{4\sigma_2}) = (\rthree + \gamma \rthree+\rone^3)a_{7\sigma_2} = (\sthree + \rone^3) a_{7\sigma_2}$, and produces a corresponding $d_7$-differential in a $i_{C_2}^*\BPone$-truncation.    

Note that in the $C_2$-slice spectral sequence for $\BPone$, the $d_7$-differentials are generated by $d_7(u_{4\sigma_2}) = \rone^3 a_{7\sigma_2}$, whereas in the $C_2$-slice spectral sequence for $\BPtwo$, they are generated by $d_7(u_{4\sigma_2}) = (\sthree + \rone^3) a_{7\sigma_2}$.  The readers should be warned that strictly speaking, the $d_7$-differentials are \textit{not} appearing independently within each $i_{C_2}^*\BPone$-truncations, but rather identifying classes between different $i_{C_2}^*\BPone$-truncations.  The exact formulas for this identification will be discussed in Section~\ref{sec:higherDifferentialsI}.  Nevertheless, since the leading terms are independent, the $d_7$-differentials do occur independently within each $i_{C_2}^*\BPone$-truncation.  

It remains to prove that there are no interfering $d_7$-differentials.  There are two cases to consider.  

\noindent (1) \textit{The source is in the image of the transfer} (in other words, the source is produced by an induced slice cell).  Denote the source by $tr(x)$, where $x$ is a class in the $C_2$-slice spectral sequence.  If $tr(x)$ supports a $d_7$-differential in the $C_4$-slice spectral sequence, then natuality of the transfer map implies that in the $C_2$-slice spectral sequence, $x$ must support a differential of length at most 7.  This means that $x$ either supports a $d_3$-differential or a $d_7$-differential.  

If $x$ supports a $d_3$-differential $d_3(x) = y$, then since the transfer map is faithful on the $E_3$-page, applying the transfer to this $d_3$-differential yields the nontrivial $d_3$-differential 
$$d_3(tr(x)) = tr(y)$$
in the $C_4$-slice spectral sequence.  This is a contradiction to the assumption that $d_7(tr(x)) \neq 0$.  

Therefore, $x$ must support a $d_7$-differential $d_7(x) = y$ in the $C_2$-slice spectral sequence.  Applying the transfer map to this $d_7$-differential gives $d_7(tr(x)) = tr(y)$, which must be the $d_7$-differential on $tr(x)$ by natuality.  However, this will not be an interfering $d_7$-differential because it is a predicted $d_7$-differential that is obtained via the transfer. 

\begin{exam}\rm
In Figure~\ref{fig:d7Interfering}, there is a possibility for a $d_7$-interfering differential with source a class at $(11,3)$ coming from a $\BPone$-truncation (it is supposed to support a predicted $d_{11}$-differential), and the target a class at $(10,10)$ coming from $i_{C_2}^*\BPone$-truncations (a pink class).  

The two possible sources at $(11,3)$ are 
$\dthree \sone u_{2\lambda}u_{3\sigma}a_\lambda a_{\sigma_2} = tr(\rthree \gamma \rthree \rone u_{4\sigma_2} a_{3\sigma_2})$ and $\done^3 \sone u_{2\lambda}u_{3\sigma}a_\lambda a_{\sigma_2} = tr(\rone^4 \gamma \rone^3 u_{4\sigma_2} a_{3\sigma_2})$.  By the discussion above, if any of these two classes support a $d_7$-differential hitting a class in the image of the transfer, then this differential must be obtained by applying the transfer map to a $d_7$-differential in the $C_2$-slice spectral sequence.  

In the $C_2$-slice spectral sequence, the relevant differentials are the following:
\begin{eqnarray*}
d_7(\rthree \gamma \rthree \rone u_{4\sigma_2} a_{3\sigma_2}) &=& \rthree \gamma \rthree \rone (\rthree + \grthree + \rone^3) a_{11\sigma_2} \\
d_7(\rone^4 \gamma \rone^3 u_{4\sigma_2} a_{3\sigma_2}) &=& \rone^4 \gamma \rone^3 (\rthree + \grthree + \rone^3) a_{11\sigma_2}.
\end{eqnarray*}
The transfer of the targets are $(\dthree \sthree \sone + \dthree \sone^4) a_{3\lambda}a_{4\sigma_2}$ and $(\done^3 \sthree \sone + \done^3 \sone^4) a_{3\lambda}a_{4\sigma_2}$, respectively.  They are both 0 on the $E_7$-page because they are targets of $d_3$-differentials.  It follows that the $d_7$-interfering differentials do not occur at $(11,3)$.  The same argument also shows that there are no $d_7$-interfering differentials with sources at $(19, 11)$, $(23, 15)$, $(31, 7)$, $\ldots$. 
\end{exam}

\noindent (2) \textit{The source is not in the image of the transfer} (in other words, the source is produced by a regular, non-induced slice cell).  As shown in Figure~\ref{fig:d7Interfering}, for degree reasons, there are possible interfering $d_7$-differentials with sources at 
\begin{enumerate}
\item $(20, 4)$, $(28, 12)$, $(36, 20)$, $\ldots$;
\item $(32,0)$, $(40,8)$, $(48, 16)$, $\ldots$; 
\item $(52, 4)$, $(60, 12)$, $(68, 20)$, $\ldots$; \\
$\ldots$.
\end{enumerate}
To prove that these $d_7$-differentials do not exist, it suffices to prove that all the classes at $(20, 4)$ are $d_7$-cycles.  Once we prove this, all the other possible sources above will be $d_7$-cycles as well by multiplicative reasons.  

The quotient map $\SliceSS(\BPtwo) \longrightarrow \SliceSS(\BPone)$ shows that both classes at $(11,3)$, $\done^3\sone u_{2\lambda}u_{3\sigma} a_\lambda a_{\sigma_2}$ and $\dthree\sone u_{2\lambda}u_{3\sigma} a_\lambda a_{\sigma_2}$, must support nontrivial $d_{11}$-differentials.  Multiplication by the permanent cycles at $(8, 8)$ ($\dthree \done u_{4\sigma} a_{4\lambda}$ and $\done^4 u_{4\sigma} a_{4\lambda}$) implies that all three classes at $(19, 11)$ (coming from the slice cells $\dthree^2 \done \sone$, $\dthree \done^4 \sone$, and $\done^7 \sone$) must support nontrivial $d_{11}$-differentials.  In fact, these are the predicted $d_{11}$-differentials.  

There are three classes at $(20, 4)$: $\dthree^2 u_{6\sigma}u_{4\lambda}a_{2\lambda}$, $\dthree \done^3u_{6\sigma}u_{4\lambda}a_{2\lambda}$, and $\done^6u_{6\sigma}u_{4\lambda}a_{2\lambda}$.  If any of these classes supports a nontrivial $d_7$-differential, the target would be a classes at $(19, 11)$, which, as we have shown in the previous paragraph, supports a nontrivial $d_{11}$-differential.  This is a contradiction because something killed on the $d_7$-page becomes trivial on the $d_{11}$-page, and cannot support a nontrivial $d_{11}$-differential (the target of that $d_{11}$-differential is not killed by a shorter differential and is on the $E_{11}$-page).

\begin{figure}
\begin{center}
\makebox[\textwidth]{\includegraphics[trim={0cm 10cm 0cm 0cm}, clip, page = 1, scale = 0.9]{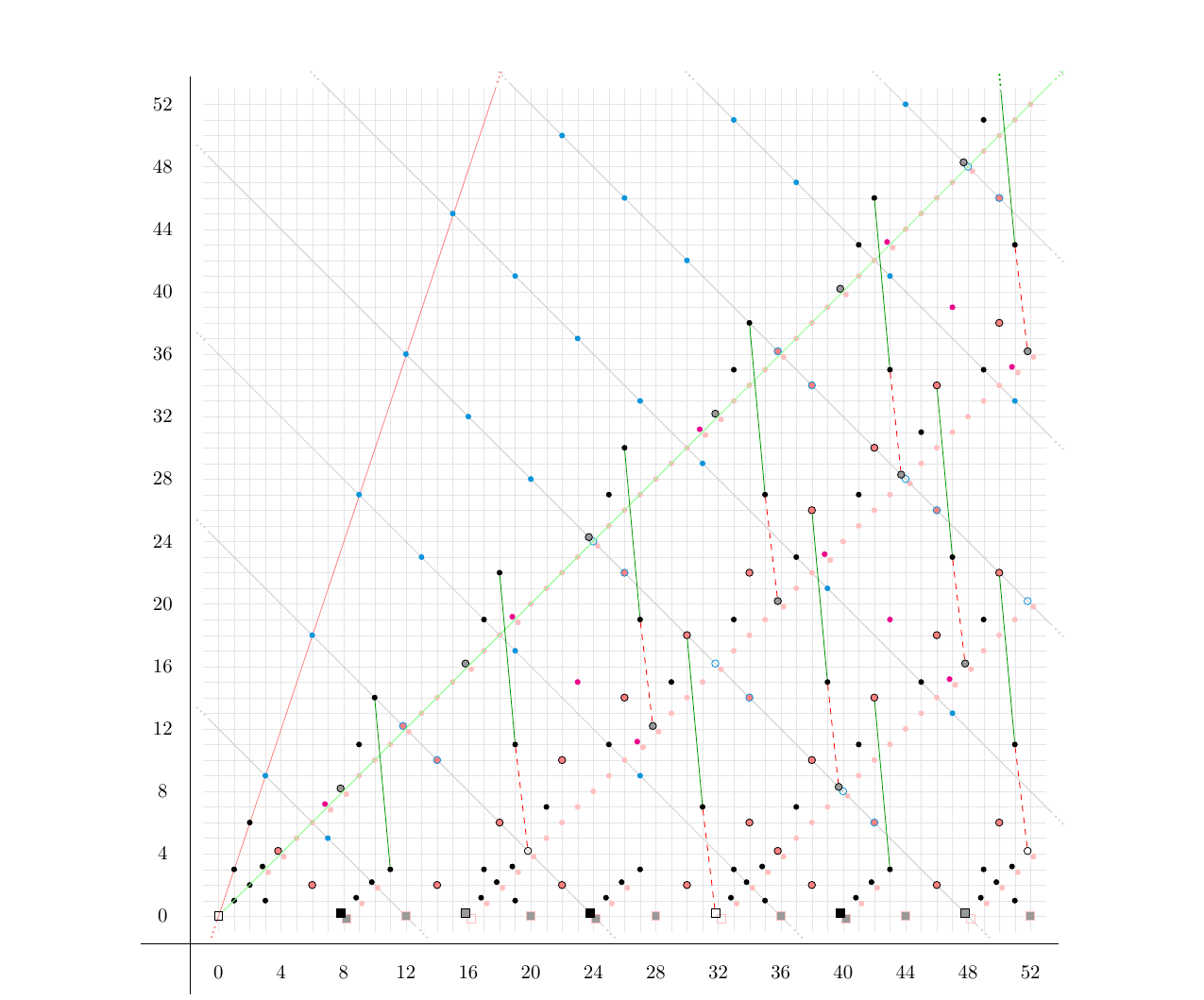}}
\end{center}
\begin{center}
\caption{The {\color{red}dashed red lines} are the possible $d_7$-interfering differentials.  The {\color{cyan} cyan classes} are the $d_5$-truncation classes, the {\color{magenta} magenta classes} are the $d_7$-truncation classes, and the {\color{pink} pink classes} are classes in $i_{C_2}^*\BPone$-truncations after the predicted $d_7$-differentials.  The {\color{green!60!black}green differentials} are the predicted $d_{11}$-differentials which all occur. }
\hfill
\label{fig:d7Interfering}
\end{center}
\end{figure}

\subsection{$d_{11}$-differentials}\label{subsec:d11induced}
For degree reasons, there are no possible $d_9$-differentials.  The next possible differentials are the $d_{11}$-differentials.  

In the slice spectral sequence for $\BPone$, all the $d_{11}$-differentials are generated by the single $d_{11}$-differential
$$d_{11}(\sone u_{2\lambda} u_{3\sigma}a_{\sigma_2} ) = \done^3 u_{4\sigma} a_{5\lambda} a_{2\sigma}$$
under multiplication.  Using the quotient map 
$$\SliceSS(\BPtwo) \longrightarrow \SliceSS(\BPone),$$
we deduce that the class $\sone u_{2\lambda} u_{3\sigma}a_{\sigma_2}$ must support a differential of length at most 11 in $\SliceSS(\BPtwo)$.  

Our knowledge of the earlier differentials implies that this class is a $d_r$-cycle for $r \leq 10$, and hence it must support a $d_{11}$-differential.  Furthermore, the formula of the $d_{11}$-differential is of the form 
$$d_{11}(\sone u_{2\lambda} u_{3\sigma}a_{\sigma_2} ) = \done^3 u_{4\sigma} a_{5\lambda} a_{2\sigma} + \cdots,$$
where ``$\cdots$'' indicates terms that go to 0 under the quotient map (which sends $\bar{r}_3, \gamma \bar{r}_3 \mapsto 0$).  All of the predicted $d_{11}$-differentials are obtained using this $d_{11}$-differential under multiplication. 

Similar to situation of the $d_7$-differentials, strictly speaking, the $d_{11}$-differentials do not necessarily occur within each $\BPone$-truncation.  For instance, in the formula above, the ``$\cdots$" could be $\dthree u_{4\sigma} a_{5\lambda} a_{2\sigma}$.  If this happens, the $d_{11}$-differential would be identifying the two classes, $\done^3 u_{4\sigma} a_{5\lambda} a_{2\sigma}$ and $\dthree u_{4\sigma} a_{5\lambda} a_{2\sigma}$, which are located in different $\BPone$-truncations.  Given this, we can kill off the leading term and assume that the rest of the terms remain.  This will give us the same distribution of classes after the $d_{11}$-differentials and will not affect later computations.  

It remains to show that there are no $d_{11}$-interfering differentials.  Figure~\ref{fig:d11Interfering} shows all the possible $d_{11}$-interfering differentials.  We will prove that none of them exist.  

\noindent (1) {\color{blue} Blue differentials}.  These differentials have sources at 
\begin{itemize}
\item $\{(27,11), (39,23), (51,35), \ldots \}$;
\item $\{(35,3), (47,15), (59,27), \ldots \}$;
\item $\{(55,7), (67,19), (79,31), \ldots \}$;
\item $\{(75,11), (87,23), (99,35), \ldots \}$;
\item $\cdots$. 
\end{itemize}
The sources of these differentials are in the image of the transfer map.  Their pre-images in the $C_2$-slice spectral sequence are all $d_{11}$-cycles (more specifically, they all support differentials of length at least 15).  Therefore, their images under the transfer map cannot support nontrivial $d_{11}$-differentials.  

\noindent (2) {\color{gray} Gray differentials}.  These differentials have sources at 
\begin{itemize}
\item $\{(43,19), (67,43), (91,67), \ldots \}$;
\item $\{(63,23), (87,47), (111,71), \ldots \}$;
\item $\{(59,3), (83,27), (107,51), \ldots \}$;
\item $\{(79,7), (103,31), (127,55), \ldots \}$;
\item $\{(99,11), (123,35), (147,59), \ldots \}$;
\item $\{(119,15), (143,39), (167,63), \ldots \}$;
\item $\{(139,19), (163,43), (187,67), \ldots \}$;
\item $\cdots$. 
\end{itemize}
Each of the sources is a $d_7$-truncation class.  If any of these differentials exist, we will obtain a contradiction when we multiply this differential by the classes at $(8, 8)$ (either $\dthree \done u_{4\sigma}a_{4\lambda}$ or $\done^4 u_{4\sigma}a_{4\lambda}$).  

For example, suppose the class at $(43, 19)$ supports a nontrivial $d_{11}$-differential.  The target (a class at $(42,30)$), when multiplied by the class $\done^4 u_{4\sigma}a_{4\lambda}$, is a nonzero class at $(50,38)$.  The source, however, becomes 0.  This is a contradiction.  

\noindent (3) Black differentials.  These differentials have sources at 
\begin{itemize}
\item $\{(18, 6), (26, 14), (34, 22), \ldots\}$;
\item $\{(30, 2), (38, 10), (46, 18), \ldots\}$;
\item $\{(50, 6), (58, 14), (66, 22), \ldots\}$;
\item $\{(62, 2), (70, 10), (78, 18), \ldots\}$;
\item $\cdots$.
\end{itemize}
It suffices to show that all of the classes in the first set are $d_{11}$-cycles.  Once we have proven this, multiplication by the class $u_{8\lambda}u_{8\sigma}$ ($d_{11}$-cycle), the three classes at $(20, 4)$ ($(\dthree^2, \dthree \done^3, \done^6) u_{4\lambda} u_{6\sigma} a_{2\lambda}$, all $d_{11}$-cycles), and the two permanent cycles at $(8, 8)$ ($(\dthree \done, \done^4)u_{4\sigma}a_{4\lambda}$) will show that all the other classes are $d_{11}$-cycles as well. 

Now, for the first set, the names of the classes at each of the possible sources are as follows: 
\begin{itemize}
\item $(18, 6)$: $2(\dthree^2, \dthree \done^3, \done^6) u_{3\lambda} u_{6\sigma} a_{3\lambda} = (\dthree^2, \dthree \done^3, \done^6) u_{4\lambda} u_{4\sigma} a_{2\lambda} a_{2\sigma}$
\item $(26,14)$: $2(\dthree^3\done, \ldots, \done^{10}) u_{3\lambda} u_{10\sigma} a_{7\lambda}= (\dthree^3\done, \ldots, \done^{10}) u_{4\lambda} u_{8\sigma} a_{6\lambda} a_{2\sigma}$
\item $(34,22)$: $2(\dthree^4\done^2, \ldots, \done^{14}) u_{3\lambda} u_{14\sigma} a_{11\lambda}= (\dthree^4\done^2, \ldots, \done^{14}) u_{4\lambda} u_{12\sigma} a_{10\lambda} a_{2\sigma}$
\item $\cdots$. 
\end{itemize} 
The names can all be written as products of the following $d_{11}$-cycles: $\done$, $\dthree$, $a_\lambda$, $a_\sigma$, $u_{4\lambda} a_\sigma$ (supports $d_{13}$-differential), and $u_{4\sigma}$ (supports $d_{13}$-differential).  Therefore, there are no $d_{11}$-interfering differentials in this case. 

\noindent (4) {\color{red} Red differentials}.  These differentials have sources at 
\begin{itemize}
\item $\{(14, 2), (22, 10), (30, 18), \ldots \}$;
\item $\{(34, 6), (42, 14), (50, 22), \ldots \}$;
\item $\{(46, 2), (54, 10), (62, 18), \ldots \}$;
\item $\{(66, 6), (74, 14), (82, 22), \ldots \}$;
\item $\cdots$.
\end{itemize}
Similar to (3), it suffices to show that all of the classes in the first set are $d_{11}$-cycles.  Afterwards, all the other classes can be proven to be $d_{11}$-cycles via multiplication by the class $u_{8\lambda}u_{8\sigma}$, the classes at $(20, 4)$, and the classes at $(8, 8)$ (all of which are $d_{11}$-cycles).  

Now, for the first set, the classes at $(22, 10)$, $(30, 18)$, $(38, 26)$, $\ldots$ are all $d_{11}$-cycles because they can be written as products of classes at $(20, 4)$ and classes at $(2, 6)$, $(10, 14)$, $(18, 22)$, $\ldots$ (all of which are $d_{11}$-cycles).  Afterwards, we deduce that the classes at $(14, 2)$ are $d_{11}$-cycles as well because if they are not, then multiplying the $d_{11}$-differential by the classes at $(8, 8)$ would produce a nontrivial $d_{11}$-differential on the classes at $(22,10)$.  This is a contradiction because we have just proven that all the classes at $(22,10)$ are $d_{11}$-cycles. 

\begin{figure}
\begin{center}
\makebox[\textwidth]{\includegraphics[trim={0cm 10cm 0cm 0cm}, clip, page = 1, scale = 0.9]{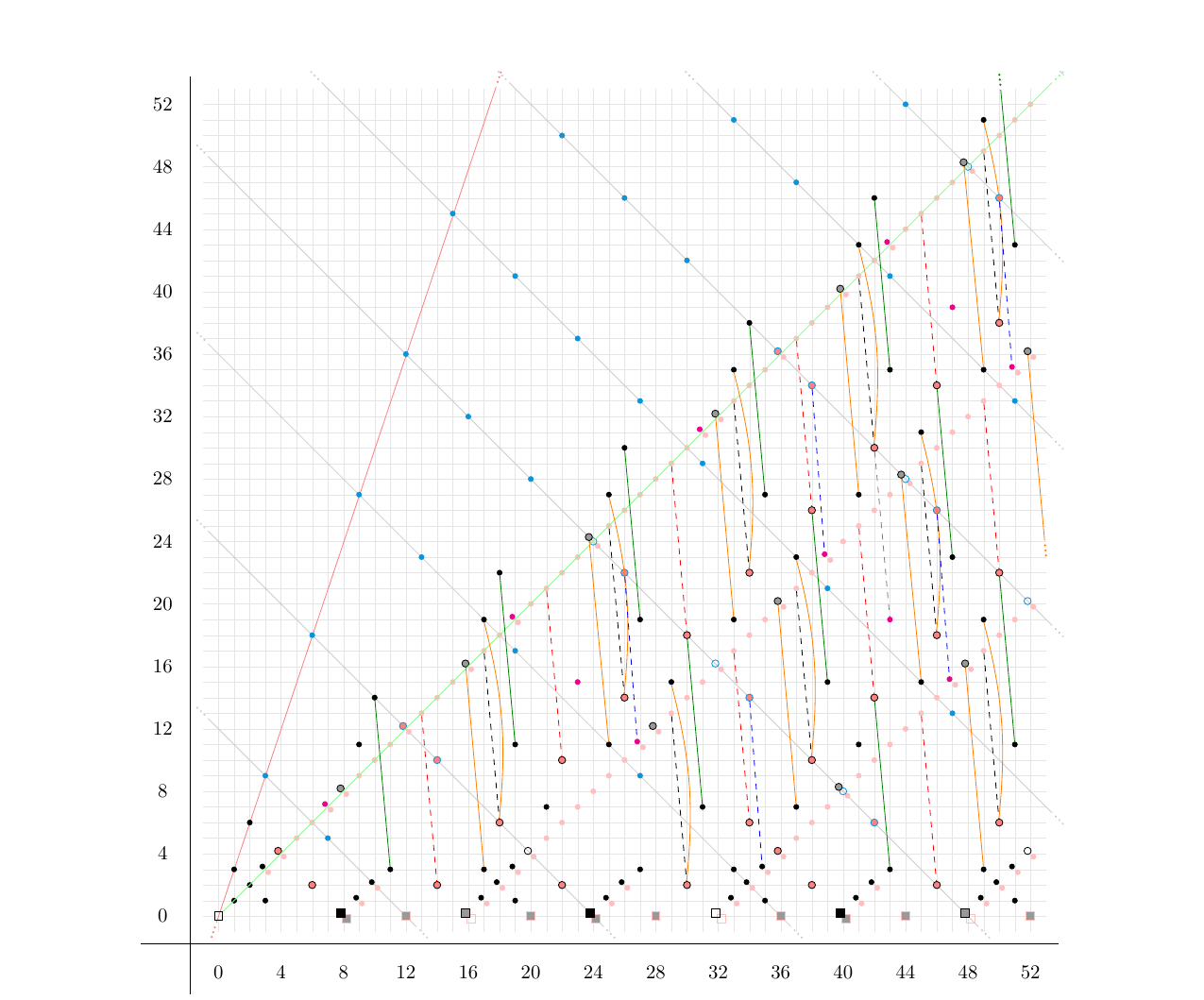}}
\end{center}
\begin{center}
\caption{The dashed lines are the possible $d_{11}$-interfering differentials.  The {\color{cyan} cyan classes} are the $d_5$-truncation classes, the {\color{magenta} magenta classes} are the $d_7$-truncation classes, and the {\color{pink} pink classes} are classes in $i_{C_2}^*\BPone$-truncations after the predicted $d_7$-differentials.  The {\color{green!60!black} green differentials} are the predicted $d_{11}$-differentials and the {\color{orange} orange differentials} are the predicted $d_{13}$-differentials. }
\hfill
\label{fig:d11Interfering}
\end{center}
\end{figure}

\subsection{Predicted $d_{13}$-differentials} \label{subsec:Predictedd13Diff}
In the slice spectral sequence of $\BPone$, all the $d_{13}$-differentials are generated by $d_{13}(u_{4\lambda}a_\sigma) = \done^3 u_{4\sigma}a_{7\lambda}$ under multiplication.  This differential was proven by applying the norm formula (see Theorem~\ref{thm:NormFormula}, Theorem~\ref{thm:BPoned13} and Corollary~\ref{cor:BPoned13}).  In fact, we can also prove this differential in $\SliceSS(\BPtwo)$ by using the norm formula, and we will do so in Section~\ref{sec:Norm} when we discuss the norm in depth.  

Alternatively, we can analyze the quotient map 
$$\SliceSS(\BPtwo) \longrightarrow \SliceSS(\BPone) $$
again.  Since $u_{4\lambda}a_\sigma$ supports a $d_{13}$-differential in $\SliceSS(\BPone)$, it must support a differential of length at most 13 in $\SliceSS(\BPtwo)$.  Our knowledge of the earlier differentials implies that $u_{4\lambda}a_\sigma$ must be a $d_{11}$-cycle, and hence must support a $d_{13}$-differential.  More specifically, we can deduce this fact by analyzing the class $\dthree^3 u_{4\lambda} u_{8\sigma} a_{5\lambda}a_\sigma$ at $(25, 11)$.  If $u_{4\lambda}a_\sigma$ supports a $d_r$-differential of length $r < 13$, then $\dthree^3 u_{4\lambda} u_{8\sigma} a_{5\lambda}a_\sigma$ must support a $d_r$-differential as well, which is impossible by degree reasons.  

Since the $d_{13}$-differential on $u_{4\lambda}a_\sigma$ respects natuality under the quotient map, it must be of the form 
$$d_{13}(u_{4\lambda}a_\sigma) = \done^3 u_{4\sigma}  a_{7\lambda} + \cdots,$$
where ``$\cdots$'' denote terms that go to 0 under the quotient map sending $\rthree, \grthree \mapsto 0$ (in particular, it could contain $\dthree u_{4\sigma}a_{7\lambda}$, as we will see in Section~\ref{sec:Norm}).  All the predicted $d_{13}$-differentials are generated by this differential under multiplication.  

Similar to the cases for $d_7$ and $d_{11}$-differentials, the readers should be warned that the $d_{13}$-differentials are not necessarily occurring within each $\BPone$-truncation.  The above formula identifies the leading term, $\done^3 u_{4\sigma}a_{7\lambda}$, with the rest of the terms (possibly none).  Therefore, we can kill off the leading term and assume that the rest of the terms remain.


\subsection{$E_{13}$-page of $\SliceSS(\BPtwo)$}
Figure~\ref{fig:BPtwoE13} shows the $E_{13}$-page of $\SliceSS(\BPtwo)$ with the predicted $d_{13}$-differentials already taken out.  The truncation classes are color coded as follows:  
\begin{enumerate}
\item {\color{cyan!80!blue} Cyan classes}: $d_5$-truncation classes; 
\item {\color{magenta} Magenta classes}: $d_7$-truncation classes; 
\item {\color{green!80!black} Green classes}: $d_{11}$-truncation classes;
\item {\color{orange!80} Orange classes}: $d_{13}$-truncation classes; 
\item {\color{pink} Pink classes}: $i_{C_2}^*\BPone$-truncation classes. 
\end{enumerate}

\begin{figure}
\begin{center}
\makebox[\textwidth]{\includegraphics[trim={0cm 9cm 0cm 9cm}, clip, page = 1, scale = 0.45]{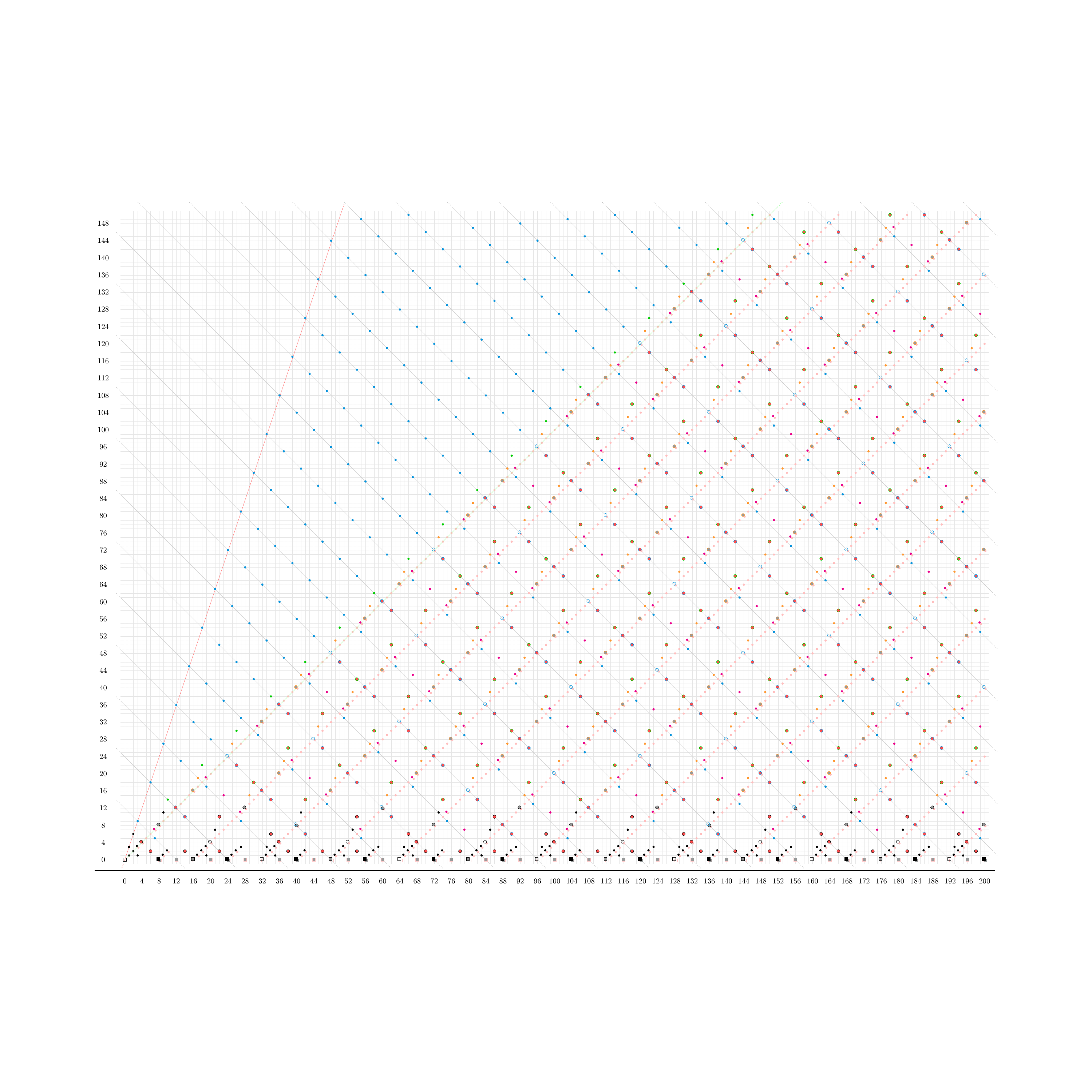}}
\end{center}
\begin{center}
\caption{The $E_{13}$-page of $\SliceSS(\BPtwo)$ with the predicted $d_{13}$-differentials already taken out.  The {\color{cyan!80!blue} cyan classes} are $d_5$-truncation classes, the {\color{magenta} magenta classes} are $d_7$-truncation classes, the {\color{green!80!black} green classes} are $d_{11}$-truncation classes, the {\color{orange!80} orange classes} are $d_{13}$-truncation classes, and the {\color{pink} pink classes} are $i_{C_2}^*\BPone$-truncation classes.}
\hfill
\label{fig:BPtwoE13}
\end{center}
\end{figure}

\section{Higher differentials I: $d_{13}$ and $d_{15}$-differentials} \label{sec:higherDifferentialsI}
In this section, we prove all the $d_{13}$ and $d_{15}$-differentials in the slice spectral sequence of $\BPtwo$, as well as differentials between $i_{C_2}^*\BPone$-truncation classes.  

\subsection{$d_{13}$-differentials}
\begin{prop}\label{prop:d13slice}
The class $u_{4\sigma}$ supports the $d_{13}$-differential 
$$d_{13}(u_{4\sigma}) = \dthree a_{3\lambda} a_{7\sigma}.$$
\end{prop}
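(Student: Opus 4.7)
The plan is to invoke the Hill--Hopkins--Ravenel Slice Differentials Theorem \cite[Theorem~9.9]{HHR} directly, exactly mirroring the proof of $d_5(u_{2\sigma}) = \done a_\lambda a_{3\sigma}$ in Theorem~\ref{thm:BPoned5}. Since $\BPtwo$ retains the generator $\bar{r}_3$ (and hence $\dthree = N_{C_2}^{C_4}\bar{r}_3$), the Slice Differentials Theorem applies and yields a $d_{13}$-differential on $u_{4\sigma}$ with target $\dthree a_{3\lambda} a_{7\sigma}$. This is the $C_4$-equivariant analog of the $C_2$-differential $d_7(u_{4\sigma_2}) = \rthree a_{\sigma_2}^7$ arising from $\bar{r}_3$, normed up to $C_4$; the pattern is the same one observed at height $1$, where the $C_2$-differential $d_3(u_{2\sigma_2}) = \rone a_{\sigma_2}^3$ norms to $d_5(u_{2\sigma}) = \done a_\lambda a_{3\sigma}$.

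Alternatively, this differential can be derived via the norm formula (Theorem~\ref{thm:NormFormula}) applied to
\[
d_7(u_{4\sigma_2}) = (\rone^3 + \rthree + \grthree)\,a_{\sigma_2}^7
\]
in $C_2$-$\SliceSS(i_{C_2}^*\BPtwo)$ from Section~\ref{sec:C2BPtwoSliceSS}. The formula predicts a $d_{13}$-differential on the $C_4$-side; after computing $N_{C_2}^{C_4}$ of the $C_2$-target and discarding contributions killed by the $d_7$- and $d_{11}$-differentials of Section~\ref{sec:InducedDiffBPone}, the only surviving piece is the term involving $\dthree$. Before concluding one must verify that $u_{4\sigma}$ survives to the $E_{13}$-page---e.g.\ $d_5(u_{4\sigma}) = 2u_{2\sigma}\,d_5(u_{2\sigma}) = 2\done u_{2\sigma} a_\lambda a_{3\sigma} = 0$ since $2a_\sigma = 0$, with analogous Leibniz arguments ruling out $d_7$ and $d_{11}$---and that $\dthree a_{3\lambda}a_{7\sigma}$ also survives, which is visible in Figure~\ref{fig:BPtwoE13}.

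The main obstacle, in the norm-formula route, is that the raw output has filtration $14$ (namely $\dthree a_{7\lambda}$) rather than the desired $13$. This is reconciled by using the extra $a_\sigma$-factor the norm formula introduces on the source together with the gold relation $u_\lambda a_{2\sigma} = 2a_\lambda u_{2\sigma}$, which trades a factor of $a_\lambda$ for $a_{2\sigma}$; the extra source factor is then removed using injectivity of multiplication by the appropriate permanent cycle on the $E_{13}$-page. The non-additivity of $N_{C_2}^{C_4}$ on $(\rone^3 + \rthree + \grthree)$ is handled by noting that $\rthree+\grthree = \operatorname{res}\sthree$ lies in the image of restriction and that $\rone^3$ has already been absorbed by the $d_7$-structure, so the only Tambara contribution left at filtration $13$ is $\dthree$.
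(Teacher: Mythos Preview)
Your primary approach (first paragraph) is correct and is exactly what the paper does: a one-line invocation of the Hill--Hopkins--Ravenel Slice Differentials Theorem \cite[Theorem~9.9]{HHR}.

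However, your ``alternative'' route via Theorem~\ref{thm:NormFormula} is misdirected. The norm formula applied to $d_7(u_{4\sigma_2}) = (\rone^3+\rthree+\grthree)a_{\sigma_2}^7$ produces a $d_{13}$-differential on $a_\sigma N_{C_2}^{C_4}(u_{4\sigma_2}) = a_\sigma \cdot u_{4\lambda}/u_{4\sigma}$, hence (after clearing the denominator) on $u_{4\lambda}a_\sigma$. That is precisely the content of Theorem~\ref{thm:NormedDiffd13}, not the present proposition. The class $u_{4\sigma}$ is not in the $a_\sigma$-multiple of the norm of any $C_2$-orientation class, so the norm formula does not see it; the differential on $u_{4\sigma}$ comes from a genuinely different mechanism (the Slice Differentials Theorem, which controls the classes $u_{2^k\sigma}$ directly via the slice tower of $\MUG$). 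Your discussion of ``trading $a_\lambda$ for $a_{2\sigma}$ via the gold relation'' and removing the extra $a_\sigma$ does not bridge this gap: the source is simply the wrong class. Drop the second and third paragraphs entirely.
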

\begin{proof}
This is an immediate application of Hill--Hopkins--Ravenel's Slice Differential Theorem~\cite[Theorem 9.9]{HHR}.
\end{proof}

The $d_{13}$-differential in Proposition~\ref{prop:d13slice} generates all the $d_{13}$-differentials between the line of slope 1 and the line of slope 3 under multiplication (see Figure~\ref{fig:BPtwod13}).  

\begin{prop}\label{prop:BPtwod13(20,4)}
The class $\dthree^2 u_{4\lambda} u_{6\sigma}a_{2\lambda}$ at $(20, 4)$ supports the $d_{13}$-differential 
$$d_{13}(\dthree^2 u_{4\lambda} u_{6\sigma}a_{2\lambda}) = \dthree^3 u_{\lambda} u_{8\sigma}a_{8\lambda}a_\sigma.$$
\end{prop}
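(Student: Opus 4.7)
The plan is to apply the Leibniz rule to the slice differential $d_{13}(u_{4\sigma}) = \dthree a_{3\lambda}a_{7\sigma}$ from Proposition~\ref{prop:d13slice}. I would factor the source as
\[
\dthree^2 u_{4\lambda} u_{6\sigma} a_{2\lambda} = u_{4\sigma}\cdot \dthree^2 u_{4\lambda}u_{2\sigma}a_{2\lambda},
\]
and first verify that the second factor survives to $E_{13}$ so that Leibniz with $u_{4\sigma}$ applies. Although $u_{2\sigma}$ supports a $d_5$ and $u_{4\lambda}$ supports a $d_7$, the identity $u_\lambda a_{3\sigma}=0$ (from iterating $u_\lambda a_{2\sigma}=2u_{2\sigma}a_\lambda$ together with $2a_\sigma=0$) yields $u_{4\lambda}a_{3\sigma}=0$, which annihilates the naive Leibniz image $\dthree^2\done u_{4\lambda}a_{3\lambda}a_{3\sigma}$ coming from $d_5(u_{2\sigma})=\done a_\lambda a_{3\sigma}$. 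For the $d_7$-check, the Leibniz image using Corollary~\ref{cor:BPoned7case2} (lifted to $\BPtwo$) produces a transfer class at $(19,11)$ lying in the $\dthree^2\BPone$-truncation; comparison with the analysis of Section~\ref{sec:InducedDiffBPone} shows that this class is either identically zero or already killed on $E_7$ by a predicted differential. The $d_9$ and $d_{11}$-vanishing are handled by analogous checks.

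Granting survival, the Leibniz rule at $E_{13}$ gives
\[
d_{13}(\dthree^2 u_{4\lambda}u_{6\sigma}a_{2\lambda})=\dthree^2 u_{4\lambda}u_{2\sigma}a_{2\lambda}\cdot d_{13}(u_{4\sigma})=\dthree^3 u_{4\lambda}u_{2\sigma}a_{5\lambda}a_{7\sigma}.
\]
I would then apply the gold relation $u_\lambda a_{2\sigma}=2u_{2\sigma}a_\lambda$ three times, trading $u_\lambda$-factors against pairs of $a_\sigma$, to identify the right-hand side with the stated generator $\dthree^3 u_\lambda u_{8\sigma}a_{8\lambda}a_\sigma$ at $(19,17)$.

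The main obstacle lies in the bookkeeping of $2$-powers during the gold simplification: naively one obtains the relation $u_{4\lambda}u_{2\sigma}a_{7\sigma}=8\,u_\lambda u_{8\sigma}a_{3\lambda}a_\sigma$, so the Leibniz target appears as $8\dthree^3 u_\lambda u_{8\sigma}a_{8\lambda}a_\sigma$, which is annihilated by $2a_\sigma=0$. The resolution requires interpreting the Leibniz product at the chain level of the slice filtration rather than via $RO(C_4)$-graded arithmetic in $H\underline{\mathbb Z}$, since the factors $u_{2\sigma}$ and $u_{4\lambda}$ are no longer alive at $E_{13}$ and the naive $2$-power reduction mixes representatives across different pages of the spectral sequence. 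Should this direct argument prove awkward, an alternative is to norm up a suitable $d_7$-differential in $C_2$-$\SliceSS(\BPtwo)$ via Theorem~\ref{thm:NormFormula}, using the formulas of Theorem~\ref{thm:fglFormulas} to identify the target in terms of $\dthree$, $\done$, and the transfer classes at the relevant bidegree.
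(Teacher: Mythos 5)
There is a genuine gap here: the Leibniz computation you base the proof on cannot produce the claimed differential, and you have in fact noticed the fatal symptom without resolving it. The term you keep, $\dthree^2 u_{4\lambda}u_{2\sigma}a_{2\lambda}\cdot d_{13}(u_{4\sigma}) = \dthree^3 u_{4\lambda}u_{2\sigma}a_{5\lambda}a_{7\sigma}$, equals $8\,\dthree^3 u_{\lambda}u_{8\sigma}a_{8\lambda}a_{\sigma}$ after three applications of the gold relation $u_{\lambda}a_{2\sigma}=2a_{\lambda}u_{2\sigma}$, and this vanishes because $2a_{\sigma}=0$. These relations hold already in the $RO(C_4)$-graded $E_2$-page, hence in every subquotient page; there is no ``chain-level'' reinterpretation available, since the value of a $d_{13}$ is a well-defined element of the $E_{13}$-page. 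So if, as you intend, $x:=\dthree^2u_{4\lambda}u_{2\sigma}a_{2\lambda}$ survives to $E_{13}$, the Leibniz rule reads $d_{13}(u_{4\sigma}x)=u_{4\sigma}\,d_{13}(x)$: all of the content sits in the term your displayed equation silently discards, and determining $d_{13}(x)$ is essentially the original problem over again (indeed, for the proposition to hold one would need $d_{13}(x)\neq 0$). This is consistent with how the paper is organized: Proposition~\ref{prop:d13slice} generates the $d_{13}$-differentials only between the lines of slope $1$ and slope $3$, and the differential at $(20,4)$ is precisely one that is \emph{not} a multiplicative consequence of it.

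Your fallback is also not a proof as stated: norming a $C_2$-equivariant $d_7$ via Theorem~\ref{thm:NormFormula} produces $d_{13}$-differentials on classes of the form $a_{\sigma}N_{C_2}^{C_4}(-)$, e.g.\ the differential $d_{13}(u_{4\lambda}a_{\sigma})$ of Theorem~\ref{thm:NormedDiffd13}, and neither the source $\dthree^2u_{4\lambda}u_{6\sigma}a_{2\lambda}$ nor the target $\dthree^3u_{\lambda}u_{8\sigma}a_{8\lambda}a_{\sigma}$ is divisible by such a class (one cannot divide by $a_{\sigma}$). The paper's actual argument goes through the restriction map: $res(\dthree^2u_{4\lambda}u_{6\sigma}a_{2\lambda})=\rthree^2\grthree^2u_{8\sigma_2}a_{4\sigma_2}$ supports a $d_{15}$ in $C_2$-$\SliceSS(i_{C_2}^*\BPtwo)$ with target $\rthree^3\grthree^3\rone a_{19\sigma_2}+\rthree^2\grthree^2(\rthree^2+\grthree^2)\rone a_{19\sigma_2}$, so the $C_4$-class supports a differential of length at most $15$; a $d_{15}$ is then ruled out by naturality, because the summand $\rthree^3\grthree^3\rone a_{19\sigma_2}$ has no preimage on the $C_4$ $E_{15}$-page (the only candidate, $\dthree^3\sone a_{9\lambda}a_{\sigma_2}$, restricts to zero), which forces a $d_{13}$ whose unique possible target is $\dthree^3u_{\lambda}u_{8\sigma}a_{8\lambda}a_{\sigma}$ at $(19,17)$. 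Your survival checks for $x$ through the $E_{11}$-page are fine in spirit (the $d_5$- and $d_7$-Leibniz images do vanish as you say), but they cannot substitute for an argument that actually identifies a nonzero target.
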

\begin{proof}
We will prove this differential by using the restriction map 
$$res: C_4\text{-}\SliceSS(\BPtwo) \longrightarrow C_2\text{-}\SliceSS(i_{C_2}^*\BPtwo).$$
The restriction of $\dthree^2 u_{4\lambda} u_{6\sigma}a_{2\lambda}$ is $\rthree^2 \grthree^2 u_{8\sigma_2}a_{4\sigma_2}$.  In the $C_2$-slice spectral sequence, this class supports the $d_{15}$-differential 
\begin{eqnarray*}
d_{15}( \rthree^2 \grthree^2 u_{8\sigma_2}a_{4\sigma_2}) &=& \rthree^2 \grthree^2 \rone (\rthree^2 + \rthree \grthree + \grthree^2) a_{19\sigma_2} \\
&=& \rthree^3 \grthree^3 \rone a_{19\sigma_2} + \rthree^2 \grthree^2 (\rthree^2 + \grthree^2) \rone a_{19\sigma_2}. 
\end{eqnarray*}
This implies that in the $C_4$-slice spectral sequence, $\dthree^2 u_{4\lambda} u_{6\sigma}a_{2\lambda}$ must support a differential of length at most 15.  

If $d_{15}(\dthree^2 u_{4\lambda} u_{6\sigma}a_{2\lambda}) = x$, then by natuality,
$$res(x) = \rthree^3 \grthree^3 \rone a_{19\sigma_2} + \rthree^2 \grthree^2 (\rthree^2 + \grthree^2) \rone a_{19\sigma_2}.$$
This is impossible because while the class $\rthree^2 \grthree^2 (\rthree^2 + \grthree^2) \rone a_{19\sigma_2}$ has a pre-image on the $E_{15}$-page ($\dthree^2 \sthree^2 \rone a_{6\lambda}a_{7\sigma_2}$), the class $\rthree^3 \grthree^3 \rone a_{19\sigma_2}$ does not.  

Therefore, the class $\dthree^2 u_{4\lambda} u_{6\sigma}a_{2\lambda}$ must support a $d_{13}$-differential.  There is one possible target, which is the class $\dthree^3 u_\lambda u_{8\sigma} a_{8\lambda}a_\sigma$ at $(19, 17)$.  This proves the desired differential.  
\end{proof}

Consider the following classes:
\begin{enumerate}
\item $\dthree u_{\lambda}u_{2\sigma}a_{2\lambda}a_\sigma$ at $(7,5)$.  This class is a permanent cycle by degree reasons.  
\item $\dthree^3 u_\lambda u_{8\sigma} a_{8\lambda}a_\sigma$ at $(19,17)$.  This class is a permanent cycle (it is the target of the $d_{13}$-differential in Proposition~\ref{prop:BPtwod13(20,4)}).  
\item $\dthree^3u_{8\sigma}a_{9\lambda}a_\sigma$ at $(17,19)$.  This class is a permanent cycle by degree reasons.  
\item $\dthree^4 u_{12\sigma}a_{12\lambda}$ at $(24,24)$.  This class supports the $d_{13}$-differential $d_{13}(\dthree^4 u_{12\sigma}a_{12\lambda}) = \dthree^5 u_{8\sigma}a_{15\lambda}a_{7\sigma}$. 
\end{enumerate} 
Using the Leibniz rule on the differential in Proposition~\ref{prop:BPtwod13(20,4)} with the classes above produces all the $d_{13}$-differentials under the line of slope 1 (see Figure~\ref{fig:BPtwod13}).

\begin{figure}
\begin{center}
\makebox[\textwidth]{\includegraphics[trim={0cm 9cm 0cm 9cm}, clip, page = 1, scale = 0.45]{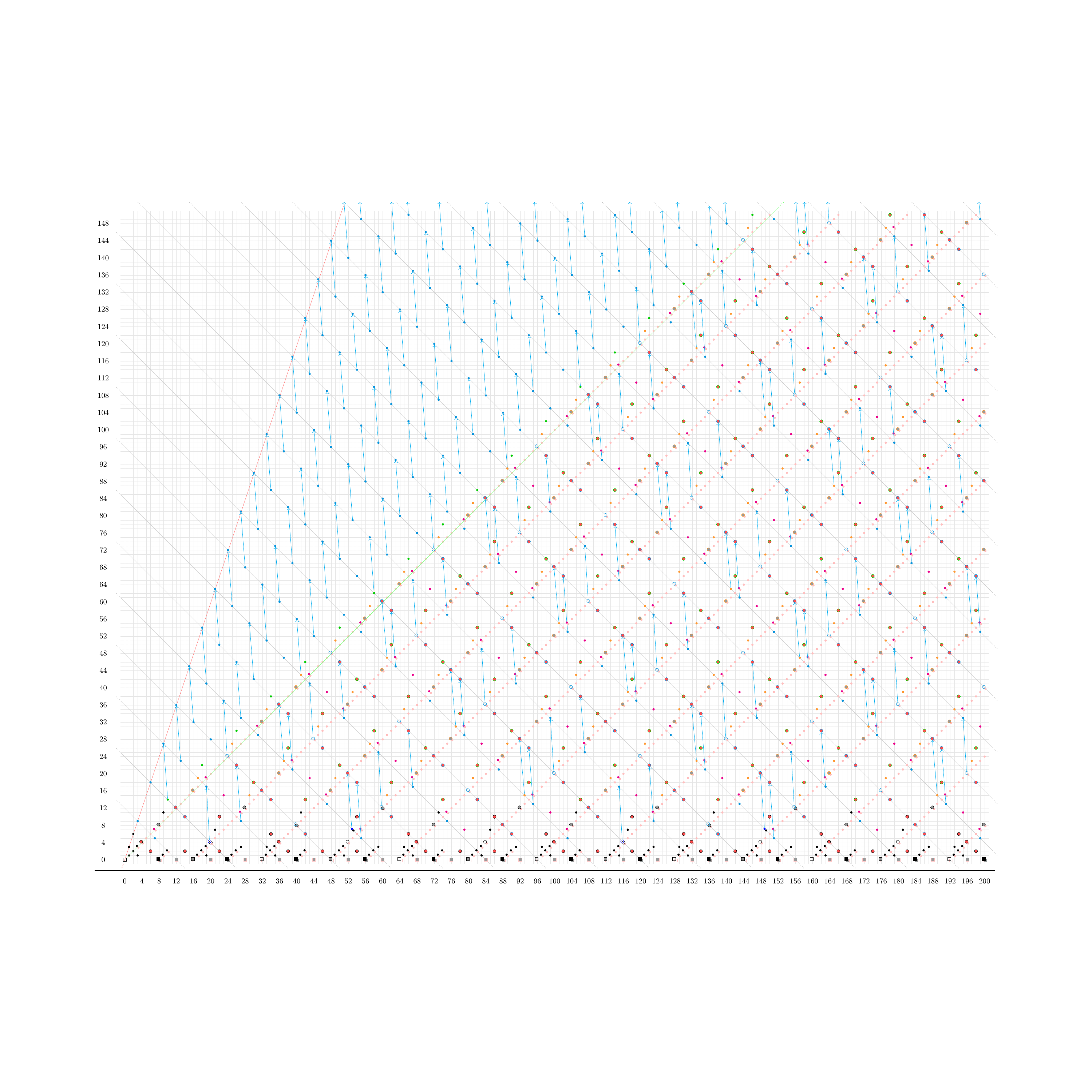}}
\end{center}
\begin{center}
\caption{$d_{13}$-differentials in $\SliceSS(\BPtwo)$.}
\hfill
\label{fig:BPtwod13}
\end{center}
\end{figure}

\subsection{Differentials between $i_{C_2}^*\BPone$-truncation classes} \label{subsection:inducedC2Differentials}
Using the restriction map
$$res:C_4\text{-}\SliceSS(\BPtwo) \longrightarrow C_2\text{-}\SliceSS(i_{C_2}^*\BPtwo)$$
and the transfer map 
$$tr: C_2\text{-}\SliceSS(i_{C_2}^*\BPtwo) \longrightarrow C_4\text{-}\SliceSS(\BPtwo),$$
we can prove all the $d_3$, $d_7$, $d_{15}$, and $d_{31}$-differentials between $i_{C_2}^*\BPone$-truncation classes ({\color{pink} pink classes}).  

The general argument goes as follows: suppose $tr(a)$ and $tr(b)$ are two $i_{C_2}^*\BPone$-truncation classes on the $E_r$-page, and $d_r(a) = b$ in $C_2\text{-}\SliceSS(\BPtwo)$. We want to prove the differential $d_r(tr(a)) = tr(b)$ in $C_4\text{-}\SliceSS(\BPtwo)$.  Since $d_r(a) = b$, $tr(b)$ must be killed by a differential of length at most $r$ (natuality).  Moreover, if $res(tr(a))$ and $res(tr(b))$ are both nonzero on the $E_r$-page, $d_r(a) = b$ implies $d_r(res(tr(a))) = res(tr(b))$.  By natuality again, $tr(a)$ must support a differential of length at most $r$.  In all the cases of interest, either our complete knowledge of all the shorter differentials (when $r = 3$, $7$, and $15$) or degree reasons will imply our desired differential.

\begin{notation}[Naming $i_{C_2}^*\BPone$-truncation classes]\rm 
From now on, we will name $i_{C_2}^*\BPone$-truncation classes by specifying the name of their corresponding slice cells and their bidegrees.  Furthermore, we will denote the induced slice cell $(\rthree^{i+j} \grthree^i \rone^k \grone^\ell, \rthree^i \grthree^{i+j}\rone^\ell \grone^k)$ by $\dthree^i \sthree^j \rone^k \grone^\ell$.  This convention reduces cluttering and improves the readability of our computations.  For example, consider the class $tr(\rthree^4 \grthree^2 \rone^2 u_{8\sigma_2}a_{12\sigma_2})$ at $(28, 12)$.  Instead of writing out its full name, we will write the class ``$\dthree^2\sthree^2\rone^2$ at $(28, 12)$'' or ``$tr(\rthree^4 \grthree^2 \rone^2)$ at $(28, 12)$'' instead.
\end{notation}

\begin{exam} \rm
On the $E_{15}$-page, there are three classes at $(28, 12)$ ($\sthree^6 \rone^2$, $\dthree \sthree^4 \rone^2$, $\dthree^2 \sthree^2 \rone^2$) and five classes at $(27,27)$ ($\sthree^9$, $\dthree \sthree^7$, $\dthree^2\sthree^5$, $\dthree^3 \sthree^3$, $\dthree^4 \sthree$) coming from $i_{C_2}^*\BPone$-truncations.  In the $C_2$-slice spectral sequence, there are $d_{15}$-differentials (as a reminder, note that from the definition of $\sthree^i$, $\sthree^i \neq (\sthree)^i$)
\begin{eqnarray*}
d_{15}(\bar{s}_3^6 \bar{r}_1^2) &=& \bar{s}_3^6 \bar{r}_1^2 \cdot \bar{r}_1 (\dthree + \bar{s}_3^2) = \bar{s}_3^6 \bar{s}_3 (\dthree + \bar{s}_3^2) \\
&=& \dthree \sthree^6 \sthree + (\dthree \sthree^6\sthree + \sthree^6 \sthree^3) =\bar{s}_3^6 \bar{s}_3^3 = \bar{s}_3^9 + \dthree^3 \bar{s}_3^3, \\
d_{15}(\dthree \bar{s}_3^4 \bar{r}_1^2) &=&  \dthree \bar{s}_3^4 \bar{r}_1^2 \cdot  \bar{r}_1 (\dthree + \bar{s}_3^2)= \dthree \bar{s}_3^4 \sthree (\dthree + \bar{s}_3^2)=\dthree \bar{s}_3^4 \cdot \bar{s}_3^3 = \dthree \bar{s}_3^7 + \dthree^4 \bar{s}_3,\\
d_{15}(\dthree^2 \bar{s}_3^2 \bar{r}_1^2) &=&\dthree^2 \bar{s}_3^2 \bar{r}_1^2 \cdot\bar{r}_1 (\dthree + \bar{s}_3^2)= \dthree^2 \bar{s}_3^2 \sthree (\dthree + \bar{s}_3^2)= \dthree^2 \bar{s}_3^2 \cdot \bar{s}_3^3 = \dthree^2 \bar{s}_3^5 + \dthree^4 \bar{s}_3.   
\end{eqnarray*}
This implies that the three classes $\sthree^6 \rone^2$, $\dthree \sthree^4 \rone^2$, $\dthree^2 \sthree^2 \rone^2$ all support differentials of length at most 15 in the $C_4$-slice spectral sequence.  Since we have complete knowledge of all the shorter differentials, the $d_{15}$-differentials above must occur.  

Alternatively, we can use the transfer.  The first differential can be rewritten as 
$$d_{15}(tr(\rthree^6 \rone^2)) = tr(\rthree^9 +\rthree^6 \grthree^3).$$
In the $C_2$-slice spectral sequence, we have the $d_{15}$-differential 
\begin{eqnarray*}
d_{15}(\bar{r}_3^6 \bar{r}_1^2) &=& \bar{r}_3^6 \bar{r}_1^2 \cdot \bar{r}_1 (\dthree + \bar{s}_3^2)  \\
&=& \bar{r}_3^6 \bar{s}_3 (\dthree + \bar{s}_3^2) \\
&=& \dthree \bar{s}_3 \bar{r}_3^6 + (\dthree \bar{s}_3 \bar{r}_3^6 + \bar{r}_3^6 \bar{s}_3^3) \\
&=& \bar{r}_3^6 \bar{s}_3^3 \\
&=& \bar{r}_3^9 + \bar{r}_3^6 \grthree^3.
\end{eqnarray*}
Applying the transfer shows that the class $tr(\bar{r}_3^9 + \bar{r}_3^6 \grthree^3)$ must be killed by a differential of length at most 15.  Our knowledge of the previous differentials again proves the desired differential.  The other two differentials above can be proved in the same way by using the transfer. 
\end{exam}

The formulas in Section~\ref{sec:C2BPtwoSliceSS} describe explicitly the surviving $i_{C_2}^*\BPone$-truncation classes on each page.  The $d_3$-differentials introduce the relation $\rone = \grone$ for $i_{C_2}^*\BPone$-truncation classes with filtrations at least 3.  After the $d_3$-differentials, their corresponding slice cells can all be written as 
$$\dthree^i \sthree^j \rone^k,$$ 
where $j > 0$.  

The $d_7$-differentials introduce the relation $\rone^3 + \rthree + \grthree = 0$ for classes with filtrations at least 7.  In other words, $\rone^3 = \sthree$ for $i_{C_2}^*\BPone$-truncation classes with filtrations at least 7.  After the $d_7$-differentials, their corresponding slice cells can all be written as 
$$\dthree^i \sthree^j \rone^k,$$
where $j > 0$ and $0 \leq k \leq 2$.  Figure~\ref{fig:BPtwopinkd7} shows the $d_7$-differentials between $i_{C_2}^*\BPone$-truncation classes. 

\begin{figure}
\begin{center}
\makebox[\textwidth]{\includegraphics[trim={0cm 10.5cm 0cm 3.5cm}, clip, scale = 0.8, page = 1]{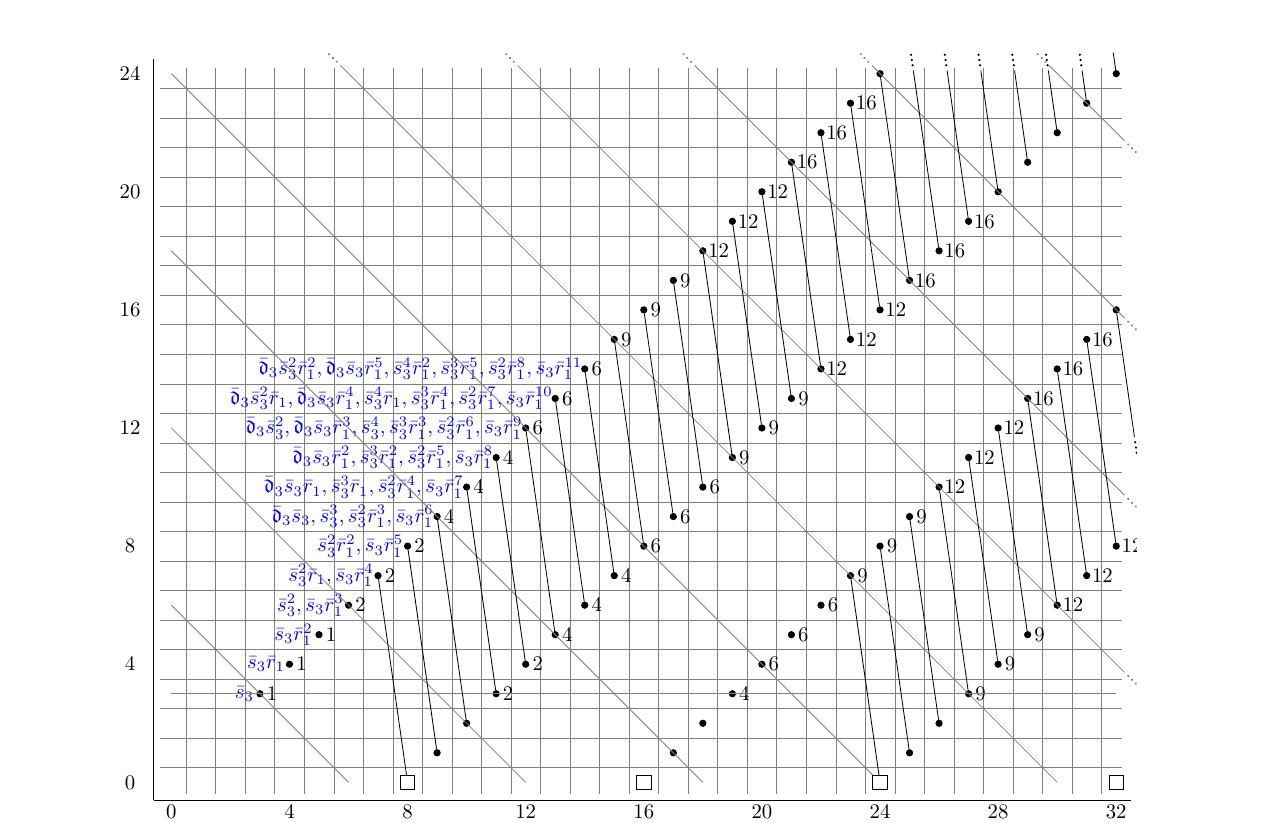}}
\end{center}
\begin{center}
\caption{$d_7$-differentials between $i_{C_2}^*\BPone$-truncation classes. }
\hfill
\label{fig:BPtwopinkd7}
\end{center}
\end{figure}

\begin{prop}\label{prop:DthreeSthreeRelations} 
After the $d_{15}$-differentials between $i_{C_2}^*\BPone$-truncation classes, the following relations hold for the classes in filtrations at least 15: 
\begin{enumerate}
\item $\sthree^3 \rone = \sthree^3 \rone^2 = 0$;
\item $\sthree^{6m+1} = \dthree^{3m}\sthree$ for all $m \geq 0$; 
\item $\sthree^{6m+2} = \dthree^{3m} \sthree^2$ for all $m \geq 0$; 
\item $\sthree^{6m+3} = \dthree^{3m} \sthree^3$ for all $m \geq 0$; 
\item $\sthree^{6m+4} = \dthree^{3m+1} \sthree^2$ for all $m \geq 0$; 
\item $\sthree^{6m+5} = \dthree^{3m+2} \sthree$ for all $m \geq 0$;
\item $\sthree^{6m} = 2 \dthree^{3m}$ for all $m \geq 1$.
\end{enumerate}
\end{prop}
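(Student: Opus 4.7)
The plan is to derive each relation by identifying the $C_2$-slice-cell representative of the $C_4$-class $\sthree^j \rone^k$ and then reducing using the $d_3$, $d_7$, $d_{15}$-differentials between $i_{C_2}^*\BPone$-truncation classes established above, together with Newton's identity $\sthree^{n+1} = \sthree \cdot \sthree^n - \dthree \cdot \sthree^{n-1}$ (where $\sthree^{n+1}$ denotes the power sum $\rthree^{n+1}+\grthree^{n+1}$ and the multiplication on the right is the ring product), and the Euler-class relation $2a_{\sigma_2} = 0$.

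The key inputs I would record first are: $d_3$ gives $\grone = -\rone$ in filtration $\geq 3$; $d_7$ gives $\rthree + \grthree = -\rone^3$ in filtration $\geq 7$; and $d_{15}$ gives $\rone(\rthree^2 + \rthree\grthree + \grthree^2) = 0$ in filtration $\geq 15$. Multiplying the $d_{15}$-relation by $(\rthree - \grthree)$ yields the derived skew-symmetric identity $\rone(\rthree^3 - \grthree^3) = 0$, while multiplying instead by $(\rthree + \grthree)$ and simplifying gives $\rone \sthree^3 = -2\rone \dthree \sthree$ in filtration $\geq 15$. These two derived identities are the main drivers of the relations below.

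Next I would observe that the $C_4$-class $\sthree^j \rone^k$ corresponds to the induced slice cell $\rthree^j\rone^k + \grthree^j\grone^k$, which after $d_3$ becomes $(\rthree^j + (-1)^k \grthree^j)\rone^k$. For $k$ odd and $j \geq 3$, this factors through $\rone(\rthree^3 - \grthree^3)$ and hence vanishes in filtration $\geq 15$; this immediately proves $\sthree^3\rone = 0$ in~(1). Relations~(2),~(4), and~(6) (the odd $\rone$-power reductions) then follow by iterating Newton's identity to express $\sthree^{6m+r}$ for $r$ odd as $\dthree^{\alpha}\sthree^{r'}$ and applying the same $d_{15}$-killing to any surviving correction terms. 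For $k$ even, the $C_2$-representative is the power sum $P_j\rone^k$, which by Newton and the derived identity reduces to $2 \cdot (\text{class in filt} \geq 15)$: for instance $P_3\rone^2 = 2\dthree\rone^5$ in filtration $\geq 15$ (using $\rone\sthree = -\rone^4$ from $d_7$). Since any such class in an $i_{C_2}^*\BPone$-truncation carries an $a_{\sigma_2}$-power, the relation $2a_{\sigma_2} = 0$ forces the doubled class to vanish, establishing the second half of~(1) and relations~(3) and~(5).

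The main obstacle is relation~(7), $\sthree^{6m} = 2\dthree^{3m}$, which bridges an $i_{C_2}^*\BPone$-truncation class with a multiple of a $\BPone$-truncation class. Via the Mackey-structure identities $\sthree^{6m} = tr(\rthree^{6m})$ and $2\dthree^{3m} = tr(res(\dthree^{3m})) = tr(\rthree^{3m}\grthree^{3m})$, the assertion reduces to showing that $tr(\rthree^{3m}(\rthree^{3m} - \grthree^{3m}))$ vanishes in filtration $\geq 15$ of $C_4$-$\SliceSS(\BPtwo)$. Since $\rthree^{3m} - \grthree^{3m}$ is divisible by $\rthree^3 - \grthree^3$, the underlying $C_2$-class carries a factor of $(\rthree^2 + \rthree\grthree + \grthree^2)$; but unlike in~(1)--(6), there is no explicit $\rone$ to invoke the basic $d_{15}$-relation. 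The delicate step will be to use the $d_7$-substitution $\rone^3 = -(\rthree + \grthree)$ to absorb $\sthree$-factors into $\rone$-powers and then carefully track the $2$-torsion contributions---especially in bidegrees whose Euler-class factors consist of powers of $a_\lambda$ alone rather than involving $a_\sigma$ or $a_{\sigma_2}$, since $2a_\lambda$ is not zero---to confirm that $\sthree^{6m}$ and $2\dthree^{3m}$ agree exactly on $E_{16}$.
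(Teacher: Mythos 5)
There are two genuine gaps. First, your overall framing treats the asserted relations as algebraic consequences, in a ring, of the relation $\rone(\rthree^2+\rthree\grthree+\grthree^2)=0$ together with Newton's identities. But the classes in the proposition are \emph{transfers} of $C_2$-classes, and the transfer is not a ring map: a product such as $\sthree\cdot\sthree^n$ is not the named class $\sthree^{n+1}$ --- by Frobenius reciprocity it decomposes as $\sthree^{n+1}$ plus transfer-of-restriction terms, and whether those correction terms vanish (they are multiples of $2$ against classes whose Euler part may be a power of $a_\lambda$, where $2a_\lambda\neq 0$) is exactly the content to be proved; your sketch defers this at the decisive moment (``applying the same $d_{15}$-killing to any surviving correction terms''). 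Relatedly, in (1) the observation that the representative factors through $\rone(\rthree^3-\grthree^3)$ only shows that the \emph{restriction} dies; to conclude $\sthree^3\rone=0$ on the next page you must exhibit the differential hitting the $C_4$-class itself. The paper does this by transferring $d_{15}(\rthree u_{8\sigma_2})$, whose target transfers to $\sthree^3\rone + tr(res(\dthree\sthree\rone))=\sthree^3\rone$, i.e.\ $d_{15}(\sthree)=\sthree^3\rone$; every one of (1)--(6) is established by such an explicit transferred $d_{15}$-differential (with sources $\sthree$, $\sthree\rone$, and $\sthree^{j}\rone^2$ for suitable $j$), not by power-sum algebra. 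Note also that (3) and (5) are identifications of nonzero classes, not vanishing statements, so your ``$2a_{\sigma_2}=0$ kills the doubled class'' mechanism does not prove them as stated.

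Second, relation (7) is not proved at all: you correctly isolate the difficulty (no visible $\rone$ in $\sthree^{6m}+2\dthree^{3m}$) and then only promise a ``delicate step.'' The missing step is short and is exactly the $d_7$-substitution you allude to, made concrete by choosing the right source. In the $C_2$-slice spectral sequence one has $\rone^2\cdot\rone(\rthree^2+\rthree\grthree+\grthree^2)=(\rthree+\grthree)(\rthree^2+\rthree\grthree+\grthree^2)=\rthree^3+\grthree^3$ in filtration $\geq 7$, so
\[
d_{15}\bigl(\rthree^{6m-3}\rone^2 u_{8\sigma_2}\bigr)=\rthree^{6m-3}(\rthree^3+\grthree^3)a_{15\sigma_2}.
\]
Applying the transfer and naturality gives $d_{15}(\sthree^{6m-3}\rone^2)=\sthree^{6m}+tr(\rthree^{6m-3}\grthree^3)$, and the second term is $tr(res(\dthree^3\cdots))=2\dthree^3$ when $m=1$ and $\dthree^3\sthree^{6m-6}$ when $m>1$; induction on $m$ then yields $\sthree^{6m}=2\dthree^{3m}$. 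In particular the factor of $2$ survives precisely because the target lands on a class whose Euler part is a power of $a_\lambda$, so your worry about $2a_\lambda\neq 0$ is the reason the relation reads $\sthree^{6m}=2\dthree^{3m}$ rather than $\sthree^{6m}=0$; it is not an additional verification still to be carried out, but it does require exhibiting the differential above rather than manipulating identities.
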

\begin{proof}
The $d_{15}$-differential in the $C_2$-slice spectral sequence multiplies the slice cell of the source by $\rone(\rthree^2 + \rthree \grthree + \grthree^2 ) = \rone (\dthree + \sthree)$.  

\noindent (1) We have the equality 
$$\rthree^3 + \grthree^3 = (\rthree + \grthree)(\rthree^2 + \rthree \grthree + \grthree^2) = \sthree (\dthree + \sthree^2). $$
Therefore, 
\begin{eqnarray*}
res(\sthree^3 \rone) &=& (\rthree^3 + \grthree^3) \rone \\
&=& (\rthree + \grthree) \cdot \rone(\rthree^2 + \rthree \grthree + \grthree^2) \\
&=& res(tr(\rthree)) \cdot \rone(\rthree^2 + \rthree \grthree + \grthree^2).
\end{eqnarray*}
Consider the class $\rthree u_{8\sigma_2}$ in the $C_2$-spectral sequence.  It supports the $d_{15}$-differential \begin{eqnarray*}
d_{15}(\rthree u_{8\sigma_2}) &=& \rthree \cdot \rone(\rthree^2 + \rthree \grthree + \grthree^2) a_{15\sigma_2} \\
&=& (\rthree^3 \rone + \rthree^2 \grthree \rone + \rthree \grthree^2 \rone) a_{15\sigma_2}\\ 
&=& (\rthree^3 \rone + res(\dthree \sthree \rone)) a_{15\sigma_2}.
\end{eqnarray*}
Applying the transfer to this $d_{15}$-differential and using natuality implies the $d_{15}$-differential 
$$d_{15}(\sthree) = tr(\rthree^3 \rone a_{15\sigma_2}) + tr(res(\dthree \sthree \rone a_{15\sigma_2})) = \sthree^3 \rone$$
in the $C_4$-slice spectral sequence.  Therefore, $\sthree^3 \rone = 0$ after the $d_{15}$-differentials. 

For $\sthree^3 \rone^2$, the proof is exactly the same.  The exact same argument as above shows the $d_{15}$-differential 
$$d_{15}(\sthree \rone ) = \sthree^3 \rone^2$$
in the $C_4$-slice spectral sequence.  

\vspace{0.1in}

\noindent (2) The statement holds trivially when $m = 0$.  When $m \geq 1$, we have the equality $\rthree^{6m+1} + \rthree^{6m-2} \grthree^3 = \rthree^{6m-2} (\rthree^3 + \grthree^3)$.  This implies the $d_{15}$-differential 
$$d_{15}(\rthree^{6m-2} \rone^2 u_{8\sigma_2}) = (\rthree^{6m+1} + \rthree^{6m-2} \grthree^3)a_{15\sigma_2} $$
in the $C_2$-slice spectral sequence.  Applying the transfer and using natuality, we obtain the $d_{15}$-differential 
$$d_{15}(\sthree^{6m-2} \rone^2) = tr(\rthree^{6m+1}) + tr(\rthree^{6m-2} \grthree^3) = \sthree^{6m+1} + \dthree^3 \sthree^{6m-5}$$
in the $C_4$-slice spectral sequence.  This produces the relation $$\sthree^{6m+1} = \dthree^3 \sthree^{6m-5}$$ for all $m \geq 1$.  Induction on $m$ proves the desired equality.  

\vspace{0.1in}

\noindent (3) The statement holds trivially when $m = 0$.  When $m \geq 1$, we have the equality 
$$\rthree^{6m+2} + \rthree^{6m-1} \grthree^3 = \rthree^{6m-1} (\rthree^3 + \grthree^3).$$  This implies the $d_{15}$-differential 
$$d_{15}(\rthree^{6m-1} \rone^2 u_{8\sigma_2}) = (\rthree^{6m+2} + \rthree^{6m-1} \grthree^3)a_{15\sigma_2}$$
in the $C_2$-slice spectral sequence.  Applying the transfer and using natuality produces the $d_{15}$-differential 
$$d_{15}(\sthree^{6m-1} \rone^2) =tr(\rthree^{6m+2}) + tr(\rthree^{6m-1} \grthree^3) = \sthree^{6m+2} + \dthree^3 \sthree^{6m-4}$$
in the $C_4$-slice spectral sequence.  Induction on $m$ proves the desired equality. 

\vspace{0.1in}
\noindent (4) The statement holds trivially when $m = 0$.  When $m \geq 1$, we have the equality 
$$\rthree^{6m+3} + \rthree^{6m} \grthree^3 = \rthree^{6m} (\rthree^3 + \grthree^3).$$
This implies the $d_{15}$-differential 
$$d_{15}(\rthree^{6m} \rone^2 u_{8\sigma_2}) = (\rthree^{6m+3} + \rthree^{6m} \grthree^3)a_{15\sigma_2}$$
in the $C_2$-slice spectral sequence.  Applying the transfer and using natuality produces the $d_{15}$-differential 
$$d_{15}(\sthree^{6m}\rone^2) = tr(\rthree^{6m+3}) + tr( \rthree^{6m} \grthree^3) = \sthree^{6m+3} + \dthree^3 \sthree^{6m-3}$$
in the $C_4$-slice spectral sequence.  Induction on $m$ proves the desired equality. 

\vspace{0.1in}
\noindent (5) We have the equality 
$$\rthree^{6m+4} + \rthree^{6m+1} \grthree^3 = \rthree^{6m+1} (\rthree^3 + \grthree^3).$$  This implies the $d_{15}$-differential 
$$d_{15}(\rthree^{6m+1}\rone^2 u_{8\sigma_2}) = (\rthree^{6m+4} + \rthree^{6m+1} \grthree^3)a_{15\sigma_2}$$
in the $C_2$-slice spectral sequence.  Applying the transfer and using natuality produces the $d_{15}$-differential 
$$d_{15}(\sthree^{6m+1} \rone^2) = tr(\rthree^{6m+4}) + tr(\rthree^{6m+1} \grthree^3)$$
in the $C_4$-slice spectral sequence.  When $m = 0$, the target is $\sthree^4 + \dthree \sthree^2$, from which we get the relation $\sthree^4 = \dthree \sthree^2$.  For $m \geq 1$, the target is $\sthree^{6m+4} + \dthree^3 \sthree^{6m-2}$, from which we get the relation $\sthree^{6m+4} = \dthree^3 \sthree^{6m-2}$.  Induction on $m$ proves the desired equality.  
\vspace{0.1in}

\noindent (6) We have the equality 
$$\rthree^{6m+5} + \rthree^{6m+2} \grthree^3 = \rthree^{6m+2} (\rthree^3 + \grthree^3).$$  This implies the $d_{15}$-differential 
$$d_{15}(\rthree^{6m+2} \rone^2 u_{8\sigma_2}) = (\rthree^{6m+5} + \rthree^{6m+2} \grthree^3) a_{15\sigma_2}$$
in the $C_2$-slice spectral sequence.  Applying the transfer and using natuality produces the $d_{15}$-differential 
$$d_{15}(\sthree^{6m+2} \rone^2) = tr(\rthree^{6m+5}) + tr(\rthree^{6m+2} \grthree^3)$$
in the $C_4$-slice spectral sequence.  When $m = 0$, the target is $\sthree^5 + \dthree^2 \sthree$, from which we get the relation $\sthree^5 = \dthree^2 \sthree$.  For $m \geq 1$, the target is $\sthree^{6m+5} + \dthree^3 \sthree^{6m-1}$, from which we get the relation $\sthree^{6m+5} = \dthree^3 \sthree^{6m-1}$.  Induction on $m$ proves the desired equality. 
\vspace{0.1in}

\noindent (7) Since 
$$\rthree^{6m} + \rthree^{6m-3} \grthree^3 = \rthree^{6m-3}(\rthree^3 + \grthree^3),$$ there is the $d_{15}$-differential 
$$d_{15}(\rthree^{6m-3}\rone^2 u_{8\sigma_2}) = \rthree^{6m-3}(\rthree^3 + \grthree^3) a_{15\sigma_2}$$
in the $C_2$-slice spectral sequence.  Applying the transfer and using natuality produces the $d_{15}$-differential 
$$d_{15}(\sthree^{6m-3} \rone^2) =  tr(\rthree^{6m}) + tr(\rthree^{6m-3} \grthree^3) = \sthree^{6m} + tr(\rthree^{6m-3} \grthree^3)$$
in the $C_4$-slice spectral sequence.  

We will now use induction on $m$.  When $m=1$, the target is $\sthree^6 + 2\dthree^3$, from which we deduce $\sthree^6 = 2\dthree^3$.  When $m >1$, the target is $\sthree^{6m} + \dthree^3 \sthree^{6m-6}$, from which we deduce $\sthree^{6m} = \dthree^3 \sthree^{6m-6}$.  Induction on $m$ shows that $\sthree^{6m} = 2 \dthree^{3m}$.

\end{proof}

\begin{warn}\rm
The class $\sthree^3$ is not 0 after the $d_{15}$-differentials between $i_{C_2}^*\BPone$-truncation classes.  In particular, the classes $\dthree \sthree^3$ at $(15, 15)$, $\dthree^2 \sthree^3$ at $(21,21)$, $\dthree^3 \sthree^3$ at $(27,27)$, $\ldots$ are not targets of $d_{15}$-differentials with sources coming from $i_{C_2}^*\BPone$-truncation classes.  However, some of these classes ($\dthree^3 \sthree^3 \in (27, 27)$ and $\dthree^7 \sthree^3 \in (51,51)$, for example) are still targets of $d_{15}$-differentials with sources coming from $\BPone$-truncation classes.  We will discuss this in the next subsection.  
\end{warn}

Figures~\ref{fig:BPtwopinkd15diagram1} and \ref{fig:BPtwopinkd15diagram2} illustrate the $d_{15}$-differentials between $i_{C_2}^*\BPone$-truncation classes. 

\begin{figure}
\begin{center}
\makebox[\textwidth]{\includegraphics[trim={0cm 9cm 0cm 9cm}, clip, page = 2, scale = 0.45]{E4C4d13d15Paper}}
\end{center}
\begin{center}
\caption{$d_{15}$-differentials between $i_{C_2}^*\BPone$-truncation classes.}
\hfill
\label{fig:BPtwopinkd15diagram1}
\end{center}
\end{figure}

\begin{figure}
\begin{center}
\makebox[\textwidth]{\includegraphics[trim={0cm 9.5cm 0cm 3.5cm}, clip, scale = 0.8, page = 1]{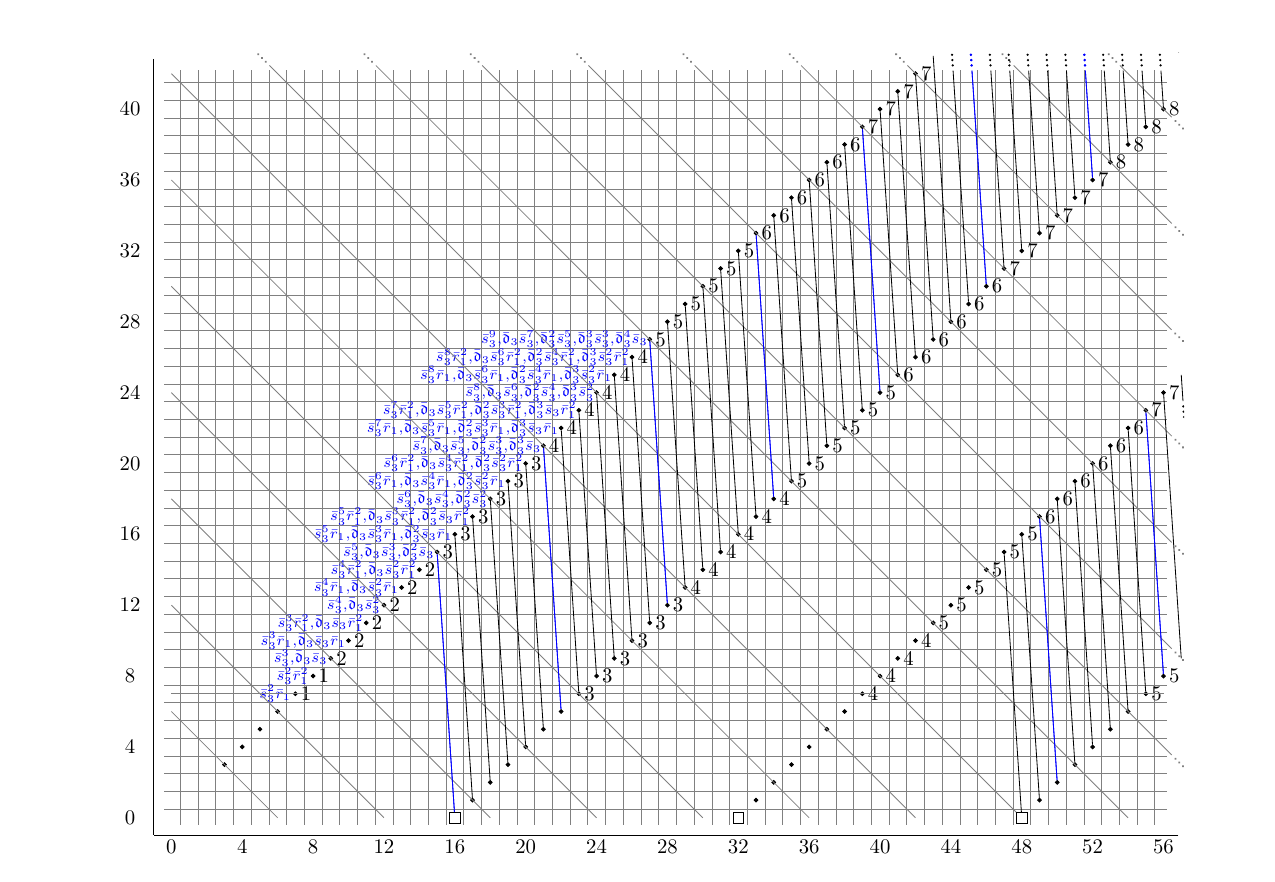}}
\end{center}
\begin{center}
\caption{$d_{15}$-differentials between $i_{C_2}^*\BPone$-truncation classes.  The targets of the {\color{blue} blue differentials} have two surviving classes instead of one.}
\hfill
\label{fig:BPtwopinkd15diagram2}
\end{center}
\end{figure}

\subsection{All the other $d_{15}$-differentials and some $d_{31}$-differentials.} \label{subsec:d15ResTr}

We will now prove the rest of the $d_{15}$-differentials (see Figure~\ref{fig:BPtwod15Rest}).

\begin{figure}
\begin{center}
\makebox[\textwidth]{\includegraphics[trim={0cm 9cm 0cm 9cm}, clip, page = 1, scale = 0.45]{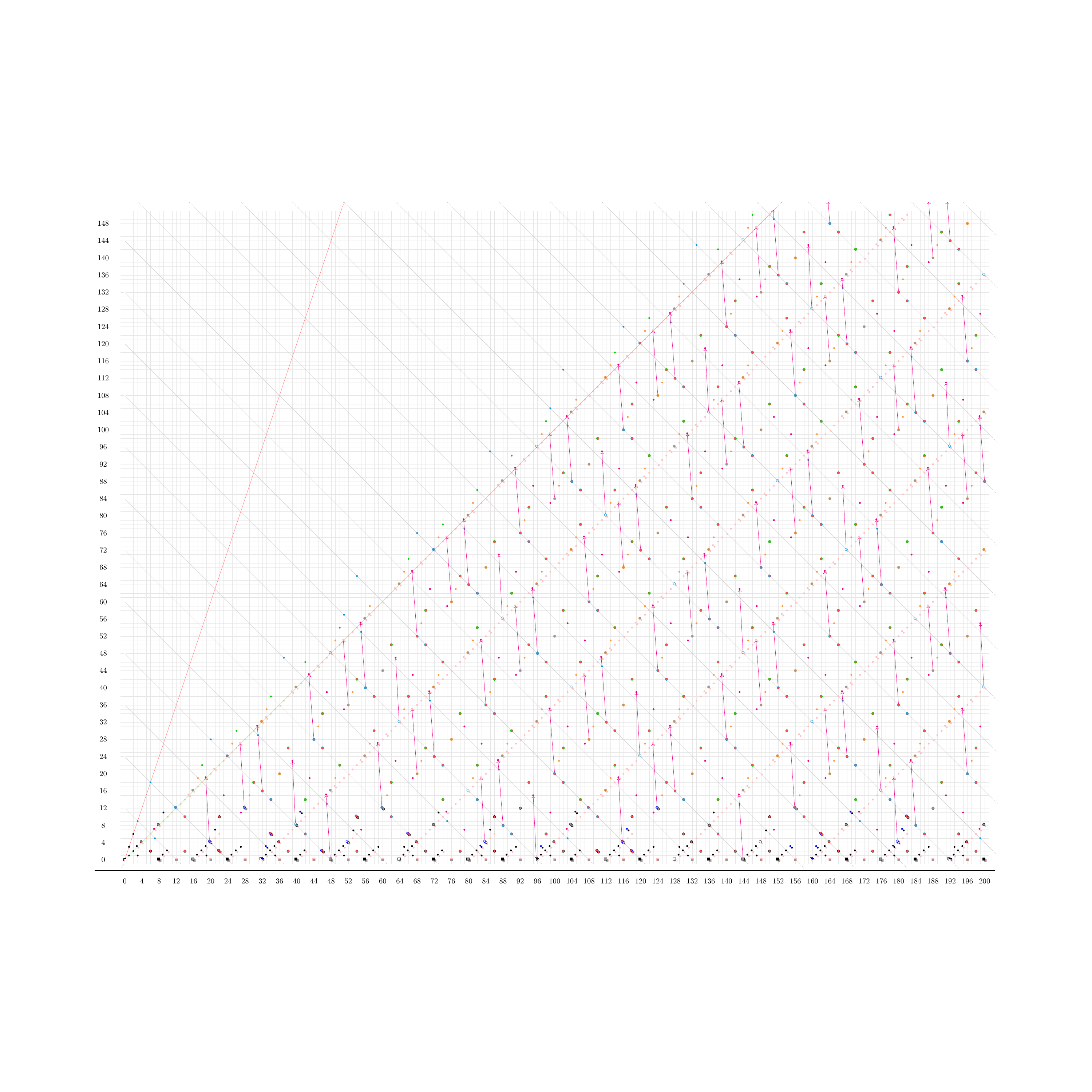}}
\end{center}
\begin{center}
\caption{The rest of the $d_{15}$-differentials.}
\hfill
\label{fig:BPtwod15Rest}
\end{center}
\end{figure}

\begin{prop}\label{prop:d15(20,4)}
The class $2\dthree^2u_{4\lambda}u_{6\sigma} a_{2\lambda}$ at $(20, 4)$ supports the $d_{15}$-differential 
$$d_{15}(2\dthree^2u_{4\lambda}u_{6\sigma} a_{2\lambda}) = \dthree^3 \sone a_{9\lambda}a_{\sigma_2}.$$
(Under our naming convention, the target is abbreviated as $\dthree^3 \sone$ at $(19, 19)$).
\end{prop}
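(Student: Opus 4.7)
The plan is to use the transfer map from the $C_2$-slice spectral sequence of $i_{C_2}^*\BPtwo$, in close analogy with the proof of Proposition~\ref{prop:BPtwod13(20,4)}. The key structural observation is that $2 = tr \circ res$, so the source of the claimed differential lies in the image of the transfer:
\[
2\dthree^2 u_{4\lambda} u_{6\sigma} a_{2\lambda} = tr\bigl(\rthree^2 \grthree^2 u_{8\sigma_2} a_{4\sigma_2}\bigr).
\]
This allows us to pull a $d_{15}$-differential in the $C_2$-spectral sequence up to the $C_4$-spectral sequence by naturality.

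I would first verify that $2\dthree^2 u_{4\lambda} u_{6\sigma} a_{2\lambda}$ is a $d_r$-cycle for every $r \le 13$. For $r \le 11$ this follows from Section~\ref{sec:InducedDiffBPone}. For $r=13$, multiply the $d_{13}$-differential of Proposition~\ref{prop:BPtwod13(20,4)} by $2$ and apply the relation $2 a_\sigma = 0$:
\[
d_{13}\bigl(2\dthree^2 u_{4\lambda} u_{6\sigma} a_{2\lambda}\bigr) = \dthree^3 u_\lambda u_{8\sigma} a_{8\lambda}\,(2 a_\sigma) = 0.
\]
Thus the source survives to the $E_{15}$-page.

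Next, recall the $C_2$-slice differential established in Section~\ref{sec:C2BPtwoSliceSS}:
\[
d_{15}\bigl(\rthree^2 \grthree^2 u_{8\sigma_2} a_{4\sigma_2}\bigr) = \rthree^2 \grthree^2 \rone(\rthree^2 + \rthree \grthree + \grthree^2) a_{19\sigma_2}.
\]
Since transfer is a map of spectral sequences and the source is a $d_{13}$-cycle, naturality gives
\[
d_{15}\bigl(2\dthree^2 u_{4\lambda} u_{6\sigma} a_{2\lambda}\bigr) = tr\bigl(\rthree^2 \grthree^2 \rone(\rthree^2 + \rthree\grthree + \grthree^2) a_{19\sigma_2}\bigr).
\]
Expanding gives three summands. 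The middle one produces the claimed target via Frobenius reciprocity: using $\rthree^3\grthree^3 = res(\dthree^3)$ and $a_{19\sigma_2} = res(a_{9\lambda}) \cdot a_{\sigma_2}$, one obtains $tr(\rthree^3 \grthree^3 \rone a_{19\sigma_2}) = \dthree^3 \sone a_{9\lambda} a_{\sigma_2}$.

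The main obstacle will be showing that the two remaining summands $tr(\rthree^4\grthree^2 \rone a_{19\sigma_2}) + tr(\rthree^2\grthree^4 \rone a_{19\sigma_2})$ vanish on the $E_{15}$-page. Their sum factors as $\dthree^2 \cdot tr\bigl((\rthree^2+\grthree^2)\rone a_{19\sigma_2}\bigr)$ by Frobenius. I would exploit the post-$d_7$ relation $\rthree + \grthree = \rone^3$ to rewrite $\rthree^2 + \grthree^2 = \rone^6 - 2\rthree\grthree$; since $2 a_{\sigma_2} = 0$, the sum collapses to $\dthree^2 \cdot tr(\rone^7 a_{19\sigma_2})$. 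I would then identify this leftover with an $i_{C_2}^*\BPone$-truncation class already annihilated by the $d_{15}$-relations of Proposition~\ref{prop:DthreeSthreeRelations} at bidegree $(19,19)$, so that only the leading term $\dthree^3 \sone a_{9\lambda} a_{\sigma_2}$ survives. Combining these steps establishes the asserted $d_{15}$-differential.
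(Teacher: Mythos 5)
Your overall strategy is the same as the paper's: restrict to the $C_2$-slice spectral sequence, use the known $d_{15}$ on $\rthree^2\grthree^2 u_{8\sigma_2}a_{4\sigma_2}$, and come back via the transfer (the paper applies the transfer to the \emph{target} and argues that $\dthree^3\sone$ must be killed by a differential of length at most $15$, then identifies the source as $tr(res(\dthree^2u_{4\lambda}u_{6\sigma}a_{2\lambda}))=2\dthree^2u_{4\lambda}u_{6\sigma}a_{2\lambda}$ by degree reasons and naturality; you instead push the differential forward from the source using $2=tr\circ res$ and the fact that the transfer is a map of spectral sequences, after checking the source is a $d_{13}$-cycle via $2a_\sigma=0$ --- that part is fine).

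The step that does not hold up as written is the disposal of the two cross terms. After expanding, the extra contribution is $tr(\rthree^4\grthree^2\rone\,a_{19\sigma_2})+tr(\rthree^2\grthree^4\rone\,a_{19\sigma_2})$, and the correct observation is that these two summands are Weyl conjugates of one another (using $\rone=\grone$ in filtration $\geq 3$), so their transfers agree on the $E_{15}$-page and the sum is $2\cdot tr(\text{a class divisible by }a_{\sigma_2})=0$ since $2a_{\sigma_2}=0$; this is precisely the paper's identity $tr\bigl(res(tr(\dthree^2\rthree^2\rone))\bigr)=2\,tr(\dthree^2\rthree^2\rone)=0$. Your substitution $\rthree^2+\grthree^2=\rone^6-2\rthree\grthree$ only runs in a circle (mod $2$ it is the same $d_7$-relation read backwards, so $\dthree^2\,tr(\rone^7a_{19\sigma_2})$ is literally the class you started with), and the proposed appeal to Proposition~\ref{prop:DthreeSthreeRelations} is not legitimate here: those relations are themselves consequences of $d_{15}$-differentials, so they hold only after the $E_{15}$-page and cannot be used to evaluate a $d_{15}$-differential \emph{on} the $E_{15}$-page. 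At best that argument identifies the target only modulo other $d_{15}$-boundaries, which is weaker than the stated formula (and also leaves open that the target might be hit rather than equal to $\dthree^3\sone$). Note finally that your route, unlike the paper's ``must be killed, and only a $d_{15}$ can kill it'' argument, does not by itself show that $\dthree^3\sone$ is nonzero on the $E_{15}$-page, so the nontriviality of the differential still needs a word. Both issues are repaired at once by the $2$-torsion/conjugation argument (equivalently, the paper's transfer-of-the-target argument).
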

\begin{proof}
In the $C_2$-slice spectral sequence, the restriction of the class $\dthree^2u_{4\lambda}u_{6\sigma} a_{2\lambda}$ at $(20,4)$ supports the $d_{15}$-differential 
\begin{eqnarray*}
d_{15}(res(\dthree^2u_{4\lambda}u_{6\sigma} a_{2\lambda})) &=& \rthree^2 \grthree^2 \cdot \rone(\rthree^2 + \rthree \grthree + \grthree^2) \\ 
&=& \dthree^2 \rone (\dthree + \sthree^2) \\ 
&=& \dthree^3 \rone + \dthree^2 \sthree^2 \rone\\
&=& \dthree^3 \rone + res(tr(\dthree^2 \rthree^2 \rone)).  
\end{eqnarray*}
Applying the transfer map shows that the class
$$tr(\dthree^3 \rone) + tr(res(tr(\dthree^2 \rthree^2 \rone))) = \dthree^3 \sone + 2 \cdot tr(\dthree^2 \rthree^2 \rone) = \dthree^3 \sone$$
must be killed by a differential of length at most 15.  By degree reasons, the differential must of length 15, and the source must be 
$$tr(res(\dthree^2u_{4\lambda}u_{6\sigma} a_{2\lambda})) = 2 \dthree^2u_{4\lambda}u_{6\sigma} a_{2\lambda}$$
by natuality.  This proves the desired differential.  
\end{proof}

Using the exact same method as the proof of Proposition~\ref{prop:d15(20,4)}, we can prove $d_{15}$-differentials on the following classes: 
\begin{enumerate}
\item $\{(20, 4), (32, 16), (44, 28), \ldots\}$, $\{(116,4), (128, 16), (140, 28), \ldots\}$, $\ldots$.
\item $\{(48, 0), (60, 12), (72, 24), \ldots\}$, $\{(144, 0), (156, 12), (168, 24), \ldots\}$, $\ldots$.
\item $\{(88, 8), (100, 20), (112, 32), \ldots\}$, $\{(184, 8), (196, 20), (208, 32), \ldots\}$, $\ldots$.
\end{enumerate}

\begin{rem}\rm
The restrictions of the classes $\dthree \done^3 u_{4\lambda}u_{6\sigma}a_{2\lambda}$ and $\done^6 u_{4\lambda}u_{6\sigma}a_{2\lambda}$ at $(20,4)$ support the following $d_{15}$-differentials in the $C_2$-spectral sequence:
\begin{eqnarray*}
d_{15}(\rthree \grthree \rone^6) &=& \rthree \grthree \rone^6 \cdot \rone (\rthree^2 + \rthree \grthree + \grthree^2) = \dthree \sthree^2 \cdot \rone (\sthree^2 + \dthree) = (\dthree^2 \sthree^2 + \dthree \sthree^4) \rone,\\
d_{15}(\rone^{12}) &=& \rone^{12} \cdot \rone (\rthree^2 + \rthree \grthree + \grthree^2) = \sthree^4 \cdot \rone (\sthree^2 + \dthree) = (\dthree \sthree^4 + \dthree^2 \sthree^2 + \sthree^6) \rone.
\end{eqnarray*}
(In the formulas above, we used the relation $\rone^3 = \rthree + \grthree = \sthree$.)  By natuality and degree reasons, there exist $d_{15}$-differentials 
$$d_{15}(\dthree \done^3) = (\dthree^2 \sthree^2 + \dthree \sthree^4) \rone$$
and 
$$d_{15}(\done^6) = (\dthree \sthree^4 + \dthree^2 \sthree^2 + \sthree^6) \rone$$
on the classes at $(20, 4)$ in the $C_4$-slice spectral sequence.  

We also have the following $d_{15}$-differentials on the classes $\dthree \sthree^2$ and $\sthree^4$ at $(20,4)$: 
\begin{eqnarray*}
d_{15}(\dthree \sthree^2) &=& (\dthree^2 \sthree^2 + \dthree \sthree^4) \rone, \\ 
d_{15}(\sthree^4) &=& (\dthree \sthree^4 + \dthree^2 \sthree^2 + \sthree^6) \rone.
\end{eqnarray*}

After the $d_{15}$-differentials, the surviving classes at $(20,4)$ are $2\dthree \done^3$, $2 \done^6$, $\dthree \done^3 + \dthree \sthree^2$, and $\done^6  + \sthree^4$.  There is a slight subtlety here because from the way we are organizing the $d_{15}$-differentials, the surviving leading terms should be $\dthree \done^3$, $2\dthree \done^3$, $\done^6$, and $2\done^6$.  Our presentation of the surviving classes will not affect later computations.  
\end{rem}

\begin{prop} \label{prop:d15(28,12)}
The class $\dthree^3 \done u_{4\lambda} u_{10\sigma} a_{6\lambda}$ at $(28, 12)$ supports the $d_{15}$-differential
$$d_{15}(\dthree^3 \done u_{4\lambda} u_{10\sigma} a_{6\lambda}) = \dthree^3 \sthree^3 u_{9\sigma}a_{9\lambda} a_{9\sigma_2}. $$
\end{prop}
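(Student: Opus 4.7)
My plan is to mirror the restriction-and-transfer argument used in Proposition~\ref{prop:d15(20,4)}, exploiting the known differential $d_{15}(u_{8\sigma_2}) = \rone(\rthree^2 + \rthree\grthree + \grthree^2) a_{15\sigma_2}$ in the $C_2$-slice spectral sequence of $i_{C_2}^*\BPtwo$. Set $x := \dthree^3 \done u_{4\lambda} u_{10\sigma} a_{6\lambda}$. First I would compute $res(x) = \rthree^3 \grthree^3 \rone \grone u_{8\sigma_2} a_{12\sigma_2}$ using $res(\dthree) = \rthree\grthree$, $res(\done) = \rone\grone$, $res(u_{4\lambda}) = u_{8\sigma_2}$, $res(u_{10\sigma}) = 1$, and $res(a_{6\lambda}) = a_{12\sigma_2}$. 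Applying the Leibniz rule to $d_{15}(u_{8\sigma_2})$ then gives
$$d_{15}(res(x)) = \rthree^3 \grthree^3 \rone^2 \grone (\rthree^2 + \rthree\grthree + \grthree^2) a_{27\sigma_2}.$$

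Next I would use the identifications $\rone = \grone$ (from the $d_3$-differentials) and $\rone^3 = \rthree + \grthree$ (from the $d_7$-differentials), both valid at this filtration, to collapse the factor $\rone^2 \grone(\rthree^2 + \rthree\grthree + \grthree^2)$ to $(\rthree + \grthree)(\rthree^2 + \rthree\grthree + \grthree^2) = \rthree^3 + \grthree^3 \pmod{2}$. Thus $d_{15}(res(x)) = res(\dthree^3 \sthree^3) a_{27\sigma_2}$. Passing back to the $C_4$-slice spectral sequence via the transfer and invoking naturality yields $d_{15}(tr(res(x))) = tr(d_{15}(res(x)))$, and Frobenius reciprocity then rewrites $tr(res(\dthree^3 \sthree^3) a_{27\sigma_2}) = \dthree^3 \sthree^3 \cdot tr(a_{27\sigma_2}) = \dthree^3 \sthree^3 a_{9\lambda} a_{9\sigma_2}$ in the notational convention established in Section~\ref{subsection:inducedC2Differentials} for induced classes. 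Because $tr \circ res$ acts as multiplication by $2$, this already shows that $2x$ carries the predicted $d_{15}$.

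The final step, which I expect to be the main obstacle, is to upgrade the conclusion from $2x$ to $x$ itself. For this I would first verify, using the analyses of Section~\ref{sec:InducedDiffBPone}, that $x$ is a $d_r$-cycle for all $r \leq 13$, so $x$ must support a differential of length at most $15$. After accounting for the $d_{15}$-differentials between $i_{C_2}^*\BPone$-truncation classes (Proposition~\ref{prop:DthreeSthreeRelations}) and the predicted $d_{13}$-differentials of Section~\ref{subsec:Predictedd13Diff}, I would argue that the unique surviving candidate at bidegree $(27,27)$ compatible with the $C_2$-computation above is $\dthree^3 \sthree^3 a_{9\lambda} a_{9\sigma_2}$. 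The delicate point is to control the $2$-torsion introduced by the $a_{\sigma_2}$-factors in the target and to rule out every other surviving class at $(27,27)$ as an alternative target, which requires a careful inventory of what the earlier $d_{13}$- and $d_{15}$-differentials have already killed at that bidegree.
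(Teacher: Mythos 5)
Your restriction computation is exactly the paper's: $res(\dthree^3\done u_{4\lambda}u_{10\sigma}a_{6\lambda}) = \dthree^3\rone^2$ (in the abbreviated $C_2$-names), and applying $d_{15}(u_{8\sigma_2})$ together with $\rone=\grone$ and $\rone^3=\sthree$ gives the $C_2$-target $\dthree^3\sthree^3$. The divergence — and the genuine gap — is in how you pass back to the $C_4$-spectral sequence. Routing the conclusion through the transfer cannot work here: the $C_2$-target is $\rthree^6\grthree^3 + \gamma(\rthree^6\grthree^3)$ (times $a_{27\sigma_2}$), so its transfer is $2\,tr(\rthree^6\grthree^3 a_{27\sigma_2}) = 2\,\dthree^3\sthree^3 = 0$, because every class at $(27,27)$ comes from an induced slice and transfers of positive-filtration classes are killed by $2$. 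Hence the transferred statement reads $d_{15}(2x)=0$; your claim that ``$2x$ carries the predicted $d_{15}$'' with target $\dthree^3\sthree^3 a_{9\lambda}a_{9\sigma_2}$ is therefore false (that target is $2$-torsion, so $2x$ can only hit zero), and the subsequent ``upgrade from $2x$ to $x$'' has nothing to start from: your assertion that $x$ must support a differential of length at most $15$ does not follow from anything you have established. This is precisely why the transfer template of Proposition~\ref{prop:d15(20,4)} does not transport to this bidegree — there the source of the differential is the doubled class $2\dthree^2u_{4\lambda}u_{6\sigma}a_{2\lambda}$ and the target has a nonzero transfer, whereas here the differential is on the generator itself and the target lies in the ($2$-torsion) image of the transfer.

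The missing step is supplied by naturality of the \emph{restriction} map rather than the transfer, which is what the paper uses. Since $res$ is a map of spectral sequences and $d_{15}(res(x)) = \dthree^3\sthree^3 \neq 0$ on the $C_2$ $E_{15}$-page, the class $x$ cannot survive as a $d_{15}$-cycle: once degree reasons (and the complete knowledge of differentials of length $\leq 13$ at this spot) show $x$ lives to $E_{15}$, one has $res(d_{15}(x)) = d_{15}(res(x)) \neq 0$, so $d_{15}(x)\neq 0$. The target is then pinned down by naturality: among the five classes $\sthree^9, \dthree\sthree^7, \dthree^2\sthree^5, \dthree^3\sthree^3, \dthree^4\sthree$ at $(27,27)$ on the $E_{15}$-page, only $\dthree^3\sthree^3$ restricts to the computed $C_2$-target, which is the candidate-elimination you sketch at the end but never ground in a reason why $x$ supports a differential at all. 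Replace the transfer step by this restriction-naturality argument and your proof closes; as written, it does not.
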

\begin{proof}
The restriction of the class $\dthree^3 \done u_{4\lambda} u_{10\sigma} a_{6\lambda}$ supports the $d_{15}$-differential 
\begin{eqnarray*}
d_{15}(res(\dthree^3 \done u_{4\lambda} a_{10\sigma} a_{6\lambda})) &=& \dthree^3 \rone^2 \cdot \rone (\dthree + \sthree^2) \\
&=& \dthree^3 \sthree (\dthree + \sthree^2) \\
&=& \dthree^4 \sthree + \dthree^4 \sthree + \dthree^3 \sthree^3 \\
&=& \dthree^3 \sthree^3
\end{eqnarray*}
in the $C_2$-slice spectral sequence.  This implies that the class $\dthree^3 \done u_{4\lambda} u_{10\sigma} a_{6\lambda}$ must support a differential of length at most 15 in the $C_4$-slice spectral sequence.  For degree reasons, it must support a $d_{15}$-differential, and the target must be $\dthree^3 \sthree^3$ by natuality.  
\end{proof}

The proof of Proposition~\ref{prop:d15(28,12)} can be used to prove $d_{15}$-differentials on the following classes: 
\begin{enumerate}
\item $\{(28, 12),(52, 36),(76, 60), \ldots\}$, $\{(124,12),(148, 36),(172, 60),\ldots\}$, $\ldots$; 
\item $\{(68,20),(92, 44),(116, 68),\ldots\}$, $\{(164, 20),(188, 44),(212, 68),\ldots\}$, $\ldots$;
\item $\{(84, 4),(108, 28),(132, 52),\ldots\}$, $\{(180, 4),(204, 28),(228,52),\ldots\}$, $\ldots$.
\end{enumerate}

\begin{prop} \label{prop:d15(40,8)}
The class $\dthree^4 u_{8\lambda}u_{12\sigma}a_{4\lambda}$ at $(40, 8)$ supports the $d_{15}$-differential 
$$d_{15}(\dthree^4 u_{8\lambda}u_{12\sigma}a_{4\lambda}) = \dthree^5 \sone u_{4\lambda}u_{15\sigma}a_{11\lambda}a_{\sigma_2}.$$
\end{prop}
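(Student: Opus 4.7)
Following the method of Propositions~\ref{prop:d15(20,4)} and~\ref{prop:d15(28,12)}, we produce the differential by transferring an appropriate $d_{15}$-differential from the $C_2$-slice spectral sequence. The key complication is that the naive restriction $res(\dthree^4 u_{8\lambda} u_{12\sigma} a_{4\lambda}) = \dthree^4 u_{16\sigma_2} a_{8\sigma_2}$ is a $d_{15}$-cycle in $C_2\text{-}\SliceSS(i_{C_2}^*\BPtwo)$, since $u_{16\sigma_2} = u_{8\sigma_2}^2$ forces $d_{15}(u_{16\sigma_2}) = 2 u_{8\sigma_2} \cdot d_{15}(u_{8\sigma_2}) = 0$ using $2 a_{\sigma_2} = 0$. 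Thus a pure restriction argument is unavailable and we instead combine transfer with degree considerations.

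Starting from the slice differential $d_{15}(u_{8\sigma_2}) = \rone(\dthree + \sthree^2) a_{15\sigma_2}$, I would multiply both sides by the $d_{15}$-cycle $\dthree^4 a_{8\sigma_2}$ to obtain
$$d_{15}(\dthree^4 u_{8\sigma_2} a_{8\sigma_2}) = \dthree^5 \rone a_{23\sigma_2} + \dthree^4 \rone \sthree^2 a_{23\sigma_2}.$$
Applying the transfer $tr\colon C_2\text{-}\SliceSS(i_{C_2}^*\BPtwo) \to C_4\text{-}\SliceSS(\BPtwo)$, the second term vanishes: expanding $\sthree^2 = \rthree^2 + 2\dthree + \grthree^2$ and using $2 a_{\sigma_2} = 0$, it reduces to $2 \dthree^4 \cdot tr(\rthree^2 \rone a_{23\sigma_2}) = 0$. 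The first term, via Frobenius reciprocity together with $a_{23\sigma_2} = res(a_{11\lambda}) \cdot a_{\sigma_2}$ and $tr(\rone a_{\sigma_2}) = \sone a_{\sigma_2}$, transfers to
$$tr(\dthree^5 \rone a_{23\sigma_2}) = \dthree^5 a_{11\lambda} \cdot tr(\rone a_{\sigma_2}) = \dthree^5 \sone a_{11\lambda} a_{\sigma_2},$$
which agrees with the stated target modulo the $u_V$-decorations (which live in bidegree $(0,0)$ and are identified on $E_{15}$ via the gold relation $u_\lambda a_{2\sigma} = 2 u_{2\sigma} a_\lambda$). By naturality, this class in bidegree $(39,23)$ must be killed by a differential of length at most $15$ in $C_4\text{-}\SliceSS(\BPtwo)$.

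Combining this with our complete knowledge of shorter differentials and an enumeration of classes in bidegree $(40,8)$ on the $E_{15}$-page, the only possible source is $\dthree^4 u_{8\lambda} u_{12\sigma} a_{4\lambda}$: indeed, the transferred source $tr(\dthree^4 u_{8\sigma_2} a_{8\sigma_2}) = 2 \dthree^4 u_{4\lambda} a_{4\lambda}$ is to be identified with $\dthree^4 u_{8\lambda} u_{12\sigma} a_{4\lambda}$ on $E_{15}$ using the gold relation together with the $d_{13}$-differential on $\dthree^2 u_{4\lambda} u_{6\sigma} a_{2\lambda}$ from Proposition~\ref{prop:BPtwod13(20,4)}. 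The main obstacle is this final identification step and the exclusion of alternative targets in bidegree $(39,23)$, which will require careful bookkeeping with the $RO(C_4)$-graded multiplicative structure analogous to the analyses in the preceding propositions.
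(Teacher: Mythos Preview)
Your transfer argument does not prove the stated integer-graded differential, and the ``identification'' step is where it fails. The $C_2$-class you differentiate, $\dthree^4 u_{8\sigma_2} a_{8\sigma_2}$, lives in $C_2$-degree $32+8\sigma_2$, not in integer degree $40$; consequently its transfer $2\dthree^4 u_{4\lambda}a_{4\lambda}$ lies in $RO(C_4)$-degree $20+12\sigma+4\lambda$, not in integer degree $40$. Your claim that the $u_V$-decorations ``live in bidegree $(0,0)$'' conflates the $(t-s,s)$ bigrading with the $RO(G)$-grading: the class $u_{4\lambda}u_{12\sigma}$ has underlying dimension $0$ but sits in the nontrivial virtual representation $20-12\sigma-4\lambda$, so $2\dthree^4 u_{4\lambda}a_{4\lambda}$ and $\dthree^4 u_{8\lambda}u_{12\sigma}a_{4\lambda}$ are genuinely different classes in different degrees. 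The gold relation does not bridge $RO(G)$-degrees; it is a relation between classes in the \emph{same} degree. Moreover, there is no integer-graded $d_{15}$ at $(40,8)$ in the $C_2$-spectral sequence to transfer in the first place, precisely because $u_{16\sigma_2}$ is a $d_{15}$-cycle --- so this route is blocked at the outset.

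The paper's argument avoids transfer entirely and instead exploits the \emph{next} $C_2$-differential. The restriction $\dthree^4 u_{16\sigma_2}a_{8\sigma_2}$ is a $d_{15}$-cycle, but it supports the $d_{31}$-differential
\[
d_{31}\big(res(\dthree^4 u_{8\lambda}u_{12\sigma}a_{4\lambda})\big)=\dthree^4\cdot(\dthree\rthree^3)=\dthree^5\rthree^3.
\]
Hence $\dthree^4 u_{8\lambda}u_{12\sigma}a_{4\lambda}$ must support a differential of length $\le 31$ in the $C_4$-spectral sequence. By degree reasons the only candidates for the target are at $(39,23)$ (a $d_{15}$) or at $(39,39)$ (a $d_{31}$). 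The two classes at $(39,39)$ are $\dthree^6\sthree$ and $\dthree^5\sthree^3$; neither restricts to $\dthree^5\rthree^3$, so naturality of restriction rules out $d_{31}$. The only remaining possibility is the $d_{15}$ hitting $\dthree^5\sone$ at $(39,23)$, which is the claimed differential. The key idea you were missing is to look past $d_{15}$ in the $C_2$-spectral sequence and use the $d_{31}$ on the restriction to bound the length from above.
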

\begin{proof}
In the $C_2$-slice spectral sequence, the restriction of $\dthree^4 u_{8\lambda}u_{12\sigma}a_{4\lambda}$ supports the $d_{31}$-differential 
$$d_{31}(res(\dthree^4 u_{8\lambda}u_{12\sigma}a_{4\lambda})) = \dthree^4 \cdot (\dthree \rthree^3) = \dthree^5 \rthree^3. $$
This implies that the class $\dthree^4 u_{8\lambda}u_{12\sigma}a_{4\lambda}$ must support a differential of length at most 31 in the $C_4$-slice spectral sequence.  By degree reasons, the target can either be at $(39, 39)$ ($d_{31}$-differential) or at $(39, 23)$ ($d_{15}$-differential).  

There are two classes at $(39, 39)$ --- $\dthree^6\sthree$ and $\dthree^5 \sthree^3$.  Since neither class restricts to $\dthree^5 \rthree^3$, the target cannot be at $(39, 39)$ by natuality.  The only possibility left is the class $\dthree^5 \sone$ at $(39, 23)$.  This is the desired differential.  
\end{proof}

\begin{prop} \label{prop:d31(40,8)}
The class $2\dthree^4 u_{8\lambda}u_{12\sigma}a_{4\lambda}$ at $(40,8)$ supports the $d_{31}$-differential
$$d_{31}(2\dthree^4 u_{8\lambda}u_{12\sigma}a_{4\lambda}) = \dthree^5 \sthree^3 u_{15\sigma}a_{15\lambda}a_{9\sigma_2}.$$
\end{prop}
\begin{proof}
As in the proof of Proposition~\ref{prop:d15(40,8)}, we have the differential 
$$d_{31}(res(\dthree^4 u_{8\lambda}u_{12\sigma}a_{4\lambda})) = \dthree^4 \cdot (\dthree \rthree^3) = \dthree^5 \rthree^3$$
in the $C_2$-spectral sequence.  Applying the transfer to the target of this differential shows that the class $tr(\dthree^5 \rthree^3) = \dthree^5 \sthree^3$ must be killed in the $C_4$-slice spectral sequence by a differential of length at most 31.  

For degree reasons, the target can only be killed by a differential of length 31.  Therefore, by natuality, the source must be 
$$tr(res(\dthree^4 u_{8\lambda}u_{12\sigma}a_{4\lambda})) = 2\dthree^4 u_{8\lambda}u_{12\sigma}a_{4\lambda}.$$
\end{proof}

The proofs of Proposition~\ref{prop:d15(40,8)} and ~\ref{prop:d31(40,8)} can be used to prove $d_{15}$ and $d_{31}$-differentials on the following classes: 
\begin{enumerate}
\item $\{(40, 8),(64, 32),(88, 56), \ldots \}$, $\{(232, 8),(256,32),(280, 56), \ldots \}$, $\ldots$;
\item $\{(96, 0),(120, 24),(144, 48), \ldots \}$, $\{(288, 0),(312, 24),(336, 48), \ldots \}$, $\ldots$;
\item $\{(176, 16),(200, 40),(224,64), \ldots \}$, $\{(368, 16),(392,40),(416,64), \ldots \}$, $\ldots$.
\end{enumerate}

\begin{rem}\rm
All the other classes at $(40, 8)$ support $d_{31}$-differentials hitting the same target, $\dthree^6 \sthree$.  In the formulas below, we use the relation $\rthree^3 = \grthree^3$, which is produced by the $d_{15}$-differentials, as well as Proposition~\ref{prop:DthreeSthreeRelations}.  
\begin{enumerate}
\item $d_{31}(res(\dthree^3 \done^3)) = d_{31} (res(\dthree^3 \sthree^2)) = \dthree^3 \sthree^2 \cdot (\dthree \rthree^3) = \dthree^4(\rthree^5 + \rthree^3 \grthree^2) = \dthree^4 \sthree^5 = \dthree^6 \sthree$.
\item $d_{31}(res(\dthree^2\done^6)) = d_{31}(res(\dthree^2 \sthree^4)) = \dthree^2 \sthree^4 \cdot (\dthree \rthree^3) = \dthree^3 (\rthree^7 + \rthree^3\grthree^4) = \dthree^3 \sthree^7 = \dthree^6 \sthree$. 
\item $d_{31}(res(\dthree \done^9)) = d_{31}(res(\dthree (\sthree)^6)) = \dthree \sthree^4 \sthree^2 \cdot (\dthree \rthree^3) =\dthree(\dthree^2 \sthree^2 + \sthree^6) \cdot (\dthree \rthree^3) = \dthree^4 \sthree^2 \rthree^3+ \dthree^2 \sthree^6 \rthree^3  =\dthree^4 \sthree^5+ \dthree^2 \sthree^9 = \dthree^6 \sthree +0= \dthree^6 \sthree$.
\item $d_{31}(res(\done^{12})) = d_{31}(res(\sthree^8)) = \sthree^8(\dthree \rthree^3) = \dthree (\rthree^{11} + \rthree^3\grthree^8) = \dthree \sthree^{11} = \dthree^6 \sthree$.
\end{enumerate}
This, combined with Proposition~\ref{prop:d31(40,8)}, shows that both remaining $i_{C_2}^*\BPone$-truncation classes ($\dthree^6 \sthree$ and $\dthree^5 \sthree^3$) at $(39, 39)$ are killed by $d_{31}$-differentials.  The same phenomenon occurs at the following bidegrees as well: 
\begin{enumerate}
\item $\{(39, 39),(63, 63),(87, 87), \ldots \}$, $\{(231, 39),(255,63),(279, 87), \ldots \}$, $\ldots$;
\item $\{(95, 31),(119, 55),(143, 79), \ldots \}$, $\{(287, 31),(311, 55),(335, 79), \ldots \}$, $\ldots$;
\item $\{(175, 47),(199, 71),(223,95), \ldots \}$, $\{(367, 47),(391,71),(415,95), \ldots \}$, $\ldots$.
\end{enumerate}
\end{rem}

Now we turn to prove the nonexistence of $d_{15}$-differentials on classes.  
\begin{prop}\label{prop:nod15(36,20)}
The class $\dthree^4 \done^2 u_{4\lambda} u_{14\sigma}a_{10\lambda}$ at $(36, 20)$ is the leading term of a $d_{15}$-cycle. 
\end{prop}
\begin{proof}
The restriction of $\dthree^4 \done^2 u_{4\lambda} u_{14\sigma}a_{10\lambda}$ supports the $d_{15}$-differential 
\begin{eqnarray*}
d_{15}(res(\dthree^4 \done^2 u_{4\lambda} u_{14\sigma}a_{10\lambda})) &=& \dthree^4 \rone^4 \cdot \rone(\dthree + \sthree^2) \\ 
&=& \dthree^4 \sthree \rone^2 (\dthree + \sthree^2) \\ 
&=& \dthree^5 \sthree \rone^2 + (\dthree^5 \sthree \rone^2 + \dthree^4 \sthree^3 \rone^2) \\ 
&=& \dthree^4 \sthree^3 \rone^2. 
\end{eqnarray*}
The class $\dthree^4 \sthree^3 \rone^2$ is also killed by the $d_{15}$-differential supported by the $i_{C_2}^*\BPone$-truncation class $\dthree^4 \sthree \rone$: 
\begin{eqnarray*}
d_{15}(\dthree^4 \sthree \rone) &=& \dthree^4 \sthree \rone \cdot \rone (\dthree + \sthree^2) \\ 
&=& \dthree^5 \sthree \rone^2 + (\dthree^5 \sthree \rone^2 + \dthree^4 \sthree^3 \rone^2) \\ 
&=& \dthree^4 \sthree^3 \rone^2. 
\end{eqnarray*} 
Technically, after these two $d_{15}$-differentials, the surviving class is $\dthree^4 \done^2 + \dthree^4 \sthree \rone$.  However, since the second differential has already been accounted for, there is no more $d_{15}$-differential on the class $\dthree^4 \done^2 u_{4\lambda} u_{14\sigma}a_{10\lambda}$.
\end{proof}

The proof of Proposition~\ref{prop:nod15(36,20)} can be used to show that the following classes are the leading terms of $d_{15}$-cycles: 
\begin{enumerate}
\item $\{(36, 20),(60, 44),(84, 68), \ldots \}$, $\{(132, 20),(156,44),(180, 68), \ldots \}$, $\ldots$;
\item $\{(52, 4),(76, 28),(100, 52), \ldots \}$, $\{(148, 4),(172, 28),(196, 52), \ldots \}$, $\ldots$;
\item $\{(92, 12),(116, 36),(140, 60), \ldots \}$, $\{(188, 12),(212,36),(236,60), \ldots \}$, $\ldots$.
\end{enumerate}

\begin{prop}\label{prop:nod15(27,11)}
There is no $d_{15}$-differential on the class $\sthree^3 \sone u_{4\lambda} u_{9\sigma} a_{5\lambda} a_{\sigma_2}$ at $(27, 11)$.  
\end{prop}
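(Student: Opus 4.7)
The class $\sthree^3 \sone u_{4\lambda} u_{9\sigma} a_{5\lambda} a_{\sigma_2}$ at $(27,11)$ already vanishes on the $E_5$-page: it is a $d_3$-boundary. Consequently it is trivially a $d_{15}$-cycle. The point is that the differential $d_3(u_\lambda) = \sone a_\lambda a_{\sigma_2}$ from Theorem~\ref{thm:BPoned3} kills $\sone a_\lambda a_{\sigma_2}$ on $E_5$, and multiplying by $a_{4\lambda}$ (a permanent cycle) gives $\sone a_{5\lambda} a_{\sigma_2} = 0$ on $E_5$. The whole class is then obtained by multiplying this relation by further $d_3$-cycles.

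Explicitly, I would apply the Leibniz rule to the element $\sthree^3 u_\lambda u_{4\lambda} u_{9\sigma} a_{4\lambda}$, which exhibits our class as a $d_3$-boundary:
\[
d_3\bigl(\sthree^3\, u_\lambda\, u_{4\lambda}\, u_{9\sigma}\, a_{4\lambda}\bigr) \;=\; \sthree^3\, \sone\, u_{4\lambda}\, u_{9\sigma}\, a_{5\lambda}\, a_{\sigma_2}.
\]
For this, one needs $\sthree^3$, $u_{4\lambda}$, $u_{9\sigma}$, and $a_{4\lambda}$ to be $d_3$-cycles. The first is the transfer of the $C_2$-$d_3$-cycle $\rthree^3$; the class $u_{4\lambda}$ is a $d_3$-cycle since $d_3(u_{4\lambda}) = 4 u_\lambda^3 \sone a_\lambda a_{\sigma_2}$ and $4 a_\lambda = 0$; the factor $u_{9\sigma}$ is a $d_3$-cycle for analogous reasons (it is a polynomial in $u_{2\sigma}$, which first supports a $d_5$); and $a_{4\lambda}$ is a permanent cycle.

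An alternative route, which also avoids any serious computation, is via the transfer map. By Frobenius reciprocity the class equals $tr(\rthree^3 \sone u_{8\sigma_2} a_{11\sigma_2})$, and the pre-image in $C_2$-$\SliceSS(i_{C_2}^*\BPtwo)$ factors through $\sone a_{3\sigma_2}$, which is zero on the $E_5$-page by virtue of the $C_2$-$d_3$-differential $d_3(u_{2\sigma_2}) = \sone a_{3\sigma_2}$; applying the transfer then gives zero. The only potential subtlety in either approach is verifying the $d_3$-cycle status of the various factors, but this follows directly from the multiplicative structure and the relation $4 a_\lambda = 0$, so there is no serious obstacle.
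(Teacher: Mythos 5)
Your argument hinges on the claim that the class at $(27,11)$ is a $d_3$-boundary and hence already zero from the $E_5$-page on; that claim is false, and it is the fatal gap. The printed name $\sthree^3\sone\cdots$ is evidently a typo for $\dthree^3\sone u_{4\lambda}u_{9\sigma}a_{5\lambda}a_{\sigma_2}$ (the paper's own argument works with $\dthree^3\sone = tr(\dthree^3\rone)$): at bidegree $(27,11)$ the underlying slice cell must have dimension $38$, which forces the induced cell $\rthree^3\grthree^3\rone$, so under the Frobenius naming convention the class is $tr(\rthree^3\grthree^3\rone\,u_{8\sigma_2}a_{11\sigma_2})$. The monomial $\rthree^3\grthree^3\rone$ is not divisible by $\rone+\grone$, and after the $d_3$- and $d_7$-identifications it is one of the surviving basis monomials (its $\rone$-exponent is $1$); consequently the class survives to $E_{15}$ — it is exactly one of the live $i_{C_2}^*\BPone$-truncation classes on the $E_{13}$-chart, and its $C_2$-preimage even supports a nonzero $d_{15}$. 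Your alternative reading, $tr(\rthree^3(\rone+\grone)u_{8\sigma_2}a_{11\sigma_2})$, is dimensionally impossible (a $20$-dimensional induced cell $S^{10\rho_2}$ carries only $u_{a\sigma_2}a_{b\sigma_2}$ with $a+b=10$), and likewise the product $\sthree^3\cdot\sone = tr(\rthree^3(\rone+\grone))$ and your proposed source $\sthree^3 u_\lambda u_{4\lambda}u_{9\sigma}a_{4\lambda}$ sit in a different bidegree than $(27,11)$. So your Leibniz/transfer computation kills a different (indeed dead) element, not the class of the proposition; if that class really vanished on $E_5$ there would be nothing to prove.

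Because the class is alive, the actual content of the proposition is to rule out a $d_{15}$ on it even though its $C_2$-preimage supports one: $d_{15}(\dthree^3\rone) = \dthree^4\rone^2+\dthree^3\sthree^2\rone^2$ in the $C_2$-slice spectral sequence. The paper's proof uses naturality of the transfer: any $d_{15}$ on $tr(\dthree^3\rone)$ would have to hit $tr(\dthree^4\rone^2)+tr(\dthree^3\sthree^2\rone^2)=\dthree^4\sone^2$, and that class was killed long before by a $d_3$ (the only class remaining at $(26,26)$ on the $E_{15}$-page is $\dthree^3\sthree^2$), so no such differential can occur. A correct proof must engage with this step; an argument that the source is zero cannot be repaired, because the source is not zero.
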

\begin{proof}
The class $\sthree^3 \sone u_{4\lambda} u_{9\sigma} a_{5\lambda} a_{\sigma_2}$ is in  the image of the transfer because $\dthree^3 \sone = tr(\dthree^3 \rone)$.  In the $C_2$-slice spectral sequence, the class $\dthree^3 \rone$ supports the $d_{15}$-differential 
$$d_{15}(\dthree^3 \rone) = \dthree^3 \rone \cdot \rone (\dthree + \sthree^2) = \dthree^4 \rone^2 + \dthree^3 \sthree^2 \rone^2.$$
If the class $\dthree^3 \sone$ does support a $d_{15}$-differential, then by natuality, the target must be 
$$tr(\dthree^4 \rone^2 + \dthree^3 \sthree^2 \rone^2) = tr(\dthree^4 \rone^2) + tr(\dthree^3 \sthree^2 \rone^2) = \dthree^4 \sone^2 +0 = \dthree^4 \sone^2.$$
This is impossible because the class $\dthree^4 \sone^2$ is killed by a $d_3$-differential and no longer exists on the $d_{15}$-page (the only class left at $(26, 26)$ is $\dthree^3 \sthree^2$).  
\end{proof}

The proof of Proposition~\ref{prop:nod15(27,11)} can be used to show that there are no $d_{15}$-differentials on the following classes: 
\begin{enumerate}
\item $\{(27, 11),(39, 23),(51,35), \ldots \}$, $\{(123, 11),(135,23),(147,35), \ldots \}$, $\ldots$;
\item $\{(55,7),(67,19),(79,31), \ldots \}$, $\{(151,7),(163,19),(175,31), \ldots \}$, $\ldots$;
\item $\{(83,3),(95,15),(107,27), \ldots \}$, $\{(179,3),(191,15),(203,27), \ldots \}$, $\ldots$.
\end{enumerate}

\section{Higher Differentials II: the Norm} \label{sec:Norm}

In this section, we will use Theorem~\ref{thm:NormFormula} to norm up differentials in $C_2$-$\SliceSS(\BPtwo)$ to differentials in $C_4$-$\SliceSS(\BPtwo)$.  

Recall that in the $C_2$-slice spectral sequence of $\BPtwo$, all the differentials are generated under multiplication by the following differentials: 
\begin{eqnarray*}
d_3(u_{2\sigma_2})  &=& \bar{v}_1 a_{3\sigma_2} = (\rone + \grone)a_{3\sigma_2}\\
d_7(u_{4\sigma_2})  &=& \bar{v}_2 a_{7\sigma_2} = (\rone^3 + \rthree + \grthree) a_{7\sigma_2}\\
d_{15}(u_{8\sigma_2})  &=& \bar{v}_3 a_{15\sigma_2} = \rone(\rthree^2 + \rthree \grthree + \grthree^2) a_{15\sigma_2}\\
d_{31}(u_{16\sigma_2})  &=& \bar{v}_4 a_{31\sigma_2}=\rthree^4 \grthree a_{31\sigma_2}
\end{eqnarray*}

\begin{thm}\label{thm:NormedDiffd5}
In the $C_4$-slice spectral sequence of $\BPtwo$, the class $u_{2\lambda}a_\sigma$ supports the $d_5$-differential 
$$d_5(u_{2\lambda}a_\sigma) = 2\done u_{2\sigma}a_{3\lambda}. $$
\end{thm}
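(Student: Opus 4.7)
The plan is to norm up the base $C_2$-equivariant differential
\[
d_3(u_{2\sigma_2}) = (\rone + \grone)a_{3\sigma_2}
\]
in $C_2$-$\SliceSS(\BPtwo)$ using the second formula of Theorem~\ref{thm:NormFormula}. Taking $x = u_{2\sigma_2}$, $y = (\rone + \grone)a_{3\sigma_2}$, and $r=3$, the theorem predicts the $d_5$-differential
\[
d_5\bigl(a_\sigma\, N_{C_2}^{C_4}(u_{2\sigma_2})\bigr) = N_{C_2}^{C_4}\bigl((\rone + \grone)a_{3\sigma_2}\bigr) = N_{C_2}^{C_4}(\rone + \grone)\cdot a_{3\lambda}
\]
in $C_4$-$\SliceSS(\BPtwo)$, as long as both the source and target survive to the $E_5$-page. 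Here $N_{C_2}^{C_4}(u_{2\sigma_2})$ is identified with the fraction $u_{2\lambda}/u_{2\sigma}$, and $N_{C_2}^{C_4}(a_{3\sigma_2}) = a_{3\lambda}$.

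The next step is to multiply both sides by $u_{2\sigma}$ and apply the Leibniz rule, mirroring the argument given in the proof of Corollary~\ref{cor:BPoned5}. Since $d_5(u_{2\sigma}) = \done a_\lambda a_{3\sigma}$ by Hill--Hopkins--Ravenel's Slice Differentials Theorem, the ``extra Leibniz term''
\[
d_5(u_{2\sigma})\cdot \frac{a_\sigma u_{2\lambda}}{u_{2\sigma}} = \done a_\lambda a_{3\sigma}\cdot \frac{a_\sigma u_{2\lambda}}{u_{2\sigma}}
\]
vanishes: the gold relation $u_\lambda a_{2\sigma} = 2u_{2\sigma}a_\lambda$ combined with $2a_\sigma = 0$ forces $u_\lambda a_{3\sigma} = 0$, and hence $u_{2\lambda}a_{3\sigma} = 0$. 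This produces the desired Leibniz consequence
\[
d_5(u_{2\lambda}a_\sigma) = u_{2\sigma}\, N_{C_2}^{C_4}(\rone + \grone)\, a_{3\lambda}.
\]

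Finally, I would identify the right-hand side with $2\done u_{2\sigma}a_{3\lambda}$. Since
\[
res\bigl(N_{C_2}^{C_4}(\rone + \grone)\bigr) = (\rone + \grone)(\grone - \rone) = -\rone^2 u_\sigma^{-1}(1 + \gamma),
\]
we get $N_{C_2}^{C_4}(\rone + \grone) = -tr(\rone^2 u_\sigma^{-1})$. Multiplying by $u_{2\sigma}a_{3\lambda}$, applying Frobenius reciprocity, and using the identity $res(\done) = \rone\grone u_\sigma^{-1}$, the target simplifies on the $E_5$-page (where the $d_3$-relation $tr(\rone a_{\sigma_2}) a_\lambda = 0$ is available) to $\done u_\lambda a_{2\lambda}a_{2\sigma}$. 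One final application of the gold relation rewrites this as $2\done u_{2\sigma}a_{3\lambda}$, completing the proof. The main obstacle is the transfer/Frobenius bookkeeping in the last step, but this is essentially the same manipulation already carried out in the alternative norm proof of Corollary~\ref{cor:BPoned5}.
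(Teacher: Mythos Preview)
Your proposal is correct and follows essentially the same approach as the paper's proof: apply the norm formula (Theorem~\ref{thm:NormFormula}) to the $C_2$-differential $d_3(u_{2\sigma_2}) = (\rone+\grone)a_{3\sigma_2}$, clear the $u_{2\sigma}$ denominator via Leibniz (with the extra term vanishing by the gold relation), and identify the target using restriction/transfer and Frobenius reciprocity exactly as in the alternative norm argument for Corollary~\ref{cor:BPoned5}. The only point you leave implicit that the paper makes explicit is the verification that the target $2\done u_{2\sigma}a_{3\lambda}$ is actually nonzero on the $E_5$-page, which is needed for Theorem~\ref{thm:NormFormula} to genuinely produce a nontrivial $d_5$-differential.
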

\begin{proof}
Applying Theorem~\ref{thm:NormFormula} to the $d_3$-differential $d_3(u_{2\sigma_2}) = (\rone + \grone) a_{3\sigma_2}$ predicts the $d_5$-differential 
$$d_5\left(\frac{u_{2\lambda}}{u_{2\sigma}} a_\sigma\right) = N(\rone + \grone) a_{3\lambda}.$$
In particular, the proof of Theorem~\ref{thm:NormFormula} shows that the class $N(\rone + \grone) a_{3\lambda}$ must be killed on or before the $E_5$-page.  

We claim that the class $N(\rone + \grone) u_{2\sigma} a_{3\lambda}$ must be killed on or before the $E_5$-page as well.  This is because if $N(\rone + \grone) a_{3\lambda}$ is killed by a differential of length $r < 5$, then since $u_{2\sigma}$ is a $d_r$-cycle, $N(\rone + \grone) u_{2\sigma} a_{3\lambda}$ is killed by a differential of length at most $r$.  Otherwise, if $N(\rone + \grone) a_{3\lambda}$ is killed by a differential of length exactly 5, then the only possible source is $\frac{u_{2\lambda}}{u_{2\sigma}} a_\sigma$ (see \cite[Figure~5]{ZengThesis}), and the predicted $d_5$-differential occurs.  Applying the Leibniz rule yields the differential 
\begin{eqnarray*}
d_5(u_{2\lambda} a_\sigma) = d_5\left( u_{2\sigma} \cdot \frac{u_{2\lambda}a_\sigma }{u_{2\sigma}}\right) &=& d_5(u_{2\sigma}) \cdot \frac{u_{2\lambda}a_\sigma }{u_{2\sigma}} + u_{2\sigma} \cdot d_5\left(\frac{u_{2\lambda}a_\sigma }{u_{2\sigma}}\right) \\ 
&=& \done a_\lambda a_\sigma^3 \cdot \frac{u_{2\lambda}a_\sigma }{u_{2\sigma}}  + u_{2\sigma} \cdot N(\rone + \grone) a_{3\lambda} \\ 
&=& 0 + N(\rone + \grone) u_{2\sigma}  a_{3\lambda} \,\,\, \text{($u_{\lambda} a_{3\sigma} = 2u_{2\sigma}a_\lambda a_\sigma = 0$)}\\ 
&=& N(\rone + \grone) u_{2\sigma}  a_{3\lambda}.
\end{eqnarray*}

Since $u_\lambda$ supports a $d_3$-differential, $u_{2\lambda}$ is a $d_3$-cycle and the class $u_{2\lambda}a_\sigma$ survives to the $E_5$-page.  To identify the target and show that it is nonzero on the $E_5$-page, note that
\begin{eqnarray*}
res(N(\rone + \grone)) &=& (\rone + \grone)(\grone - \rone) \\
&=& -(\rone^2 - \grone^2) \\
&=& -(1 + \gamma)\rone^2 u_{-\sigma}.
\end{eqnarray*}
This implies that $N(\rone + \grone) = -tr(\rone^2 u_{-\sigma})$.  Note that $u_{-\sigma}$ is used as a placeholder to indicate that target of the transfer is in degree $2 + \lambda - (1 - \sigma) = 1 + \sigma + \lambda$. 

The target of the predicted differential is 
$$-tr(\rone^2 u_\sigma^{-1})u_{2\sigma} a_{3\lambda} = -tr(\rone^2 u_\sigma a_{6\sigma_2}) = tr(\rone^2 u_\sigma a_{6\sigma_2}
). $$
To identify this with $2\done u_{2\sigma} a_{3\lambda}$, consider the equality 
\begin{eqnarray*}
tr(\rone^2 u_\sigma a_{6\sigma_2}) + tr(\rone \grone u_\sigma a_{6\sigma_2}) &=& tr(\rone (\rone + \grone) u_\sigma a_{6\sigma_2}
) \\ 
&=& tr(\rone u_\sigma a_{\sigma_2} res(tr(\rone a_{\sigma_2})a_{2\lambda})) \\
&=&tr(\rone u_\sigma a_{\sigma_2})tr(\rone a_{\sigma_2}) a_{2\lambda}.
\end{eqnarray*}
The last expression is 0 because $d_3(u_\lambda) = \sone a_\lambda a_{\sigma_2} = tr(\rone a_{\sigma_2}) a_\lambda$.  It follows that 
\begin{eqnarray*}
tr(\rone^2 u_\sigma a_{6\sigma_2}) &=& - tr(\rone \grone u_\sigma a_{6\sigma_2}) \\ 
&=& tr(\rone \grone u_\sigma a_{6\sigma_2}) \\ 
&=&tr(res(\done u_{2\sigma} a_{3\lambda})) \\ 
&=& 2\done u_{2\sigma} a_{3\lambda}.
\end{eqnarray*}
This class is not zero on the $E_5$-page.  Therefore, the predicted $d_5$-differential on $u_{2\lambda}a_\sigma$ occurs.
\end{proof}

\begin{rem}\rm
In the integer graded spectral sequence, the normed $d_5$-differential can be seen on the class $\done^3 u_{2\lambda}u_{2\sigma}a_\lambda a_\sigma$ at $(9,3)$:
$$d_{5}(\done^3 u_{2\lambda}u_{2\sigma}a_\lambda a_\sigma) = 2\done^4 u_{4\lambda}a_{4\lambda}.$$
This is the product of the differential in Theorem~\ref{thm:NormedDiffd5} and $\done^3 u_{2\sigma}a_{\lambda}$.  An alternative, perhaps easier way to identify the target is to note that 
\begin{eqnarray*}
\done^3 u_{2\sigma}a_{\lambda} tr(\rone^2 u_\sigma a_{6\sigma_2}) &=& tr(res(\done^3 u_{2\sigma}a_{\lambda}) \rone^2 u_\sigma a_{6\sigma_2}) \\ 
&=& tr(\rone^5 \grone^3 a_{8\sigma_2}) \\ 
&=& tr(\rone^4 \grone^4 a_{8\sigma_2}) \, \, \, \text{($\rone = \grone$ after the $d_3$-differentials)}\\ 
&=& tr(res(\done^4 u_{4\sigma}a_{4\lambda})) \\
&=& 2 \done^4 u_{4\sigma} a_{\lambda}. 
\end{eqnarray*}
\end{rem}

\begin{thm}\label{thm:NormedDiffd13}
In the $C_4$-slice spectral sequence of $\BPtwo$, the class $u_{4\lambda}a_\sigma$ supports the $d_{13}$-differential 
$$d_{13}(u_{4\lambda}a_\sigma) = \done^3 u_{4\sigma} a_{7\lambda} + tr(\rthree^2 u_\sigma a_{14\sigma_2}).$$
\end{thm}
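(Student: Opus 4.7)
The plan is to mimic the strategy of Theorem~\ref{thm:BPoned13}, but taking into account that in $\BPtwo$ the element $\bar{v}_2$ is represented by $\bar{r}_1^3+\bar{r}_3+\gamma\bar{r}_3$ rather than just $\bar{r}_1^3$, so the two extra summands $\rthree$ and $\grthree$ will contribute an additional transfer term to the target.

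First I will invoke Theorem~\ref{thm:NormFormula} applied to the $C_2$-equivariant $d_7$-differential
\[
d_7(u_{4\sigma_2})=\bar{v}_2\,a_{7\sigma_2}=(\bar{r}_1^3+\bar{r}_3+\gamma\bar{r}_3)\,a_{7\sigma_2}
\]
in $C_2\text{-}\SliceSS(i_{C_2}^*\BPtwo)$. With $r=7$, this predicts a $d_{13}$-differential in the $C_4$-slice spectral sequence, which after multiplication by the permanent cycle $u_{4\sigma}$ reads
\[
d_{13}(u_{4\lambda}a_\sigma)=u_{4\sigma}\cdot N_{C_2}^{C_4}\!\bigl((\bar{r}_1^3+\bar{r}_3+\gamma\bar{r}_3)\,a_{7\sigma_2}\bigr).
\]
Before using this, I need to check that both source and target survive to $E_{13}$. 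The source is a $d_3$-cycle because $u_{4\lambda}=(u_{2\lambda})^2$ and $u_{2\lambda}$ is a $d_3$-cycle; naturality under $\BPtwo\to\BPone$ (where $u_{4\lambda}a_\sigma$ supports exactly $d_{13}$) together with our complete list of $d_5$, $d_7$, $d_{11}$-differentials rules out any shorter differential. For the target, it suffices to observe that multiplying by the permanent cycle $\done^5 u_{4\sigma}a_\lambda$ lands it in bidegree $(16,16)$ on the line of slope $1$, where the class is nonzero on $E_{13}$ exactly as in the $\BPone$ case.

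The main computational task is then to expand $N_{C_2}^{C_4}(\bar{r}_1^3+\bar{r}_3+\gamma\bar{r}_3)$. I will use the fact that for $N:=N_{C_2}^{C_4}$ and $C_2$-classes $a,b$ one has $N(a+b)=N(a)+N(b)+tr(a\cdot\gamma b)$, together with the identities $N(\bar{r}_1)=\done$, $res(\sthree)=\rthree+\grthree$, and the Frobenius reciprocity $tr(x\cdot res(y))=tr(x)\cdot y$. Expanding,
\begin{align*}
N(\bar{r}_1^3+(\rthree+\grthree)) &= \done^3 + N(\sthree) + tr(\bar{r}_1^3\cdot\gamma(\rthree+\grthree))\\
&= \done^3 + N(\sthree) + \sone^3\sthree,
\end{align*}
and similarly $N(\sthree)=N(\rthree+\grthree)=2N(\rthree)+tr(\rthree^2)$. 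Multiplying through by $u_{4\sigma}a_{7\lambda}$ the terms $\sone^3\sthree$ and $2N(\rthree)$ both vanish on the $E_{13}$-page: $\sone\,a_\lambda=0$ after the $d_3$-differential $d_3(u_\lambda)=\sone\,a_\lambda a_{\sigma_2}$, and $2N(\rthree)\,a_{7\lambda}$ is a transfer-killed multiple of $a_\lambda^7$ by standard gold/transfer relations in the slice $E_2$-term. What remains is exactly
\[
u_{4\sigma}\bigl(\done^3 a_{7\lambda}+tr(\rthree^2\,a_{14\sigma_2})\bigr) = \done^3 u_{4\sigma}a_{7\lambda}+tr\bigl(\rthree^2 u_\sigma a_{14\sigma_2}\bigr),
\]
using $res(u_{4\sigma})=1$ together with $u_{4\sigma}\cdot a_{7\lambda}=u_\sigma a_{14\sigma_2}$ inside the transfer via the projection formula.

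The hard part will be the bookkeeping in this last computation: correctly tracking which of the ``extra'' norm and transfer summands survive on $E_{13}$ and which are killed by the earlier $d_3$, $d_5$, $d_7$ differentials. All other pieces (the norm-formula prediction, the survival of source and target, and the final identification) are formal consequences of Theorem~\ref{thm:NormFormula}, the Frobenius reciprocity, and the computations of Sections~\ref{sec:SliceSSBPone}--\ref{sec:higherDifferentialsI}.
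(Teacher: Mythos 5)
Your overall strategy is the same as the paper's: apply Theorem~\ref{thm:NormFormula} to the $C_2$-differential $d_7(u_{4\sigma_2})=(\rone^3+\rthree+\grthree)a_{7\sigma_2}$, multiply by $u_{4\sigma}$, identify the target, and verify it is nonzero on the $E_{13}$-page. But the two steps that carry the real content are not correct as written. In the identification of the target you discard $2N(\rthree)u_{4\sigma}a_{7\lambda}=2\dthree u_{4\sigma}a_{7\lambda}$ as zero; it is not. Since $4a_\lambda=0$ but $2a_{7\lambda}\neq 0$ and $\dthree u_{4\sigma}a_{7\lambda}$ is not a transfer, there is no ``transfer-killed'' reason for it to vanish, and in fact the remark immediately following the theorem in the paper shows $tr(\rthree^2u_\sigma a_{14\sigma_2})=2\dthree u_{4\sigma}a_{7\lambda}+tr(\rthree u_\sigma a_{3\sigma_2})tr(\rthree a_{3\sigma_2})a_{4\lambda}$, i.e.\ this class is (up to a further transfer term) exactly the nontrivial correction term in the answer. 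Likewise your disposal of the cross term $tr(\rone^3\cdot\gamma(\rthree+\grthree))$ via ``$\sone a_\lambda=0$'' does not apply: the $d_3$-relation kills $\sone a_\lambda a_{\sigma_2}$, and the class at hand is a transfer multiplied by $a_{7\lambda}$, not of that form. More generally your norm-of-a-sum expansion ignores the $u_\sigma$-twists and the sign $\gamma(\grthree)=-\rthree$, which is precisely where the $2$-primary bookkeeping lives, so the cancellations you assert are not established. The paper sidesteps all of this by computing $res(N(\rone^3+\rthree+\grthree))=(\rone^3+\rthree+\grthree)(\grone^3+\grthree-\rthree)$ directly; the cross terms then occur in pairs and die because twice a transfer of a positive-filtration class vanishes, leaving exactly $\done^3u_{4\sigma}a_{7\lambda}+tr(\rthree^2u_\sigma a_{14\sigma_2})$.

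The nonvanishing check also has a gap. Multiplying by $\done^5u_{4\sigma}a_\lambda$ and declaring the result at $(16,16)$ nonzero ``exactly as in the $\BPone$ case'' is not a proof at height $4$: the target is now a sum of $\done^8u_{8\sigma}a_{8\lambda}$ with a transfer class, and at $(16,16)$ of $\SliceSS(\BPtwo)$ there are extra $i_{C_2}^*\BPone$-truncation classes and extra predicted differentials of length at most $12$, so one must check that this particular sum survives. The paper instead multiplies by $\done^9u_{8\sigma}a_{5\lambda}$, lands at $(24,24)$, and computes the product explicitly to be $\done^{12}u_{12\sigma}a_{12\lambda}+2\dthree^4u_{12\sigma}a_{12\lambda}+\sthree^8$, which is visibly nonzero on $E_{13}$; note that the transfer summand contributes the terms $2\dthree^4u_{12\sigma}a_{12\lambda}+\sthree^8$, again showing it cannot be thrown away. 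So your proposal needs the genuine Mackey/Tambara bookkeeping (restriction of the norm, Frobenius reciprocity, $2\cdot tr=0$ only in positive filtration) and an explicit survival check of the target; as stated, both of these steps fail.
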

\begin{proof}
Applying Theorem~\ref{thm:NormFormula} to the $d_7$-differential $d_7(u_{4\sigma_2}) = (\rone^3+ \rthree + \grthree) a_{7\sigma_2}$ predicts the $d_{13}$-differential 
$$d_{13}\left(\frac{u_{4\lambda}}{u_{4\sigma}}a_\sigma \right) = N(\rone^3 + \rthree + \grthree)a_{7\lambda}.$$
In particular, the proof of Theorem~\ref{thm:NormFormula} shows that the class $N(\rone^3 + \rthree + \grthree)a_{7\lambda}$ must be killed on or before the $E_{13}$-page.  

We claim that the class $N(\rone^3 + \rthree + \grthree)u_{4\sigma}a_{7\lambda}$ must also be killed on or before the $E_{13}$-page.  This is because if $N(\rone^3 + \rthree + \grthree)a_{7\lambda}$ is killed by a differential of length $r < 13$, then since $u_{4\sigma}$ is a $d_r$-cycle, $N(\rone^3 + \rthree + \grthree)u_{4\sigma}a_{7\lambda}$ is killed by a differential of length at most $r$.  Otherwise, if $N(\rone^3 + \rthree + \grthree)a_{7\lambda}$ is killed by a differential of length exactly 13, then the only possible source is $\frac{u_{4\lambda}}{u_{4\sigma}} a_\sigma$ (see \cite[Figure~5]{ZengThesis}), and the predicted $d_{13}$-differential occurs.  Since $d_{13}(u_{4\sigma}) = \dthree a_{3\lambda}a_{7\sigma}$, applying the Leibniz rule yields the differential 
\begin{eqnarray*}
d_{13}(u_{4\lambda}a_\sigma) = d_{13}\left(u_{4\sigma} \cdot \frac{u_{4\lambda}}{u_{4\sigma}}a_\sigma\right) &=& d_{13}(u_{4\sigma}) \cdot \frac{u_{4\lambda}}{u_{4\sigma}}a_\sigma + u_{4\sigma} \cdot d_{13}\left(\frac{u_{4\lambda}}{u_{4\sigma}}a_\sigma \right) \\ 
&=& \dthree a_{3\lambda}a_{7\sigma} \cdot \frac{u_{4\lambda}}{u_{4\sigma}}a_\sigma+ u_{4\sigma} \cdot N(\rone^3 + \rthree + \grthree)a_{7\lambda} \\
&=& 0 + u_{4\sigma} \cdot N(\rone^3 + \rthree + \grthree)a_{7\lambda} \,\,\, \text{($u_{\lambda} a_{3\sigma} = 2u_{2\sigma}a_\lambda a_\sigma = 0$)}\\
&=& N(\rone^3 + \rthree + \grthree)u_{4\sigma}a_{7\lambda}. 
\end{eqnarray*}
To prove that this $d_{13}$-differential occurs, it suffices to show that the predicted target, $N(\rone^3 + \rthree + \grthree)u_{4\sigma}a_{7\lambda}$, is not zero on the $E_{13}$-page.  Once we show this, it would imply that this class must be killed by a $d_{13}$-differential (and the only possible source is $u_{4\lambda}a_\sigma$).

The restriction of $N(\rone^3 + \rthree + \grthree)$ is 
\begin{eqnarray*}
res(N(\rone^3 + \rthree + \grthree)) &=& (\rone^3 + \rthree + \grthree)(\grone^3 + \grthree - \rthree) \\ 
&=& \rone^3 \grone^3 + (\grthree \rone^3 + \rthree \grone^3 ) + (\grthree \grone^3 - \rthree \rone^3 ) - (\rthree^2 - \grthree^2) \\ 
&=& res(\done^3) +  (1 + \gamma) \grthree \rone^3u_{-3\sigma} +  (1 + \gamma)\rthree \grone^3u_{-3\sigma}- (1 + \gamma)\rthree^2 u_{-3\sigma}.
\end{eqnarray*}
Therefore we have
\begin{eqnarray*}
N(\rone^3 + \rthree + \grthree)u_{4\sigma}a_{7\lambda} &=& \done^3 u_{4\sigma}a_{7\lambda} + tr(\grthree \rone^3 u_{\sigma} a_{14\sigma_2}) + tr(\grthree \grone^3 u_{\sigma} a_{14\sigma_2}) - tr(\rthree^2u_{\sigma} a_{14\sigma_2} )\\ 
&=& \done^3 u_{4\sigma}a_{7\lambda} + tr(\grthree \rone^3 u_{\sigma} a_{14\sigma_2}) + tr(\grthree \rone^3 u_{\sigma} a_{14\sigma_2}) + tr(\rthree^2u_{\sigma} a_{14\sigma_2} )\\ 
&=&\done^3 u_{4\sigma}a_{7\lambda} + 2tr(\grthree \rone^3 u_{\sigma} a_{14\sigma_2}) + tr(\rthree^2u_{\sigma} a_{14\sigma_2} )\\ 
&=&\done^3 u_{4\sigma}a_{7\lambda} + tr(\rthree^2u_{\sigma} a_{14\sigma_2} ). 
\end{eqnarray*}
To show that this class is not zero, we multiply the predicted differential on $u_{4\lambda}a_\sigma$ by $\done^9u_{8\sigma} a_{5\lambda}$ (a permanent cycle) to bring it to the integer graded part of the slice spectral sequence: 
$$d_{13}(\done^9 u_{4\lambda}u_{8\sigma} a_{5\lambda} a_{\sigma}) = \done^9u_{8\sigma} a_{5\lambda} \cdot (\done^3 u_{4\sigma}a_{7\lambda} + tr(\rthree^2u_{\sigma} a_{14\sigma_2} )).$$
The source of this differential is at $(25, 11)$ and the target is at $(24, 24)$.  Once we verify that the target of this differential is not zero on the $E_{13}$-page, we can then conclude that $N(\rone^3 + \rthree + \grthree)u_{4\sigma}a_{7\lambda}$ is also not zero on the $E_{13}$-page.  Indeed, 
\begin{eqnarray*}
&&\done^9u_{8\sigma} a_{5\lambda} \cdot (\done^3 u_{4\sigma}a_{7\lambda} + tr(\rthree^2u_{\sigma} a_{14\sigma_2} )) \\
&=& \done^{12}u_{12\sigma} a_{12\lambda} + tr(res(\done^9 u_{8\sigma} a_{5\lambda})\rthree^2u_{\sigma} a_{14\sigma_2} ) \\ 
&=& \done^{12}u_{12\sigma} a_{12\lambda} + tr(\rthree^2 \rone^{9} \grone^{9} a_{24\sigma_2})\\ 
&=& \done^{12}u_{12\sigma} a_{12\lambda} + tr(\rthree^2 \rone^{18} a_{24\sigma_2})\\ 
&=& \done^{12}u_{12\sigma} a_{12\lambda} + tr(\rthree^2 (\rthree + \grthree)^6 a_{24\sigma_2}) \,\,\, \text{(in the $C_2$-spectral sequence, $\rone^3 = \rthree + \grthree$)} \\ 
&=& \done^{12}u_{12\sigma} a_{12\lambda} +  tr(\rthree^2(\rthree^6 + \rthree^4 \grthree^2 + \rthree^2 \grthree^4 + \grthree^6)a_{24\sigma_2})\\ 
&=& \done^{12}u_{12\sigma} a_{12\lambda} + tr(\rthree^4 \grthree^4 a_{24\sigma_2})+ tr(\rthree^8 a_{24\sigma_2}) + tr((\rthree^6 \grthree^2+ \rthree^2 \grthree^6)a_{24\sigma_2})   \\ 
&=& \done^{12}u_{12\sigma} a_{12\lambda} + tr(res(\dthree^4 u_{12\sigma}a_{12\lambda}))+ tr(\rthree^8 a_{24\sigma_2}) + tr(res(\dthree^2 u_{6\sigma}a_{6\lambda} tr(\rthree^4 a_{12\sigma_2}))  \\ 
&=& \done^{12}u_{12\sigma} a_{12\lambda} +2\dthree^4 u_{12\sigma}a_{12\lambda}+  \sthree^8 + 2 \dthree^2 u_{6\sigma}a_{6\lambda} tr(\rthree^4 a_{12\sigma_2}) \\ 
&=& \done^{12}u_{12\sigma} a_{12\lambda} +2\dthree^4 u_{12\sigma}a_{12\lambda}+  \sthree^8 ,
\end{eqnarray*}
which is not zero on the $E_{13}$-page.  Therefore, the normed $d_{13}$-differential on $u_{4\lambda}a_\sigma$ occurs. 
\end{proof}

\begin{rem}\rm \label{rem:Normd13Intuition}
The term $tr(\rthree^2 u_{\sigma}a_{14\sigma_2})$ in the expression of the target can also be rewritten as 
\begin{eqnarray*}
tr(\rthree^2 u_{\sigma}a_{14\sigma_2}) &=& tr(\rthree(\rthree + \grthree)u_{\sigma}a_{14\sigma_2} + \rthree \grthree u_{\sigma}a_{14\sigma_2}) \\
&=& tr(res(\dthree u_{4\sigma}a_{7\lambda})) + tr( res(tr(\rthree a_{3\sigma_2})a_{4\lambda}) \rthree u_\sigma a_{3\sigma_2}) \\
&=& 2\dthree u_{4\sigma}a_{7\lambda} + tr(\rthree u_\sigma a_{3\sigma_2}) tr(\rthree a_{3\sigma_2})a_{4\lambda}.
\end{eqnarray*}
Intuitively, this is saying that after the $d_{13}$-differentials, 
$$\done^3 = 2\dthree + \text{terms in $i_{C_2}^*\BPone$-truncations}.$$
This intuition will be useful in the proof of the next theorem.  
\end{rem}

\begin{thm}\label{thm:NormedDiffd29}
In the $C_4$-slice spectral sequence of $\BPtwo$, the class $u_{8\lambda}a_\sigma$ supports the $d_{29}$-differential 
$$d_{29}(u_{8\lambda}a_\sigma) = \dthree^2 \done u_{8\sigma}a_{15\lambda} + tr(\rthree^4 \rone^2 u_\sigma a_{15\lambda}).$$
\end{thm}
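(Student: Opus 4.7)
The plan is to mimic the proofs of Theorems~\ref{thm:NormedDiffd5} and \ref{thm:NormedDiffd13}, applying Theorem~\ref{thm:NormFormula} to the $C_2$-slice differential $d_{15}(u_{8\sigma_2}) = \bar v_3 a_{15\sigma_2}$, where $\bar v_3 = \rone(\rthree^2 + \rthree\grthree + \grthree^2)$. The norm formula predicts
$$d_{29}\!\left(a_\sigma \cdot \frac{u_{8\lambda}}{u_{8\sigma}}\right) = N(\rone) \cdot N(\rthree^2 + \rthree\grthree + \grthree^2) \cdot a_{15\lambda} = \done \cdot N(\rthree^2 + \rthree\grthree + \grthree^2) \cdot a_{15\lambda}$$
in the $C_4$-slice spectral sequence. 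Multiplying by $u_{8\sigma}$ and applying the Leibniz rule will then give the desired differential on $u_{8\lambda}a_\sigma$, provided the ``cross-term'' $d_{29}(u_{8\sigma}) \cdot u_{8\lambda}a_\sigma/u_{8\sigma}$ vanishes.

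To handle the cross-term, I will argue that $u_{8\sigma}$ is a $d_r$-cycle for all $r \le 28$: the only shorter differentials on $u_{8\sigma}$ arise via Leibniz from $d_5(u_{2\sigma}) = \done a_\lambda a_{3\sigma}$ and $d_{13}(u_{4\sigma}) = \dthree a_{3\lambda}a_{7\sigma}$, and for each the corresponding target picks up a factor of $2 a_\sigma = 0$ once we write $u_{8\sigma} = u_{2\sigma}^4 = u_{4\sigma}^2$. This reduces the problem to identifying $\done \cdot N(\rthree^2 + \rthree\grthree + \grthree^2) \cdot u_{8\sigma}a_{15\lambda}$ with the stated target.

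The key computation for identifying the target is
$$res\bigl(N(\rthree^2 + \rthree\grthree + \grthree^2)\bigr) = (\rthree^2 + \rthree\grthree + \grthree^2)(\rthree^2 - \rthree\grthree + \grthree^2) = \rthree^4 + \rthree^2\grthree^2 + \grthree^4,$$
using $\gamma(\rthree) = \grthree$ and $\gamma(\grthree) = -\rthree$. Combined with $\rthree^2\grthree^2 = res(\dthree^2)$ and $\rthree^4 + \grthree^4 = res(tr(\rthree^4))$, this shows that $N(\rthree^2 + \rthree\grthree + \grthree^2) = \dthree^2 + tr(\rthree^4)$ up to kernel-of-restriction terms, which are $a_\sigma$-divisible and get absorbed after multiplication by $u_{8\sigma}a_{15\lambda}$. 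Multiplying by $\done$, applying Frobenius reciprocity, and re-expressing the resulting transfer with the appropriate $u_\sigma$-degree shift (as was done for $tr(\rthree^2 u_\sigma a_{14\sigma_2})$ in the proof of Theorem~\ref{thm:NormedDiffd13}), together with the identification $\rone\grone \equiv \rone^2$ in filtration $\ge 3$ coming from $d_3(u_{2\sigma_2}) = (\rone + \grone)a_{3\sigma_2}$, yields
$$d_{29}(u_{8\lambda}a_\sigma) = \dthree^2 \done u_{8\sigma}a_{15\lambda} + tr(\rthree^4 \rone^2 u_\sigma a_{15\lambda}).$$

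Finally, I will verify that the target is nonzero on the $E_{29}$-page by multiplying the differential by a suitable permanent cycle (the analog of $\done^9 u_{4\lambda}u_{8\sigma}a_{5\lambda}a_\sigma$ used in Theorem~\ref{thm:NormedDiffd13}) to translate the differential into the integer-graded region, where the target becomes a product of $\done$-powers and classes whose non-vanishing can be read off from the $C_2$-slice spectral sequence in Section~\ref{sec:C2BPtwoSliceSS}. I expect the target identification to be the main obstacle: unlike $N(\rone^3 + \rthree + \grthree)$, which has a transparent $\done^3$ summand coming from $N(\rone^3)$, the element $N(\rthree^2 + \rthree\grthree + \grthree^2)$ is not a sum of norms of simpler classes, so extracting the two clean summands $\dthree^2 \done$ and $tr(\rthree^4 \rone^2 u_\sigma)$ requires delicate bookkeeping of signs coming from $\gamma(\grthree) = -\rthree$, of $u_\sigma$-shifts on transfer classes, and of the kernel-of-restriction indeterminacy.
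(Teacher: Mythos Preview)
Your approach is essentially the paper's: apply Theorem~\ref{thm:NormFormula} to the $C_2$-differential $d_{15}(u_{8\sigma_2}) = \bar v_3\, a_{15\sigma_2}$, multiply by $u_{8\sigma}$ (which the paper simply cites as a permanent cycle), and identify the target by computing the restriction of the norm. Your factorization $N(\bar v_3) = \done \cdot N(\rthree^2 + \rthree\grthree + \grthree^2)$ is a cosmetic variant of the paper's direct computation of $res(N(\bar v_3))$; both land on the stated formula.

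Where your proposal falls short is the non-vanishing check, which you underestimate. The paper multiplies by $\dthree^5\done^2 u_{16\sigma}a_{9\lambda}$ to reach bidegree $(48,48)$, where the target unpacks to $\dthree^7\done^3 u_{24\sigma}a_{24\lambda} + \dthree^5\sthree^6 + \dthree^7\sthree^2$. Showing this is nonzero on $E_{29}$ is \emph{not} something one reads off the $C_2$-spectral sequence: it requires first using the $d_{13}$-differential of Theorem~\ref{thm:NormedDiffd13} (multiplied by $\dthree^7 u_{20\sigma}a_{17\lambda}$) to produce the relation $\dthree^7\done^3 u_{24\sigma}a_{24\lambda} + \dthree^7\sthree^2 = 0$ after $E_{13}$, and then a transferred $C_2$-$d_{15}$-differential to identify the surviving term $\dthree^5\sthree^6$ with $2\dthree^8 u_{24\sigma}a_{24\lambda}$, which is visibly nonzero. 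So the main obstacle is not extracting the target formula but this chain of $C_4$-level identifications on later pages.
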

\begin{proof}
Applying Theorem~\ref{thm:NormFormula} to the $d_{15}$-differential ${d_{15}(u_{8\sigma_2}) = \rone(\rthree^2 + \rthree \grthree + \grthree^2) a_{15\sigma_2}}$ predicts the $d_{29}$-differential 
$$d_{29}\left(\frac{u_{8\lambda}}{u_{8\sigma}}a_\sigma \right) = N(\rone (\rthree^2 + \rthree \grthree + \grthree^2))a_{15\lambda}.$$
In particular, the proof of Theorem~\ref{thm:NormFormula} shows that the class ${N(\rone (\rthree^2 + \rthree \grthree + \grthree^2))a_{15\lambda}}$ must be killed on or before the $E_{29}$-page.  

We claim that the class $N(\rone (\rthree^2 + \rthree \grthree + \grthree^2))u_{8\sigma}a_{15\lambda}$ must also be killed on or before the $E_{29}$-page.  This is because if $N(\rone (\rthree^2 + \rthree \grthree + \grthree^2))a_{15\lambda}$ is killed by a differential of length $r < 29$, then since $u_{8\sigma}$ is a $d_r$-cycle, ${N(\rone (\rthree^2 + \rthree \grthree + \grthree^2))u_{8\sigma}a_{15\lambda}}$ is killed by a differential of length at most $r$.  Otherwise, if ${N(\rone (\rthree^2 + \rthree \grthree + \grthree^2))a_{15\lambda}}$ is killed by a differential of length exactly 29, then the only possible source is $\frac{u_{8\lambda}}{u_{8\sigma}} a_\sigma$ (see \cite[Figure~5]{ZengThesis}), and the predicted $d_{29}$-differential occurs.  Since $d_{29}(u_{8\sigma}) = 0$, applying the Leibniz rule yields the $d_{29}$-differential 
$$d_{29}(u_{8\lambda}a_\sigma) = N(\rone (\rthree^2 + \rthree \grthree + \grthree^2))u_{8\sigma}a_{15\lambda}.$$  

To prove that this $d_{29}$-differential on $u_{8\lambda}a_\sigma$ exists, it suffices to show that the predicted target, ${N(\rone (\rthree^2 + \rthree \grthree + \grthree^2))u_{8\sigma}a_{15\lambda}}$, is not zero on the $E_{29}$-page.  Once we show this, it would imply that this class must be killed by a $d_{29}$-differential (and the only possible source is $u_{8\lambda}a_\sigma$). 

The restriction of $N(\rone (\rthree^2 + \rthree \grthree + \grthree^2))$ is 
\begin{eqnarray*}
res(N(\rone (\rthree^2 + \rthree \grthree + \grthree^2))) &=& \rone (\rthree^2 + \rthree \grthree + \grthree^2) \cdot \grone (\grthree^2 -\rthree \grthree + \rthree^2) \\ 
&=& (\rthree \grthree)^2 \rone \grone + (\rthree^4 + \grthree^4) \rone \grone \\ 
&=& res(\dthree^2 \done) + (1+\gamma)\rthree^4 \rone \grone u_{-7\sigma}. 
\end{eqnarray*}
Therefore we have
\begin{eqnarray*}
{N(\rone (\rthree^2 + \rthree \grthree + \grthree^2))u_{8\sigma}a_{15\lambda}}&=&\dthree^2 \done u_{8\sigma}a_{15\lambda} + tr(\rthree^4 \rone \grone u_{-7\sigma})u_{8\sigma}a_{15\lambda} \\
&=& \dthree^2 \done u_{8\sigma}a_{15\lambda} + tr(\rthree^4 \rone^2 u_\sigma a_{30\sigma_2}). 
\end{eqnarray*}
To show that this class is not zero, we will multiply the predicted differential by $\dthree^5\done^2 u_{16\sigma}a_{9\lambda}$ (a permanent cycle) to bring it to the integer graded part of the spectral sequence:
$$d_{29}(\dthree^5 \done^2 u_{8\lambda} u_{16\sigma} a_{9\lambda}a_\sigma) =  \dthree^7 \done^3 u_{24\sigma} a_{24\lambda} + \dthree^5 \done^2 u_{16\sigma} a_{9\lambda} tr(\rthree^4 \rone^2 u_\sigma a_{30\sigma_2}).$$
The source of this differential is at $(49, 19)$ and the target is at $(48, 48)$ (see Figure~\ref{fig:NormedDiffd29d61}).  Once we verify that the target of this new differential is not zero on the $E_{29}$-page, we can then conclude that the original target, ${N(\rone (\rthree^2 + \rthree \grthree + \grthree^2))u_{8\sigma}a_{15\lambda}}$, is also not zero on the $E_{29}$-page.

The new target is equal to 
\begin{eqnarray*}
&&\dthree^7 \done^3 u_{24\sigma} a_{24\lambda} + \dthree^5 \done^2 u_{16\sigma} a_{9\lambda} tr(\rthree^4 \rone^2 u_\sigma a_{30\sigma_2})  \\
&=& \dthree^7 \done^3 u_{24\sigma} a_{24\lambda} +  tr((\rthree \grthree)^5\rthree^4 \rone^4 \grone^2 a_{48\sigma_2})\\
&=& \dthree^7 \done^3 u_{24\sigma} a_{24\lambda} +  tr((\rthree \grthree)^5\rthree^4 \rone^6 a_{48\sigma_2})\\
&=& \dthree^7 \done^3 u_{24\sigma} a_{24\lambda} + tr((\rthree \grthree)^5\rthree^4 (\rthree + \grthree)^2 a_{48\sigma_2}) \, \, \, \text{(in the $C_2$-spectral sequence, $\rone^3 = \rthree + \grthree$)}\\
&=& \dthree^7 \done^3 u_{24\sigma} a_{24\lambda} + tr((\rthree \grthree)^5\rthree^6 a_{18\sigma_2}) + tr((\rthree \grthree)^5 \rthree^4 \grthree^2 a_{48\sigma_2})\\
&=& \dthree^7 \done^3 u_{24\sigma} a_{24\lambda} + \dthree^5 \sthree^6 + \dthree^7 \sthree^2. \\
\end{eqnarray*}
To further simplify this, take the $d_{13}$-differential on $u_{4\lambda}a_\sigma$ and multiply it by $\dthree^7u_{20\sigma}a_{17\lambda}$.  This produces the $d_{13}$-differential 
\begin{eqnarray*}
d_{13}(\dthree^7 u_{4\lambda}u_{20\sigma}a_{17\lambda}a_\sigma) &=& \dthree^7u_{20\sigma}a_{17\lambda}(\done^3 u_{4\sigma}a_{7\lambda} + tr(\rthree^2u_{\sigma} a_{14\sigma_2} ) ) \\ 
&=& \dthree^7 \done^3 u_{24\sigma}a_{24\lambda} + tr((\rthree \grthree)^7 \rthree^2 a_{48\sigma_2})\\ 
&=& \dthree^7 \done^3 u_{24\sigma}a_{24\lambda} + \dthree^7 \sthree^2.
\end{eqnarray*}
This is a differential whose source is at $(49, 35)$ and target is at $(48, 48)$.  It introduces the relation $\dthree^7 \done^3 u_{24\sigma}a_{24\lambda} + \dthree^7 \sthree^2 = 0$ after the $E_{13}$-page.  Therefore, the new target can be identified with $\dthree^5 \sthree^6 = tr((\rthree \grthree)^5 \sthree^6 a_{48\sigma_2})$.  

We will now show that $\dthree^5 \sthree^6 \neq 0$ on the $E_{29}$-page.  Recall that in the $C_2$-slice spectral sequence, we have the $d_{15}$-differential 
\begin{eqnarray*}
d_{15}((\rthree \grthree)^5 \rthree^3 \rone^2 u_{8\sigma_2}a_{33\sigma_2}) &=& (\rthree \grthree)^5 \rthree^3 \rone^2 \cdot \rone (\rthree^2 + \rthree \grthree + \grthree^2) a_{48\sigma_2} \\
&=& (\rthree \grthree)^5 \rthree^3 \cdot (\rthree + \grthree)(\rthree^2 + \rthree \grthree + \grthree^2) a_{48\sigma_2} \\
&=& (\rthree \grthree)^5 \rthree^3(\rthree^3 + \grthree^3) a_{48\sigma_2} \\ 
&=& (\rthree \grthree)^5 \rthree^6 a_{48\sigma_2} + (\rthree \grthree)^8a_{48\sigma_2}.
\end{eqnarray*}
Applying the transfer to this differential yields the $d_{15}$-differential 
$$d_{15}(\dthree^5 \sthree^3 \rone^2) = \dthree^5 \sthree^6 + 2 \dthree^8 u_{24\sigma} a_{24\lambda}$$
in the $C_4$-spectral sequence (cf. Proposition~\ref{prop:DthreeSthreeRelations}).  Therefore, the new target of our normed $d_{29}$-differential can be identified with the class $2 \dthree^8 u_{24\sigma} a_{24\lambda}$ on the $E_{29}$-page, which is not zero.  It follows that the original target, ${N(\rone (\rthree^2 + \rthree \grthree + \grthree^2))u_{8\sigma}a_{15\lambda}}$, is also not zero on the $E_{29}$-page.  This completes the proof of the theorem.  
\end{proof}

\begin{thm}\label{thm:NormedDiffd61}
In the $C_4$-slice spectral sequence of $\BPtwo$, the class $u_{16\lambda}a_\sigma$ supports the $d_{61}$-differential 
$$d_{61}(u_{16\lambda}a_\sigma) = \dthree^5 u_{16\sigma}a_{31\lambda}.$$
\end{thm}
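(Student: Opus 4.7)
The plan is to follow precisely the template of Theorems~\ref{thm:NormedDiffd5}, \ref{thm:NormedDiffd13}, and \ref{thm:NormedDiffd29}: apply Theorem~\ref{thm:NormFormula} to the top $C_2$-slice differential
$$d_{31}(u_{16\sigma_2}) \;=\; \rthree^4\grthree\,a_{31\sigma_2}$$
to obtain a predicted $d_{61}$-differential
$$d_{61}\!\left(\tfrac{u_{16\lambda}}{u_{16\sigma}}\,a_\sigma\right) \;=\; N(\rthree^4\grthree)\,a_{31\lambda}$$
in $C_4\text{-}\SliceSS(\BPtwo)$. Multiplying by $u_{16\sigma}$ (which is a $d_{61}$-cycle, since $u_{8\sigma}$ is a permanent cycle by the HHR Periodicity Theorem, and any differential on $u_{16\sigma}$ carries a positive odd power of $a_\sigma$ that, via $2a_\sigma=0$, is annihilated by the $a_\sigma$ already present in the source) upgrades this to the integer-graded prediction
$$d_{61}(u_{16\lambda}a_\sigma) \;=\; N(\rthree^4\grthree)\,u_{16\sigma}\,a_{31\lambda}.$$

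Next I would identify $N(\rthree^4\grthree)$ via restriction. Using the Weyl action $\gamma\rthree=\grthree$ and $\gamma\grthree=-\rthree$,
$$res\bigl(N(\rthree^4\grthree)\bigr) \;=\; \rthree^4\grthree\cdot\gamma(\rthree^4\grthree) \;=\; -(\rthree\grthree)^5 \;=\; -res(\dthree^5).$$
Since the kernel of $res$ consists of transferred classes, this gives $N(\rthree^4\grthree) \equiv \dthree^5$ modulo transfers (the sign is irrelevant in positive filtration, where $2=tr(1)$). So the predicted target has the shape $\dthree^5\,u_{16\sigma}\,a_{31\lambda} + tr(\xi)\,u_{16\sigma}\,a_{31\lambda}$, and the theorem amounts to the claim that the correction $tr(\xi)\,u_{16\sigma}\,a_{31\lambda}$ already dies before page $61$.

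The main obstacle is therefore twofold: (a) showing the transfer correction is zero on the $E_{61}$-page, and (b) confirming that the leading term $\dthree^5\,u_{16\sigma}\,a_{31\lambda}$ itself survives to $E_{61}$. For (a), I would expand the correction via Frobenius reciprocity exactly as in the proofs of Theorems~\ref{thm:NormedDiffd13} and \ref{thm:NormedDiffd29} (where $\done^3$ was successively rewritten in terms of $\dthree$ and transferred $i_{C_2}^*\BPone$-truncation classes) and then iteratively collapse the resulting transferred monomials using the $d_3$-, $d_7$-, and $d_{15}$-differentials between $i_{C_2}^*\BPone$-truncation classes from Subsection~\ref{subsection:inducedC2Differentials}, together with the relations in Proposition~\ref{prop:DthreeSthreeRelations}; here I expect each such transferred term to land either in the kernel of a shorter established differential (e.g. $\sthree^3\rone = 0$ from Proposition~\ref{prop:DthreeSthreeRelations}(1)) or in a bidegree above the vanishing line.

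For (b), the natural tactic is to multiply the predicted differential by a well-chosen permanent cycle (such as a power of $\done$ or of $u_{8\sigma}$) to descend to an integer-graded bidegree where the $E_{61}$-page can be enumerated explicitly. The Vanishing Theorem (Theorem~\ref{thm:aboveFiltration61die}) then constrains the relevant classes, and the complete inventory of shorter differentials established in Sections~\ref{sec:SliceSSBPone}--\ref{sec:higherDifferentialsI} and \ref{sec:Norm} lets one rule out, one by one, the candidate $d_r$-differentials ($r<61$) that could have killed $\dthree^5\,u_{16\sigma}\,a_{31\lambda}$. I anticipate the survival check in (b) to be the most delicate step, because the target sits on the horizontal vanishing line of filtration $61$ (this being the very place where the spectral sequence terminates by Theorem~\ref{thm:MainTheorem2}), so the bookkeeping has to line up exactly with the vanishing-line statement.
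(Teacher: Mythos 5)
The first half of your plan coincides with the paper's proof: apply Theorem~\ref{thm:NormFormula} to $d_{31}(u_{16\sigma_2})=\rthree^4\grthree\,a_{31\sigma_2}$, and multiply the resulting prediction by $u_{16\sigma}$. But your step (a) is a non-issue: unlike the $d_{13}$ and $d_{29}$ cases, where the $C_2$-class being normed was a \emph{sum} and the norm therefore produced transfer correction terms, here $\rthree^4\grthree$ is a single monomial, and the norm is multiplicative, so $N(\rthree^4\grthree)=N(\rthree)^4N(\grthree)=\dthree^5$ on the nose (up to a sign that is irrelevant in positive filtration). There is no transferred correction to dispose of, and the paper simply writes the target as $\dthree^5 a_{31\lambda}$; your restriction computation is consistent with this but the Frobenius-collapse subroutine you sketch is unnecessary.

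The genuine gap is in your step (b), which is the entire content of the paper's proof. First, your plan is circular: you invoke the Vanishing Theorem (Theorem~\ref{thm:aboveFiltration61die}), but its proof rests precisely on the fact that $\alpha^2=\dthree^{16}u_{48\sigma}a_{48\lambda}$ at $(96,96)$ is killed by a $d_{61}$-differential, which is the integer-graded consequence of the very theorem you are proving; it cannot be used here. Second, ``rule out one by one the shorter differentials that could kill the target'' does not capture how survival is actually established. After multiplying the predicted differential by $\dthree^{11}u_{32\sigma}a_{17\lambda}$ (note that powers of $\done$ and $u_{8\sigma}$ alone cannot reach integer grading, since the coefficient of $\sigma$ stays odd), the target becomes $\alpha^2$ at $(96,96)$, and this spot is \emph{not} untouched by shorter differentials: Theorem~\ref{thm:NormedDiffd29} produces a $d_{29}$ killing $2\alpha^2$, so a blind enumeration would have to grapple with the fact that the bidegree is partially hit. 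The paper's argument is that, by degree reasons, the only way $\alpha^2$ itself could die before the $E_{61}$-page is by a $d_{31}$-differential, and this is excluded by multiplying with the permanent cycle $\dthree a_{3\lambda}a_{3\sigma}$: if $\alpha^2$ died by the $E_{31}$-page, so would $\dthree^{17}u_{48\sigma}a_{51\lambda}a_{3\sigma}$ at $(99,105)$, which for degree reasons cannot be killed by any differential of length at most $31$. Some such specific exclusion argument, not relying on the Vanishing Theorem, is what your proposal is missing.
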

\begin{proof}
Applying Theorem~\ref{thm:NormFormula} to the $d_{31}$-differential $d_{31}(u_{16\sigma_2}) = \rthree^4 \grthree a_{31\sigma_2}$ predicts the $d_{61}$-differential 
$$d_{61}\left(\frac{u_{16\lambda}}{u_{16\sigma}}a_\sigma \right) =  N(\rthree^4 \grthree) a_{31\lambda} = \dthree^5 a_{31\lambda}. $$
In particular, the proof of Theorem~\ref{thm:NormFormula} shows that the class $\dthree^5 a_{31\lambda}$ must be killed on or before the $E_{61}$-page.  

We claim that the class $\dthree^5 u_{16\sigma}a_{31\lambda}$ must also be killed on or before the $E_{61}$-page.  This is because if $\dthree^5 a_{31\lambda}$ is killed by a differential of length $r < 61$, then since $u_{16\sigma}$ is a $d_r$-cycle, $\dthree^5 u_{16\sigma}a_{31\lambda}$ is killed by a differential of length at most $r$.  Otherwise, if $\dthree^5 a_{31\lambda}$ is killed by a differential of length exactly 61, then the only possible source is $\frac{u_{16\lambda}}{u_{16\sigma}} a_\sigma$ (see \cite[Figure~5]{ZengThesis}), and the predicted $d_{61}$-differential occurs.  Since $d_{61}(u_{16\sigma}) = 0$, applying the Leibniz rule yields the $d_{61}$-differential 
$$d_{61}(u_{16\lambda}a_\sigma) = \dthree^5 u_{16\sigma}a_{31\lambda}.$$  

To prove that this $d_{61}$-differential on $u_{16\lambda}a_\sigma$ occurs, it suffices to show that the predicted target, $\dthree^5 u_{16\sigma}a_{31\lambda}$, is not zero on the $E_{61}$-page.  Once we show this, it would imply that this class must be killed by a $d_{61}$-differential (and the only possible source is $u_{16\lambda}a_\sigma$).

To show that the predicted target is not zero on the $E_{61}$-page, we will multiply the predicted $d_{61}$-differential on $u_{16\lambda}a_\sigma$ by $\dthree^{11}u_{32\sigma}a_{17\lambda}$ (a permanent cycle) to move it to the integer graded spectral sequence:
$$d_{61}(\dthree^{11}u_{16\lambda}u_{32\sigma}a_{17\lambda}a_\sigma) = \dthree^{16} u_{48\sigma}a_{48\lambda}. $$
This is a predicted $d_{61}$-differential whose source is at $(97, 65)$ and whose target is at $(96,96)$.  Once we verify that the target of this differential is not zero on the $E_{61}$-page, we can then conclude that the original target, $\dthree^5 u_{16\sigma}a_{31\lambda}$, is also not zero on the $E_{61}$-page.

Theorem~\ref{thm:NormedDiffd29} shows that there is a $d_{29}$-differential 
$$d_{29}(\dthree^{13} \done^2 u_{8\lambda} u_{40\sigma}a_{33\lambda} a_\sigma) = 2\dthree^{16} u_{48\sigma} a_{48\lambda}.$$
For degree reasons, if the class $\dthree^{16} u_{48\sigma}a_{48\lambda}$ is zero on the $E_{61}$-page, then the only possibility is for it to be killed by a $d_{31}$-differential (see Figure~\ref{fig:NormedDiffd29d61}).  This is impossible by considering the class $\dthree^{17}u_{48\sigma}a_{51\lambda} a_{3\sigma} = \dthree^{16} u_{48\sigma} a_{48\lambda} \cdot \dthree a_{3\lambda} a_{3\sigma}$ at $(99, 105)$.  If the class $\dthree^{16} u_{48\sigma}a_{48\lambda}$ becomes zero after the $d_{31}$-differentials, then $\dthree^{17}u_{48\sigma}a_{51\lambda} a_{3\sigma}$ will also become zero after the $E_{31}$-page (this is because $\dthree a_{3\lambda} a_{3\sigma}$ is a permanent cycle).  However, by degree reasons, $\dthree^{17}u_{48\sigma}a_{51\lambda} a_{3\sigma}$ cannot be killed by a differential of length at most 31.  

Therefore, the class $\dthree^{16} u_{48\sigma} a_{48\lambda}$ is not zero on the $E_{61}$-page.  It follows that $\dthree^5 u_{16\sigma}a_{31\lambda}$ is also not zero on the $E_{61}$-page and the normed $d_{61}$-differential on $u_{8\lambda}a_\sigma$ occurs. 
\end{proof}

\begin{figure}
\begin{center}
\makebox[\textwidth]{\includegraphics[trim={0cm 9cm 0cm 9cm}, clip, page = 1, scale = 0.45]{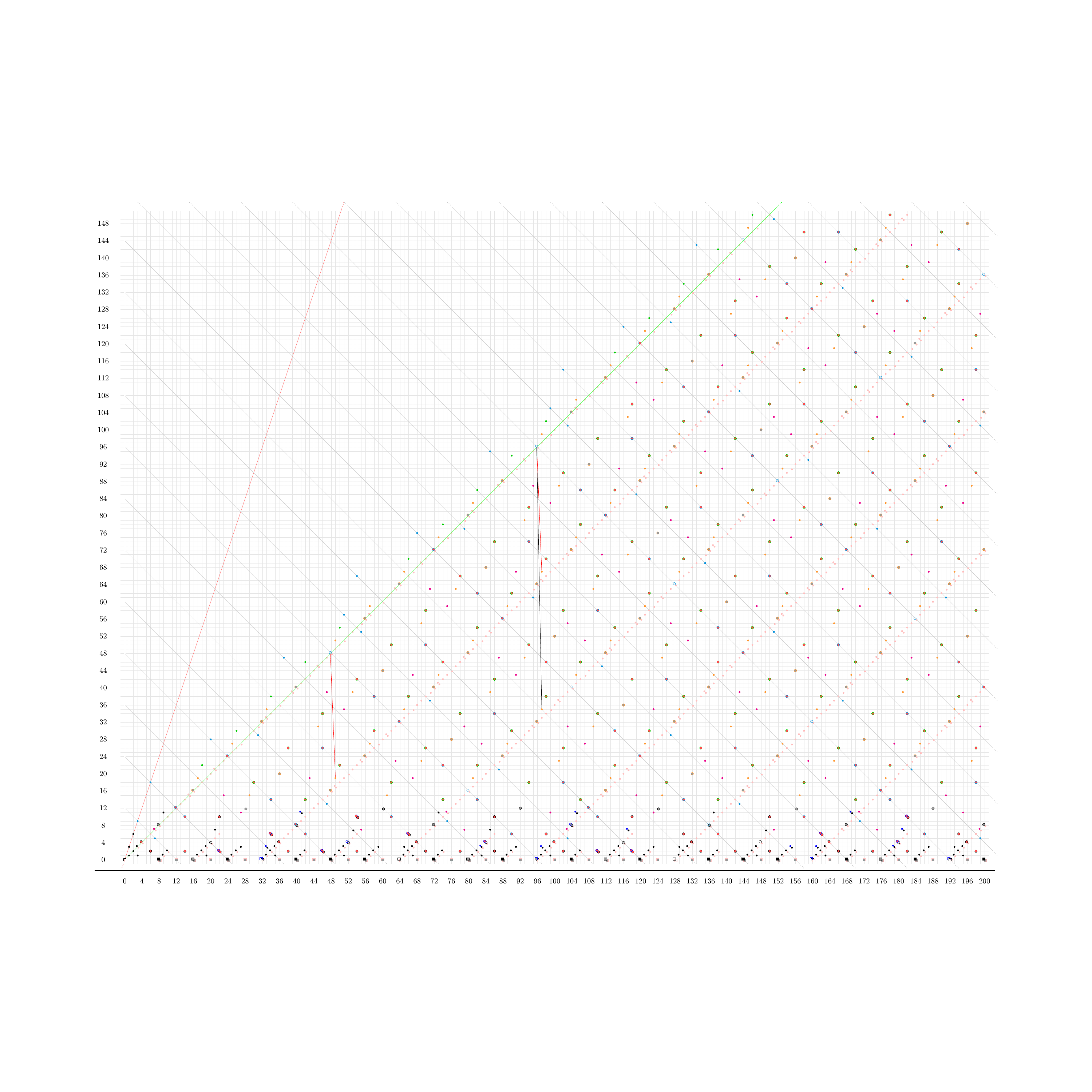}}
\end{center}
\begin{center}
\caption{Normed $d_{29}$ and $d_{61}$-differentials. }
\hfill
\label{fig:NormedDiffd29d61}
\end{center}
\end{figure}

\section{Higher Differentials III: The Vanishing Theorem} %
\subsection{$\alpha$ and $\alpha^2$}
Let $\alpha$ be the class $\dthree^8 u_{24\sigma} u_{24\lambda}$ at $(48, 48)$.  Theorem~\ref{thm:NormedDiffd29} and Theorem~\ref{thm:NormedDiffd61} show that 
\begin{enumerate}
\item The class $\alpha$ is a permanent cycle that survives to the $E_\infty$-page; 
\item The class $\alpha^2 = \dthree^{16} u_{48\sigma} a_{48\lambda}$ at $(96, 96)$ is killed by the $d_{61}$-differential 
$$d_{61}(\dthree^{11}u_{16\lambda}u_{32\sigma} a_{17\lambda}a_\sigma) = \alpha^2. $$
\end{enumerate}

In the $C_4$-slice spectral sequence of $\BPtwo$, $\alpha$ will be playing the role of $\epsilon$ in $C_4\text{-}\SliceSS(\BPone)$.  The following lemma is the higher height analogue of Lemma~\ref{lem:BPoneMainLem} and is proven using the exact same method.  

\begin{lem}\label{lem:BPtwoMainLem}
Let $d_r(x) = y$ be a nontrivial differential in $C_4$-$\SliceSS(\BPtwo)$.
\begin{enumerate}
\item The classes $\alpha x$ and $\alpha y$ are both nonzero on the $E_r$-page, and $d_r(\alpha x ) = \alpha y$.
\item If both $x$ and $y$ are divisible by $\alpha$ on the $E_2$-page, then $x/\alpha$ and $y/\alpha$ both survive to the $E_r$-page, and $d_r(x/\alpha) = y/\alpha$.  
\end{enumerate}
\end{lem}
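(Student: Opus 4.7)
The plan is to imitate the proof of Lemma~\ref{lem:BPoneMainLem} step by step, with the class $\alpha = \dthree^8 u_{24\sigma} u_{24\lambda}$ playing the role that $\epsilon$ played for $\BPone$. The first preparatory step is to establish the direct analogue of Fact~\ref{fact:epsilon}: multiplication by $\alpha$ is injective on the $E_2$-page in the appropriate range, its image is the region of classes of filtration $s \geq 48$ lying on or below the ray of slope $3$ starting at $(48,48)$, and on the $E_r$-page for $r$ sufficiently large (in particular after the identifications made by the $d_3$-differentials) every class in this region is an $\alpha$-multiple. These facts can be read off from the description of the slice filtration in Section~\ref{sec:slicesBPtwo}, since $\alpha$ is carried by the slice cell $\dthree^8$, and the slice cells above slope $3$ past degree $(48,48)$ are exhausted by products of $\dthree^8$ with earlier slice data.

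With this preparatory fact in hand, I would induct on $r$. For (1), given a nontrivial $d_r(x) = y$, first show $\alpha y$ survives to the $E_r$-page: it cannot support any shorter differential (otherwise $y$ would also support one, contradicting that $y$ is a target), and if $d_k(z) = \alpha y$ were a shorter differential killing $\alpha y$, then $z$ would necessarily be $\alpha$-divisible by the image description, say $z = \alpha z'$, so the inductive hypothesis applied to this $d_k$-differential would give $d_k(z') = y$, contradicting the nontriviality of $d_r(x) = y$. Next, $\alpha x$ must also survive: if it were killed by a shorter differential, the relation $\alpha x = 0$ would hold on the $E_r$-page, contradicting the Leibniz computation $d_r(\alpha x) = \alpha y \neq 0$, which uses that $\alpha$ is a permanent cycle by Theorem~\ref{thm:NormedDiffd29}. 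The Leibniz rule then concludes (1).

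For (2), I would run the symmetric argument. If $y/\alpha$ supported or were killed by a differential of length less than $r$, the inductive hypothesis would upgrade that to a corresponding short differential involving $y = \alpha \cdot (y/\alpha)$, which is impossible. Likewise, a shorter differential $d_k(x/\alpha) = w$ with $k < r$ would, by the inductive hypothesis, produce $d_k(x) = \alpha w$, contradicting $d_r(x) = y$; hence $x/\alpha$ survives to the $E_r$-page and the Leibniz rule gives $d_r(x/\alpha) = y/\alpha$. The main obstacle I expect is the preparatory step: unlike the $\BPone$ setting, the $E_2$-page of $\SliceSS(\BPtwo)$ mixes $\BPone$-truncations and $i_{C_2}^*\BPone$-truncations, so carefully verifying that every class of filtration $\geq 48$ on or below the slope-$3$ ray from $(48,48)$ is genuinely an $\alpha$-multiple, and that this $\alpha$-divisor is unique, requires bookkeeping with the organization of slice cells from Section~\ref{sec:slicesBPtwo} rather than being a one-line consequence as for $\epsilon$.
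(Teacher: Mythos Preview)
Your inductive skeleton is exactly the paper's, but the handling of the ``preparatory step'' differs in a way that matters. You attempt to prove an analogue of Fact~\ref{fact:epsilon}: that after the $d_3$-identifications every class in the cone of filtration $\geq 48$ below the slope-$3$ ray from $(48,48)$ is an $\alpha$-multiple. For $\BPtwo$ this is almost certainly false on the $E_5$-page: the $i_{C_2}^*\BPone$-truncation cells $\dthree^i\sthree^j\cdots$ with $i<8$ populate that cone and are not obviously $\dthree^8$-divisible until many further identifications (through $d_7$, $d_{13}$, $d_{15}$) have taken place. You correctly flag this as the main obstacle, but you do not see the fix.

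The paper sidesteps the issue by enlarging the base case of the induction to $r\leq 15$, relying on the complete classification of all differentials of length $\leq 15$ carried out in Sections~\ref{sec:InducedDiffBPone}--\ref{sec:higherDifferentialsI}. With that base case in hand, the inductive step only ever needs to argue about a hypothetical $d_k(z)=\alpha y$ with $k>15$; such a $z$ lies on the $E_{16}$-page, on or under the line of slope~$1$, with filtration $\geq 48$, and on the $E_{16}$-page every such class \emph{is} $\alpha$-divisible. For $k\leq 15$ the divisibility of $z$ is checked directly from the explicit list of short differentials. So the paper never needs the clean $E_5$-divisibility statement you are aiming for; it trades that for the (already done) bookkeeping of all differentials through $d_{15}$.
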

\begin{proof}
We will prove both statements by using induction on $r$, the length of the differential.  Both claims are true when $r \leq 15$.  

Now, suppose that both statements hold for all differentials of length smaller than $r$.  Given a nontrivial differential $d_r(x) = y$, we will first show that $\alpha y$ survives to the $E_r$-page.  

If $\alpha y$ supports a differential, then $y$ must support a differential as well.  This is a contradiction because $y$ is the target of a differential.  Therefore if $\alpha y$ does not survive to the $E_r$-page, it must be killed by a differential $d_k(z) = \alpha y$, where $k <r$.  

We claim that $z$ is divisible by $\alpha$.  If $k \leq 15$, then this is true because we have characterized completely all the differentials of length $\leq 15$, and in all the cases $z$ will be divisible by $\alpha$.  If $k > 15$, then $k$ will be divisible by $\alpha$ as well because it is a class on or under the line of slope 1 with filtration at least 48, and all such classes are divisible by $\alpha$ starting from the $E_{16}$-page.  

The inductive hypothesis, applied to the differential $d_k(z) = \alpha y$, shows that $d_k(z/\alpha) = y$.  This is a contradiction because $d_r(x) = y$ is a nontrivial $d_r$-differential.  Therefore, $\alpha y$ survives to the $E_r$-page.  

$$\begin{tikzcd}
& \alpha y &\\
y \ar[ru, dashed, dash, "\cdot \alpha"] && \\
&& \\ 
&&z \ar[luuu, swap, "d_k"]\\ 
&z/\alpha \ar[ru, dashed, dash, "\cdot \alpha"]\ar[luuu, swap, "d_k"]& \alpha x \\ 
& x \ar[luuuu, "d_r"] \ar[ru, dashed, dash, "\cdot \alpha"]& 
\end{tikzcd}$$

If $\alpha x$ does not survive to the $E_r$-page, then it must be killed by a shorter differential.  This shorter differential will introduce the relation $\epsilon x = 0$ on the $E_r$-page.  However, the Leibniz rule, applied to the differential $d_r(x) = y$, shows that 
$$d_r(\alpha x) = \alpha y \neq 0$$
on the $E_r$-page.  This is a contradiction.  Therefore, $\alpha x$ must survive to the $E_r$-page as well, and it supports the differential 
$$d_r(\alpha x) = \alpha y.$$ 
This proves (1). 

To prove (2), note that if $y/\alpha$ supports a differential of length smaller than $r$, then the induction hypothesis would imply that $y$ also supports a differential of the same length.  Similarly, if $y/\alpha$ is killed by a differential of length smaller than $r$, then the induction hypothesis would imply that $y$ is also killed a by a differential of the same length.  Both scenarios lead to contradictions.  Therefore, $y/\alpha$ survives to the $E_r$-page.  

We will now show that $x/\alpha$ also survives to the $E_r$-page.  Since $x$ supports a $d_r$-differential, $x/\alpha$ must also support a differential of length at most $r$.  Suppose that $d_k (x/\alpha) = z$, where $k < r$.  The induction hypothesis, applied to this $d_k$-differential, implies the existence of the differential $d_k(x) = \alpha z$.  This is a contradiction because $d_r(x) = y$.  
$$\begin{tikzcd}
&y& \\
y/\alpha \ar[ru, dashed, dash, "\cdot \alpha"]&\alpha z&  \\ 
z\ar[ru, dashed, dash, "\cdot \alpha"]&& \\
&&\\
&& x \ar[luuu, "d_k"]\ar[luuuu, swap, "d_r"]\\
&x/\alpha \ar[luuu, "d_k"]\ar[ru, dashed, dash, "\cdot \alpha"]&
\end{tikzcd}$$

It follows that $x/\alpha$ survives to the $E_r$-page, and it supports a nontrivial $d_r$-differential.  Since $y/\alpha$ also survives to the $E_r$-page, the Leibniz rule shows that 
$$d_r(x/\alpha) = y/\alpha,$$
as desired. 
\end{proof}

\begin{thm}[Vanishing Theorem] \label{thm:aboveFiltration61die}
Any class of the form $\alpha^2x$ on the $E_2$-page of $C_4$-$\SliceSS(\BPtwo)$ must die on or before the $E_{61}$-page.  
\end{thm}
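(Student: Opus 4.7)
The plan is to follow the template of Theorem~\ref{thm:BPoneMainThm}, replacing the roles of $\epsilon^2$ and $d_{13}$ with $\alpha^2$ and $d_{61}$. The two essential inputs are: (a) Theorem~\ref{thm:NormedDiffd61}, which tells us that $\alpha^2$ is killed by a $d_{61}$-differential via the formula $d_{61}(\dthree^{11} u_{16\lambda} u_{32\sigma} a_{17\lambda} a_\sigma) = \alpha^2$; and (b) the multiplicative control over $\alpha$ established in Lemma~\ref{lem:BPtwoMainLem}.

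First, suppose that $x$ supports some nontrivial differential $d_r(x) = y$ with $r \leq 61$. Applying Lemma~\ref{lem:BPtwoMainLem}(1) once gives the nontrivial $d_r$-differential $d_r(\alpha x) = \alpha y$, and applying it once more produces the nontrivial $d_r$-differential $d_r(\alpha^2 x) = \alpha^2 y$. Consequently $\alpha^2 x$ vanishes on the $E_{r+1}$-page, and hence certainly on the $E_{62}$-page.

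Otherwise, $x$ is a $d_r$-cycle for every $r \leq 61$, so $x$ survives to the $E_{62}$-page. By Theorem~\ref{thm:NormedDiffd61}, the class $\alpha^2$ is a $d_{61}$-boundary, so it represents zero on the $E_{62}$-page. The slice spectral sequence is multiplicative, so if $\alpha^2 x$ survives to $E_{62}$ at all, then its class there equals $\alpha^2|_{E_{62}} \cdot x|_{E_{62}} = 0 \cdot x|_{E_{62}} = 0$. If instead $\alpha^2 x$ dies earlier, there is nothing to prove. Hence in every case $\alpha^2 x$ has died by the $E_{61}$-page.

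The argument itself is short; essentially all of the technical work has been absorbed into Theorem~\ref{thm:NormedDiffd61} (the length-$61$ normed differential on $u_{16\lambda} a_\sigma$) and Lemma~\ref{lem:BPtwoMainLem} (the $\alpha$-multiplicativity of differentials). The only subtle bookkeeping point is the double application of Lemma~\ref{lem:BPtwoMainLem} in the first case: after establishing that $\alpha x$ and $\alpha y$ survive to $E_r$ and satisfy $d_r(\alpha x) = \alpha y$, one reapplies the lemma to this \emph{new} nontrivial $d_r$-differential to conclude that $\alpha(\alpha x) = \alpha^2 x$ and $\alpha(\alpha y) = \alpha^2 y$ also survive to $E_r$ and support $d_r(\alpha^2 x) = \alpha^2 y$ nontrivially. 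Everything else is a formal consequence of the multiplicative structure.
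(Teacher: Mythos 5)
Your proof is correct and follows essentially the same route as the paper: the case where $x$ supports a differential of length at most $61$ is handled by propagating it to $\alpha^2 x$ via Lemma~\ref{lem:BPtwoMainLem}(1) (your explicit double application is just a bookkeeping refinement of the paper's single citation of the lemma), and the case where $x$ is a $d_{61}$-cycle is handled by the fact that $\alpha^2$ is a $d_{61}$-boundary (Theorem~\ref{thm:NormedDiffd61}) together with multiplicativity, which is exactly the paper's argument that $\alpha^2 x$ must then be killed on or before the $E_{61}$-page.
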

\begin{proof}
If $x$ is a $d_{61}$-cycle, then the class $\alpha^2 x$ is a $d_{61}$-cycle as well.  Since $\alpha^2$ is killed by a $d_{61}$-differential, $\alpha^2 x$ must also be killed by a differential of length at most 61.  

Now suppose that the class $x$ is not a $d_{61}$-cycle and it supports the differential $d_r(x) = y$, where $r \leq 61$.  Applying Lemma~\ref{lem:BPtwoMainLem}(1), we deduce that the class $\alpha^2 x$ must support the nontrivial $d_r$-differential 
$$d_r(\alpha^2 x) = \alpha^2 y.$$
Therefore, it cannot survive past the $E_{61}$-page.  
\end{proof}

\subsection{Important permanent cycles}
\begin{figure}
\begin{center}
\makebox[\textwidth]{\includegraphics[trim={0cm 10cm 0cm 10cm}, clip, page = 1, scale = 0.23]{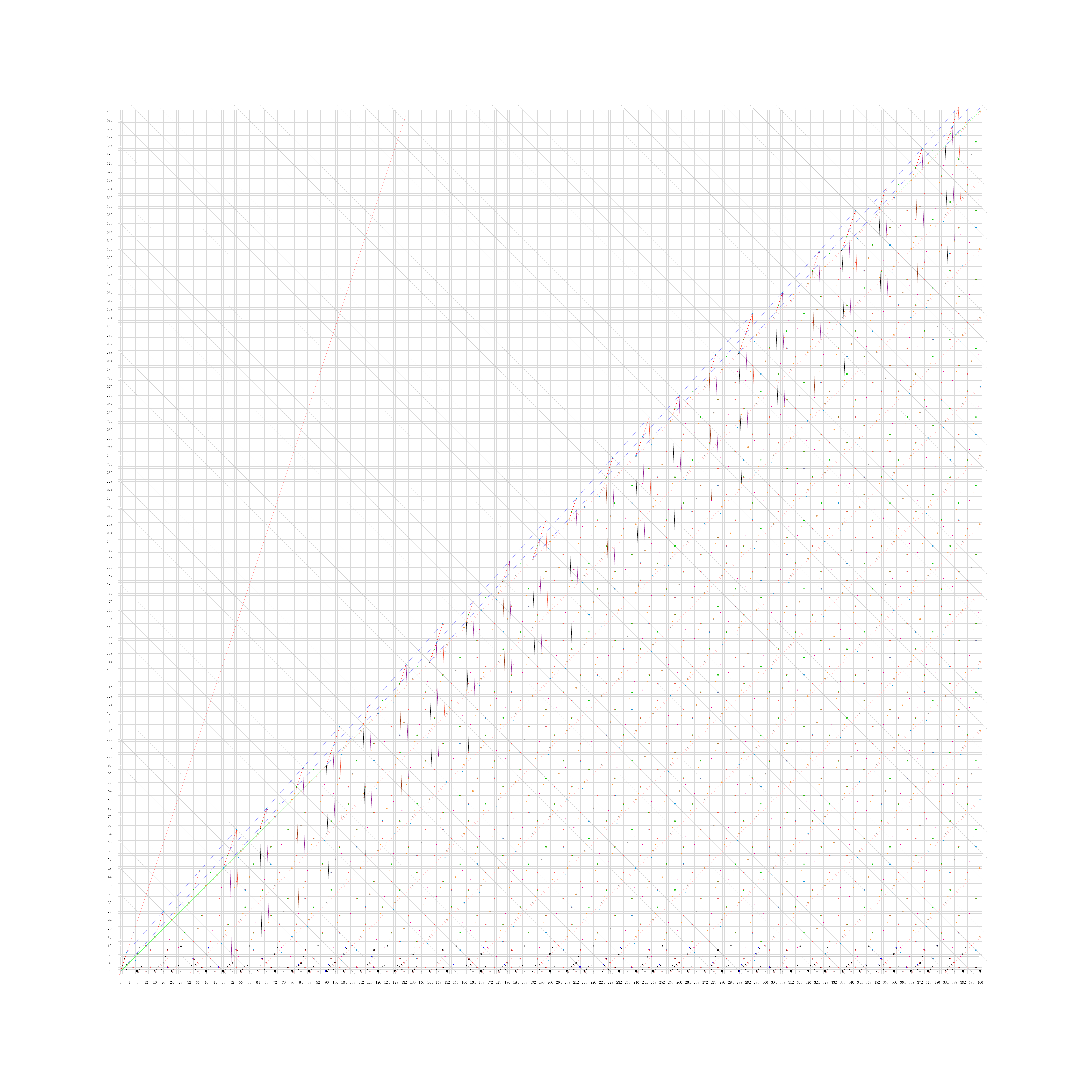}}
\end{center}
\begin{center}
\caption{$E_{16}$-page of $\SliceSS(\BPtwo)$.  The differentials shown are the long differentials that cross the vanishing line of slope 1.  The black differentials are the $d_{61}$-differentials; the {\color{RawSienna} sienna differentials} are the $d_{59}$-differentials; the {\color{Plum} plum differentials} are the $d_{53}$-differentials; and the {\color{RedOrange} red-orange differentials} are the $d_{43}$-differentials.} 
\hfill
\label{fig:E4C4E16page}
\end{center}
\end{figure}

\begin{prop}
The following classes are permanent cycles that survive to the $E_\infty$-page of $C_4\text{-}\SliceSS(\BPtwo)$.  
\begin{itemize}
\item $\eta':= \done a_\lambda a_\sigma$ at $(1,3)$;
\item $\xi:= \dthree a_{3\lambda} a_{3\sigma}$ at $(3, 9)$;
\item $\epsilon':= $ at $(8, 8)$; 
\item $\beta:= \dthree^3 u_{8\sigma} a_{9\lambda}a_\sigma$ at $(17, 19)$.
\end{itemize}
\end{prop}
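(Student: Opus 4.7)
The plan is to verify, class by class, that each of the four candidates (i) supports no nontrivial differential, and (ii) is not hit by any differential. Throughout, I will rely on the horizontal vanishing line at filtration $61$ established in Theorem~\ref{thm:aboveFiltration61die}, together with the complete description of the $d_r$-differentials for $r \leq 15$ from Sections~\ref{sec:InducedDiffBPone} and \ref{sec:higherDifferentialsI}, and the normed differentials on $u_{2^i\lambda}a_\sigma$ from Section~\ref{sec:Norm}.

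For $\eta'=\done a_\lambda a_\sigma$ at $(1,3)$ and $\xi = \dthree a_{3\lambda}a_{3\sigma}$ at $(3,9)$, the argument is essentially by degree. Any $d_r$-differential out of these classes would land at stem $t-s=0$ or $t-s=2$, in filtrations strictly above $3$ or $9$ respectively. Along the $t-s=0$ and $t-s=2$ columns the only nontrivial classes on the $E_2$-page are confined to very low filtration (the spectral sequence has a gap in these stems by the slice cell description of Section~\ref{sec:slicesBPtwo}), so these targets are simply zero. Dually, no class in the $(2,3-r)$ or $(4,9-r)$ bidegrees can map to $\eta'$ or $\xi$: either the putative source is in a bidegree with no classes, or it has already been accounted for in the $d_3, d_5, d_7$-analysis of Section~\ref{sec:InducedDiffBPone}.

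For $\epsilon'$ at $(8,8)$ (the natural lift to $\BPtwo$ of the class $\epsilon = \done^4 u_{4\sigma}a_{4\lambda}$ of $\SliceSS(\BPone)$), I will argue as follows. Natuality along the quotient map $\BPtwo\to\BPone$ shows $\epsilon'$ maps to the permanent cycle $\epsilon$, so if $\epsilon'$ supported a differential, its target would lie in the kernel of the quotient map, i.e.\ in an $i_{C_2}^*\BPone$-truncation. All such targets in the relevant bidegrees $(7,8+r)$ have already been identified with sources of $d_3, d_7, d_{15}$-differentials (Subsection~\ref{subsection:inducedC2Differentials}), so no room remains. That $\epsilon'$ is not hit follows from checking the finite list of possible sources at $(9, 8-r)$ for $r \geq 2$, all of which are either permanent cycles (such as $\eta'$-multiples) or supports of shorter known differentials by Sections~\ref{sec:InducedDiffBPone}–\ref{sec:higherDifferentialsI}.

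The last class, $\beta=\dthree^3 u_{8\sigma}a_{9\lambda}a_\sigma$ at $(17,19)$, is the main obstacle. Here I will use two inputs: first, $\beta$ is the target of the $d_{13}$-differential on $\dthree^2 u_{4\lambda}u_{6\sigma}a_{2\lambda}$ of Proposition~\ref{prop:BPtwod13(20,4)}, which forces $\beta$ to be a $d_{13}$-cycle. Second, I will appeal to the vanishing theorem: any hypothetical differential $d_r(\beta)=y$ with $r\geq 14$ would produce a target in filtration $\geq 33$, stem $16$, which by Theorem~\ref{thm:aboveFiltration61die} is of the form $\alpha^2 z$ only if $r$ is large enough, but $\beta$ is not divisible by $\alpha$ on the $E_2$-page, leading via Lemma~\ref{lem:BPtwoMainLem} to a contradiction with the uniqueness of the differentials on $\alpha^2$-multiples. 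To see $\beta$ is not hit, I will list sources at $(18, 19-r)$: they are multiples of the permanent cycles $\eta'$ and $\xi$ or classes already killed by known $d_7$- and $d_{11}$-differentials, and none can map to $\beta$. Once this case is dealt with, multiplication by $\beta$ and the other permanent cycles plays a crucial role in the subsequent determination of differentials in Sections~\ref{sec:HigherDifferentialsIV}--\ref{sec:HigherDifferentialsVI}.
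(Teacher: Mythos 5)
Your treatment of $\eta'$ and $\xi$ is fine (the paper dismisses these in one line), but the two cases carrying real content, $\epsilon'$ and $\beta$, are not actually proved. The decisive step in the paper is one you never take: for each class, degree reasons first pin down the unique length of a differential that could kill it ($d_7$ for $\epsilon'$, $d_{17}$ for $\beta$), and then the hypothetical differential is multiplied by the permanent cycle $\alpha = \dthree^8 u_{24\sigma}a_{24\lambda}$ at $(48,48)$ using Lemma~\ref{lem:BPtwoMainLem}(1): a killer of $\epsilon'$ would force $\alpha\epsilon'$ at $(56,56)$ to be killed on or before the $E_7$-page, and a killer of $\beta$ would force $\alpha\beta$ at $(65,67)$ to be killed on or before the $E_{17}$-page, both impossible by degree reasons. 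Your substitute — that the possible sources at $(9,8-r)$ and $(18,19-r)$ are ``permanent cycles or supports of shorter known differentials'' — is an unverified assertion, and it is not innocuous: at $(9,1)$ there are transfer classes that genuinely support $d_7$-differentials (transfers of differentials generated by $d_7(u_{4\sigma_2})$ in the $C_2$-spectral sequence), and ruling out that their targets, or the target of a $d_{17}$ out of $(18,2)$, involve $\epsilon'$ or $\beta$ requires either an explicit computation of those targets or the $\alpha$-multiplication argument; you supply neither.

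The $\beta$ paragraph also contains an outright error: the target of the $d_{13}$-differential of Proposition~\ref{prop:BPtwod13(20,4)} is $\dthree^3 u_\lambda u_{8\sigma}a_{8\lambda}a_\sigma$ at $(19,17)$, not $\beta = \dthree^3 u_{8\sigma}a_{9\lambda}a_\sigma$ at $(17,19)$; and if $\beta$ were the target of that differential it would die on the $E_{13}$-page, contradicting the very statement you are proving. The ensuing appeal to Theorem~\ref{thm:aboveFiltration61die} and Lemma~\ref{lem:BPtwoMainLem} to exclude differentials \emph{supported} by $\beta$ is both garbled and unnecessary: $\beta$, like the other three classes, is a product of permanent cycles ($\dthree$, $u_{8\sigma}$, $a_\lambda$, $a_\sigma$), so it is trivially a permanent cycle, and the only genuine issue is whether it is hit — exactly the step left unproved. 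A smaller caveat: the formula for $\epsilon'$ is blank in the statement (a typo in the paper), but its later use shows $\epsilon' = \dthree\done u_{4\sigma}a_{4\lambda}$, which maps to $0$ under the quotient $\BPtwo\to\BPone$; your naturality argument identifying its image with $\epsilon$ therefore does not apply, though in any case that argument only addresses the (trivial) permanent-cycle half of the claim.
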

\begin{proof}
All the classes are clearly permanent cycles.  It is also immediately clear that $\eta'$ and $\xi$ survive to the $E_\infty$-page.  

Suppose that $\epsilon'$ is killed by a differential.  For degree reasons, the length of that differential must be 7.  This implies that $\alpha \cdot \epsilon'$ at $(56, 56)$ must also be killed on or before the $E_7$-page.  This is impossible for degree reasons.  This shows that $\epsilon'$ survives to the $E_\infty$-page.  

Now, suppose that $\beta$ is killed by a differential.  For degree reasons, the length of that differential must be 17.  This implies that $\alpha \cdot \beta$ at $(65, 67)$ must also be killed on or before the $E_{17}$-page.  This is again impossible because of degree reasons. 
\end{proof}

In Figure~\ref{fig:E4C4E16page}, we have drawn some multiplications by $\xi$ ({\color{red} red structure lines}) and $\beta$ ({\color{blue} blue structure lines}).  These multiplications will be useful later when we prove long differentials that cross the vanishing line of slope 1.  

\begin{prop}\label{prop:gammaPowers}
Let $\gamma:=\dthree^2 u_{4\lambda}u_{6\sigma}a_{2\lambda}$ at $(20, 4)$.  Then 
\begin{enumerate}
\item $d_{13}(\gamma) = \dthree^3 u_{\lambda} u_{8\sigma} a_{8\lambda}a_\sigma$ ($d_{13}(20,4) = (19, 17)$) ; 
\item $d_{15}(\gamma^2) =\dthree^5 \sone u_{4\lambda}u_{15\sigma}a_{11\lambda}a_{\sigma_2}$ ($d_{15}(40, 8) = (39, 23)$); 
\item $d_{31}(\gamma^4) = \dthree^9 \sthree^3 = tr(\rthree^{12} \grthree^9 u_{16\sigma_2}a_{47\sigma_2})$ ($d_{31}(80, 16) = (79, 47)$); 
\item The class $\gamma^8 = \dthree^{16} u_{32\lambda}u_{48\sigma}a_{16\lambda}$ at $(160,32)$ is a permanent cycle.  
\end{enumerate}
\end{prop}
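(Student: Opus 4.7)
The plan is to treat the four claims in order, leveraging results already in hand. Parts (1) and (2) require no new argument: (1) is the $d_{13}$-differential established in Proposition~\ref{prop:BPtwod13(20,4)} and (2) is the $d_{15}$-differential established in Proposition~\ref{prop:d15(40,8)}.

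For part (3), the restriction-naturality template used in Proposition~\ref{prop:d15(40,8)} and Proposition~\ref{prop:d31(40,8)} breaks down because $res(\gamma^4) = \rthree^8 \grthree^8 u_{32\sigma_2} a_{16\sigma_2}$ contains the $C_2$-permanent cycle $u_{32\sigma_2}$, so the restriction is itself a permanent cycle in $C_2$-$\SliceSS(i_{C_2}^*\BPtwo)$ and provides no length bound on differentials off $\gamma^4$. I would proceed in two stages. First, verify $\gamma^4$ survives to the $E_{31}$-page: applying Leibniz to $\gamma^4 = (\gamma^2)^2$ gives $d_{13}(\gamma^4) = 4\gamma^3 d_{13}(\gamma) = 0$ (since $2 a_\sigma = 0$) and $d_{15}(\gamma^4) = 2\gamma^2 d_{15}(\gamma^2) = 0$ (since $d_{15}(\gamma^2)$ is a transfer class containing an $a_{\sigma_2}$-factor and hence is annihilated by $2$); intermediate $d_r$ for $17 \leq r \leq 29$ have targets that vanish for degree reasons. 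Second, identify $d_{31}(\gamma^4)$ by combining the $d_{31}$-differential on $2\gamma^2$ from Proposition~\ref{prop:d31(40,8)} with Frobenius reciprocity:
\begin{align*}
\gamma^2 \cdot \dthree^5 \sthree^3 a_{39\sigma_2}
&= tr\bigl(res(\dthree^4 u_{8\lambda} u_{12\sigma} a_{4\lambda}) \cdot \rthree^8 \grthree^5 a_{39\sigma_2}\bigr) \\
&= tr(\rthree^{12}\grthree^9 u_{16\sigma_2} a_{47\sigma_2}) = \dthree^9 \sthree^3.
\end{align*}
This gives the natural candidate target. Since $2\gamma^4 = tr(res(\gamma^4))$ is a $C_4$-permanent cycle we have $2 d_{31}(\gamma^4) = 0$, so $d_{31}(\gamma^4)$ is $2$-torsion, consistent with the candidate being a transfer of a class annihilated by $2$. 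To rule out $d_{31}(\gamma^4) = 0$, I would check, using Proposition~\ref{prop:DthreeSthreeRelations} and the $i_{C_2}^*\BPone$-truncation analysis of Section~\ref{sec:higherDifferentialsI}, that $\dthree^9 \sthree^3$ is a nonzero $d_{31}$-cycle on $E_{31}$ and is the unique $2$-torsion class in bidegree $(79, 47)$ that could serve as the target.

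For part (4), I would show every differential on $\gamma^8 = (\gamma^4)^2$ vanishes. By Leibniz, $d_r(\gamma^8) = 2\gamma^4 \cdot d_r(\gamma^4)$. For $r \leq 15$ this is zero since $\gamma^4$ is a $d_r$-cycle; for intermediate $r \in [17, 29]$ the targets vanish by degree. At the critical $r = 31$, part (3) yields
\[
d_{31}(\gamma^8) = 2 \gamma^4 \cdot \dthree^9 \sthree^3 = 0,
\]
because $\dthree^9 \sthree^3 = tr(\rthree^{12}\grthree^9 u_{16\sigma_2} a_{47\sigma_2})$ is $2$-torsion. For $r > 31$, I would use that $res(\gamma^8) = \rthree^{16} \grthree^{16} u_{64\sigma_2} a_{32\sigma_2}$ is a $C_2$-permanent cycle, so $2\gamma^8$ is a $C_4$-permanent cycle and any $d_r(\gamma^8)$ must be $2$-torsion; combining this with the Vanishing Theorem (Theorem~\ref{thm:aboveFiltration61die}) applied to $\alpha^2 \gamma^8$ rules out surviving long differentials.

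The main obstacle is the last step of part (3): the identification of $d_{31}(\gamma^4)$ is delicate because $\gamma^2$ is absent from the $E_{31}$-page, so the Leibniz identity $2\gamma^4 = (2\gamma^2)\gamma^2$ is only an $E_2$-level identity and cannot be transported directly to $E_{31}$. The identification therefore rests on the Frobenius computation above combined with a careful verification of nonvanishing on later pages.
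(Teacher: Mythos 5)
There is a genuine gap in part (3): nothing in your plan forces $\gamma^4$ to support a nonzero differential at all. Your Frobenius computation $\gamma^2\cdot\dthree^5\sthree^3 = \dthree^9\sthree^3$ only names a plausible target, and, as you yourself note, it cannot be fed into the Leibniz rule because $\gamma^2$ dies on the $E_{15}$-page; likewise, verifying that $\dthree^9\sthree^3$ is a nonzero $d_{31}$-cycle and is the unique $2$-torsion class in bidegree $(79,47)$ is perfectly consistent with $d_{31}(\gamma^4)=0$, i.e.\ with $\gamma^4$ being a permanent cycle. The missing idea, which is the heart of the paper's proof, is to multiply by the permanent cycle $\beta=\dthree^3u_{8\sigma}a_{9\lambda}a_\sigma$: the product $\gamma^4\beta$ is exactly the class $\dthree^{11}u_{16\lambda}u_{32\sigma}a_{17\lambda}a_\sigma$ at $(97,35)$ that supports the $d_{61}$-differential killing $\alpha^2$ (Theorem~\ref{thm:NormedDiffd61}), so by Leibniz $\gamma^4$ must itself support a differential of length at most $61$. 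One then lists the four possible targets ($\dthree^9\sthree^3$ and $\dthree^{10}\sthree$ at $(79,47)$, $\dthree^{13}u_\lambda u_{38\sigma}a_{38\lambda}a_\sigma$ at $(79,77)$, $\dthree^{12}\sthree^2\rone$ at $(79,79)$) and eliminates all but $\dthree^9\sthree^3$: the two classes $\dthree^{10}\sthree$ and $\dthree^{12}\sthree^2\rone$ are consumed by known $d_{31}$-differentials between $i_{C_2}^*\BPone$-truncation classes, and the class at $(79,77)$ is hit by a $d_{29}$ from $(80,48)$, which is itself forced by multiplying $(80,48)$ by $\beta$ and invoking the $d_{29}$ killing $2\dthree^{16}u_{48\sigma}a_{48\lambda}$. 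Without an argument of this kind your part (3) does not close.

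Part (4) has a related soft spot: your Leibniz identity $d_r(\gamma^8)=2\gamma^4 d_r(\gamma^4)$ is only available while $\gamma^4$ survives, hence only through $r=31$, and for $r>31$ the constraints you cite do not suffice --- knowing $d_r(\gamma^8)$ is $2$-torsion does not make it zero, and the Vanishing Theorem applied to $\alpha^2\gamma^8$ cannot rule out a differential on $\gamma^8$, since by Lemma~\ref{lem:BPtwoMainLem} such a differential would simply propagate to $\alpha^2\gamma^8$ and remain consistent with that theorem. The paper's argument is immediate and you should use it: $\gamma^8=\dthree^{16}u_{32\lambda}u_{48\sigma}a_{16\lambda}$ is a product of permanent cycles (with $u_{32\lambda}=N_{C_2}^{C_4}(u_{32\sigma_2})\cdot u_{32\sigma}$ and $u_{8\sigma}$ permanent), hence a permanent cycle. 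Parts (1) and (2) of your proposal are correct and agree with the paper.
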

\begin{proof}
(1) and (2) are Proposition~\ref{prop:BPtwod13(20,4)} and Propostion~\ref{prop:d15(40,8)}, respectively.  

To prove (3), note that $\gamma^4 \cdot \beta$ is the class $\dthree^{11}u_{16\lambda}u_{32\sigma}a_{17\lambda}a_\sigma$ at $(97, 35)$, which supports the $d_{61}$-differential killing $\alpha^2$.  Therefore $\gamma^4$ must support a differential of length at most 61.  The possible targets are the following classes:  
\begin{itemize}
\item $\dthree^9 \sthree^3$ at $(79, 47)$; 
\item $\dthree^{10}\sthree$ at $(79, 47)$;
\item $\dthree^{13}u_{\lambda}u_{38\sigma}a_{38\lambda}a_\sigma$ at $(79, 77)$;
\item $\dthree^{12}\sthree^2 \rone$ at $(79, 79)$.
\end{itemize}
We know all the $d_{31}$-differentials between $i_{C_2}^* \BPone$-truncation classes.  In particular, the class $\dthree^{10}\sthree$ at $(79, 47)$ supports the $d_{31}$-differential 
$$d_{31}(\dthree^{10}\sthree) = \dthree^{12}\sthree^2 \,\,\,(d_{31}(79,47) = (78, 78)),$$
and the class $\dthree^{12}\sthree^2 \rone$ at $(79, 79)$ is killed by the $d_{31}$-differential 
$$d_{31}(\dthree^{10}\sthree \rone) = \dthree^{12}\sthree^2 \rone \,\,\,(d_{31}(80,48) = (79, 79)).$$

Now, consider the class $\dthree^{10}\done^2 u_{8\lambda} u_{32\sigma} a_{24\lambda}$ at $(80, 48)$.  The product of this class with $\beta = \dthree^3u_{8\sigma}a_{9\lambda}a_\sigma$ is the class
$$\dthree^{10}\done^2 u_{8\lambda} u_{32\sigma} a_{24\lambda} \cdot \dthree^3u_{8\sigma}a_{9\lambda}a_\sigma = \dthree^{13} \done^2 u_{8\lambda}u_{40\sigma} a_{33\lambda}a_\sigma$$
at $(97, 67)$.  This class supports the $d_{29}$-differential 
$$d_{29}(\dthree^{13} \done^2 u_{8\lambda}u_{40\sigma} a_{33\lambda}a_\sigma) = 2\dthree^{16}u_{48\sigma}a_{48\lambda} \,\,\, (d_{29}(97, 67) = 2(96,96)).$$
Therefore, the class $\dthree^{10}\done^2 u_{8\lambda} u_{32\sigma} a_{24\lambda}$ must support a differential of length at most 29.  The only possibility is the class $\dthree^{13}u_{\lambda}u_{38\sigma}a_{38\lambda}a_\sigma$ at $(79, 77)$.  

It follows that the only possibility for the target of the differential supported by $\gamma^4$ is the class $\dthree^9 \sthree^3$ at $(79, 47)$.  The differential will be the $d_{31}$-differential that we claimed.  This proves (3).   

For (4), since the classes $\dthree^{16}$, $u_{32\lambda}$, $u_{48\sigma}$, and $a_{16\lambda}$ are all permanent cycles, their product is a permanent cycle as well.  
\end{proof}

Proposition~\ref{prop:gammaPowers} shows that whenever we have proved a $d_r$-differential of length $r < 31$, we can multiply that differential by $\alpha$ and $\gamma^4$ and use Lemma~\ref{lem:BPtwoMainLem} to deduce more $d_r$-differentials.  If the length of the differential is $r \geq 31$, then we can multiply that differential by $(48, 48)$ and $(160,32)$ and use Lemma~\ref{lem:BPtwoMainLem} to produce more $d_r$-differentials.  

\subsection{Long Differentials Crossing the Line of Slope 1}
\begin{thm}\label{thm:LongCrossingDifferentials}
The following differentials exist:
\begin{enumerate}
\item $d_{61}(\overline{\mathfrak{d}}_3^{11} u_{16\lambda} u_{32\sigma} a_{17\lambda} a_{\sigma}) = \alpha^2 $ ($d_{61}(97,35) =(96,96)$);
\item $d_{61}(2\overline{\mathfrak{d}}_3^{14} u_{15\lambda} u_{42\sigma} a_{27\lambda}) = \alpha^2\beta$ ($d_{61}(2(114,54)) =(113,115)$);
\item $d_{59}(\overline{\mathfrak{d}}_3^{17} \overline{s}_1 u_{14\lambda} u_{51\sigma} a_{37\lambda} a_{\sigma_2}) = \alpha^2\beta^2$ ($d_{59}(131,75) =(130,134)$);
\item $d_{53}(\overline{\mathfrak{d}}_3^{20} \overline{\mathfrak{d}}_1^{2} u_{12\lambda} u_{62\sigma} a_{50\lambda}) = \alpha^2\beta^3$ ($d_{53}(148,100) =(147,153)$);
\item $d_{53}(\overline{\mathfrak{d}}_3^{23} \overline{\mathfrak{d}}_1^{2} u_{12\lambda} u_{70\sigma} a_{59\lambda} a_{\sigma}) = \alpha^2\beta^4$ ($d_{53}(165,119) =(164,172)$);
\item $d_{53}(2\overline{\mathfrak{d}}_3^{26} \overline{\mathfrak{d}}_1^{2} u_{11\lambda} u_{80\sigma} a_{69\lambda}) = \alpha^2\beta^5$ ($d_{53}(2(182,138)) =(181,191)$);
\item $d_{43}(\overline{\mathfrak{d}}_3^{29} \overline{s}_3^{3}) = \alpha^2\beta^6$ ($d_{43}(199,167) =(198,210)$).
\end{enumerate}
\end{thm}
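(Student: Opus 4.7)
The plan is to use the Vanishing Theorem (Theorem~\ref{thm:aboveFiltration61die}) as the organizing principle. For each $k = 0, 1, \ldots, 6$, the class $\alpha^2 \beta^k$ lies in the image of multiplication by $\alpha^2$, so by the Vanishing Theorem it must die on or before the $E_{61}$-page. Statement (1) is the normed differential already established as Theorem~\ref{thm:NormedDiffd61}, and the task for (2)--(7) is to identify the unique source of the shortest differential killing each $\alpha^2 \beta^k$.

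For statement (2), I would multiply the $d_{61}$-differential of (1) by the permanent cycle $\beta$ via Lemma~\ref{lem:BPtwoMainLem}(1). A direct computation yields
\[
\beta \cdot \dthree^{11} u_{16\lambda} u_{32\sigma} a_{17\lambda} a_\sigma = \dthree^{14} u_{16\lambda} u_{40\sigma} a_{26\lambda} a_{2\sigma},
\]
and a single application of the gold relation $u_\lambda a_{2\sigma} = 2 a_\lambda u_{2\sigma}$ rewrites this as $2\dthree^{14} u_{15\lambda} u_{42\sigma} a_{27\lambda}$, which is exactly the claimed source. Provided this class survives to the $E_{61}$-page---which one checks by observing that any shorter differential on it would contradict the differentials already established in the relevant stems---the desired $d_{61}$ is produced immediately by Lemma~\ref{lem:BPtwoMainLem}(1) applied to (1).

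For statements (3)--(7), iterating this $\beta$-multiplication naively fails: the class $\beta \cdot (\text{source of previous statement})$ is killed by a shorter differential before reaching the relevant page, so a different higher-filtration class must carry the death of $\alpha^2 \beta^k$. For each such $k$ I would (i) enumerate all $E_{16}$-page candidates in the band of stems and filtrations that can hit $\alpha^2 \beta^k$, using the slice decomposition in Table~\ref{array:OrganizeSliceCells}; (ii) discard candidates that have already been killed, or are known to support differentials going elsewhere, from the results of Sections~\ref{sec:InducedDiffBPone}--\ref{sec:Norm}; (iii) verify that the unique remaining candidate survives to the required page using the $\alpha$- and $\gamma^{4}$-divisibility arguments of Lemma~\ref{lem:BPtwoMainLem} and Proposition~\ref{prop:gammaPowers}; and (iv) confirm the length and the precise target via naturality under restriction to the $C_2$-slice spectral sequence of Section~\ref{sec:C2BPtwoSliceSS} or via the norm formula (Theorem~\ref{thm:NormFormula}).

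The main obstacle is the combinatorial case analysis in (3)--(7), which becomes progressively more delicate as the differential length drops from $61$ to $43$. The hardest case is (7), where the source $\dthree^{29} \sthree^{3}$ is a purely $i_{C_2}^* \BPone$-truncation class; here the essential ingredients are Proposition~\ref{prop:DthreeSthreeRelations} to normalize the various powers of $\sthree$ after the $d_{15}$-differentials, together with the explicit differentials between $i_{C_2}^*\BPone$-truncation classes from Section~\ref{subsection:inducedC2Differentials}, so as to confirm that $\dthree^{29} \sthree^{3}$ really does survive to the $E_{43}$-page and that its $d_{43}$ lands on $\alpha^2 \beta^6$ rather than on a competing target. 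Throughout, Figure~\ref{fig:E4C4E16page}, with its $\beta$- and $\xi$-multiplication lines already drawn in, serves as the book-keeping device that makes the enumeration of candidate sources tractable.
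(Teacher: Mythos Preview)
Your overall framework---use the Vanishing Theorem to force each $\alpha^2\beta^k$ to die, then identify the source---is the same as the paper's, and your treatment of (1) is identical. But you are missing the one idea that makes the paper's proof short: a \emph{monotonicity squeeze}. Since $\beta$ is a permanent cycle, if $\alpha^2\beta^i$ is killed by a $d_{r_i}$-differential then $r_0 \geq r_1 \geq \cdots \geq r_6$ (the source of the differential killing $\alpha^2\beta^{i}$, multiplied by $\beta$, must die on or before the same page, so the next one is no longer). The paper then observes from the $E_{16}$-chart that the \emph{shortest} differential that can possibly hit $\alpha^2\beta^6$ has length $43$; hence $43 \leq r_i \leq 61$ for every $i$. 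With that two-sided bound in hand, each of (2)--(7) becomes a one-line degree check: in the band of filtrations corresponding to lengths in $[43,61]$ (respectively $[43,59]$, $[43,53]$, \ldots, as the upper bound ratchets down) there is exactly one surviving class in the correct stem.

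Your proposal for (2) via $\beta$-multiplication is fine in spirit (though Lemma~\ref{lem:BPtwoMainLem}(1) is about $\alpha$-multiplication, not $\beta$; you want the Leibniz rule plus a survival check). The real problem is your plan for (3)--(7). Without the lower bound $r_i \geq 43$, your enumeration in step (i) must cover \emph{all} filtrations from $73$ up to one below the target, not just the narrow band the paper uses. Ruling out the extra candidates in that larger range would require knowing differentials ($d_{19}$, $d_{21}$, $d_{23}$, $d_{27}$, $d_{29}$, $d_{31}$) that are only established \emph{later} in the paper, and several of those later proofs explicitly invoke the present theorem (see for instance the proof of Lemma~\ref{lem:tempd35Differentials} and parts of Theorem~\ref{prop:d19d27Differential}). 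So your brute-force enumeration, as stated, risks circularity. The squeeze argument sidesteps this entirely: it needs nothing beyond the $E_{16}$-page and the norm differential of Theorem~\ref{thm:NormedDiffd61}. Your step (iv), confirming via restriction or norm, is also unnecessary---once the length range is pinned down, uniqueness on the chart is enough.
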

\begin{proof}
Since $\beta$ is a permanent cycle, the classes $\alpha^2 \beta^i$ ($1\leq i \leq 6$) are all killed on or before the $E_{61}$-page by differentials of decreasing length.  More precisely, if a $d_r$-differential kills $\alpha^2 \beta^i$ and a $d_{r'}$-differential kills $\alpha^2 \beta^{i'}$ with $0 \leq i < i' \leq 6$, then $r \geq r'$.  

Consider the class $\alpha^2\beta^6$.  The shortest differential that can kill this class is a $d_{43}$-differential.  Therefore, all the differentials killing the class $\alpha^2 \beta^i$ for $1 \leq i \leq 6$ must all be of length at least 43 and at most 61. 

(1) follows directly from Theorem~\ref{thm:NormedDiffd61}.   

For (2), the only differential that can kill $\alpha^2 \beta$ that's of length $43 \leq r \leq 61$ is the claimed $d_{61}$-differential.  

For (3), the only differential that can kill $\alpha^2 \beta^2$ that's of length $43 \leq r \leq 61$ is the claimed $d_{59}$-differential.  

For (4), the only differential that can kill $\alpha^2 \beta^3$ that's of length $43 \leq r \leq 59$ is the claimed $d_{53}$-differential.  

For (5), the only differential that can kill $\alpha^2 \beta^4$ that's of length $43 \leq r \leq 53$ is the claimed $d_{53}$-differential.  

For (6), the only differential that can kill $\alpha^2 \beta^5$ that's of length $43 \leq r \leq 53$ is the claimed $d_{53}$-differential.  

Lastly, for (7), the only differential that can kill $\alpha^2 \beta^6$ that's of length $43 \leq r \leq 53$ is the claimed $d_{43}$-differential.  
\end{proof}
Applying Lemma~\ref{lem:BPtwoMainLem} to the differentials in Theorem~\ref{thm:LongCrossingDifferentials}, we obtain all the other long differentials crossing the line of slope 1.  These differentials are shown in Figure~\ref{fig:E4C4E16page}. 

\section{Higher differentials IV: Everything until the $E_{29}$-page} \label{sec:HigherDifferentialsIV}
\begin{figure}
\begin{center}
\makebox[\textwidth]{\includegraphics[trim={0cm 10cm 0cm 10cm}, clip, page = 1, scale = 0.23]{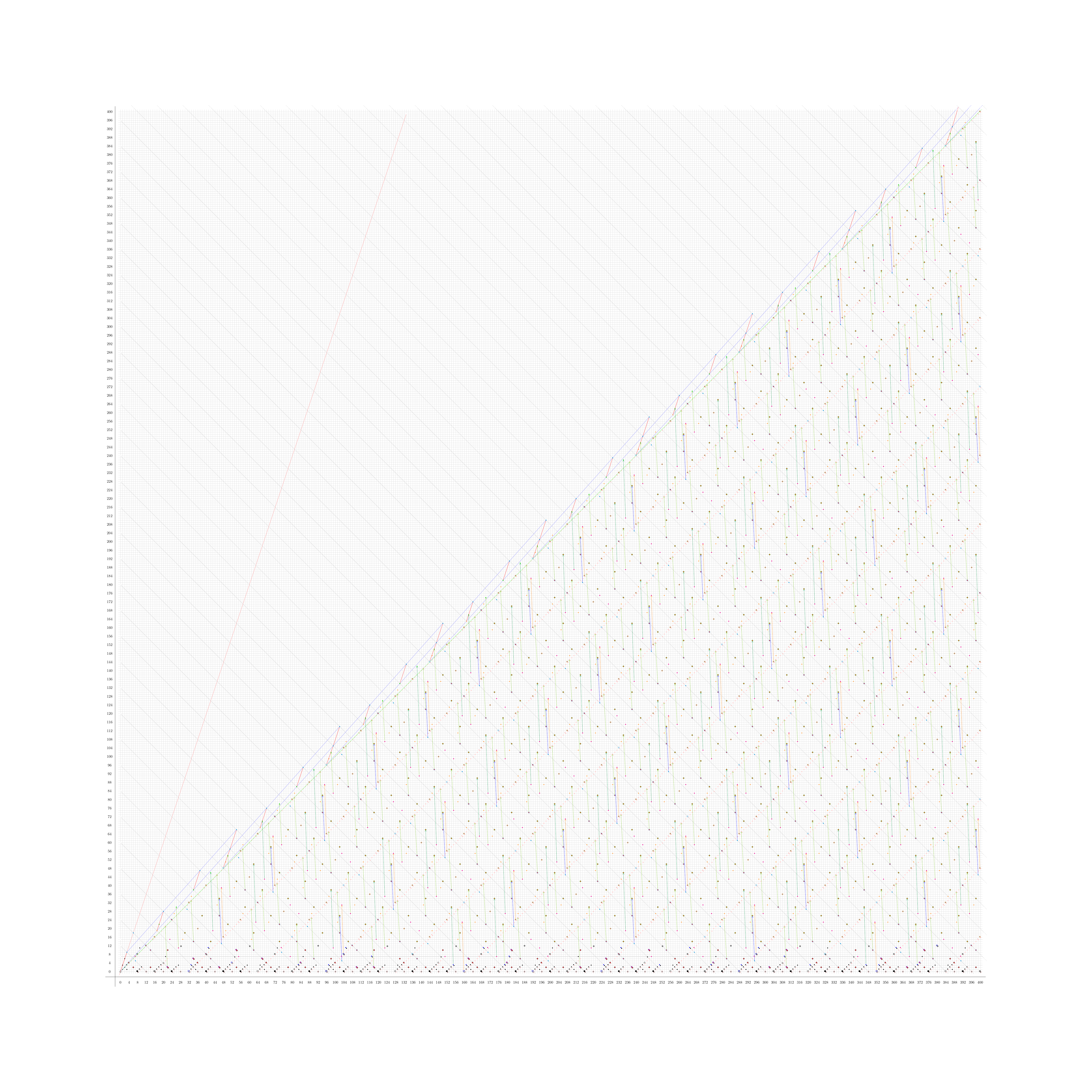}}
\end{center}
\begin{center}
\caption{$d_r$-differentials of lengths $15 < r < 29$.  The $d_{19}$-differentials are shown in {\color{LimeGreen} lime-green}, the $d_{21}$-differentials are shown in {\color{blue} blue}, the $d_{23}$-differentials are shown in {\color{orange} orange}, and the $d_{27}$-differentials are shown in {\color{ForestGreen} forest-green}. }
\hfill
\label{fig:E4C4E19toE27page}
\end{center}
\end{figure}

\subsection{$d_{21}$-differentials}
\begin{prop}\label{prop:d21Differential}
The following $d_{21}$-differentials exist: 
\begin{eqnarray*}
d_{21}(\dthree^{21} u_{9\lambda}u_{62\sigma}a_{54\lambda}a_\sigma) &=&2\dthree^{22} \done^2 u_{3\lambda} u_{68\sigma}a_{65\lambda}  \,\,\,(d_{21}(143,109) = 2(142, 130)), \\
d_{21}(\overline{\mathfrak{d}}_3^{25} u_{9\lambda} u_{74\sigma} a_{66\lambda} a_{\sigma}) &=& 2\overline{\mathfrak{d}}_3^{26} \overline{\mathfrak{d}}_1^{2} u_{3\lambda} u_{80\sigma} a_{77\lambda}\,\,\,(d_{21}(167,133) = 2(166, 154)).
\end{eqnarray*}
\end{prop}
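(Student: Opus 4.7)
The plan is to prove each of the two $d_{21}$-differentials by combining the Vanishing Theorem (Theorem~\ref{thm:aboveFiltration61die}) with Lemma~\ref{lem:BPtwoMainLem} and a careful exhaustion against our catalog of previously established $d_r$-differentials.  The key initial observation is that both targets are visibly $\alpha^2$-multiples: writing $\alpha^2 = \dthree^{16}u_{48\sigma}a_{48\lambda}$ at $(96,96)$, the first target factors as
\[
2\dthree^{22}\done^2 u_{3\lambda}u_{68\sigma}a_{65\lambda} = \alpha^2\cdot\bigl(2\dthree^{6}\done^2 u_{3\lambda}u_{20\sigma}a_{17\lambda}\bigr),
\]
sitting at $(96,96)+(46,34) = (142,130)$, and the second target factors as $\alpha^2 \cdot (2\dthree^{10}\done^2 u_{3\lambda}u_{32\sigma}a_{29\lambda})$ at $(96,96)+(70,58)=(166,154)$.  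Both sources similarly factor: source~1 is $\alpha^2\cdot(\dthree^{5}u_{9\lambda}u_{14\sigma}a_{6\lambda}a_\sigma)$ at the translate of $(47,13)$, and source~2 is $\alpha^2\cdot(\dthree^{9}u_{9\lambda}u_{26\sigma}a_{18\lambda}a_\sigma)$ at the translate of $(71,37)$.  The Vanishing Theorem then guarantees that both targets die on or before the $E_{61}$-page.

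Next, I invoke Lemma~\ref{lem:BPtwoMainLem}: since $\alpha^2$ remains alive through the $E_{60}$-page and both reduced sources and reduced targets are divisible by $\alpha^2$ already on the $E_2$-page, it suffices to establish the corresponding $d_{21}$-differentials on the reduced classes at $(47,13)\to(46,34)$ and $(71,37)\to(70,58)$.  The stated differentials will then follow by two applications of Lemma~\ref{lem:BPtwoMainLem}(1).  To establish the reduced $d_{21}$'s, I first use the complete classification of differentials $d_r$ for $r\leq 19$ assembled in Sections~\ref{sec:InducedDiffBPone}, \ref{sec:higherDifferentialsI}, \ref{sec:Norm} and the earlier subsections of Section~\ref{sec:HigherDifferentialsIV} to verify that the reduced sources survive to the $E_{21}$-page (each potential Leibniz-induced shorter differential either cancels by the relations $u_\lambda a_{3\sigma}=0$ and $4a_\lambda = 0$, or matches an identification already recorded).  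Then, since the original $\alpha^2$-multiplied sources must die by $E_{61}$ (they lie well above the filtration-$61$ vanishing line), the contrapositive of Lemma~\ref{lem:BPtwoMainLem}(1) forces the reduced sources to die by $E_{61}$ as well.  Inspecting the $E_{21}$-page at the target bidegrees $(46,34)$ and $(70,58)$, one isolates the unique surviving class compatible with a $d_{21}$-shift, namely the stated transfer-flavored $\alpha^2$-preimages carrying the factor of $2$; any longer $d_r$-candidate is ruled out by the Vanishing Theorem applied at the potential target (its $\alpha^2$-multiple would conflict with a differential already known from Theorem~\ref{thm:LongCrossingDifferentials}, Theorem~\ref{thm:NormedDiffd29}, or Theorem~\ref{thm:NormedDiffd61}).

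The main obstacle is the bookkeeping in this last step: one must enumerate the surviving classes at $(46,34)$ and $(70,58)$ on the $E_{21}$-page, accounting for the $d_{15}$- and $d_{19}$-identifications among the $i_{C_2}^*\BPone$-truncation generators (using Proposition~\ref{prop:DthreeSthreeRelations} and the transfer formulas of Section~\ref{subsec:d15ResTr}), and explain the appearance of the factor of $2$ in each target as a transfer of a $C_2$-class through $tr\circ res = 2$ on the appropriate representative.  Once this case analysis pins down $2\dthree^{6}\done^2 u_{3\lambda}u_{20\sigma}a_{17\lambda}$ and $2\dthree^{10}\done^2 u_{3\lambda}u_{32\sigma}a_{29\lambda}$ as the unique possible targets of a $d_{21}$-differential from the respective reduced sources, multiplication by $\alpha^2$ via Lemma~\ref{lem:BPtwoMainLem} yields the two differentials stated in the Proposition.
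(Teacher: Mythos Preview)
Your reduction strategy goes in the wrong direction, and the key step is broken.  You correctly observe that the targets $2(142,130)$ and $2(166,154)$ are $\alpha^2$-multiples and hence must die by the Vanishing Theorem.  But then you pass to the ``reduced'' classes at $(47,13)$ and $(71,37)$ and try to prove $d_{21}$-differentials there first.  The justification you give --- that the $\alpha^2$-multiplied sources at $(143,109)$ and $(167,133)$ lie above filtration $61$ and hence must die, so ``the contrapositive of Lemma~\ref{lem:BPtwoMainLem}(1)'' forces the reduced sources to die --- is not valid.  Lemma~\ref{lem:BPtwoMainLem}(1) says that a nontrivial differential on $x$ produces a nontrivial differential on $\alpha x$; its contrapositive says nothing about permanent cycles.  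If $(47,13)$ were a permanent cycle, then $\alpha^2\cdot(47,13)$ would still die (because $\alpha^2$ itself is killed by a $d_{61}$), so there is no contradiction.  Thus you have no argument that the reduced source supports any differential at all.

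The paper's proof works directly at the high bidegrees and is much shorter.  It applies the Vanishing Theorem to the \emph{target} $2(142,130)$: this class must be killed on or before $E_{61}$, and for degree reasons can only be hit from $(143,109)$ or from two classes at $(143,79)$, namely $\dthree^{18}\sthree$ and $\dthree^{17}\sthree^3$.  The two classes at $(143,79)$ are then ruled out because each is already accounted for by a $d_{31}$-differential (one between $i_{C_2}^*\BPone$-truncation classes, the other from Proposition~\ref{prop:d31(40,8)}).  This leaves $(143,109)$ as the only possible source, giving the $d_{21}$.  Your proposal never addresses these competing sources at $(143,79)$; even if your reduction were repaired, you would still need to exclude them.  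The second differential is handled identically.  The low-filtration $d_{21}$'s at $(47,13)$ and $(71,37)$ are then \emph{deduced} from the high ones via Lemma~\ref{lem:BPtwoMainLem}(2), not the other way around.
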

\begin{proof}
The Vanishing Theorem (Theorem~\ref{thm:aboveFiltration61die}) shows that the class $2(142, 30)$ must die on or before the $E_{61}$-page.  For degree reasons, the only possibility is for it to be killed.  The only possibilities for the source of the differential are the following classes: 
\begin{enumerate}
\item $\dthree^{21} u_{9\lambda}u_{62\sigma}a_{54\lambda}a_\sigma$ at $(143, 109)$;
\item $\dthree^{18}\sthree$ at $(143,79)$; 
\item $\dthree^{17}\sthree^3$ at $(143, 79)$.
\end{enumerate} 
Class (2) is killed by the class $\dthree^{15}\sthree^2$ at $(144,48)$ by a $d_{31}$-differential (this is a $d_{31}$-differential between $i_{C_2}^*\BPone$-truncation classes).  Class (3) is killed by the class $2\dthree^{16} u_{24\lambda}u_{48\sigma}a_{24\lambda}$ at $(144,48)$ via a $d_{31}$-differential (see the discussion after Proposition~\ref{prop:d31(40,8)}).  

Therefore, the only possibility for the source is class (1), and we deduce our desired $d_{21}$-differential.  
\end{proof}
All the other $d_{21}$-differentials are obtained from the differentials in Proposition~\ref{prop:d21Differential} by using product structures with the classes $\alpha$ and $\gamma^4$ (see the discussion after Propositon~\ref{prop:gammaPowers}).  These differentials are the {\color{blue} blue differentials} in Figure~\ref{fig:E4C4E19toE27page}.

\subsection{$d_{23}$-differentials}
\begin{prop}\label{prop:d23Differential}
The following $d_{23}$-differentials exist:
\begin{eqnarray*}
d_{23}(\overline{\mathfrak{d}}_3^{5} \overline{\mathfrak{d}}_1 u_{8\lambda} u_{16\sigma} a_{8\lambda}) &=& \overline{\mathfrak{d}}_3^{7} \overline{s}_1 u_{2\lambda} u_{21\sigma} a_{19\lambda} a_{\sigma_2}\,\,\,(d_{23}(48,16)=(47,39)), \\ 
d_{23}(\overline{\mathfrak{d}}_3^{9} \overline{\mathfrak{d}}_1 u_{8\lambda} u_{28\sigma} a_{20\lambda}) &=&\overline{\mathfrak{d}}_3^{11} \overline{s}_1 u_{2\lambda} u_{33\sigma} a_{31\lambda} a_{\sigma_2} \,\,\,(d_{23}(72,40)=(71,63)).
\end{eqnarray*}
\begin{proof}
We will prove the first $d_{23}$-differential.  The proof of the second $d_{23}$-differential is exactly the same.  The restriction of the class $\overline{\mathfrak{d}}_3^{5} \overline{\mathfrak{d}}_1 u_{8\lambda} u_{16\sigma} a_{8\lambda}$ is $\rthree^5 \grthree^5\rone \grone u_{16\sigma_2}a_{16\sigma_2}$ in the $C_2$-slice spectral sequence.  It supports the $d_{31}$-differential 
$$d_{31}(\rthree^5 \grthree^5\rone \grone u_{16\sigma_2}a_{16\sigma_2}) = \rthree^5 \grthree^5 \rone^2 \cdot (\rthree^4 \grthree) a_{47\sigma_2} = \rthree^9 \grthree^6 \rone^2 a_{47\sigma_2}. $$
This implies that in the $C_4$-spectral sequence, the class $\overline{\mathfrak{d}}_3^{5} \overline{\mathfrak{d}}_1 u_{8\lambda} u_{16\sigma} a_{8\lambda}$ must support a differential of length at most 31.  There are two possible choices for the target: 
\begin{enumerate}
\item $\dthree^7 \sthree \rone^2$ at (47, 47) ($d_{31}$-differential);
\item $\overline{\mathfrak{d}}_3^{7} \overline{s}_1 u_{2\lambda} u_{21\sigma} a_{19\lambda} a_{\sigma_2}$ at $(47, 39)$ ($d_{23}$-differential).
\end{enumerate}
Class (1) is impossible for natuality reasons because the class $\dthree^7 \sthree \rone^2$ does not restrict to $\rthree^9 \grthree^6 \rone^2 a_{47\sigma_2}$.  Therefore, the target must be class (2) and we deduce the desired $d_{23}$-differential.  
\end{proof}
\end{prop}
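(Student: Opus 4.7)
The plan is to apply the restriction--naturality strategy already used in Proposition~\ref{prop:d15(40,8)}, but pushed out to length~$31$ so that the differential can be pinned down from both sides. First I would compute
$$res\bigl(\dthree^{5}\done\, u_{8\lambda}u_{16\sigma}a_{8\lambda}\bigr) = \rthree^{5}\grthree^{5}\rone\grone\, u_{16\sigma_{2}} a_{16\sigma_{2}}$$
in $C_{2}$-$\SliceSS(i_{C_{2}}^{\ast}\BPtwo)$. Using the Leibniz rule together with the generating $C_{2}$-differential $d_{31}(u_{16\sigma_{2}}) = \rthree^{4}\grthree\, a_{31\sigma_{2}}$, and the fact that $u_{32\sigma_{2}}$ is a permanent cycle, I obtain
$$d_{31}\bigl(\rthree^{5}\grthree^{5}\rone\grone\, u_{16\sigma_{2}}a_{16\sigma_{2}}\bigr) = \rthree^{9}\grthree^{6}\rone^{2}a_{47\sigma_{2}},$$
and the target survives to the $E_{31}$-page. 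Naturality of $res$ then forces the class $\dthree^{5}\done\, u_{8\lambda}u_{16\sigma}a_{8\lambda}$ at $(48,16)$ to support some nonzero differential of length $r\leq 31$ in $C_{4}$-$\SliceSS(\BPtwo)$.

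Next I would enumerate all surviving candidate targets at $(47, 16+r)$ for $16\leq r\leq 31$, using our complete description of the $E_{16}$-page together with the structural relations in Proposition~\ref{prop:DthreeSthreeRelations}. The only two survivors are the claimed class $\dthree^{7}\sone\, u_{2\lambda}u_{21\sigma}a_{19\lambda}a_{\sigma_{2}}$ at $(47,39)$, which would be hit by a $d_{23}$, and the $i_{C_{2}}^{\ast}\BPone$-truncation class $\dthree^{7}\sthree\,\rone^{2}$ at $(47,47)$, which would be hit by a $d_{31}$.

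The decisive step is to rule out the second candidate. Since $\dthree^{7}\sthree\,\rone^{2}$ lies in the image of the transfer from $C_{2}$, its restriction equals
$$res\bigl(\dthree^{7}\sthree\,\rone^{2}\bigr) = \bigl(\rthree^{8}\grthree^{7} + \rthree^{7}\grthree^{8}\bigr)\rone^{2}a_{47\sigma_{2}},$$
which is different from $\rthree^{9}\grthree^{6}\rone^{2}a_{47\sigma_{2}}$ on the $E_{31}$-page. Hence a $d_{31}$ from our given source hitting $\dthree^{7}\sthree\,\rone^{2}$ would violate naturality, leaving only the $d_{23}$ possibility. The second differential is obtained by running the identical argument at the source $(72,40)$: compute the restriction, generate a $d_{31}$ by Leibniz, enumerate the two candidate targets at $(71,71)$ and $(71,63)$, and eliminate the $i_{C_{2}}^{\ast}\BPone$-truncation candidate at $(71,71)$ by the same restriction mismatch. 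The main obstacle will be confirming that the candidate enumeration across the long window $16\leq r\leq 31$ is exhaustive; however, this is guaranteed by the near-triviality of the $E_{16}$-page in the relevant wedge between the lines of slope $1$ and slope $3$ (see Figure~\ref{fig:E4C4E16page}), together with Proposition~\ref{prop:DthreeSthreeRelations} which pins down every surviving $i_{C_{2}}^{\ast}\BPone$-truncation class in that range.
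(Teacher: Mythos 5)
Your proposal is correct and follows essentially the same route as the paper's own proof: restrict the source to $C_2$-$\SliceSS(i_{C_2}^*\BPtwo)$, use the $d_{31}$-differential there to force a differential of length at most $31$ by naturality, enumerate the only two surviving candidate targets at $(47,39)$ and $(47,47)$, and eliminate the $i_{C_2}^*\BPone$-truncation class $\dthree^{7}\sthree\rone^{2}$ by the restriction/naturality mismatch, with the second differential handled identically at $(72,40)$. Your explicit formula $res(\dthree^{7}\sthree\rone^{2})=(\rthree^{8}\grthree^{7}+\rthree^{7}\grthree^{8})\rone^{2}a_{47\sigma_2}$ is simply an unpacking of the paper's assertion that this class does not restrict to $\rthree^{9}\grthree^{6}\rone^{2}a_{47\sigma_2}$, so no new ingredient is introduced.
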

All the other $d_{23}$-differentials are obtained from the differentials in Proposition~\ref{prop:d23Differential} by using product structures with the classes $\alpha$ and $\gamma^4$ (see the discussion after Propositon~\ref{prop:gammaPowers}).  These differentials are the {\color{orange} orange differentials} in Figure~\ref{fig:E4C4E19toE27page}.

\subsection{$d_{19}$ and $d_{27}$-differentials}

\begin{lem}\label{lem:tempd29Differentials}
The following $d_{29}$-differentials exist: 
\begin{enumerate}
\item $d_{29}(2\dthree^3 \done u_{7\lambda} u_{10\sigma} a_{3\lambda}) =\overline{\mathfrak{d}}_3^{5} \overline{\mathfrak{d}}_1^{2} u_{16\sigma} a_{17\lambda} a_{\sigma} \,\,\, (d_{29}(2(34,6)) = (33, 35))$; 
\item $d_{29}(\dthree^2\done^2 u_{8\lambda} u_{8\sigma}) = \overline{\mathfrak{d}}_3^{5} u_{\lambda} u_{14\sigma} a_{14\lambda} a_{\sigma}\,\,\, (d_{29}((32,0)) = (31,29))$; 
\item $d_{29}(\overline{\mathfrak{d}}_3^{22} \overline{\mathfrak{d}}_1^{2} u_{8\lambda} u_{68\sigma} a_{60\lambda}) = \overline{\mathfrak{d}}_3^{25} u_{\lambda} u_{74\sigma} a_{74\lambda} a_{\sigma}\,\,\, (d_{29}((152,120)) = (151,149))$; 
\item $d_{29}(\overline{\mathfrak{d}}_3^{9} \overline{\mathfrak{d}}_1^{2} u_{8\lambda} u_{28\sigma} a_{21\lambda} a_{\sigma}) = 2\overline{\mathfrak{d}}_3^{12} u_{36\sigma} a_{36\lambda}\,\,\, (d_{29}((73,43)) = 2(72, 72))$;
\item $d_{29}(\overline{\mathfrak{d}}_3^{5} \overline{\mathfrak{d}}_1^{2} u_{8\lambda} u_{16\sigma} a_{9\lambda} a_{\sigma}) = 2\overline{\mathfrak{d}}_3^{8} u_{24\sigma} a_{24\lambda}\,\,\, (d_{29}((49,19)) = 2(48,48))$.
\end{enumerate}
\end{lem}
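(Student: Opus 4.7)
Differential (5) is the integer-graded incarnation of the Normed $d_{29}$-differential established in Theorem~\ref{thm:NormedDiffd29}, and in fact its derivation is carried out explicitly at the end of the proof of that theorem. Namely, multiplying the Normed $d_{29}$-differential
\[
d_{29}(u_{8\lambda}a_\sigma) = \dthree^2\done u_{8\sigma}a_{15\lambda} + tr(\rthree^4 \rone^2 u_\sigma a_{30\sigma_2})
\]
by the permanent cycle $\dthree^5\done^2 u_{16\sigma}a_{9\lambda}$ and then reducing the resulting target via the $d_{15}$-differential
\[
d_{15}(\dthree^5\sthree^3 \rone^2) = \dthree^5\sthree^6 + 2\dthree^8 u_{24\sigma}a_{24\lambda}
\]
among $i_{C_2}^*\BPone$-truncation classes yields (5), with target $2\alpha$.

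Differentials (2), (3), (4) follow from (5) by multiplication with appropriate permanent cycles or $d_{r}$-cycles for $r\leq 28$, combined with the Leibniz rule via Lemma~\ref{lem:BPtwoMainLem}. For example, the source of (4) equals the source of (5) times the class $\dthree^4 u_{12\sigma}a_{12\lambda}$ at bidegree $(24,24)$, and the target likewise equals $2\alpha \cdot \dthree^4 u_{12\sigma}a_{12\lambda} = 2\dthree^{12}u_{36\sigma}a_{36\lambda}$. Once one verifies that $\dthree^4 u_{12\sigma}a_{12\lambda}$ is a $d_{r}$-cycle for $r\leq 28$ --- which follows from the complete record of prior differentials compiled in the preceding sections --- Leibniz delivers (4). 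Analogous multiplications by longer products formed from $\alpha$, $\beta$, $\xi$, and $\gamma^{4}$ produce (3), while a parallel multiplication at a lower bidegree (absorbing the $a_\sigma$ factor via the Frobenius relation) yields (2). Differential (1) carries a factor of $2$ on its source, signaling that the source lies in the image of the transfer $tr\colon C_2\text{-}\SliceSS(\BPtwo) \to C_4\text{-}\SliceSS(\BPtwo)$; it is obtained either by norming up an integer-graded consequence of the generating $C_2$-differential $d_{15}(u_{8\sigma_2}) = \rone(\rthree^2+\rthree\grthree+\grthree^2)a_{15\sigma_2}$ via Theorem~\ref{thm:NormFormula}, or equivalently by multiplying the Normed $d_{29}$-differential by a permanent cycle whose underlying restriction is divisible by $2$.

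The main obstacle is bookkeeping rather than conceptual difficulty. For each of the five differentials one must verify both that (i) the claimed source is nontrivial on the $E_{29}$-page --- that is, has not been killed by any of the already-established differentials of length $<29$ --- and that (ii) the target simplifies correctly to the stated form after applying the $d_{13}$- and $d_{15}$-identities among $i_{C_2}^*\BPone$-truncation classes from Section~\ref{sec:higherDifferentialsI} together with the Frobenius identity $tr(x)\cdot y = tr(x\cdot res(y))$. The Vanishing Theorem (Theorem~\ref{thm:aboveFiltration61die}) and the surviving permanent cycles $\alpha$, $\beta$, $\gamma^{4}$ supply global constraints that rule out shorter-length differentials competing for the same source.
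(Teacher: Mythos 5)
Your handling of (5) and (essentially) (1) agrees with the paper: both are read off from Theorem~\ref{thm:NormedDiffd29} and its proof, (1) by multiplying the normed differential by a permanent cycle containing $a_\sigma$ (which kills the transfer term in the target, the factor of $2$ in the source coming from the gold relation). The gap is in (2)--(4). For (4), your multiplier $\dthree^4 u_{12\sigma}a_{12\lambda}$ at $(24,24)$ is \emph{not} a $d_r$-cycle for $r\le 28$: since $d_{13}(u_{4\sigma})=\dthree a_{3\lambda}a_{7\sigma}$ (Proposition~\ref{prop:d13slice}), this class supports $d_{13}(\dthree^4 u_{12\sigma}a_{12\lambda})=\dthree^5 u_{8\sigma}a_{15\lambda}a_{7\sigma}$ --- it is listed explicitly as a $d_{13}$-source in Section~\ref{sec:higherDifferentialsI} --- so it is dead by $E_{15}$ and cannot be multiplied against a $d_{29}$-differential; the verification you defer to ``the complete record of prior differentials'' would in fact refute your claim. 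Moreover, Lemma~\ref{lem:BPtwoMainLem} licenses multiplication and division only by $\alpha$, not by arbitrary permanent cycles, so it cannot be cited for these products.

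For (2) and (3) a multiplicative derivation from (5) is not available even in principle: the sources $\dthree^2\done^2u_{8\lambda}u_{8\sigma}$ at $(32,0)$ and $\dthree^{22}\done^2u_{8\lambda}u_{68\sigma}a_{60\lambda}$ at $(152,120)$ contain no $a_\sigma$ and no factor of $2$, whereas the source of (5) contains $a_\sigma$; multiplying it by any class (in particular by products of $\alpha$, $\beta$, $\xi$, $\gamma^4$) either retains an $a_\sigma$ or, after the gold relation, introduces a factor of $2$, so it never equals these sources, and $a_\sigma$ cannot be divided out. The arguments have to run in the opposite, ``divisibility'' direction, which your sketch omits: for (2), $(32,0)\cdot\beta$ \emph{is} the source of (5), so since $\beta$ is a permanent cycle the class at $(32,0)$ must support a differential of length at most $29$, and degree reasons force the target $(31,29)$; for (4), multiplying the target of the Theorem~\ref{thm:NormedDiffd29} differential $d_{29}(65,35)=(64,64)$ by $\epsilon'$ shows $2(72,72)$ must be killed on or before $E_{29}$, with $(73,43)$ the only possible source; and (3) requires a two-step version ($\epsilon'\cdot(160,160)=2(168,168)$ forces $d_{29}(169,139)=2(168,168)$, and $(169,139)=\beta\cdot(152,120)$ then forces $(152,120)$ to support the claimed $d_{29}$). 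The vague appeals to the Frobenius relation and to ``longer products of $\alpha,\beta,\xi,\gamma^4$'' cannot substitute for these must-die/must-support steps, so as written (2), (3), (4) are not proved.
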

\begin{proof}
\noindent (1): This follows directly from Theorem~\ref{thm:NormedDiffd29}. 

\vspace{0.1in}

\noindent (2): If we multiply the class $\dthree^2\done^2 u_{8\lambda} u_{8\sigma}$ at $(32, 0)$ by $\beta$ and use Theorem~\ref{thm:NormedDiffd29}, we deduce the $d_{29}$-differential 
$$d_{29}(\overline{\mathfrak{d}}_3^{5} \overline{\mathfrak{d}}_1^{2} u_{8\lambda} u_{16\sigma} a_{9\lambda} a_{\sigma}) = 2\overline{\mathfrak{d}}_3^{8} u_{24\sigma} a_{24\lambda} \,\,\,(d_{29}(49,19) = 2(48,48)).$$
Therefore, the class $\dthree^2\done^2 u_{8\lambda} u_{8\sigma}$ at $(32, 0)$ must support a differential of length at most 29.  For degree reasons, the only possible target is the class at $(31, 29)$.

\vspace{0.1in}

\noindent (3): Consider the class $\overline{\mathfrak{d}}_3^{22} \overline{\mathfrak{d}}_1^{2} u_{8\lambda} u_{68\sigma} a_{60\lambda}$ at $(152, 120)$.  We will show that this class must support a differential of length at most 29.  Once we have shown this, the only possible target will be the class $\overline{\mathfrak{d}}_3^{25} u_{\lambda} u_{74\sigma} a_{74\lambda} a_{\sigma}$ at $(151, 149)$.

By Theorem~\ref{thm:NormedDiffd29}, we have the $d_{29}$-differential. 
$$d_{29}(\overline{\mathfrak{d}}_3^{24} \overline{\mathfrak{d}}_1 u_{8\lambda} u_{72\sigma} a_{65\lambda} a_{\sigma}) =\overline{\mathfrak{d}}_3^{26} \overline{\mathfrak{d}}_1^{2} u_{80\sigma} a_{80\lambda} \,\,\,(d_{29}(161,131) = (160,160)).$$
If we multiply the target of this differential by $\epsilon'$, we get the class $2\overline{\mathfrak{d}}_3^{28} u_{84\sigma} a_{84\lambda}$ at $(168,168)$.  This class must be killed by a differential of length at most 29.  For degree reasons, the only possibility is the $d_{29}$-differential 
$$d_{29}(\overline{\mathfrak{d}}_3^{25} \overline{\mathfrak{d}}_1^{2} u_{8\lambda} u_{76\sigma} a_{69\lambda} a_{\sigma}) = 2\overline{\mathfrak{d}}_3^{28} u_{84\sigma} a_{84\lambda}\,\,\,(d_{29}(169,139) = 2(168,168)).$$
The source of this differential is equal to $\beta \cdot (152,120)$.  Therefore, the class $\overline{\mathfrak{d}}_3^{22} \overline{\mathfrak{d}}_1^{2} u_{8\lambda} u_{68\sigma} a_{60\lambda}$ at $(152, 120)$ must support a differential of length 29, as desired. 

\vspace{0.1in}

\noindent (4): By Theorem~\ref{thm:NormedDiffd29}, we have the $d_{29}$-differential 
$$d_{29}(\overline{\mathfrak{d}}_3^{8} \overline{\mathfrak{d}}_1 u_{8\lambda} u_{24\sigma} a_{17\lambda} a_{\sigma}) = \overline{\mathfrak{d}}_3^{10} \overline{\mathfrak{d}}_1^{2} u_{32\sigma} a_{32\lambda}\,\,\,(d_{29}(65,35) = (64,64)).$$
If we multiply the target of this differential by $\epsilon'$, we get the class $2\overline{\mathfrak{d}}_3^{12} u_{36\sigma} a_{36\lambda}$ at $(72,72)$.  Therefore, this class at $(72,72)$ must be killed by a differential of length at most 29.  The only possibility is the $d_{29}$-differential that we claimed.  

\vspace{0.1in}

\noindent (5): This differential is proven in the proof of Theorem~\ref{thm:NormedDiffd29}.  
\end{proof}

\begin{lem}\label{lem:tempd35Differentials}
The following $d_{35}$-differentials exist: 
\begin{eqnarray*}
d_{35}(\overline{\mathfrak{d}}_3^{17} \overline{s}_1 u_{12\lambda} u_{51\sigma} a_{39\lambda} a_{\sigma_2}) =  2\overline{\mathfrak{d}}_3^{20} u_{3\lambda} u_{60\sigma} a_{57\lambda}\,\,\,(d_{35}(127,79) = 2(126, 114)), \\
d_{35}(\overline{\mathfrak{d}}_3^{21} \overline{s}_1 u_{12\lambda} u_{63\sigma} a_{51\lambda} a_{\sigma_2}) =2\overline{\mathfrak{d}}_3^{24} u_{3\lambda} u_{72\sigma} a_{69\lambda} \,\,\,(d_{35}(151,103) = 2(150,138)).
\end{eqnarray*}
\end{lem}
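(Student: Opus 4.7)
The plan is to prove each of the two $d_{35}$-differentials separately, following the strategy used in Proposition~\ref{prop:d21Differential}: identify the proposed target as a class that must die on or before the $E_{61}$-page by the Vanishing Theorem (Theorem~\ref{thm:aboveFiltration61die}), then eliminate all candidate sources except the claimed one using our classification of shorter differentials.

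First I would factor both targets through $\alpha^2 = \overline{\mathfrak{d}}_3^{16} u_{48\sigma} a_{48\lambda}$ at $(96,96)$. The first target equals $2\alpha^2 \cdot \overline{\mathfrak{d}}_3^{4} u_{3\lambda} u_{12\sigma} a_{9\lambda}$ at $2(126,114)$, and the second equals $2\alpha^2 \cdot \overline{\mathfrak{d}}_3^{8} u_{3\lambda} u_{24\sigma} a_{21\lambda}$ at $2(150,138)$. By the Vanishing Theorem both targets must die on or before the $E_{61}$-page. Next, one checks by inspecting the available target positions for each of these classes on the $E_r$-page for $r \le 61$ that neither can support a nontrivial outgoing differential in that range (every potential target bidegree is either zero on the $E_{35}$-page or has already been consumed by a shorter differential from Sections~\ref{sec:InducedDiffBPone}--\ref{sec:HigherDifferentialsIV}). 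Hence each must instead be killed by an incoming differential.

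I would then enumerate the possible sources. For the first target, the candidate sources lie at $(127, 79+2k)$ for $k \ge 0$ with $2k+1$ ranging over the admissible differential lengths, and analogously for the second target. Each candidate is eliminated using one of the following ingredients: (i) the complete list of $d_r$-differentials for $r \le 31$, in particular the $d_{15}$-relations of Proposition~\ref{prop:DthreeSthreeRelations}, the $d_{23}$-differentials of Proposition~\ref{prop:d23Differential}, and the $d_{29}$-differentials of Lemma~\ref{lem:tempd29Differentials}, all of which either kill a given candidate or show it supports an earlier differential; (ii) natuality with the restriction map to $C_2$-$\SliceSS(i_{C_2}^*\BPtwo)$ (computed in Section~\ref{sec:C2BPtwoSliceSS}), which obstructs any source whose restriction does not hit the right $C_2$-class. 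Working through this list leaves $\overline{\mathfrak{d}}_3^{17}\overline{s}_1 u_{12\lambda} u_{51\sigma} a_{39\lambda} a_{\sigma_2}$ as the unique remaining source for the first target and $\overline{\mathfrak{d}}_3^{21}\overline{s}_1 u_{12\lambda} u_{63\sigma} a_{51\lambda} a_{\sigma_2}$ for the second. The coefficient $2$ on each target appears because both sources lie in the image of the transfer (they contain the factor $\overline{s}_1 = tr(\overline{r}_1)$), so one has a natural $\text{tr}(res(-))$-style identification exactly as in Proposition~\ref{prop:d15(20,4)}.

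The main obstacle is precisely this enumeration: the $E_{35}$-page contains several $\BPone$- and $i_{C_2}^*\BPone$-truncation classes at the relevant bidegrees, and each must be carefully tracked through the cascade of $d_{15}$-, $d_{23}$-, $d_{29}$-, and $d_{31}$-differentials to confirm its status on the $E_{35}$-page; the fact that both differentials have exactly the same $\overline{s}_1 a_{\sigma_2}$-shape and differ only by the permanent-cycle-up-to-$d_{13}$ factor $\overline{\mathfrak{d}}_3^{4} u_{12\sigma} a_{12\lambda}$ suggests one should first establish the first differential and then run essentially the same enumeration (but up one $\overline{\mathfrak{d}}_3^{4}$-power) to obtain the second. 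Once the enumeration is complete in each case the differential is forced, and natuality with the restriction map pins down the target uniquely.
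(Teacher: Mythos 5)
Your overall strategy is the paper's: apply the Vanishing Theorem (Theorem~\ref{thm:aboveFiltration61die}) to the target, argue it must be killed, and then eliminate every candidate source except the claimed one. But two concrete points do not work as you have set them up. First, your enumeration of candidate sources at $(127,79+2k)$ for $k\ge 0$ only captures potential differentials of length $\le 35$; the candidates you must also exclude are the \emph{lower}-filtration classes, namely $\dthree^{15}\sthree\rone^2$ at $(127,63)$ (a potential $d_{51}$-source) and $\dthree^{15}\sone u_{18\lambda}u_{45\sigma}a_{27\lambda}a_{\sigma_2}$ at $(127,55)$ (a potential $d_{59}$-source). These are disposed of, respectively, by a $d_{31}$-differential between $i_{C_2}^*\BPone$-truncation classes (Section~\ref{subsection:inducedC2Differentials}) and by the $d_{23}$-differential of Proposition~\ref{prop:d23Differential} (propagated by $\alpha$, $\gamma^4$); you cite Proposition~\ref{prop:d23Differential}, so the tools are at hand, but the enumeration as you state it would silently skip exactly these cases. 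Also note that at $(127,95)$ the class $\dthree^{17}\sthree^3$ is ruled out because it is the \emph{target} of the $d_{31}$-differential obtained from Proposition~\ref{prop:gammaPowers}(3) multiplied by $\alpha$ (via Lemma~\ref{lem:BPtwoMainLem}); appealing to ``the complete list of $d_r$ for $r\le 31$'' is anachronistic here, since the full $d_{29}$/$d_{31}$ lists are only established later, so you must cite the specific differentials that are already available.

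The more serious gap is in the second differential. The analogue of the dangerous candidate is $\dthree^{21}\sthree^3$ at $(151,119)$, which could a priori kill $2\dthree^{24}u_{3\lambda}u_{72\sigma}a_{69\lambda}$ at $2(150,138)$ by a $d_{19}$. This class is \emph{not} eliminated by anything in your toolkit: it is not hit by, and does not support, any differential of length $\le 31$ (unlike its counterpart $\dthree^{17}\sthree^3$ at $(127,95)$, which is a $d_{31}$-target), and restriction/transfer naturality cannot forbid the $d_{19}$, since its $C_2$-preimage dies earlier in the $C_2$-slice spectral sequence and so imposes no constraint. The paper excludes it by the extra fact that $\dthree^{21}\sthree^3$ \emph{supports} a $d_{43}$-differential, which is Theorem~\ref{thm:LongCrossingDifferentials} (the $\alpha^2\beta^6$ length-window argument) divided by $\alpha$ via Lemma~\ref{lem:BPtwoMainLem}; in particular it survives to the $E_{43}$-page and hence cannot support a $d_{19}$. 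Without invoking this $d_{43}$ (or an equivalent argument), your elimination for the second differential is incomplete, and the claimed $d_{35}$ on $\dthree^{21}\sone u_{12\lambda}u_{63\sigma}a_{51\lambda}a_{\sigma_2}$ is not forced.
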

\begin{proof}
Consider the target of the first differential.  By Theorem~\ref{thm:aboveFiltration61die}, it must be killed on or before the $E_{61}$-page.  The possibilities for the sources are
\begin{enumerate}
\item $\dthree^{17} \sthree^3$ at $(127, 95)$; 
\item $\dthree^{18} \sthree$ at $(127, 95)$;
\item $\overline{\mathfrak{d}}_3^{17} \overline{s}_1 u_{12\lambda} u_{51\sigma} a_{39\lambda} a_{\sigma_2}$ at $(127, 79)$;
\item $\dthree^{15}\sthree \rone^2$ at $(127, 63)$;
\item $\overline{\mathfrak{d}}_3^{15} \overline{s}_1 u_{18\lambda} u_{45\sigma} a_{27\lambda} a_{\sigma_2}$ at $(127, 55)$;
\end{enumerate}

(1) is impossible because by Proposition~\ref{prop:gammaPowers}, the class $\dthree^{17} \sthree^3$ at $(127, 95)$ is the target of the $d_{31}$-differential 
$$d_{31}(\overline{\mathfrak{d}}_3^{16} u_{16\lambda} u_{48\sigma} a_{32\lambda}) = \dthree^{17} \sthree^3 \,\,\,(d_{31}(128, 64) = (127, 95)).$$

(2) is impossible because $\dthree^{18}\sthree$ at $(127, 95)$ supports the $d_{31}$-differential 
$$d_{31}(\dthree^{18}\sthree) = \dthree^{20} \sthree^2  \,\,\,(d_{31}(127,95) = (126, 126)).$$
This is a $d_{31}$-differential between $i_{C_2}^*\BPone$-truncation classes. 

(4) is impossible because $\dthree^{15}\sthree \rone^2$ at $(127, 63)$ is the target of the $d_{31}$-differential 
$$d_{31}(\dthree^{12}\sthree^2 \rone^2) = \dthree^{15}\sthree \rone^2  \,\,\,(d_{31}(128, 32) = (127, 63))$$
between $i_{C_2}^*\BPone$-truncation classes. 

(5) is impossible because by Proposition~\ref{prop:d23Differential}, this class is the target of the $d_{23}$-differential 
$$d_{23}(\overline{\mathfrak{d}}_3^{13} \overline{\mathfrak{d}}_1 u_{24\lambda} u_{40\sigma} a_{16\lambda}) = \overline{\mathfrak{d}}_3^{15} \overline{s}_1 u_{18\lambda} u_{45\sigma} a_{27\lambda} a_{\sigma_2} \,\,\,(d_{23}(128, 32) = (127, 63))$$

It follows that the only possibility for the source is (3), and we deduce our claimed $d_{35}$-differential.  

The second differential is proven in the exact same way, except that we just need the extra fact that the class $\overline{\mathfrak{d}}_3^{21} \overline{s}_3^{3}$ at $(151,119)$ supports a $d_{43}$-differential (Theorem~\ref{thm:LongCrossingDifferentials}).
\end{proof}

\begin{thm}\label{prop:d19d27Differential}
The following differentials exist:
\begin{enumerate}
\item $d_{19}(2\overline{\mathfrak{d}}_3^{30} u_{5\lambda} u_{90\sigma} a_{85\lambda}) = \overline{\mathfrak{d}}_3^{30} \overline{s}_3^{3}\,\,\, (d_{19}(2(190,170)) = (189,189))$; \\
$d_{19}(2\overline{\mathfrak{d}}_3^{26} u_{5\lambda} u_{78\sigma} a_{73\lambda}) = \overline{\mathfrak{d}}_3^{26} \overline{s}_3^{3}\,\,\,(d_{19}(166,146) = (165,165))$;
\item $d_{19}(\overline{\mathfrak{d}}_3^{31} \overline{s}_1 u_{4\lambda} u_{93\sigma} a_{89\lambda} a_{\sigma_2}) = \overline{\mathfrak{d}}_3^{32} \overline{\mathfrak{d}}_1^{2} u_{96\sigma} a_{98\lambda} a_{2\sigma}\,\,\,(d_{19}(195,179) = (194,198))$;\\
$d_{19}(\overline{\mathfrak{d}}_3^{27} \overline{s}_1 u_{4\lambda} u_{81\sigma} a_{77\lambda} a_{\sigma_2}) = \overline{\mathfrak{d}}_3^{28} \overline{\mathfrak{d}}_1^{2} u_{84\sigma} a_{86\lambda} a_{2\sigma}\,\,\,(d_{19}(171,155) = (170,174))$;
\item $d_{19}(2\overline{\mathfrak{d}}_3^{32} u_{5\lambda} u_{96\sigma} a_{91\lambda}) = \overline{\mathfrak{d}}_3^{32} \overline{s}_3^{3}\,\,\,(d_{19}(2(202,182)) = (201,201))$;\\
$d_{19}(2\overline{\mathfrak{d}}_3^{28} u_{5\lambda} u_{84\sigma} a_{79\lambda}) =\overline{\mathfrak{d}}_3^{28} \overline{s}_3^{3} \,\,\,(d_{19}(2(178,158)) = (177,177))$;
\item $d_{19}(2\overline{\mathfrak{d}}_3^{30} u_{13\lambda} u_{90\sigma} a_{77\lambda}) =\overline{\mathfrak{d}}_3^{30} \overline{s}_3^{3} \,\,\,(d_{19}(2(206,154)) = (205,173))$;\\
$d_{19}(2\overline{\mathfrak{d}}_3^{26} u_{13\lambda} u_{78\sigma} a_{65\lambda}) = \overline{\mathfrak{d}}_3^{26} \overline{s}_3^{3}\,\,\,(d_{19}(2(182,130)) = (181,149))$;
\item $d_{19}(\overline{\mathfrak{d}}_3^{31} \overline{s}_1 u_{12\lambda} u_{93\sigma} a_{81\lambda} a_{\sigma_2}) = 2\overline{\mathfrak{d}}_3^{32} \overline{\mathfrak{d}}_1^{2} u_{7\lambda} u_{98\sigma} a_{91\lambda}\,\,\,(d_{19}(211,163) = 2(210,182))$;\\
$d_{19}(\overline{\mathfrak{d}}_3^{27} \overline{s}_1 u_{12\lambda} u_{81\sigma} a_{69\lambda} a_{\sigma_2}) =2\overline{\mathfrak{d}}_3^{28} \overline{\mathfrak{d}}_1^{2} u_{7\lambda} u_{86\sigma} a_{79\lambda} \,\,\,(d_{19}(187,139) = 2(186,158))$;
\item $d_{19}(2\overline{\mathfrak{d}}_3^{32} u_{13\lambda} u_{96\sigma} a_{83\lambda}) =\overline{\mathfrak{d}}_3^{32} \overline{s}_3^{3} \,\,\,(d_{19}(2(218,166)) = (217,185))$;\\
$d_{19}(2\overline{\mathfrak{d}}_3^{28} u_{13\lambda} u_{84\sigma} a_{71\lambda}) =\overline{\mathfrak{d}}_3^{28} \overline{s}_3^{3} \,\,\,(d_{19}(2(194,142)) = (193,161))$;
\item $d_{27}(\overline{\mathfrak{d}}_3^{31} \overline{s}_1 u_{10\lambda} u_{93\sigma} a_{83\lambda} a_{\sigma_2}) = 2\overline{\mathfrak{d}}_3^{33} \overline{\mathfrak{d}}_1 u_{3\lambda} u_{100\sigma} a_{97\lambda}\,\,\,(d_{27}(207,167) = 2(206,194))$;\\
$d_{27}(\overline{\mathfrak{d}}_3^{27} \overline{s}_1 u_{10\lambda} u_{81\sigma} a_{71\lambda} a_{\sigma_2}) = 2\overline{\mathfrak{d}}_3^{29} \overline{\mathfrak{d}}_1 u_{3\lambda} u_{88\sigma} a_{85\lambda}\,\,\,(d_{27}(183,143) = 2(182,170))$;
\item $d_{27}(\overline{\mathfrak{d}}_3^{33} \overline{s}_1 u_{6\lambda} u_{99\sigma} a_{93\lambda} a_{\sigma_2}) = \overline{\mathfrak{d}}_3^{35} \overline{\mathfrak{d}}_1 u_{104\sigma} a_{106\lambda} a_{2\sigma}\,\,\,(d_{27}(211,187) = (210,214))$;\\
$d_{27}(\overline{\mathfrak{d}}_3^{29} \overline{s}_1 u_{6\lambda} u_{87\sigma} a_{81\lambda} a_{\sigma_2}) = \overline{\mathfrak{d}}_3^{31} \overline{\mathfrak{d}}_1 u_{92\sigma} a_{94\lambda} a_{2\sigma}\,\,\,(d_{27}(187,163) = (186,190))$.
\end{enumerate}
\end{thm}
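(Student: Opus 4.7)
The proof will proceed in the same style as Propositions~\ref{prop:d21Differential} and \ref{prop:d23Differential}: for each listed target class I will use the Vanishing Theorem~\ref{thm:aboveFiltration61die} to force it to die on or before the $E_{61}$-page, and then eliminate every candidate source other than the claimed one. The key inputs are our complete knowledge of all differentials of length at most 15, the $d_{21}$, $d_{23}$, $d_{29}$, $d_{31}$, and $d_{35}$-differentials established in Propositions~\ref{prop:d21Differential}, \ref{prop:d23Differential}, \ref{prop:gammaPowers}, Lemma~\ref{lem:tempd29Differentials} and Lemma~\ref{lem:tempd35Differentials}, together with the long differentials of Theorem~\ref{thm:LongCrossingDifferentials}.

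First I would verify that each claimed target is nonzero on the relevant $E_r$-page. The targets in (1), (3), (4), (6) are $i_{C_2}^*\BPone$-truncation classes of the form $\dthree^n \sthree^3$; by part (4) of Proposition~\ref{prop:DthreeSthreeRelations} these survive all $d_{15}$-differentials between $i_{C_2}^*\BPone$-truncation classes, and by inspection they are not hit by any of the shorter differentials compiled above. The targets in (2), (5), (7), (8) are integer-graded classes divisible by $\alpha$, and again one checks against our catalogue of shorter differentials that they survive to the $E_{18}$ or $E_{26}$-page as appropriate. For each target I would then enumerate the candidate sources in the bidegree range allowed by the Vanishing Theorem and the known permanent cycles $\eta'$, $\xi$, $\epsilon'$, $\beta$, $\alpha$. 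Most candidates are ruled out because they already support or receive a shorter differential, or because a natuality argument along $\mathrm{res}$ or $\mathrm{tr}$ (as in the proof of Proposition~\ref{prop:d23Differential}) forbids the hypothetical differential. In every case exactly one candidate remains, which must then be the source of the claimed differential.

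Within each numbered case the two listed differentials have bidegrees differing by $(24,24)$, and this difference equals the bidegree of $(\epsilon')^3$, a product of permanent cycles. Consequently, once the smaller-bidegree differential in a pair has been established (where there are fewer alternative sources to rule out), the companion differential follows by iterated application of Lemma~\ref{lem:BPtwoMainLem} to multiplication by $\epsilon'$. I would execute the cases in order (1)--(8), establishing the lower-bidegree differential of each pair first and then propagating by $(\epsilon')^3$.

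The main obstacle will be the bookkeeping: eighteen differentials, each requiring a careful enumeration and elimination of alternative sources in a portion of the spectral sequence already crowded with $\BPone$-truncation classes, $i_{C_2}^*\BPone$-truncation classes, powers of $\dthree$, and the $\sthree$-relations of Proposition~\ref{prop:DthreeSthreeRelations}. The individual arguments are routine applications of the techniques developed in Sections~\ref{sec:higherDifferentialsI}--\ref{sec:HigherDifferentialsIV}, but tracking which classes remain at each page requires steady use of the figures and the explicit relations already derived.
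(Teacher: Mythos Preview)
Your overall strategy---apply the Vanishing Theorem to force the target to die and then eliminate rival sources---is exactly what the paper does, and your list of inputs (Propositions~\ref{prop:d21Differential}, \ref{prop:d23Differential}, \ref{prop:gammaPowers}, Lemmas~\ref{lem:tempd29Differentials}, \ref{lem:tempd35Differentials}, Theorem~\ref{thm:LongCrossingDifferentials}) is correct. However, two points in your plan would fail.

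First, your propagation step via $(\epsilon')^3$ does not work. Lemma~\ref{lem:BPtwoMainLem} is proved only for multiplication by $\alpha$, not by $\epsilon'$; there is no analogous divisibility statement for $\epsilon'$. More seriously, the two classes in each pair are not related by multiplication by $(\epsilon')^3$ at all: their ratio is $\dthree^4 u_{12\sigma} a_{12\lambda}$, which has the same bidegree $(24,24)$ but is a different class (indeed one supporting a $d_{13}$-differential, hence not a permanent cycle). The paper does not propagate within a pair; it reproves each second differential by the same target-side argument as the first, noting ``The second differential is proven in the exact same way.''

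Second, the cases cannot be run in the order $(1)$--$(8)$. The paper's order is $(1),(2),(5),(3),(4),(7),(8),(6)$, and this matters: the elimination step in $(3)$ uses a $d_{19}$-differential deduced from $(2)$; the elimination in $(8)$ uses the $d_{19}$-differential from $(5)$; and crucially the elimination in $(6)$ requires both the $d_{19}$ from $(5)$ \emph{and} the $d_{27}$ from $(8)$ (propagated via $\alpha$ and $\gamma^4$) to rule out competing sources at $(218,126)$ and $(218,158)$. Running $(6)$ before $(8)$ leaves a candidate source you cannot eliminate.
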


\begin{proof}
\noindent (1): Consider the class $2\overline{\mathfrak{d}}_3^{30} u_{5\lambda} u_{90\sigma} a_{85\lambda}$ at $(190, 170)$.  By Theorem~\ref{thm:aboveFiltration61die}, this class must die on or before the $E_{61}$-page.  If this class supports a differential, then we are done.  

If it is the target of a differential, then for degree reasons, the only possibility for the source is the class $\overline{\mathfrak{d}}_3^{25} \overline{s}_3^{3}$ at $(191,127)$.  However, by Proposition~\ref{prop:d31(40,8)} and the discussion afterwards, this class is the target of the $d_{31}$-differential 
$$d_{31}(2\overline{\mathfrak{d}}_3^{24} u_{24\lambda} u_{72\sigma} a_{48\lambda}) =\overline{\mathfrak{d}}_3^{25} \overline{s}_3^{3} \,\,\,(d_{31}(2(192,96)) = (191,127)).$$
Therefore, this class cannot be the target of a differential.  This proves the first differential in (1). 

The second differential in (1) is proven by the exact same method.  

\vspace{0.1in}

\noindent (2): To prove the first differential in (2), consider the class $\overline{\mathfrak{d}}_3^{32} \overline{\mathfrak{d}}_1^{2} u_{96\sigma} a_{98\lambda} a_{2\sigma}$ at $(194, 198)$.  By Theorem~\ref{thm:aboveFiltration61die}, this class must die on or before the $E_{61}$-page.  For degree reasons, the only possible source is the class $\overline{\mathfrak{d}}_3^{31} \overline{s}_1 u_{4\lambda} u_{93\sigma} a_{89\lambda} a_{\sigma_2}$ at $(195, 179)$.  This proves the first differential in (2).  The second differential is  proven in the exact same way.  

\vspace{0.1in}

\noindent (5): For the first differential, consider the class $2\overline{\mathfrak{d}}_3^{32} \overline{\mathfrak{d}}_1^{2} u_{7\lambda} u_{98\sigma} a_{91\lambda}$ at $2(210, 182)$.  By Theorem~\ref{thm:aboveFiltration61die}, this class must die on or before the $E_{61}$-page.  If this class is the source of a differential, the target must be the class $\overline{\mathfrak{d}}_3^{35} u_{104\sigma} a_{105\lambda} a_{\sigma}$ at $(209,211)$.  This is impossible because we have proven in Theorem~\ref{thm:LongCrossingDifferentials} that the class at $(209, 211)$ is the target of a $d_{61}$-differential.  

Therefore, this class must be killed by a differential of length at most 61.  For degree reasons, the only possible source is the class $\overline{\mathfrak{d}}_3^{31} \overline{s}_1 u_{12\lambda} u_{93\sigma} a_{81\lambda} a_{\sigma_2}$ at $(211,163)$.  This proves the first differential in (5).  The second differential is proven in the exact same way.  

\vspace{0.1in}

\noindent (3): For the first differential, consider the class $\overline{\mathfrak{d}}_3^{32} \overline{s}_3^{3}$ at $(201, 201)$.  By Theorem~\ref{thm:aboveFiltration61die}, this class must be killed on or before the $E_{61}$-page.  For degree reasons, the only possible sources are the following classes:
\begin{itemize}
\item $2\overline{\mathfrak{d}}_3^{32} u_{5\lambda} u_{96\sigma} a_{91\lambda}$ at $(202, 182)$;
\item $2\overline{\mathfrak{d}}_3^{31} \overline{\mathfrak{d}}_1 u_{7\lambda} u_{94\sigma} a_{87\lambda}$ at $(202, 174)$;
\item $2\overline{\mathfrak{d}}_3^{28} \overline{\mathfrak{d}}_1^{2} u_{15\lambda} u_{86\sigma} a_{71\lambda}$ at $(202, 142)$. 
\end{itemize}
If the class $2\overline{\mathfrak{d}}_3^{31} \overline{\mathfrak{d}}_1 u_{7\lambda} u_{94\sigma} a_{87\lambda}$ at $(202, 174)$ is the source, then the differential will be a $d_{27}$-differential.  However, by Theorem~\ref{thm:NormedDiffd29} the class $2\overline{\mathfrak{d}}_3^{30} u_{7\lambda} u_{90\sigma} a_{83\lambda}$ at $(194, 166)$ support the $d_{29}$-differential 
$$d_{29}(2\overline{\mathfrak{d}}_3^{30} u_{7\lambda} u_{90\sigma} a_{83\lambda}) = \overline{\mathfrak{d}}_3^{32} \overline{\mathfrak{d}}_1 u_{96\sigma} a_{97\lambda} a_{\sigma} \,\,\,(d_{29}(2(194, 166)) =(193,195)).$$
Since $2(194, 166) \cdot \epsilon' = 2(202, 174)$, this is a contradiction.  

The class $2\overline{\mathfrak{d}}_3^{28} \overline{\mathfrak{d}}_1^{2} u_{15\lambda} u_{86\sigma} a_{71\lambda}$ at $(202, 142)$ cannot be the source either because it is the target of the $d_{19}$-differential 
$$d_{19}(\overline{\mathfrak{d}}_3^{27} \overline{s}_1 u_{20\lambda} u_{81\sigma} a_{61\lambda} a_{\sigma_2}) = 2\overline{\mathfrak{d}}_3^{28} \overline{\mathfrak{d}}_1^{2} u_{15\lambda} u_{86\sigma} a_{71\lambda} \,\,\,(d_{19}(203, 123) =2(202,142)).$$
This $d_{19}$-differential can be deduced from the second differential of (2) by using multiplicative structures with the classes $\alpha$ and $\gamma^4$.  

Therefore, the only possibility for the source is the class $2\overline{\mathfrak{d}}_3^{32} u_{5\lambda} u_{96\sigma} a_{91\lambda}$ at $(202, 182)$.  This proves the first $d_{19}$-differential.  

For the second differential, consider the class $\overline{\mathfrak{d}}_3^{28} \overline{s}_3^{3}$ at $(177,177)$.  By Theorem~\ref{thm:aboveFiltration61die}, it must be killed by a differential of length at most 61.  The possible sources are the following classes: 
\begin{itemize}
\item $2\overline{\mathfrak{d}}_3^{28} u_{5\lambda} u_{84\sigma} a_{79\lambda}$ at $(178, 158)$;
\item $2\overline{\mathfrak{d}}_3^{27} \overline{\mathfrak{d}}_1 u_{7\lambda} u_{82\sigma} a_{75\lambda}$ at $(178, 150)$;
\item $2\overline{\mathfrak{d}}_3^{24} \overline{\mathfrak{d}}_1^{2} u_{15\lambda} u_{74\sigma} a_{59\lambda}$ at $(178,118)$.
\end{itemize}
By using Lemma~\ref{lem:tempd29Differentials} (1) and multiplicative structures with $\alpha$, the class $2\overline{\mathfrak{d}}_3^{28} u_{5\lambda} u_{84\sigma} a_{79\lambda}$ at $(178, 150)$ support a $d_{29}$-differential, and therefore cannot be the source.  The class $2\,\overline{\mathfrak{d}}_3^{24} \overline{\mathfrak{d}}_1^{2} u_{15\lambda} u_{74\sigma} a_{59\lambda}$ at $(178, 118)$ is the target of the $d_{19}$-differential 
$$d_{19}(\overline{\mathfrak{d}}_3^{23} \overline{s}_1 u_{20\lambda} u_{69\sigma} a_{49\lambda} a_{\sigma_2}) = 2\overline{\mathfrak{d}}_3^{24} \overline{\mathfrak{d}}_1^{2} u_{15\lambda} u_{74\sigma} a_{59\lambda} \,\,\,(d_{19}(179, 99) =2(178,118)).$$
This $d_{19}$-differential can be deduced from the first differential of (2) by using multiplicative structures with the classes $\alpha$ and $\gamma^4$.  

Therefore, there is only one possible source left, and this leads to the desired $d_{19}$-differential.  

\vspace{0.1in}

\noindent (4): To prove the first differential in (4), we will first multiply the source by $\gamma^4$ and prove the $d_{19}$-differential 
$$d_{19}(2\overline{\mathfrak{d}}_3^{38} u_{29\lambda} u_{114\sigma} a_{85\lambda}) = \overline{\mathfrak{d}}_3^{38} \overline{s}_3^{3}\,\,\,(d_{19}(2(286,170)) = (285,189)).$$
Once we have proven this, we can immediately deduce the first differential.  

Consider the class $\overline{\mathfrak{d}}_3^{38} \overline{s}_3^{3}$ at $(285, 189)$.  By Theorem~\ref{thm:aboveFiltration61die}, this class must die on or before the $E_{61}$-page.  For degree reasons, it cannot support a differential (because the length of that differential must be $15 <r \leq 61$).  Therefore, it must be the target of a differential.  For degree reasons, the possible sources are the following classes:
\begin{itemize}
\item $2\overline{\mathfrak{d}}_3^{38} u_{29\lambda} u_{114\sigma} a_{85\lambda}$ at $(286,170)$; 
\item $2\overline{\mathfrak{d}}_3^{36} u_{35\lambda} u_{108\sigma} a_{73\lambda}$ at $(286,146)$.
\end{itemize}
Using the first differential in Lemma~\ref{lem:tempd35Differentials} and multiplicative structures with $\gamma^8$, we deduce that the class $2\overline{\mathfrak{d}}_3^{36} u_{35\lambda} u_{108\sigma} a_{73\lambda}$ at $(286, 146)$ is the target of the $d_{35}$-differential
$$d_{35}(\overline{\mathfrak{d}}_3^{33} \overline{s}_1 u_{44\lambda} u_{99\sigma} a_{55\lambda} a_{\sigma_2}) = 2\overline{\mathfrak{d}}_3^{36} u_{35\lambda} u_{108\sigma} a_{73\lambda}\,\,\,(d_{35}(287,111) = 2(286,146)).$$
Therefore, the only possible source left is the class $2\overline{\mathfrak{d}}_3^{38} u_{29\lambda} u_{114\sigma} a_{85\lambda}$ at $(286, 170)$.  This proves our desired differential.  

The proof of the second differential is exactly the same (except near the end we use the second differential in Lemma~\ref{lem:tempd35Differentials} and multiplicative structures with $\gamma^8$ and $\alpha$).  

\vspace{0.1in}

\noindent (7): For the first differential, consider the class $2\overline{\mathfrak{d}}_3^{33} \overline{\mathfrak{d}}_1 u_{3\lambda} u_{100\sigma} a_{97\lambda}$ at $(206, 194)$.  By Theorem~\ref{thm:aboveFiltration61die} and degree reasons, this class must be killed on or before the $E_{61}$-page.  The only possibilities for the source are the following classes: 
\begin{itemize}
\item $\overline{\mathfrak{d}}_3^{31} \overline{s}_1 u_{10\lambda} u_{93\sigma} a_{83\lambda} a_{\sigma_2}$ at $(207, 167)$;
\item $\overline{\mathfrak{d}}_3^{29} u_{17\lambda} u_{86\sigma} a_{70\lambda} a_{\sigma}$ at $(207, 141)$. 
\end{itemize}
Using Lemma~\ref{lem:tempd29Differentials} (2) and multiplicative structures with $\alpha$ and $\gamma^4$, we deduce that the class $\overline{\mathfrak{d}}_3^{29} u_{17\lambda} u_{86\sigma} a_{70\lambda} a_{\sigma}$ at $(207, 141)$ is the target of the $d_{29}$-differential 
$$d_{29}(\overline{\mathfrak{d}}_3^{26} \overline{\mathfrak{d}}_1^{2} u_{24\lambda} u_{80\sigma} a_{56\lambda}) =\overline{\mathfrak{d}}_3^{29} u_{17\lambda} u_{86\sigma} a_{70\lambda} a_{\sigma} \,\,\,(d_{29}(208,112)=(207,141)).$$
Therefore, the source must be the class $\overline{\mathfrak{d}}_3^{31} \overline{s}_1 u_{10\lambda} u_{93\sigma} a_{83\lambda} a_{\sigma_2}$ at $(207, 167)$. 

The second differential is proven in the exact same way, except near the end we use Lemma~\ref{lem:tempd29Differentials} (3) and multiplicative structures with $\alpha$ and $\gamma^4$ to deduce a $d_{29}$-differential.  

\vspace{0.1in}

\noindent (8): To prove the first differential, consider the class $\overline{\mathfrak{d}}_3^{35} \overline{\mathfrak{d}}_1 u_{104\sigma} a_{106\lambda} a_{2\sigma}$ at $(210, 214)$.  By Theorem~\ref{thm:aboveFiltration61die}, it must be killed on or before the $E_{61}$-page.  The only possibilities for the sources are the following classes: 
\begin{itemize}
\item $\overline{\mathfrak{d}}_3^{33} \overline{s}_1 u_{6\lambda} u_{99\sigma} a_{93\lambda} a_{\sigma_2}$ at $(211,187)$;
\item $\overline{\mathfrak{d}}_3^{31} \overline{s}_1 u_{12\lambda} u_{93\sigma} a_{81\lambda} a_{\sigma_2}$ at $(211,163)$.
\end{itemize}
By (5), the class $(211,163)$ supports the $d_{19}$-differential 
$$d_{19}(\overline{\mathfrak{d}}_3^{31} \overline{s}_1 u_{12\lambda} u_{93\sigma} a_{81\lambda} a_{\sigma_2}) =2\overline{\mathfrak{d}}_3^{32} \overline{\mathfrak{d}}_1^{2} u_{7\lambda} u_{98\sigma} a_{91\lambda} \,\,\,(d_{19}(211,163)=2(210,182)).$$
Therefore, the only possibility for the source is the class $\overline{\mathfrak{d}}_3^{33} \overline{s}_1 u_{6\lambda} u_{99\sigma} a_{93\lambda} a_{\sigma_2}$ at $(211, 187)$.  This proves our desired differential. 

The proof of the second differential is exactly the same.  

\vspace{0.1in}

\noindent (6): For the first differential, consider the class $\overline{\mathfrak{d}}_3^{32} \overline{s}_3^{3}$ at $(217, 185)$.  By Theorem~\ref{thm:aboveFiltration61die}, this class must die on or before the $E_{61}$-page.  If this class supports a differential, then the only possible target is the class $2\overline{\mathfrak{d}}_3^{36} u_{108\sigma} a_{108\lambda}$ at $(216, 216)$.  This is impossible because this class at $(216,216)$ is the target of the $d_{29}$-differential 
$$d_{29}(\overline{\mathfrak{d}}_3^{33} \overline{\mathfrak{d}}_1^{2} u_{8\lambda} u_{100\sigma} a_{93\lambda} a_{\sigma}) = 2\overline{\mathfrak{d}}_3^{36} u_{108\sigma} a_{108\lambda}\,\,\,(d_{29}(217,187) = 2(216,216)).$$
We can deduce this differential from Lemma~\ref{lem:tempd29Differentials} (4) and multiplicative structures with $\alpha$.  

Therefore, this class must be killed by a differential of length at most 61.  The only possibilities for the source are the following classes: 
\begin{itemize}
\item $2\overline{\mathfrak{d}}_3^{32} u_{13\lambda} u_{96\sigma} a_{83\lambda}$ at $(218,166)$;
\item $2\overline{\mathfrak{d}}_3^{31} \overline{\mathfrak{d}}_1 u_{15\lambda} u_{94\sigma} a_{79\lambda}$ at $(218,158)$;
\item $2\overline{\mathfrak{d}}_3^{28} \overline{\mathfrak{d}}_1^{2} u_{23\lambda} u_{86\sigma} a_{63\lambda}$ at $(218,126)$.
\end{itemize}

The class $2\overline{\mathfrak{d}}_3^{31} \overline{\mathfrak{d}}_1 u_{15\lambda} u_{94\sigma} a_{79\lambda}$ at $(218, 158)$ is the target of the $d_{27}$-differential 
$$d_{27}(\overline{\mathfrak{d}}_3^{29} \overline{s}_1 u_{22\lambda} u_{87\sigma} a_{65\lambda} a_{\sigma_2}) = 2\overline{\mathfrak{d}}_3^{31} \overline{\mathfrak{d}}_1 u_{15\lambda} u_{94\sigma} a_{79\lambda}\,\,\,(d_{27}(219,131) = 2(218,158)).$$
We can deduce this differential from the second differential in (8) and multiplication with $\alpha$ and $\gamma^4$. 

The class $2\overline{\mathfrak{d}}_3^{28} \overline{\mathfrak{d}}_1^{2} u_{23\lambda} u_{86\sigma} a_{63\lambda}$ at $(218, 126)$ is the target of the $d_{19}$-differential 
$$d_{19}(\overline{\mathfrak{d}}_3^{27} \overline{s}_1 u_{28\lambda} u_{81\sigma} a_{53\lambda} a_{\sigma_2}) = 2\overline{\mathfrak{d}}_3^{28} \overline{\mathfrak{d}}_1^{2} u_{23\lambda} u_{86\sigma} a_{63\lambda}\,\,\,(d_{19}(219,107) = 2(218,126)).$$
We can deduce this differential from the second differential in (5) and multiplication with $\alpha$ and $\gamma^4$.  

It follows that the only possibility left for the source is the class $2\overline{\mathfrak{d}}_3^{32} u_{13\lambda} u_{96\sigma} a_{83\lambda}$ at $(218, 166)$, as desired. 

The proof of the second differential is exactly the same.  
\end{proof}

All the other $d_{19}$- and $d_{27}$-differentials are obtained from the differentials in Proposition~\ref{prop:d19d27Differential} by using product structures with the classes $\alpha$ and $\gamma^4$ (see the discussion after Propositon~\ref{prop:gammaPowers}).  These differentials are the {\color{LimeGreen} lime-green differentials} and the {\color{ForestGreen} forest-green differentials} in Figure~\ref{fig:E4C4E19toE27page}, respectively.

\section{Higher differentials V: $d_{29}$-differentials and $d_{31}$-differentials} \label{sec:HigherDifferentialsV}
\subsection{$d_{29}$-differentials}

\begin{thm}\label{thm:E4C4d29Differentials}
The following $d_{29}$-differentials exist: 
\begin{enumerate}
\item $d_{29}(\overline{\mathfrak{d}}_3^{26} \overline{\mathfrak{d}}_1^{2} u_{8\lambda} u_{80\sigma} a_{72\lambda}) =\overline{\mathfrak{d}}_3^{29} u_{\lambda} u_{86\sigma} a_{86\lambda} a_{\sigma} \,\,\,(d_{29}(176,144) = (175,173))$;
\item $d_{29}(\overline{\mathfrak{d}}_3^{27} u_{8\lambda} u_{80\sigma} a_{73\lambda} a_{\sigma}) = \overline{\mathfrak{d}}_3^{29} \overline{\mathfrak{d}}_1 u_{88\sigma} a_{88\lambda}\,\,\,(d_{29}(177,147) = (176,176))$;
\item $d_{29}(2\overline{\mathfrak{d}}_3^{27} \overline{\mathfrak{d}}_1 u_{7\lambda} u_{82\sigma} a_{75\lambda}) =\overline{\mathfrak{d}}_3^{29} \overline{\mathfrak{d}}_1^{2} u_{88\sigma} a_{89\lambda} a_{\sigma} \,\,\,(d_{29}(2(178,150)) = (177,179))$;
\item $d_{29}(\overline{\mathfrak{d}}_3^{28} \overline{\mathfrak{d}}_1 u_{8\lambda} u_{84\sigma} a_{77\lambda} a_{\sigma}) = \overline{\mathfrak{d}}_3^{30} \overline{\mathfrak{d}}_1^{2} u_{92\sigma} a_{92\lambda}\,\,\,(d_{29}(185,155) = (184,184))$;
\item $d_{29}(\overline{\mathfrak{d}}_3^{29} \overline{\mathfrak{d}}_1^{2} u_{8\lambda} u_{88\sigma} a_{81\lambda} a_{\sigma}) = 2\overline{\mathfrak{d}}_3^{32} u_{96\sigma} a_{96\lambda}\,\,\,(d_{29}(193,163) = 2(192,192))$;
\item $d_{29}(2\overline{\mathfrak{d}}_3^{30} u_{7\lambda} u_{90\sigma} a_{83\lambda}) = \overline{\mathfrak{d}}_3^{32} \overline{\mathfrak{d}}_1 u_{96\sigma} a_{97\lambda} a_{\sigma}\,\,\,(d_{29}(2(194,166)) = (193,195))$;
\item $d_{29}(\overline{\mathfrak{d}}_3^{30} \overline{\mathfrak{d}}_1^{2} u_{8\lambda} u_{92\sigma} a_{84\lambda}) = \overline{\mathfrak{d}}_3^{33} u_{\lambda} u_{98\sigma} a_{98\lambda} a_{\sigma}\,\,\,(d_{29}(200,168) = (199,197)$;
\item $d_{29}(\overline{\mathfrak{d}}_3^{31} u_{8\lambda} u_{92\sigma} a_{85\lambda} a_{\sigma}) =\overline{\mathfrak{d}}_3^{33} \overline{\mathfrak{d}}_1 u_{100\sigma} a_{100\lambda} \,\,\,(d_{29}(201,171) = (200,200))$;
\item $d_{29}(2\overline{\mathfrak{d}}_3^{31} \overline{\mathfrak{d}}_1 u_{7\lambda} u_{94\sigma} a_{87\lambda}) = \overline{\mathfrak{d}}_3^{33} \overline{\mathfrak{d}}_1^{2} u_{100\sigma} a_{101\lambda} a_{\sigma}\,\,\,(d_{29}(2(202,174)) = (201,203))$;
\item $d_{29}(\overline{\mathfrak{d}}_3^{32} \overline{\mathfrak{d}}_1 u_{8\lambda} u_{96\sigma} a_{89\lambda} a_{\sigma}) = \overline{\mathfrak{d}}_3^{34} \overline{\mathfrak{d}}_1^{2} u_{104\sigma} a_{104\lambda}\,\,\,(d_{29}(209,179) = (208,208))$;
\item $d_{29}(\overline{\mathfrak{d}}_3^{33} \overline{\mathfrak{d}}_1^{2} u_{8\lambda} u_{100\sigma} a_{93\lambda} a_{\sigma}) = 2\overline{\mathfrak{d}}_3^{36} u_{108\sigma} a_{108\lambda}\,\,\,(d_{29}(217,187) = 2(216,216))$;
\item $d_{29}(2\overline{\mathfrak{d}}_3^{34} u_{7\lambda} u_{102\sigma} a_{95\lambda}) = \overline{\mathfrak{d}}_3^{36} \overline{\mathfrak{d}}_1 u_{108\sigma} a_{109\lambda} a_{\sigma} \,\,\,(d_{29}(2(218,190)) = (217,219))$;
\item $d_{29}(2\overline{\mathfrak{d}}_3^{28} u_{11\lambda} u_{84\sigma} a_{73\lambda}) = \overline{\mathfrak{d}}_3^{30} \overline{\mathfrak{d}}_1 u_{4\lambda} u_{90\sigma} a_{87\lambda} a_{\sigma}\,\,\,(d_{29}(2(190,146)) = (189,175))$;
\item $d_{29}(2\overline{\mathfrak{d}}_3^{29} \overline{\mathfrak{d}}_1 u_{11\lambda} u_{88\sigma} a_{77\lambda}) = \overline{\mathfrak{d}}_3^{31} \overline{\mathfrak{d}}_1^{2} u_{4\lambda} u_{94\sigma} a_{91\lambda} a_{\sigma}\,\,\,(d_{29}(2(198,154)) = (197,183))$;
\item $d_{29}(\overline{\mathfrak{d}}_3^{30} \overline{\mathfrak{d}}_1 u_{12\lambda} u_{90\sigma} a_{79\lambda} a_{\sigma}) = \overline{\mathfrak{d}}_3^{32} \overline{\mathfrak{d}}_1^{2} u_{4\lambda} u_{98\sigma} a_{94\lambda}\,\,\,(d_{29}(205,159) = (204,188))$;
\item $d_{29}(2\overline{\mathfrak{d}}_3^{32} u_{11\lambda} u_{96\sigma} a_{85\lambda}) =\overline{\mathfrak{d}}_3^{34} \overline{\mathfrak{d}}_1 u_{4\lambda} u_{102\sigma} a_{99\lambda} a_{\sigma} \,\,\,(d_{29}(2(214,170)) = (213,199))$;
\item $d_{29}(2\overline{\mathfrak{d}}_3^{33} \overline{\mathfrak{d}}_1 u_{11\lambda} u_{100\sigma} a_{89\lambda}) = \overline{\mathfrak{d}}_3^{35} \overline{\mathfrak{d}}_1^{2} u_{4\lambda} u_{106\sigma} a_{103\lambda} a_{\sigma}\,\,\,(d_{29}(2(222,178)) = (221,207))$;
\item $d_{29}(\overline{\mathfrak{d}}_3^{34} \overline{\mathfrak{d}}_1 u_{12\lambda} u_{102\sigma} a_{91\lambda} a_{\sigma}) = \overline{\mathfrak{d}}_3^{36} \overline{\mathfrak{d}}_1^{2} u_{4\lambda} u_{110\sigma} a_{106\lambda}\,\,\,(d_{29}(229,183) = (228,212))$.
\end{enumerate}
\end{thm}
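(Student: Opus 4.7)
The plan is to bootstrap from Theorem~\ref{thm:NormedDiffd29}, which provides the base $d_{29}$-differential $d_{29}(u_{8\lambda}a_\sigma) = \dthree^2 \done u_{8\sigma}a_{15\lambda} + tr(\rthree^4 \rone^2 u_\sigma a_{14\sigma_2})$, and to use the Vanishing Theorem (Theorem~\ref{thm:aboveFiltration61die}) together with product structure with the permanent cycles $\alpha$, $\beta$, $\epsilon'$, $\gamma^4$ (which supports a $d_{31}$-differential, see Proposition~\ref{prop:gammaPowers}), and $\gamma^8$ to propagate and detect the desired differentials. Most of the listed $d_{29}$-differentials will arise as consequences of multiplying the base normed differential by the relevant permanent cycles and then invoking Lemma~\ref{lem:BPtwoMainLem}(1), once we show the resulting source and target are both nonzero on the $E_{29}$-page.

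The strategy for each differential will follow a uniform pattern: locate the target, which in each of the 18 cases lies above the line of slope $1$ (so is a class divisible by $\alpha^2$ or close to it); invoke Theorem~\ref{thm:aboveFiltration61die} to conclude that the target must die by the $E_{61}$-page; and then eliminate competing sources for that target (or competing targets for that source) using the $d_r$-differentials already established in Section~\ref{sec:InducedDiffBPone}, Section~\ref{sec:higherDifferentialsI}, Section~\ref{sec:Norm}, Section~\ref{sec:HigherDifferentialsIV}, Lemma~\ref{lem:tempd29Differentials}, Lemma~\ref{lem:tempd35Differentials}, and Theorem~\ref{thm:LongCrossingDifferentials}. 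For instance, differentials (5), (10), (11), and (17) will follow by multiplying the base normed $d_{29}$-differential by $\beta$, $\alpha\beta$, $\alpha^2\beta$, and $\alpha\beta^2\gamma^4$ respectively, while (1), (2), (7), (8) should come from multiplications by $\gamma^4$ and $\alpha$ after matching the bidegrees. For differentials like (3), (6), (9), (12) whose sources carry an extra factor of $2$, the pattern from the proof of Lemma~\ref{lem:tempd29Differentials}(3) will apply: use $\epsilon'$-multiplication on a previously established $d_{29}$-differential to locate a class in high filtration that must be hit, and then rule out all alternate sources by exhibiting earlier (shorter) differentials on them.

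The key step for differentials (13)--(18), whose sources involve $u_{4\lambda}$ or $u_{12\lambda}$ (introducing the $\gamma$ factor), will be to chain the base $d_{29}$-differential with $\gamma$ and its powers. Here we must be careful: $\gamma$ itself supports a $d_{13}$-differential (Proposition~\ref{prop:BPtwod13(20,4)}), so Leibniz applied to $\gamma \cdot (\text{source})$ will produce the desired formula only after the shorter $d_{13}$-contributions have been accounted for. Concretely, we will multiply the target-side predictions by $\gamma^4$ (which is a $d_{29}$-cycle, as its shortest differential is $d_{31}$) to shift bidegree without interference, and then use Lemma~\ref{lem:BPtwoMainLem} to descend when both the $\gamma^4$-multiple and the original class survive to the $E_{29}$-page.

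The main obstacle I anticipate is the exhaustive ruling-out of competing sources/targets, particularly for differentials such as (6), (9), (12), and the second halves of (13)--(18). In these cases the relevant vertical column on the $E_{29}$-page contains several classes belonging to different $\BPone$- and $i_{C_2}^*\BPone$-truncations, and each must be excluded either by the previously computed $d_{15}$--$d_{27}$ differentials (especially the $d_{19}$'s and $d_{27}$'s of Theorem~\ref{prop:d19d27Differential}, and the $d_{31}$-identifications among $i_{C_2}^*\BPone$-truncation classes in Proposition~\ref{prop:DthreeSthreeRelations}) or by a separate Vanishing Theorem argument chaining through $\alpha\beta^k$. We expect that each exclusion will succeed for degree reasons, but the combinatorial bookkeeping (and the consistent use of the $\rone = \grone$, $\rone^3 = \sthree$, $\sthree^3\rone = 0$ relations) will be delicate; the payoff is that Lemma~\ref{lem:BPtwoMainLem}, combined with the three permanent cycles $\alpha$, $\beta$, $\epsilon'$ and the $d_{31}$-cycle $\gamma^4$, reduces the 18 cases to a small number of independent base cases, each verifiable directly from Theorem~\ref{thm:NormedDiffd29}.
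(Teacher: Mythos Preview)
Your overall strategy---bootstrap from Theorem~\ref{thm:NormedDiffd29}, invoke the Vanishing Theorem on targets, and propagate via the permanent cycles $\alpha$, $\beta$, $\epsilon'$---is exactly what the paper does, and cases (2), (3), (5), (10) are indeed immediate from the normed differential. Several of your specific multiplication claims are off (for instance, (17) is not $\alpha\beta^2\gamma^4$ times the base differential; the paper gets it as $\epsilon'\cdot(16)$), but these are bookkeeping issues, not conceptual ones.

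The substantive divergence is your treatment of (13)--(18). You propose chaining through $\gamma$ and worrying about the Leibniz interaction with $d_{13}(\gamma)$, then ``descending'' via Lemma~\ref{lem:BPtwoMainLem}. This is both unnecessary and does not quite work: Lemma~\ref{lem:BPtwoMainLem} is stated only for $\alpha$-division, not $\gamma^4$-division. The paper's arguments for (13)--(18) are much more direct and do not involve $\gamma$ at all. Cases (13), (16) are handled by the Vanishing Theorem on the target plus a degree count; (14), (17) follow from (13), (16) by $\epsilon'$-multiplication. The key cases you are missing are (15) and (18): here the paper argues from the \emph{source}, not the target. For (15), one multiplies the source at $(205,159)$ by $\epsilon'$ to land at $(213,167)$, which supports a $d_{53}$ by Theorem~\ref{thm:LongCrossingDifferentials}; hence $(205,159)$ must support a differential of length $\leq 53$, and degree reasons force it to be the $d_{29}$. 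For (18), one multiplies the source at $(229,183)$ by $\eta'$ to land at $(230,186)$, again a $d_{53}$-source, and the same reasoning applies. Your proposal lists Theorem~\ref{thm:LongCrossingDifferentials} only as a tool for excluding competitors, not as the positive input that drives these two cases; without that source-side argument, (15) and (18) will not close.
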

\begin{proof}
The differentials (2), (3), (5), (10) are immediate from Theorem~\ref{thm:NormedDiffd29}.  \\

\noindent (1): Consider the class $\overline{\mathfrak{d}}_3^{26} \overline{\mathfrak{d}}_1^{2} u_{8\lambda} u_{80\sigma} a_{72\lambda}$ at $(176, 144)$.  If we multiply this class by $\beta$, we get the class $(193, 163)$, which supports the $d_{29}$-differential (5).  Therefore, the class $\overline{\mathfrak{d}}_3^{26} \overline{\mathfrak{d}}_1^{2} u_{8\lambda} u_{80\sigma} a_{72\lambda}$ at $(176, 144)$ must support a differential of length at most 29.  The only possibility is the $d_{29}$-differential that we claimed. 

\vspace{0.1in}
\noindent (4): This differential follows from (2) via multiplication by $\epsilon'$.  

\vspace{0.1in}
\noindent (6): Consider the class $\overline{\mathfrak{d}}_3^{32} \overline{\mathfrak{d}}_1 u_{96\sigma} a_{97\lambda} a_{\sigma}$ at $(193, 195)$.  By Theorem~\ref{thm:aboveFiltration61die}, this class must be killed by a differential of length at most 61.  By degree reasons, the only possibility is the $d_{29}$-differential we claimed. 

\vspace{0.1in}
\noindent (8): Consider the class $\overline{\mathfrak{d}}_3^{31} u_{8\lambda} u_{92\sigma} a_{85\lambda} a_{\sigma}$ at $(201, 171)$.  If we multiply this class by $\epsilon'$, we get the class $\overline{\mathfrak{d}}_3^{32} \overline{\mathfrak{d}}_1 u_{8\lambda} u_{96\sigma} a_{89\lambda} a_{\sigma}$ at $(209, 179)$, which supports differential (10).  Therefore, the class $\overline{\mathfrak{d}}_3^{31} u_{8\lambda} u_{92\sigma} a_{85\lambda} a_{\sigma}$ at $(201, 171)$ must support a differential of length at most 29.  The only possibility is the differential that we claimed. 

\vspace{0.1in}
\noindent (9): This differential follows form (6) via multiplication by $\epsilon'$.  

\vspace{0.1in}
\noindent (11): This differential follows from (10) via multiplication by $\epsilon'$.  

\vspace{0.1in}
\noindent (7): Consider the class $\overline{\mathfrak{d}}_3^{30} \overline{\mathfrak{d}}_1^{2} u_{8\lambda} u_{92\sigma} a_{84\lambda}$ at $(200, 168)$.  If we multiply this class by $\beta$, we get the class $\overline{\mathfrak{d}}_3^{33} \overline{\mathfrak{d}}_1^{2} u_{8\lambda} u_{100\sigma} a_{93\lambda} a_{\sigma}$ at $(217, 187)$, which supports differential (11).  Therefore, the class $\overline{\mathfrak{d}}_3^{30} \overline{\mathfrak{d}}_1^{2} u_{8\lambda} u_{92\sigma} a_{84\lambda}$ at $(200, 168)$ must support a differential of length at most 29.  The only possibility is the differential that we claimed. 

\vspace{0.1in}
\noindent (12): Consider the class $\overline{\mathfrak{d}}_3^{36} \overline{\mathfrak{d}}_1 u_{108\sigma} a_{109\lambda} a_{\sigma}$ at $(217, 219)$.  By Theorem~\ref{thm:aboveFiltration61die}, this class must be killed by a differential of length at most 61.  For degree reasons, the only possible differential is the $d_{29}$-differential that we claimed. 

\vspace{0.1in}
\noindent (13): Consider the class $\overline{\mathfrak{d}}_3^{30} \overline{\mathfrak{d}}_1 u_{4\lambda} u_{90\sigma} a_{87\lambda} a_{\sigma}$ at $(189,175)$.  By Theorem~\ref{thm:aboveFiltration61die}, this class must die on or before the $E_{61}$-page.  For degree reasons, the only possible way for this to happen is for the claimed $d_{29}$-differential to exist.  

\vspace{0.1in}
\noindent (14): This differential follows from (13) via multiplication by $\epsilon'$ (alternatively, we can also use Theorem~\ref{thm:aboveFiltration61die}). 

\vspace{0.1in}
\noindent (15): Consider the class $\overline{\mathfrak{d}}_3^{30} \overline{\mathfrak{d}}_1 u_{12\lambda} u_{90\sigma} a_{79\lambda} a_{\sigma}$ at $(205, 159)$.  If we multiply this class by $\epsilon'$, we get the class $\overline{\mathfrak{d}}_3^{31} \overline{\mathfrak{d}}_1^{2} u_{12\lambda} u_{94\sigma} a_{83\lambda} a_{\sigma}$ at $(213,167)$.  This class support a $d_{53}$-differential by Theorem~\ref{thm:LongCrossingDifferentials}.  Therefore, the class $\overline{\mathfrak{d}}_3^{30} \overline{\mathfrak{d}}_1 u_{12\lambda} u_{90\sigma} a_{79\lambda} a_{\sigma}$ at $(205, 159)$ must support a differential of length at most 53.  For degree reasons, this implies the $d_{29}$-differential that we claimed.  

\vspace{0.1in}
\noindent (16): Consider the class $\overline{\mathfrak{d}}_3^{34} \overline{\mathfrak{d}}_1 u_{4\lambda} u_{102\sigma} a_{99\lambda} a_{\sigma}$ at $(213,199)$.  By Theorem~\ref{thm:aboveFiltration61die}, this class must die on or before the $E_{61}$-page.  For degree reasons, the only way for this to happen is for the claimed $d_{29}$-differential to exist.  

\vspace{0.1in}
\noindent (17): This differential follows from (16) via multiplication by $\epsilon'$.  

\vspace{0.1in}
\noindent (18): Consider the class $\overline{\mathfrak{d}}_3^{34} \overline{\mathfrak{d}}_1 u_{12\lambda} u_{102\sigma} a_{91\lambda} a_{\sigma}$ at $(229,183)$.  If we multiply this class by $\eta'$, we get the class $2\overline{\mathfrak{d}}_3^{34} \overline{\mathfrak{d}}_1^{2} u_{11\lambda} u_{104\sigma} a_{93\lambda}$ at $(230,186)$.  By Theorem~\ref{thm:LongCrossingDifferentials}, this class supports a $d_{53}$-differential.  Therefore, the class $\overline{\mathfrak{d}}_3^{34} \overline{\mathfrak{d}}_1 u_{12\lambda} u_{102\sigma} a_{91\lambda} a_{\sigma}$ at $(229,183)$ must support a differential of length at most 53.  For degree reasons, we deduce the claimed $d_{29}$-differential. 
\end{proof}

Theorem~\ref{thm:E4C4d29Differentials}, combined with using multiplicative structures on the classes $\alpha$ and $\gamma^4$, produces all the $d_{29}$-differentials in $\SliceSS(\BPtwo)$ (see our discussion after Proposition~\ref{prop:gammaPowers}).  These differentials are shown in Figure~\ref{fig:E4C4d29Differentials}. 

\begin{figure}
\begin{center}
\makebox[\textwidth]{\includegraphics[trim={0cm 10cm 0cm 10cm}, clip, page = 1, scale = 0.23]{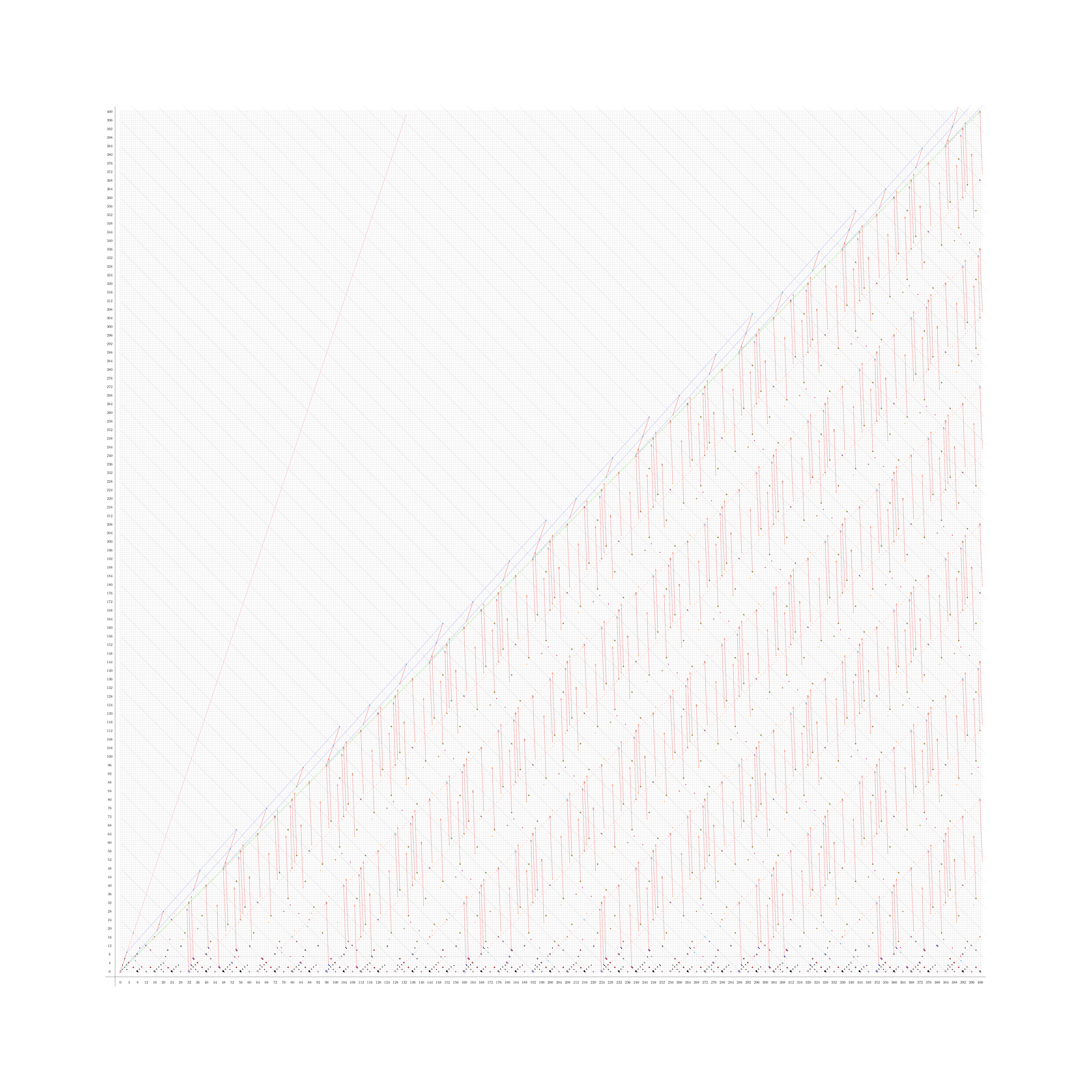}}
\end{center}
\begin{center}
\caption{$d_{29}$-differentials in $\SliceSS(\BPtwo)$.}
\hfill
\label{fig:E4C4d29Differentials}
\end{center}
\end{figure}

\subsection{$d_{31}$-differentials} \label{subsec:d31Diff}

Almost all of the $d_{31}$-differentials are induced $d_{31}$-differentials from $i_{C_2}^*\BPone$-truncation classes, and they can be proven by using the transfer and the restriction map (see Section~\ref{subsection:inducedC2Differentials}).  

The rest of the $d_{31}$-differentials follows from Proposition~\ref{prop:d31(40,8)} (and the discussion afterwards), Proposition~\ref{prop:gammaPowers}, and multiplication with the following classes:
\begin{itemize}
\item $\alpha$ at $(48,48)$ (permanent cycle); 
\item $\gamma^8$ at $(160,32)$ (permanent cycle);
\item $\overline{\mathfrak{d}}_3^{12} u_{16\lambda} u_{36\sigma} a_{20\lambda}$ at $(104,40)$ ($d_{31}$-cycle).  
\end{itemize}
The $d_{31}$-differentials are shown in Figure~\ref{fig:E4C4d31Differentials}.

\begin{figure}
\begin{center}
\makebox[\textwidth]{\includegraphics[trim={0cm 10cm 0cm 10cm}, clip, page = 1, scale = 0.23]{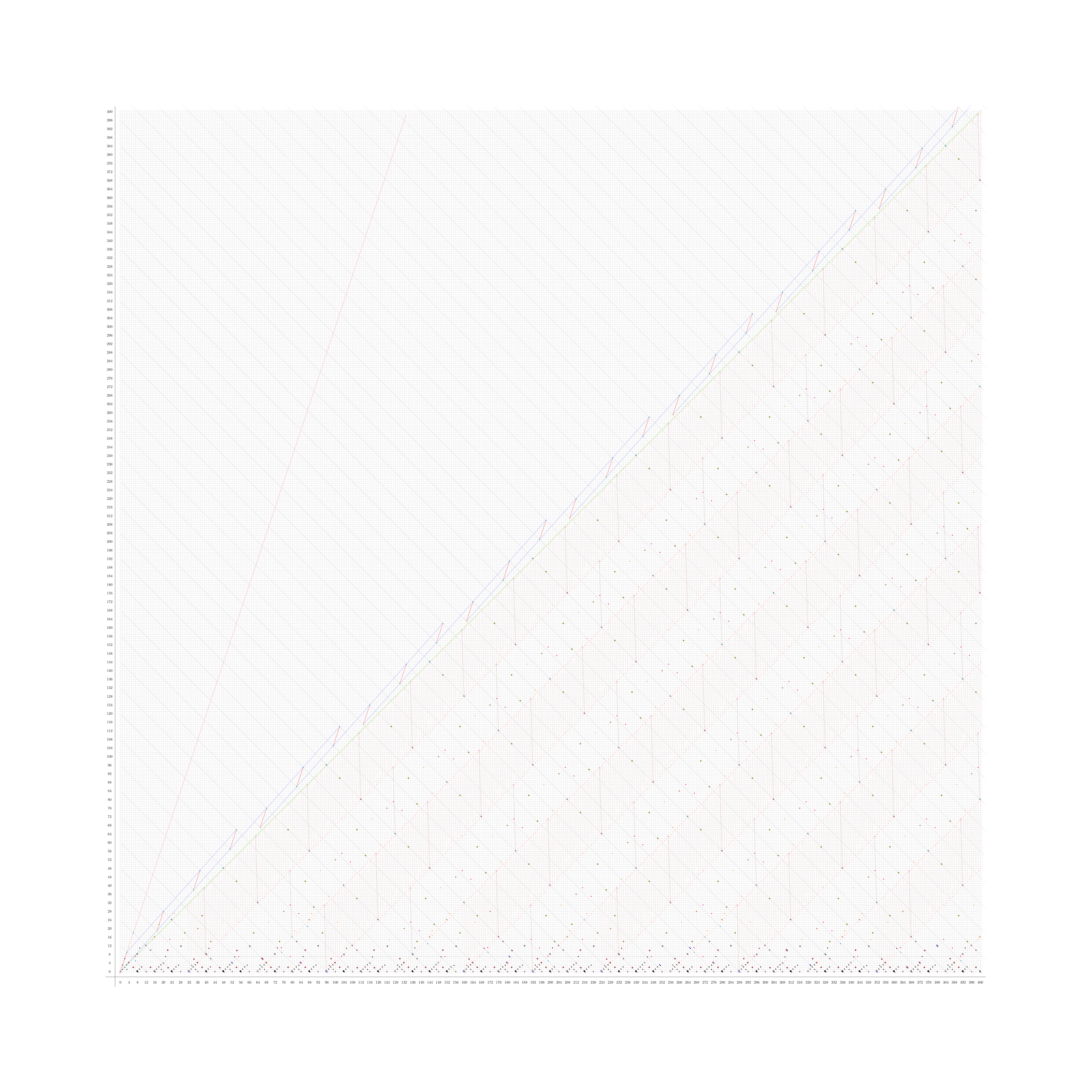}}
\end{center}
\begin{center}
\caption{$d_{31}$-differentials in $\SliceSS(\BPtwo)$.}
\hfill
\label{fig:E4C4d31Differentials}
\end{center}
\end{figure}

\section{Higher differentials VI: $d_{35}$ to $d_{61}$-differentials} \label{sec:HigherDifferentialsVI}

\begin{prop}\label{prop:E4C4d35Differentials}
The following $d_{35}$-differentials exist: 
\begin{enumerate}
\item $d_{35}(\overline{\mathfrak{d}}_3^{17} \overline{s}_1 u_{12\lambda} u_{51\sigma} a_{39\lambda} a_{\sigma_2}) =  2\overline{\mathfrak{d}}_3^{20} u_{3\lambda} u_{60\sigma} a_{57\lambda}\,\,\,(d_{35}(127,79) = 2(126, 114))$;
\item $d_{35}(\overline{\mathfrak{d}}_3^{21} \overline{s}_1 u_{12\lambda} u_{63\sigma} a_{51\lambda} a_{\sigma_2}) =2\overline{\mathfrak{d}}_3^{24} u_{3\lambda} u_{72\sigma} a_{69\lambda} \,\,\,(d_{35}(151,103) = 2(150,138))$;
\item $d_{35}(\overline{\mathfrak{d}}_3^{21} \overline{s}_1 u_{28\lambda} u_{63\sigma} a_{35\lambda} a_{\sigma_2}) = 2\overline{\mathfrak{d}}_3^{24} u_{19\lambda} u_{72\sigma} a_{53\lambda}\,\,\, (d_{35}(183,71) = 2(182,106))$;
\item $d_{35}(\overline{\mathfrak{d}}_3^{25} \overline{s}_1 u_{28\lambda} u_{75\sigma} a_{47\lambda} a_{\sigma_2}) = 2\overline{\mathfrak{d}}_3^{28} u_{19\lambda} u_{84\sigma} a_{65\lambda}\,\,\, (d_{35}(207,95) = 2(206,130))$.
\end{enumerate}
\end{prop}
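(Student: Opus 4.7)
Differentials (1) and (2) are precisely the two $d_{35}$-differentials established in Lemma~\ref{lem:tempd35Differentials}, so the only new work is for (3) and (4). I plan to prove these two by mimicking the strategy of Lemma~\ref{lem:tempd35Differentials}: identify the target classes, invoke the Vanishing Theorem (Theorem~\ref{thm:aboveFiltration61die}) to force them to die by the $E_{61}$-page, and then eliminate every candidate source except the claimed one.

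For (3), I would take the target $2\dthree^{24} u_{19\lambda} u_{72\sigma} a_{53\lambda}$ at $(182,106)$. Because this class lies above filtration $96$ and is divisible by $\alpha^2$ on the $E_2$-page, Theorem~\ref{thm:aboveFiltration61die} guarantees that it dies on or before the $E_{61}$-page. For degree reasons (this is a class sitting far above the line of slope $1$, in a column where no long differential can leave it), it cannot support a differential, so it must be the target of a $d_r$-differential with $r\leq 61$. I would then run through the admissible differential lengths $r\in\{19,21,23,27,29,31,35,43,53,59,61\}$ and, for each one, list the classes at bidegree $(183, 106-r)$. Most of the candidates are either $i_{C_2}^*\BPone$-truncation classes such as $\dthree^a\sthree^b\rone^c$ (already sorted out by the $d_{31}$-differentials of Proposition~\ref{prop:DthreeSthreeRelations} and Section~\ref{subsec:d31Diff}), or integer-graded monomials in $\dthree,\done,u_\lambda,u_\sigma,a_\lambda,a_\sigma$ whose fate is fixed by the $d_{19}$-, $d_{21}$-, $d_{23}$-, $d_{27}$-, $d_{29}$-differentials of Proposition~\ref{prop:d21Differential}, Proposition~\ref{prop:d23Differential}, Theorem~\ref{prop:d19d27Differential}, and Theorem~\ref{thm:E4C4d29Differentials}. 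After this elimination only the claimed source $\dthree^{21}\sone u_{28\lambda}u_{63\sigma}a_{35\lambda}a_{\sigma_2}$ at $(183,71)$ remains, forcing the $d_{35}$-differential.

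The proof of (4) follows the same outline, with the target $2\dthree^{28} u_{19\lambda} u_{84\sigma} a_{65\lambda}$ at $(206,130)$. In fact, I expect (4) to drop out of (3) almost immediately: multiplying the source and target of (3) by the permanent cycle $\dthree^4 u_{12\sigma} a_{12\lambda}\cdot\epsilon'$-type product that shifts bidegree by $(24,24)$ produces exactly the source/target pair of (4), so by Lemma~\ref{lem:BPtwoMainLem} the existence of (3) gives (4) provided one verifies that the relevant multipliers are nonzero on the $E_{35}$-page. If that shortcut fails, the direct enumeration argument above carries through at $(206,130)$ with no new ideas.

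The main obstacle is the enumeration step: the columns $t-s=183$ and $t-s=207$ are dense with candidate classes built from monomials in $\dthree,\done,\sthree,\sone,\rone$ together with assorted $u$'s and $a$'s, and each candidate must be matched with an earlier-established differential. The cleanest way to organize the bookkeeping is the column-wise picture introduced in Section~\ref{sec:slicesBPtwo}: group the candidates according to which $\dthree^i\BPone$- or $\dthree^i\sthree^j i_{C_2}^*\BPone$-truncation they come from, and for each truncation invoke the structural differentials that have already been proven for that truncation. I do not anticipate needing any new ingredient beyond those already in place.
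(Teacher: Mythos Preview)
Your approach for (1)--(3) is exactly the paper's: cite Lemma~\ref{lem:tempd35Differentials} for (1) and (2), and for (3) apply the Vanishing Theorem to the target $2\dthree^{24}u_{19\lambda}u_{72\sigma}a_{53\lambda}$ at $(182,106)$ and observe that the only surviving candidate source is the class at $(183,71)$.

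For (4), however, both of your proposed routes stumble. The shortcut fails outright: Lemma~\ref{lem:BPtwoMainLem} is specifically about multiplication by $\alpha$ at $(48,48)$, not an arbitrary shift by $(24,24)$, and the class $\dthree^{4}u_{12\sigma}a_{12\lambda}$ at $(24,24)$ is not even a $d_{13}$-cycle (it supports $d_{13}(\dthree^{4}u_{12\sigma}a_{12\lambda})=\dthree^{5}u_{8\sigma}a_{15\lambda}a_{7\sigma}$), so there is no permanent-cycle multiplier that carries (3) to (4). Your fallback direct enumeration does \emph{not} carry through ``with no new ideas'': when you list the candidate sources for killing $2\dthree^{28}u_{19\lambda}u_{84\sigma}a_{65\lambda}$ at $(206,130)$, two survive, namely $\dthree^{25}\sthree^{3}$ at $(207,111)$ and the desired class $\dthree^{25}\sone u_{28\lambda}u_{75\sigma}a_{47\lambda}a_{\sigma_2}$ at $(207,95)$. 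None of the previously established differentials rules out $(207,111)$---the $d_{51}$-differentials come later in the paper. The paper handles this by a second, nested Vanishing Theorem argument: first consider the class $2\dthree^{30}\done^{2}u_{11\lambda}u_{92\sigma}a_{81\lambda}$ at $(206,162)$, which must also die by $E_{61}$, and for degree reasons can only be killed by $\dthree^{25}\sthree^{3}$ at $(207,111)$ via a $d_{51}$. This pins down the fate of $(207,111)$ and leaves $(207,95)$ as the only source for the target at $(206,130)$. So the missing ingredient is this auxiliary application of Theorem~\ref{thm:aboveFiltration61die} to $(206,162)$.
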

\begin{proof}
(1) and (2) are proven in Lemma~\ref{lem:tempd35Differentials}.  For (3), consider the class $2\overline{\mathfrak{d}}_3^{24} u_{19\lambda} u_{72\sigma} a_{53\lambda}$ at $(182,106)$.  By Theorem~\ref{thm:aboveFiltration61die}, this class must die on or before the $E_{61}$-page.  For degree reasons, the only way this can happen is for this class to be killed by the class $\overline{\mathfrak{d}}_3^{21} \overline{s}_1 u_{28\lambda} u_{63\sigma} a_{35\lambda} a_{\sigma_2}$ at $(183,71)$.  This proves the desired $d_{35}$-differential. 

For (4), first consider the class $2\overline{\mathfrak{d}}_3^{30} \overline{\mathfrak{d}}_1^{2} u_{11\lambda} u_{92\sigma} a_{81\lambda}$ at $(206,162)$.  By Theorem~\ref{thm:aboveFiltration61die}, this class must die on or before the $E_{61}$-page.  For degree reasons, this class must be killed by the class $\overline{\mathfrak{d}}_3^{25} \overline{s}_3^{3}$ at $(207, 111)$.  

Now, consider the class $2\overline{\mathfrak{d}}_3^{28} u_{19\lambda} u_{84\sigma} a_{65\lambda}$ at $(206, 130)$.  By Theorem~\ref{thm:aboveFiltration61die} again, this class must die on or before the $E_{61}$-page.  For degree reasons, this class must be killed by one of the following classes: 
\begin{itemize}
\item $\overline{\mathfrak{d}}_3^{25} \overline{s}_3^{3}$ at $(207, 111)$;
\item $\overline{\mathfrak{d}}_3^{25} \overline{s}_1 u_{28\lambda} u_{75\sigma} a_{47\lambda} a_{\sigma_2}$ at $(207, 95)$.
\end{itemize}
Since we have already shown in the previous paragraph the class $\overline{\mathfrak{d}}_3^{25} \overline{s}_3^{3}$ at $(207, 111)$ supports a $d_{51}$-differential, the source must be the class $\overline{\mathfrak{d}}_3^{25} \overline{s}_1 u_{28\lambda} u_{75\sigma} a_{47\lambda} a_{\sigma_2}$ at $(207, 95)$.  This proves the desired $d_{35}$-differential. 
\end{proof}

Using product structures with $\alpha$ and $\gamma^8$, the $d_{35}$-differentials in Proposition~\ref{prop:E4C4d35Differentials} produce all the other $d_{35}$-differentials.  They are shown in Figure~\ref{fig:E4C4d35Differentials}.

The $d_{43}$-differentials follow from Theorem~\ref{thm:LongCrossingDifferentials} (7) and using product structures with $\alpha$.  They are shown in Figure~\ref{fig:E4C4d43Differentials}.

\begin{prop}\label{prop:E4C4d51Differentials}
The following $d_{51}$-differentials exist: 
\begin{enumerate}
\item $d_{51}(\overline{\mathfrak{d}}_3^{25} \overline{s}_3^{3}) =2\overline{\mathfrak{d}}_3^{30} \overline{\mathfrak{d}}_1^{2} u_{11\lambda} u_{92\sigma} a_{81\lambda} \,\,\, (d_{51}(207,111) = 2(206,162))$;
\item $d_{51}(\overline{\mathfrak{d}}_3^{29} \overline{s}_3^{3}) = 2\overline{\mathfrak{d}}_3^{34} \overline{\mathfrak{d}}_1^{2} u_{27\lambda} u_{104\sigma} a_{77\lambda}\,\,\, (d_{51}(263,103) = 2(262,154))$.
\end{enumerate}
\end{prop}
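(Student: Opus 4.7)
The plan is to apply the Vanishing Theorem (Theorem~\ref{thm:aboveFiltration61die}) to the two targets, both of which are divisible by $\alpha^2 = \dthree^{16}u_{48\sigma}a_{48\lambda}$ at $(96,96)$. Indeed, the target in (1) equals $\alpha^2$ times a class at $(110,66)$, and the target in (2) equals $\alpha^2$ times a class at $(166,58)$, so both must die on or before the $E_{61}$-page. The argument then proceeds by ruling out that these targets support differentials of length at most $61$, and by identifying the unique source that can kill them via a differential of admissible length.

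For part (1), this chain of reasoning has already been invoked in the proof of Proposition~\ref{prop:E4C4d35Differentials}(4): the class at $(206,162)$ must be killed, and for degree reasons the only candidate for a source within the bound permitted by the Vanishing Theorem is $\dthree^{25}\sthree^{3}$ at $(207,111)$, producing a $d_{51}$-differential. First I would make this explicit by listing the bidegrees $(207,s)$ with $s\leq 161$ carrying classes on the $E_{51}$-page, and ruling out every candidate other than $\dthree^{25}\sthree^{3}$ by appealing to the previously established differentials (of lengths $19$, $21$, $23$, $27$, $29$, $31$, $35$) from Sections~\ref{sec:InducedDiffBPone} through \ref{sec:HigherDifferentialsV}.

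Part (2) is structurally identical: apply the Vanishing Theorem to the target $2\dthree^{34}\done^{2}u_{27\lambda}u_{104\sigma}a_{77\lambda}$ at $(262,154)$, check that it supports no nontrivial differential of length at most $61$ (for degree reasons the only candidate targets lie across the vanishing line of slope $1$, and the relevant bidegrees have already been identified as targets of differentials we have established), enumerate the possible sources in the column $t-s=263$ of filtration at most $154$, and eliminate every candidate except $\dthree^{29}\sthree^{3}$ at $(263,103)$. The elimination step will use the $d_{19}$ and $d_{27}$-differentials of Theorem~\ref{prop:d19d27Differential}, the $d_{29}$-differentials of Theorem~\ref{thm:E4C4d29Differentials}, the $d_{31}$-differentials between $i_{C_2}^*\BPone$-truncation classes (Section~\ref{subsec:d31Diff}), and the $d_{35}$-differentials of Proposition~\ref{prop:E4C4d35Differentials}.

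The main obstacle is precisely this bookkeeping: the $(263,s)$ column for $s$ between roughly $100$ and $154$ contains several classes of the form $\dthree^{a}\done^{b}\sone^{c}$ (and their transfers from $i_{C_2}^*\BPone$-truncations), and one must carefully verify that each such candidate is already accounted for by one of the shorter differentials before the claimed $d_{51}$-differential can be uniquely extracted. Once these exclusions are in place, the Vanishing Theorem forces the remaining candidate $\dthree^{29}\sthree^{3}$ to be the source, completing the proof.
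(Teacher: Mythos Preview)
Your approach is essentially the same as the paper's: apply the Vanishing Theorem to the target, rule out any differential it could support, and then identify the unique admissible source. For (1) this is exactly what happens (and, as you note, was already carried out in the proof of Proposition~\ref{prop:E4C4d35Differentials}(4)).

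One small correction for (2): the class you must rule out as a potential target of a $d_{61}$ supported by $2\dthree^{34}\done^{2}u_{27\lambda}u_{104\sigma}a_{77\lambda}$ at $(262,154)$ is the class $\dthree^{39}\done^{2}u_{12\lambda}u_{118\sigma}a_{107\lambda}a_\sigma$ at $(261,215)$. This class is not eliminated because it is the \emph{target} of an established differential, but because it is the \emph{source} of one---by Theorem~\ref{thm:LongCrossingDifferentials}(5) and multiplication by $\alpha$, it supports a $d_{53}$-differential and hence is gone before the $E_{61}$-page. Once this exclusion is made correctly, the paper's proof is quite short: rather than enumerating all classes in the column $t-s=263$, it simply asserts (for degree reasons) that the only two ways for $(262,154)$ to die are the $d_{61}$ just ruled out and the claimed $d_{51}$ from $(263,103)$. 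Your more explicit bookkeeping would of course reach the same conclusion.
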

\begin{proof}
For (1), consider the class $2\overline{\mathfrak{d}}_3^{30} \overline{\mathfrak{d}}_1^{2} u_{11\lambda} u_{92\sigma} a_{81\lambda}$ at $(206, 162)$.  By Theorem~\ref{thm:aboveFiltration61die}, this class must die on or before the $E_{61}$-page.  For degree reasons, the only way for this to happen is for the claimed $d_{51}$-differential to exist.  

For (2), consider the class $2\overline{\mathfrak{d}}_3^{34} \overline{\mathfrak{d}}_1^{2} u_{27\lambda} u_{104\sigma} a_{77\lambda}$ at $(262,154)$.  By Theorem~\ref{thm:aboveFiltration61die} again, this class must die on or before the $E_{61}$-page.  There are two possibilities: 
\begin{itemize}
\item This class supports a $d_{61}$-differential and kills the class $\overline{\mathfrak{d}}_3^{39} \overline{\mathfrak{d}}_1^{2} u_{12\lambda} u_{118\sigma} a_{107\lambda} a_{\sigma}$ at $(261,215)$;
\item This class is killed by a $d_{51}$-differential coming from the class $d_{51}(\overline{\mathfrak{d}}_3^{29} \overline{s}_3^{3})$ at $(263,103)$. 
\end{itemize}
The first case is impossible because by Theorem~\ref{thm:LongCrossingDifferentials} (5) and multiplication with $\alpha$, the class $\overline{\mathfrak{d}}_3^{39} \overline{\mathfrak{d}}_1^{2} u_{12\lambda} u_{118\sigma} a_{107\lambda} a_{\sigma}$ at $(261, 215)$ supports a $d_{53}$-differential.  It follows that the claimed $d_{51}$-differential exists. 
\end{proof}

Using product structures with $\alpha$ and $\gamma^8$, the $d_{51}$-differentials in Proposition~\ref{prop:E4C4d51Differentials} produce all the other $d_{51}$-differentials.  They are shown in Figure~\ref{fig:E4C4d51Differentials}.

All the $d_{53}$-differentials are obtained from the $d_{53}$-differentials in Theorem~\ref{thm:LongCrossingDifferentials} and using product structures with $\alpha$.  They are shown in Figure~\ref{fig:E4C4d53Differentials}

\begin{prop}\label{prop:E4C4d55Differentials}
The following $d_{55}$-differentials exist: 
\begin{enumerate}
\item $d_{55}(\overline{\mathfrak{d}}_3^{24} \overline{\mathfrak{d}}_1^{2} u_{28\lambda} u_{74\sigma} a_{46\lambda}) = \overline{\mathfrak{d}}_3^{29} \overline{s}_1 u_{14\lambda} u_{87\sigma} a_{73\lambda} a_{\sigma_2}\,\,\, (d_{55}(204,92) = (203,147))$;
\item $d_{55}(\overline{\mathfrak{d}}_3^{28} \overline{\mathfrak{d}}_1^{2} u_{44\lambda} u_{86\sigma} a_{42\lambda}) = \overline{\mathfrak{d}}_3^{33} \overline{s}_1 u_{30\lambda} u_{99\sigma} a_{69\lambda} a_{\sigma_2}\,\,\, (d_{55}(260,84) = (259,139))$.
\end{enumerate}
\end{prop}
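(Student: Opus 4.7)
The plan is to apply the Vanishing Theorem (Theorem~\ref{thm:aboveFiltration61die}) to both target classes. The target of (1) at $(203,147)$ equals $\alpha^{2}$ times the class $\overline{\mathfrak{d}}_3^{13} \overline{s}_1 u_{14\lambda} u_{39\sigma} a_{25\lambda} a_{\sigma_2}$ at $(107,51)$, and the target of (2) at $(259,139)$ equals $\alpha^{2}$ times the class $\overline{\mathfrak{d}}_3^{17} \overline{s}_1 u_{30\lambda} u_{51\sigma} a_{21\lambda} a_{\sigma_2}$ at $(163,43)$, where $\alpha^{2} = \overline{\mathfrak{d}}_3^{16} u_{48\sigma} a_{48\lambda}$. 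Hence by Theorem~\ref{thm:aboveFiltration61die}, both targets must die on or before the $E_{61}$-page.

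For each target, the next step is to verify that it cannot support a nontrivial differential: a $d_{r}$-differential with $3 \le r \le 61$ would require a surviving class at the same stem minus one with filtration between $\textup{filt}(T)+3$ and $\textup{filt}(T)+61$. Inspection of the $E_{16}$-page together with the $d_{19}$-, $d_{21}$-, $d_{23}$-, $d_{27}$-, $d_{29}$-, $d_{31}$-, $d_{35}$-, $d_{43}$-, $d_{51}$-, and $d_{53}$-differentials established in Sections~\ref{sec:higherDifferentialsI}--\ref{sec:HigherDifferentialsV} and in Propositions~\ref{prop:E4C4d35Differentials} and \ref{prop:E4C4d51Differentials} rules out every candidate. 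It follows that each target must be hit by an incoming differential $d_{r}$ with $r$ odd and $3\le r\le 61$. For (1) I then enumerate the classes at $(204,f)$ with $86\le f<147$ and show that the only one surviving to the relevant $E_{r}$-page is $\overline{\mathfrak{d}}_3^{24} \overline{\mathfrak{d}}_1^{2} u_{28\lambda} u_{74\sigma} a_{46\lambda}$ at $(204,92)$; this forces $r=55$ and the first claimed differential. The second differential follows from the same analysis at stem $260$, where the unique surviving candidate source is $\overline{\mathfrak{d}}_3^{28} \overline{\mathfrak{d}}_1^{2} u_{44\lambda} u_{86\sigma} a_{42\lambda}$ at $(260,84)$.

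The hard part will be the case-by-case bookkeeping: at each intermediate filtration $f$ in the relevant range, one must identify every $\BPone$-truncation class and every $i_{C_2}^{\ast}\BPone$-truncation class at stem $204$ (respectively $260$), and then verify that each such class has already been shown to support or be killed by a differential of length strictly less than the candidate $r$. The $i_{C_2}^{\ast}\BPone$-truncation classes at these stems are especially numerous in view of the relations of Proposition~\ref{prop:DthreeSthreeRelations} and the discussion in Section~\ref{subsection:inducedC2Differentials}. The $d_{29}$- and $d_{31}$-differentials of Section~\ref{sec:HigherDifferentialsV} do most of the elimination work at the intermediate filtrations, while the $d_{35}$-differentials of Proposition~\ref{prop:E4C4d35Differentials} and the long differentials of Theorem~\ref{thm:LongCrossingDifferentials} handle the higher filtrations in the range $\textup{filt}(T)-r$ for $r$ near $55$; checking that these leave exactly one candidate source standing at each of the two stems is the substantive content of the argument.
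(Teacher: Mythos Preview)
Your approach matches the paper's: apply the Vanishing Theorem (Theorem~\ref{thm:aboveFiltration61die}) to the target class, then argue by degree reasons that the only way it can die is via the claimed $d_{55}$. For part~(1) this works exactly as you describe.

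For part~(2), however, there is a gap in your elimination step. You claim that inspection of the $E_{16}$-page together with the $d_{19}$-- through $d_{53}$--differentials rules out every candidate for the target $(259,139)$ to \emph{support} a differential. This is not quite true: the class $2\overline{\mathfrak{d}}_3^{38} u_{15\lambda} u_{114\sigma} a_{99\lambda}$ at $(258,198)$ survives all of the differentials on your list and is a legitimate candidate target for a $d_{59}$-differential out of $(259,139)$. The paper rules this out by observing that the class at $(258,198)$ itself supports a $d_{61}$-differential, obtained from Theorem~\ref{thm:LongCrossingDifferentials}(2) (the differential $d_{61}(2(114,54))=(113,115)$) via multiplication by powers of $\alpha$; since it supports a $d_{61}$, it survives to $E_{61}$ and hence cannot have been killed on $E_{59}$. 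You do invoke Theorem~\ref{thm:LongCrossingDifferentials} later in your outline, but only when eliminating candidate \emph{sources}; you need it here as well, when eliminating candidate \emph{targets} of a differential supported by $(259,139)$. Once this case is handled, the rest of your argument goes through.
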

\begin{proof}
For (1), consider the class $\overline{\mathfrak{d}}_3^{29} \overline{s}_1 u_{14\lambda} u_{87\sigma} a_{73\lambda} a_{\sigma_2}$ at $(203, 147)$.  By Theorem~\ref{thm:aboveFiltration61die}, this class must die on or before the $E_{61}$-page.  For degree reasons, the only way this can happen is for the claimed $d_{55}$-differential to exist.  

For (2), consider the class $\overline{\mathfrak{d}}_3^{33} \overline{s}_1 u_{30\lambda} u_{99\sigma} a_{69\lambda} a_{\sigma_2}$ at $(259, 139)$.  By Theorem~\ref{thm:aboveFiltration61die}, this class must die on or before the $E_{61}$-page.  For degree reasons, there are two possibilities: 
\begin{itemize}
\item This class supports a $d_{59}$-differential to kill the class $2\overline{\mathfrak{d}}_3^{38} u_{15\lambda} u_{114\sigma} a_{99\lambda}$ at $(258, 198)$;
\item this class is killed by a $d_{55}$-differential from the class $\overline{\mathfrak{d}}_3^{28} \overline{\mathfrak{d}}_1^{2} u_{44\lambda} u_{86\sigma} a_{42\lambda}$ at $(260, 84)$. 
\end{itemize}
The first case is impossible because by Theorem~\ref{thm:LongCrossingDifferentials} (2) and multiplication with $\alpha$, the class $2\overline{\mathfrak{d}}_3^{38} u_{15\lambda} u_{114\sigma} a_{99\lambda}$ at $(258, 198)$ supports a $d_{61}$-differential.  Therefore, the second possibility must occur, and we get our desired $d_{55}$-differential.  
\end{proof}

Using product structures with $\alpha$ and $\gamma^8$, the $d_{55}$-differentials in Proposition~\ref{prop:E4C4d55Differentials} produce all the other $d_{55}$-differentials.  They are shown in Figure~\ref{fig:E4C4d55Differentials}.

\begin{prop}\label{prop:E4C4d59Differentials}
The following $d_{59}$-differentials exist: 
\begin{enumerate}
\item $d_{59}(\overline{\mathfrak{d}}_3^{17} \overline{s}_1 u_{14\lambda} u_{51\sigma} a_{37\lambda} a_{\sigma_2}) = \overline{\mathfrak{d}}_3^{22} u_{64\sigma} a_{66\lambda} a_{2\sigma}\,\,\, (d_{59}(131,75) = (130,134))$;
\item $d_{59}(\overline{\mathfrak{d}}_3^{21} \overline{s}_1 u_{30\lambda} u_{63\sigma} a_{33\lambda} a_{\sigma_2}) = 2\overline{\mathfrak{d}}_3^{26} u_{15\lambda} u_{78\sigma} a_{63\lambda}\,\,\, (d_{59}(187,67) = 2(186,126))$.
\end{enumerate}
\end{prop}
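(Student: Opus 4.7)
The strategy mirrors the arguments for Propositions~\ref{prop:E4C4d51Differentials} and~\ref{prop:E4C4d55Differentials}: in each case we focus on the claimed target, show that it is divisible by $\alpha^{2}$ so that the Vanishing Theorem (Theorem~\ref{thm:aboveFiltration61die}) forces it to die on or before the $E_{61}$-page, and then use the differentials already established in Sections~\ref{sec:higherDifferentialsI}--\ref{sec:HigherDifferentialsVI} to rule out every degree-feasible competitor, leaving only the claimed $d_{59}$-differential.

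For statement (1), the target $\overline{\mathfrak{d}}_3^{22} u_{64\sigma} a_{66\lambda} a_{2\sigma}$ at $(130,134)$ factors as $\alpha^{2}\cdot\overline{\mathfrak{d}}_3^{6} u_{16\sigma} a_{18\lambda} a_{2\sigma}$, so Theorem~\ref{thm:aboveFiltration61die} applies. I would then enumerate the classes in degrees $(130,134)\pm(1,r)$ with $r\le 61$ that could participate in a differential against $(130,134)$. Since the target sits above the line of slope $1$ and the $d_r$-differentials with $r\le 35$ are already completely determined in Sections~\ref{sec:higherDifferentialsI}--\ref{sec:HigherDifferentialsVI}, only very long differentials remain as candidates. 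The $d_{43}$, $d_{51}$, $d_{53}$, $d_{55}$, and $d_{61}$-differentials across the vanishing line have been fully catalogued (Theorem~\ref{thm:LongCrossingDifferentials} together with Propositions~\ref{prop:E4C4d51Differentials} and~\ref{prop:E4C4d55Differentials}), and none of them involves $(130,134)$. After discarding these, the unique degree-admissible possibility is the claimed $d_{59}$ from $\overline{\mathfrak{d}}_3^{17} \overline{s}_1 u_{14\lambda} u_{51\sigma} a_{37\lambda} a_{\sigma_2}$ at $(131,75)$.

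For statement (2), the target $2\overline{\mathfrak{d}}_3^{26} u_{15\lambda} u_{78\sigma} a_{63\lambda}$ at $(186,126)$ equals $\alpha^{2}\cdot 2\overline{\mathfrak{d}}_3^{10} u_{15\lambda} u_{30\sigma} a_{15\lambda}$, so again the Vanishing Theorem requires it to die by the $E_{61}$-page. The degree-feasible options are either a $d_{61}$-differential originating at a class in tridegree $(187,65)$ territory, or the claimed $d_{59}$ from $\overline{\mathfrak{d}}_3^{21} \overline{s}_1 u_{30\lambda} u_{63\sigma} a_{33\lambda} a_{\sigma_2}$ at $(187,67)$. To kill the $d_{61}$ alternative I would locate the relevant candidate source and show that, by Theorem~\ref{thm:LongCrossingDifferentials} (combined with multiplication by $\alpha$ and $\gamma^{4}$), it already supports one of the long differentials crossing the line of slope $1$, exactly as in the proof of Proposition~\ref{prop:E4C4d55Differentials}(2). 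Once all competing differentials are excluded, the claimed $d_{59}$ is the only remaining way for the target to die.

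The main obstacle will be the careful bookkeeping in the second step: the region around $(186,126)$ contains many $i_{C_2}^{*}\BPone$-truncation classes and $\BPone$-truncation classes, and one must verify that every degree-admissible potential source and target has already been accounted for by earlier differentials or by Theorem~\ref{thm:LongCrossingDifferentials}. This enumeration, rather than any conceptual difficulty, is where the real work lies; once it is done, the two $d_{59}$-differentials fall out by elimination exactly as in the preceding propositions.
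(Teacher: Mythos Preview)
Your overall strategy is correct and matches the paper's. The only thing you are missing is that part~(1) requires no new argument at all: the differential $d_{59}(131,75)=(130,134)$ is precisely Theorem~\ref{thm:LongCrossingDifferentials}(3), since $(130,134)=\alpha^{2}\beta^{2}$. The paper's proof of (1) is a one-line citation of that theorem, whereas you propose to redo the enumeration that was already carried out there. For part~(2) your approach coincides with the paper's: apply the Vanishing Theorem to the target at $(186,126)$ and eliminate competitors by degree reasons. The paper asserts that after the $E_{55}$-page there is in fact no surviving class at $(187,65)$ to worry about, so the $d_{61}$ alternative you flag does not arise; the bookkeeping you anticipate collapses to a short degree check rather than a lengthy elimination.
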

\begin{proof}
(1) is Theorem~\ref{thm:LongCrossingDifferentials} (3).  To prove (2), consider the class $2\overline{\mathfrak{d}}_3^{26} u_{15\lambda} u_{78\sigma} a_{63\lambda}$ at $(186, 126)$.  By Theorem~\ref{thm:aboveFiltration61die}, this class must die on or before the $E_{61}$-page.  For degree reasons, the only way this can happen is for the claimed $d_{59}$-differential to exist.  
\end{proof}
Using product structures with $\alpha$ and $\gamma^8$, the $d_{59}$-differentials in Proposition~\ref{prop:E4C4d59Differentials} produce all the other $d_{59}$-differentials.  They are shown in Figure~\ref{fig:E4C4d59Differentials}.

\begin{prop}\label{prop:E4C4d61Differentials}
The following $d_{61}$-differentials exist: 
\begin{enumerate}
\item $d_{61}(2\overline{\mathfrak{d}}_3^{26} u_{31\lambda} u_{78\sigma} a_{47\lambda}) = \overline{\mathfrak{d}}_3^{31} u_{16\lambda} u_{92\sigma} a_{77\lambda} a_{\sigma}\,\,\, (d_{61}(2(218,194)) = (217,155))$;
\item $d_{61}(\overline{\mathfrak{d}}_3^{27} \overline{\mathfrak{d}}_1^{2} u_{28\lambda} u_{82\sigma} a_{55\lambda} a_{\sigma}) = \overline{\mathfrak{d}}_3^{32} \overline{\mathfrak{d}}_1^{2} u_{12\lambda} u_{98\sigma} a_{86\lambda}\,\,\, (d_{61}(221,111) = (220,172))$;
\item $d_{61}(2\overline{\mathfrak{d}}_3^{30} \overline{\mathfrak{d}}_1^{2} u_{27\lambda} u_{92\sigma} a_{65\lambda}) =\overline{\mathfrak{d}}_3^{35} \overline{\mathfrak{d}}_1^{2} u_{12\lambda} u_{106\sigma} a_{95\lambda} a_{\sigma} \,\,\, (d_{61}(2(238,130)) = (237,191))$;
\item $d_{61}(\overline{\mathfrak{d}}_3^{31} u_{32\lambda} u_{92\sigma} a_{61\lambda} a_{\sigma}) = \overline{\mathfrak{d}}_3^{36} u_{16\lambda} u_{108\sigma} a_{92\lambda}\,\,\, (d_{61}(249,123) = (248,184))$;
\item $d_{61}(2\overline{\mathfrak{d}}_3^{30} u_{47\lambda} u_{90\sigma} a_{43\lambda}) =\overline{\mathfrak{d}}_3^{35} u_{32\lambda} u_{104\sigma} a_{73\lambda} a_{\sigma} \,\,\, (d_{61}(2(274,86)) = (273,147))$;
\item $d_{61}(\overline{\mathfrak{d}}_3^{31} \overline{\mathfrak{d}}_1^{2} u_{44\lambda} u_{94\sigma} a_{51\lambda} a_{\sigma}) =\overline{\mathfrak{d}}_3^{36} \overline{\mathfrak{d}}_1^{2} u_{28\lambda} u_{110\sigma} a_{82\lambda} \,\,\, (d_{61}(277,103) = (276,164))$;
\item $d_{61}(2\overline{\mathfrak{d}}_3^{34} \overline{\mathfrak{d}}_1^{2} u_{43\lambda} u_{104\sigma} a_{61\lambda}) = \overline{\mathfrak{d}}_3^{39} \overline{\mathfrak{d}}_1^{2} u_{28\lambda} u_{118\sigma} a_{91\lambda} a_{\sigma}\,\,\, (d_{61}(2(294,122)) = (293,183))$;
\item $d_{61}(\overline{\mathfrak{d}}_3^{35} u_{48\lambda} u_{104\sigma} a_{57\lambda} a_{\sigma}) =\overline{\mathfrak{d}}_3^{40} u_{32\lambda} u_{120\sigma} a_{88\lambda} \,\,\, (d_{61}(305,115) = (304,176))$.
\end{enumerate}
\end{prop}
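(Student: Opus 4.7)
The plan is to follow the template of Propositions~\ref{prop:E4C4d51Differentials}, \ref{prop:E4C4d55Differentials}, and \ref{prop:E4C4d59Differentials}: in each of the eight cases, the target lies in filtration strictly greater than $96$ and in the image of multiplication by $\alpha^{2}$, so by the Vanishing Theorem (Theorem~\ref{thm:aboveFiltration61die}) it must die on or before the $E_{61}$-page. The task is then to exclude every shorter differential that could kill this target, so that only the claimed $d_{61}$-differential remains.

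For a fixed target $(t,s)$ the strategy is to enumerate, for each $r < 61$, the candidate source bidegrees $(t+1, s - r + 1)$, list the surviving classes there on the $E_{r}$-page, and eliminate each candidate by appealing to a previously established differential. The principal tools are: (i) the induced $d_{31}$-differentials on $i_{C_{2}}^{\ast}\BPone$-truncation classes from Section~\ref{subsec:d31Diff} (which kill or support differentials on all $\overline{s}_{3}$-power classes in the relevant bidegrees); (ii) Theorem~\ref{thm:E4C4d29Differentials} and Proposition~\ref{prop:E4C4d35Differentials}, propagated by Lemma~\ref{lem:BPtwoMainLem} via multiplication by $\alpha$, $\beta$, $\epsilon'$, $\gamma^{4}$, and $\gamma^{8}$; and (iii) the long crossing differentials of Theorem~\ref{thm:LongCrossingDifferentials}, which secretly account for classes that appear otherwise viable as sources. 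For example, in case (1) the target $\dthree^{31}u_{16\lambda}u_{92\sigma}a_{77\lambda}a_{\sigma}$ at $(217,155)$ has only one candidate source, namely $2\dthree^{26}u_{31\lambda}u_{78\sigma}a_{47\lambda}$ at $(218,194)$, once the $i_{C_{2}}^{\ast}\BPone$-truncation classes in that vertical strip are killed by $d_{31}$-differentials and the remaining candidates in lower filtrations are shown to support shorter $d_{29}$- or $d_{35}$-differentials via $\alpha$- and $\gamma^{4}$-multiplication of Theorem~\ref{thm:E4C4d29Differentials} and Proposition~\ref{prop:E4C4d35Differentials}.

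Cases (1), (2), (4), (6), (8) each involve a target whose candidate-source column contains four or five surviving classes; these reduce to the pattern above. Cases (3), (5), (7) are then obtained from (2), (4), (6) respectively by multiplication by $\epsilon'$ at $(8,8)$, using Lemma~\ref{lem:BPtwoMainLem}(1) to propagate the established differential; alternatively, the same direct enumeration works, since the target still has filtration above $96$ and the Vanishing Theorem applies uniformly. At each step one must verify that the particular targets remain nonzero on the $E_{r}$-page when they are the predicted targets, which follows because their $\alpha^{2}$-factorizations survive to $E_{61}$ by the same multiplicative reasoning used in the proofs of Theorems~\ref{thm:NormedDiffd29} and \ref{thm:NormedDiffd61}.

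The main obstacle is not conceptual but bookkeeping: in cases (4)--(8) the candidate-source columns are longest, and a handful of entries look viable at first glance but must be excluded by appealing to the $d_{53}$- or $d_{59}$-differentials of Theorem~\ref{thm:LongCrossingDifferentials} applied after multiplication by $\alpha$ or $\gamma^{4}$. The only genuine subtlety is ensuring that the $\alpha^{2}$-divisibility used to invoke the Vanishing Theorem is verified for each target explicitly; this is routine, since every target listed lies below the slope-$3$ ray through $(96,96)$ in the region where multiplication by $\alpha$ is surjective onto the associated graded (Fact~\ref{fact:epsilon} adapted to $\alpha$). No further structural input beyond what has been established in Sections~\ref{sec:Norm}--\ref{sec:HigherDifferentialsV} is required.
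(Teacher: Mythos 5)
Your proposal is correct and takes essentially the same route as the paper: in each case the paper applies the Vanishing Theorem (Theorem~\ref{thm:aboveFiltration61die}) to the target, which must therefore die on or before the $E_{61}$-page, and then concludes the claimed $d_{61}$-differential ``for degree reasons.'' The candidate-source enumeration and eliminations via earlier differentials and $\alpha$-, $\epsilon'$-, $\gamma$-multiplications that you spell out are precisely the bookkeeping the paper compresses into that phrase.
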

\begin{proof}
All of these differentials are proven by using the same method: we first consider the target, which, by Theorem~\ref{thm:aboveFiltration61die}, must die on or before the $E_{61}$-page.  Once we know this, then for degree reasons, we deduce the claimed $d_{61}$-differential.  
\end{proof}
Using product structures with $\alpha$ and $\gamma^8$, the $d_{61}$-differentials in Proposition~\ref{prop:E4C4d61Differentials} produce all the other $d_{61}$-differentials.  They are shown in Figure~\ref{fig:E4C4d61Differentials}.

\begin{figure}
\begin{center}
\makebox[\textwidth]{\includegraphics[trim={0cm 10cm 0cm 10cm}, clip, page = 1, scale = 0.23]{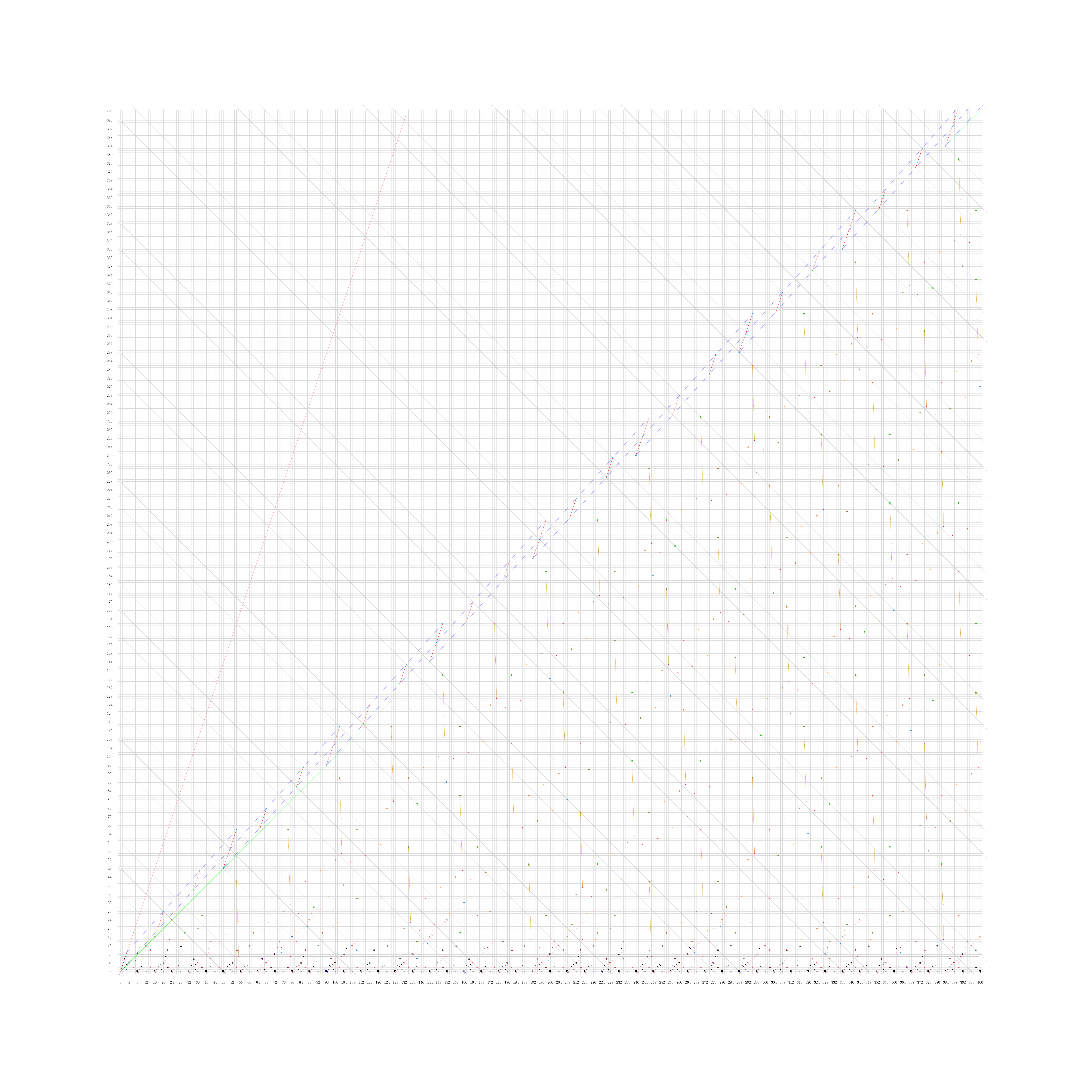}}
\end{center}
\begin{center}
\caption{$d_{35}$-differentials in $\SliceSS(\BPtwo)$.}
\hfill
\label{fig:E4C4d35Differentials}
\end{center}
\end{figure}

\begin{figure}
\begin{center}
\makebox[\textwidth]{\includegraphics[trim={0cm 10cm 0cm 10cm}, clip, page = 2, scale = 0.23]{PaperE4C4E35toE61}}
\end{center}
\begin{center}
\caption{$d_{43}$-differentials in $\SliceSS(\BPtwo)$.}
\hfill
\label{fig:E4C4d43Differentials}
\end{center}
\end{figure}

\begin{figure}
\begin{center}
\makebox[\textwidth]{\includegraphics[trim={0cm 10cm 0cm 10cm}, clip, page = 3, scale = 0.23]{PaperE4C4E35toE61}}
\end{center}
\begin{center}
\caption{$d_{51}$-differentials in $\SliceSS(\BPtwo)$.}
\hfill
\label{fig:E4C4d51Differentials}
\end{center}
\end{figure}

\begin{figure}
\begin{center}
\makebox[\textwidth]{\includegraphics[trim={0cm 10cm 0cm 10cm}, clip, page = 4, scale = 0.23]{PaperE4C4E35toE61}}
\end{center}
\begin{center}
\caption{$d_{53}$-differentials in $\SliceSS(\BPtwo)$.}
\hfill
\label{fig:E4C4d53Differentials}
\end{center}
\end{figure}

\begin{figure}
\begin{center}
\makebox[\textwidth]{\includegraphics[trim={0cm 10cm 0cm 10cm}, clip, page = 5, scale = 0.23]{PaperE4C4E35toE61}}
\end{center}
\begin{center}
\caption{$d_{55}$-differentials in $\SliceSS(\BPtwo)$.}
\hfill
\label{fig:E4C4d55Differentials}
\end{center}
\end{figure}

\begin{figure}
\begin{center}
\makebox[\textwidth]{\includegraphics[trim={0cm 10cm 0cm 10cm}, clip, page = 6, scale = 0.23]{PaperE4C4E35toE61}}
\end{center}
\begin{center}
\caption{$d_{59}$-differentials in $\SliceSS(\BPtwo)$.}
\hfill
\label{fig:E4C4d59Differentials}
\end{center}
\end{figure}

\begin{figure}
\begin{center}
\makebox[\textwidth]{\includegraphics[trim={0cm 10cm 0cm 10cm}, clip, page = 7, scale = 0.23]{PaperE4C4E35toE61}}
\end{center}
\begin{center}
\caption{$d_{61}$-differentials in $\SliceSS(\BPtwo)$.}
\hfill
\label{fig:E4C4d61Differentials}
\end{center}
\end{figure}

\section{Summary of Differentials} \label{sec:SummaryOfDifferentials}
In this section, we summarize all the differentials in the slice spectral sequence of $\BPtwo$.  Table~\ref{table:summaryDifferentials} summarizes all the $RO(C_4)$-graded differentials that we can deduce from the integer graded spectral sequence, as well as their proofs.  Figure~\ref{fig:E4C4AllDifferentials} shows all the differentials from $d_{13}$ to $d_{61}$.  To better organize the differentials visually, Figure~\ref{fig:E4C4d13-d15Differentials} shows the $d_{13}$ to $d_{15}$-differentials; Figure~\ref{fig:E4C4d19-d31Differentials} shows the $d_{19}$ to $d_{31}$-differentials; and Figure~\ref{fig:E4C4d35-d61Differentials} shows the $d_{35}$ to $d_{61}$-differentials.  The $E_\infty$-page is shown in Figure~\ref{fig:E4C4EinftyPage}.  We observe that there is a horizontal vanishing line at filtration 60.  


\begin{center}
\begin{longtable}{| p{.15\textwidth} | p{.50\textwidth} |p{.45\textwidth} |} 
\hline
Differential & Formula & Proof \\ \hline 
$d_3$ & $d_3(u_\lambda) = \bar{s}_1a_\lambda a_{\sigma_2}$ 
\newline $d_3(u_{2\sigma_2}) = (\rone + \grone) a_{3\sigma_2}$ & Section~\ref{subsec:d3induced} \\ \hline
{\color{blue} $d_5$} & $d_5(u_{2\sigma}) = \done a_\lambda a_{3\sigma}$
\newline $d_5(u_{2\lambda}) = \done u_\lambda a_{2\lambda}a_\sigma$ 
\newline $d_5(u_{3\lambda}a_\sigma) = 2\done u_\lambda u_{2\sigma}a_{3\lambda} $
&Section~\ref{subsec:d5induced} \\ \hline
${\color{Magenta} d_7}$ &$d_7(2u_{2\lambda}) = \done \sone u_\sigma a_{3\lambda} a_{\sigma_2}$ \newline
$d_7(2u_{2\lambda} u_{2\sigma}) = \done \sone u_{3\sigma} a_{3\lambda} a_{\sigma_2}$ \newline
$d_7(u_{4\lambda}) = \done \sone u_{2\lambda} u_\sigma a_{3\lambda} a_{\sigma_2}$ \newline
$d_7(u_{4\sigma_2}) = (\rone^3 + \rthree + \grthree)a_{7\sigma_2}$& Section~\ref{subsec:d7induced} \\ \hline
${\color{green!60!black}d_{11}}$ & $d_{11}(\sone u_{2\lambda}u_{3\sigma}a_{\sigma_2}) = \done^3 u_{4\sigma}a_{5\lambda}a_{2\sigma} $
\newline $d_{11}(\sone u_{6\lambda}u_\sigma a_{\sigma_2}) = 2\done^3 u_{3\lambda} u_{4\sigma}a_{6\lambda}$& Section~\ref{subsec:d11induced} \\ \hline

${\color{cyan} d_{13}}$ & 
$d_{13}(u_{4\sigma})= \dthree a_{3\lambda}a_{7\sigma}$ \newline
$d_{13}(u_{4\lambda}a_\sigma)= \done^3 u_{4\sigma}a_{7\lambda}$ \newline
$d_{13}(u_{4\lambda}u_{2\sigma})= \dthree u_\lambda u_{4\sigma}a_{6\lambda}a_\sigma$ \newline
$d_{13}(u_{4\lambda}u_{2\sigma}a_\sigma)= 2\dthree u_{6\sigma}a_{7\lambda}$ \newline
$d_{13}(u_{5\lambda}a_\sigma)= 2\dthree u_\lambda u_{4\sigma}a_{7\lambda}$ \newline
$d_{13}(u_{5\lambda}u_{2\sigma}a_\sigma)= 2\dthree u_{\lambda} u_{6\sigma}a_{7\lambda}$
& Section~\ref{subsec:Predictedd13Diff} \newline 
Proposition~\ref{prop:d13slice} \newline Proposition~\ref{prop:BPtwod13(20,4)} \newline 
Theorem~\ref{thm:NormedDiffd13} \\ \hline 

${\color{Magenta} d_{15}}$ & $d_{15}(u_{8\sigma_2}) = \rone (\rthree \grthree + \rthree^2 + \grthree^2) a_{15\sigma_2}$ \newline 
$d_{15}(2u_{4\lambda}) = \dthree \sone u_{3\sigma}a_{7\lambda}a_{\sigma_2}$ \newline
$d_{15}(2u_{4\lambda}u_{2\sigma}) = \dthree \sone u_{5\sigma}a_{7\lambda}a_{\sigma_2}$ \newline
$d_{15}(2u_{4\lambda}u_{4\sigma}) = \dthree \sone u_{7\sigma}a_{7\lambda}a_{\sigma_2}$ \newline
$d_{15}(2u_{4\lambda}u_{6\sigma}) = \dthree \sone u_{9\sigma}a_{7\lambda}a_{\sigma_2}$ \newline
$d_{15}(\done u_{4\lambda}u_{2\sigma}) = \sthree^3 u_\sigma a_{3\lambda}a_{9\sigma_2}$ \newline
$d_{15}(\done u_{4\lambda}u_{6\sigma}) = \sthree^3 u_{5\sigma} a_{3\lambda}a_{9\sigma_2}$ \newline
$d_{15}(u_{8\lambda}) = \dthree \sone u_{4\lambda} u_{3\sigma} a_{7\lambda} a_{\sigma_2}$ \newline
$d_{15}(u_{8\lambda}u_{4\sigma}) =\dthree \sone u_{4\lambda} u_{7\sigma} a_{7\lambda} a_{\sigma_2} $ & Section~\ref{subsection:inducedC2Differentials} \newline Section~\ref{subsec:d15ResTr} \\ \hline
${\color{LimeGreen}d_{19}}$ & $d_{19}(2u_{5\lambda}) = \sthree^3 a_{5\lambda}a_{9\sigma_2}$ \newline
$d_{19}(2u_{5\lambda}u_{2\sigma}) =\sthree^3 u_{2\sigma}a_{5\lambda}a_{9\sigma_2}$ \newline
$d_{19}(2u_{5\lambda}u_{4\sigma}) =\sthree^3 u_{4\sigma}a_{5\lambda}a_{9\sigma_2}$ \newline
$d_{19}(2u_{5\lambda}u_{6\sigma}) =\sthree^3 u_{\sigma}a_{5\lambda}a_{9\sigma_2}$ \newline
$d_{19}(2u_{13\lambda}) = \sthree^3 u_{8\lambda}a_{5\lambda}a_{9\sigma_2}$ \newline
$d_{19}(2u_{13\lambda}u_{2\sigma}) = \sthree^3 u_{8\lambda} u_{2\sigma}a_{5\lambda}a_{9\sigma_2} $ \newline
$d_{19}(2u_{13\lambda}u_{4\sigma}) =\sthree^3 u_{8\lambda} u_{4\sigma}a_{5\lambda}a_{9\sigma_2} $ \newline
$d_{19}(2u_{13\lambda}u_{6\sigma}) =\sthree^3 u_{8\lambda} u_{6\sigma}a_{5\lambda}a_{9\sigma_2} $ \newline
$d_{19}(\sone u_{4\lambda}u_{\sigma}a_{\sigma_2}) =\dthree \done^2 u_{4\sigma}a_{9\lambda}a_{2\sigma}$ \newline
$d_{19}(\sone u_{4\lambda}u_{5\sigma}a_{\sigma_2}) =\dthree \done^2 u_{8\sigma}a_{9\lambda}a_{2\sigma}$ \newline
$d_{19}(\sone u_{12\lambda}u_{\sigma}a_{\sigma_2}) = 2 \dthree \done^2 u_{7\lambda}u_{6\sigma}a_{10\lambda}$ \newline
$d_{19}(\sone u_{12\lambda}u_{5\sigma}a_{\sigma_2}) =2 \dthree \done^2 u_{7\lambda}u_{10\sigma}a_{10\lambda}$ & Proposition~\ref{prop:d19d27Differential} \newline Proven together with $d_{27}$-differentials \newline Uses Theorem~\ref{thm:aboveFiltration61die} and some $d_{29}$- and $d_{35}$-differentials\\ \hline
${\color{blue}d_{21}}$ &$d_{21}(u_{9\lambda}u_{2\sigma}a_\sigma) = 2 \dthree \done^2 u_{3\lambda} u_{8\sigma} a_{11\lambda}$ \newline 
$d_{21}(u_{9\lambda}u_{6\sigma}a_\sigma) =2 \dthree \done^2 u_{3\lambda} u_{12\sigma} a_{11\lambda}$ & Proposition~\ref{prop:d21Differential} (uses Theorem~\ref{thm:aboveFiltration61die}) \\ \hline
${\color{orange}d_{23}}$ & $d_{23}(\done u_{8\lambda}) = \dthree^2 \sone u_{2\lambda} u_{5\sigma} a_{11\lambda} a_{\sigma_2}$ \newline 
$d_{23}(\done u_{8\lambda}u_{4\sigma}) =\dthree^2 \sone u_{2\lambda} u_{9\sigma} a_{11\lambda} a_{\sigma_2}$ & Proposition~\ref{prop:d23Differential} \\ \hline
${\color{ForestGreen}d_{27}}$ &$d_{27}(\sone u_{6\lambda}u_{3\sigma}a_{\sigma_2}) = \dthree^2\done u_{8\sigma} a_{13\lambda}a_{2\sigma}$ \newline 
$d_{27}(\sone u_{6\lambda}u_{7\sigma}a_{\sigma_2}) = \dthree^2\done u_{12\sigma} a_{13\lambda}a_{2\sigma}$ \newline
$d_{27}(\sone u_{10\lambda} u_{\sigma}a_{\sigma_2}) = 2\dthree^2 \done u_{3\lambda} u_{8\sigma} a_{14\lambda}$ \newline
$d_{27}(\sone u_{10\lambda} u_{5\sigma}a_{\sigma_2}) = 2\dthree^2 \done u_{3\lambda} u_{12\sigma} a_{14\lambda}$ & Proposition~\ref{prop:d19d27Differential} \newline Proven together with $d_{19}$-differentials \newline Uses Theorem~\ref{thm:aboveFiltration61die} and some $d_{29}$- and $d_{35}$-differentials\\ \hline
${\color{Red}d_{29}}$ & $d_{29}(2u_{7\lambda}u_{2\sigma}) = \dthree^2 \done u_{8\sigma} a_{14\lambda} a_\sigma$ \newline
$d_{29}(2\done u_{7\lambda}u_{2\sigma}) = \dthree^2 \done^2 u_{8\sigma} a_{14\lambda} a_\sigma$ \newline
$d_{29}(2u_{7\lambda}u_{6\sigma}) = \dthree^2 \done u_{12\sigma} a_{14\lambda} a_\sigma$ \newline
$d_{29}(2\done u_{7\lambda}u_{6\sigma}) = \dthree^2 \done^2 u_{12\sigma} a_{14\lambda} a_\sigma$ \newline
$d_{29}(u_{8\lambda}a_\sigma) = \dthree^2 \done u_{8\sigma} a_{15\lambda}$ \newline
$d_{29}(\done u_{8\lambda}a_\sigma) = \dthree^2 \done^2 u_{8\sigma} a_{15\lambda}$ \newline
$d_{29}(\done^2 u_{8\lambda}a_\sigma) = 2\dthree^3 u_{8\sigma} a_{15\lambda}$ \newline
$d_{29}(u_{8\lambda}u_{4\sigma}a_\sigma) = \dthree^2 \done u_{12\sigma} a_{15\lambda}$ \newline
$d_{29}(\done u_{8\lambda}u_{4\sigma}a_\sigma) = \dthree^2 \done^2 u_{12\sigma} a_{15\lambda}$ \newline
$d_{29}(\done^2 u_{8\lambda}u_{4\sigma}a_\sigma) = 2\dthree^3 u_{12\sigma} a_{15\lambda} $ \newline
$d_{29}(\done^2 u_{8\lambda}) = \dthree^3 u_\lambda u_{6\sigma} a_{14\lambda} a_\sigma$ \newline 
$d_{29}(\done^2 u_{8\lambda} u_{4\sigma}) = \dthree^3 u_\lambda u_{10\sigma} a_{14\lambda} a_\sigma$ \newline
$d_{29}(2u_{11\lambda}) = \dthree^2 \done u_{4\lambda} u_{6\sigma} a_{14\lambda} a_\sigma$ \newline
$d_{29}(2\done u_{11\lambda}) = \dthree^2 \done^2 u_{4\lambda} u_{6\sigma} a_{14\lambda} a_\sigma$ \newline
$d_{29}(2u_{11\lambda}u_{4\sigma}) = \dthree^2 \done u_{4\lambda} u_{10\sigma} a_{14\lambda} a_\sigma$ \newline
$d_{29}(2\done u_{11\lambda}u_{4\sigma}) = \dthree^2 \done^2 u_{4\lambda} u_{10\sigma} a_{14\lambda} a_\sigma$ \newline 
$d_{29}(\done u_{12\lambda}u_{2\sigma}a_\sigma) = \dthree^2 \done^2 u_{4\lambda} u_{10\sigma} a_{15\lambda}$ \newline
$d_{29}(\done u_{12\lambda}u_{6\sigma}a_\sigma) = \dthree^2 \done^2 u_{4\lambda} u_{14\sigma} a_{15\lambda}$ & Theorem~\ref{thm:E4C4d29Differentials} (uses Theorem~\ref{thm:NormedDiffd29} and Theorem~\ref{thm:aboveFiltration61die}) \\ \hline
${\color{pink!80!black}d_{31}}$ & $d_{31}(u_{16\sigma_2}) = \rthree^4 \grthree a_{31\sigma_2}$ \newline
$d_{31}(2u_{8\lambda}) = \dthree \sthree^3 u_{3\sigma} a_{11\lambda} a_{9\sigma_2}$ \newline
$d_{31}(2u_{8\lambda}u_{4\sigma}) = \dthree \sthree^3 u_{7\sigma} a_{11\lambda} a_{9\sigma_2}$ \newline
$d_{31}(u_{16\lambda}) = \dthree \sthree^3 u_{8\lambda} u_{3\sigma} a_{11\lambda} a_{9\sigma_2}$ & Section~\ref{subsection:inducedC2Differentials} \newline Proposition~\ref{prop:d31(40,8)} \newline Proposition~\ref{prop:gammaPowers} \newline Section~\ref{subsec:d31Diff} \\ \hline
${\color{Dandelion!90!black}d_{35}}$ & $d_{35}(\sone u_{12\lambda} u_{3\sigma} a_{\sigma_2}) = 2 \dthree^3 u_{3\lambda} u_{12\sigma} a_{18\lambda}$ \newline
$d_{35}(\sone u_{12\lambda} u_{7\sigma} a_{\sigma_2}) = 2 \dthree^3 u_{3\lambda} u_{16\sigma} a_{18\lambda}$ \newline
$d_{35}(\sone u_{28\lambda} u_{3\sigma} a_{\sigma_2}) = 2 \dthree^3 u_{19\lambda} u_{12\sigma} a_{18\lambda}$ \newline
$d_{35}(\sone u_{28\lambda} u_{7\sigma} a_{\sigma_2}) = 2 \dthree^3 u_{19\lambda} u_{16\sigma} a_{18\lambda}$  & Proposition~\ref{prop:E4C4d35Differentials} (uses Theorem~\ref{thm:aboveFiltration61die})\\ \hline
${\color{RedOrange}d_{43}}$ &$d_{43}(\sthree^3 u_{8\lambda} u_{7\sigma} a_{9\sigma_2} ) = \dthree^5 u_{16\sigma} a_{23\lambda} a_{6\sigma}$ & Theorem~\ref{thm:LongCrossingDifferentials} (uses Theorem~\ref{thm:aboveFiltration61die})\\ \hline
${\color{TealBlue!90!black}d_{51}}$ & $d_{51}(\sthree^3 u_{24\lambda} u_{3\sigma} a_{9\sigma_2}) = 2\dthree^5 \done^2 u_{11\lambda} u_{20\sigma} a_{30\lambda} $ \newline
$d_{51}(\sthree^3 u_{40\lambda} u_{7\sigma} a_{9\sigma_2}) = 2\dthree^5 \done^2 u_{27\lambda} u_{24\sigma} a_{30\lambda}$  & Proposition~\ref{prop:E4C4d51Differentials} (uses Theorem~\ref{thm:aboveFiltration61die}) \\ \hline
${\color{Plum}d_{53}}$ & $d_{53}(\done^2 u_{12\lambda} u_{6\sigma}) = \dthree^5 u_{16\sigma} a_{25\lambda} a_{3\sigma}$ & Theorem~\ref{thm:LongCrossingDifferentials} (uses Theorem~\ref{thm:aboveFiltration61die})\\ \hline
${\color{NavyBlue}d_{55}}$ & $d_{55}(\done^2 u_{28\lambda}u_{2\sigma}) = \dthree^5 \sone u_{14\lambda} u_{15\sigma} a_{27\lambda} a_{\sigma_2}$ \newline
$d_{55}(\done^2 u_{44\lambda} u_{6\sigma}) = \dthree^5 \sone u_{30\lambda} u_{19\sigma} a_{27\lambda} a_{\sigma_2}$ & Proposition~\ref{prop:E4C4d55Differentials} (uses Theorem~\ref{thm:aboveFiltration61die})\\ \hline
${\color{RawSienna}d_{59}}$ & $d_{59}(\sone u_{14\lambda} u_{3\sigma} a_{\sigma_2}) = \dthree^5 u_{16\sigma} a_{29\lambda} a_{2\sigma}$ \newline 
$d_{59}(\sone u_{30\lambda} u_{7\sigma} a_{\sigma_2}) = \dthree^5 u_{16\lambda} u_{20\sigma} a_{29\lambda} a_{2\sigma}$ & Proposition~\ref{prop:E4C4d59Differentials} (uses Theorem~\ref{thm:aboveFiltration61die}) \\ \hline
${\color{black}d_{61}}$ & $d_{61}(2u_{15\lambda}u_{2\sigma}) = \dthree^5 u_{16\sigma}a_{30\lambda} a_\sigma$ \newline
$d_{61}(2u_{31\lambda}u_{6\sigma}) = \dthree^5 u_{16\lambda} u_{20\sigma} a_{30\lambda} a_\sigma$ \newline
$d_{61}(u_{16\lambda}a_\sigma) = \dthree^5 u_{16\sigma}a_{31\lambda}$ \newline
$d_{61}(u_{32\lambda}u_{4\sigma} a_\sigma) = \dthree^5 u_{16\lambda} u_{20\sigma} a_{31\lambda}$ \newline
$d_{61}(2\done^2 u_{27\lambda} u_{4\sigma}) = \dthree^5 \done^2 u_{12\lambda} u_{18\sigma} a_{30\lambda} a_\sigma$ \newline
$d_{61}(2\done^2 u_{43\lambda}) = \dthree^5\done^2 u_{28\lambda} u_{14\sigma} a_{30\lambda} a_\sigma$ \newline
$d_{61}(\done^2 u_{28\lambda} u_{2\sigma} a_\sigma) = \dthree^5 \done^2 u_{12\lambda} u_{18\sigma} a_{31\lambda}$ \newline
$d_{61}(\done^2 u_{44\lambda} u_{6\sigma} a_\sigma) = \dthree^5 \done^2 u_{28\lambda} u_{22\sigma} a_{31\lambda} $  & Proposition~\ref{prop:E4C4d61Differentials} (uses Theorem~\ref{thm:NormedDiffd61} and Theorem~\ref{thm:aboveFiltration61die}) \\ \hline
\caption{Summary of Differentials.} 
\label{table:summaryDifferentials}
\end{longtable}
\end{center}

\begin{figure}
\begin{center}
\makebox[\textwidth]{\includegraphics[trim={0cm 10cm 0cm 10cm}, clip, page = 1, scale = 0.23]{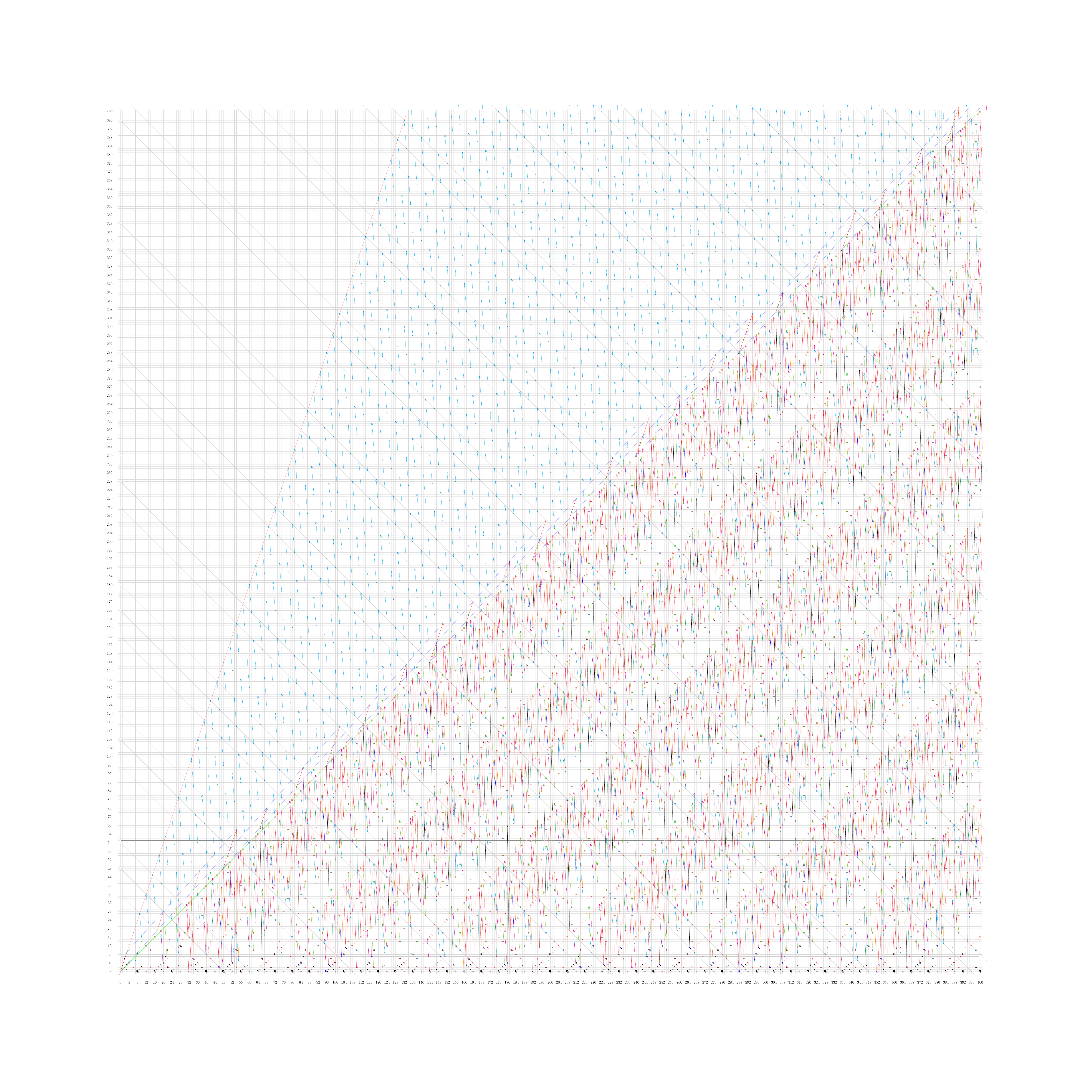}}
\end{center}
\begin{center}
\caption{$d_{13}$ to $d_{61}$-differentials in $\SliceSS(\BPtwo)$.}
\hfill
\label{fig:E4C4AllDifferentials}
\end{center}
\end{figure}

\begin{figure}
\begin{center}
\makebox[\textwidth]{\includegraphics[trim={0cm 10cm 0cm 10cm}, clip, page = 1, scale = 0.23]{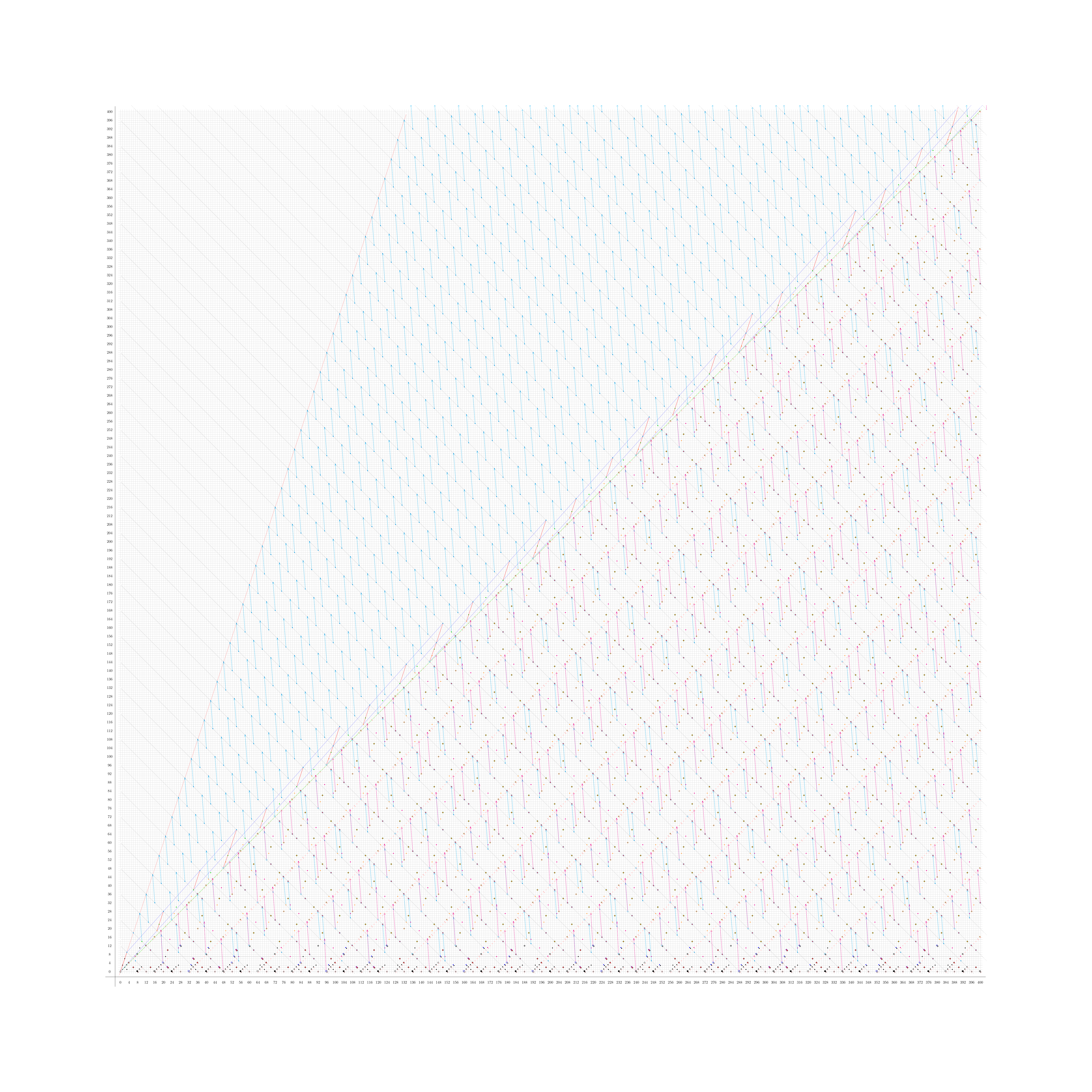}}
\end{center}
\begin{center}
\caption{$d_{13}$ to $d_{15}$-differentials in $\SliceSS(\BPtwo)$.}
\hfill
\label{fig:E4C4d13-d15Differentials}
\end{center}
\end{figure}

\begin{figure}
\begin{center}
\makebox[\textwidth]{\includegraphics[trim={0cm 10cm 0cm 10cm}, clip, page = 1, scale = 0.23]{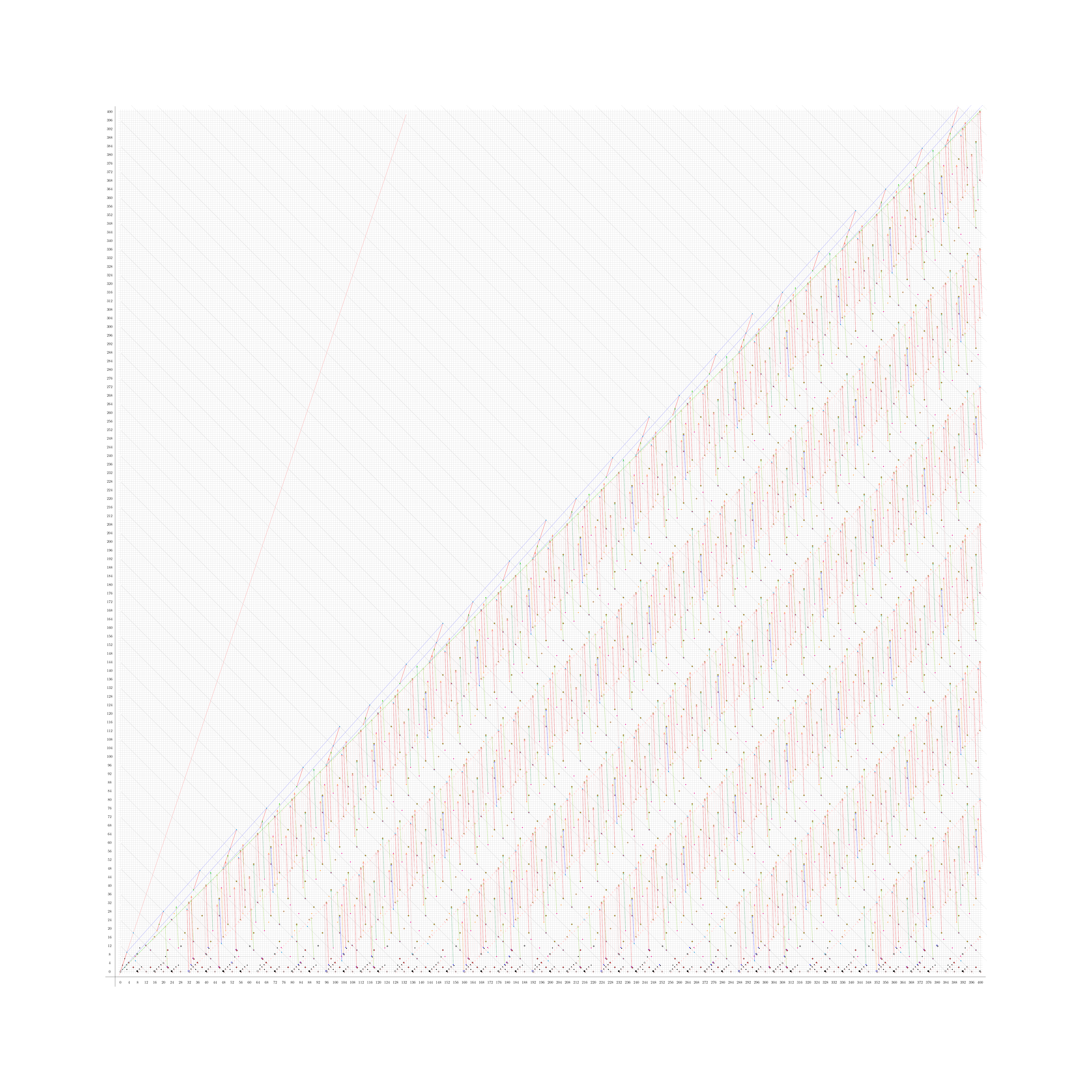}}
\end{center}
\begin{center}
\caption{$d_{19}$ to $d_{31}$-differentials in $\SliceSS(\BPtwo)$.}
\hfill
\label{fig:E4C4d19-d31Differentials}
\end{center}
\end{figure}

\begin{figure}
\begin{center}
\makebox[\textwidth]{\includegraphics[trim={0cm 10cm 0cm 10cm}, clip, page = 1, scale = 0.23]{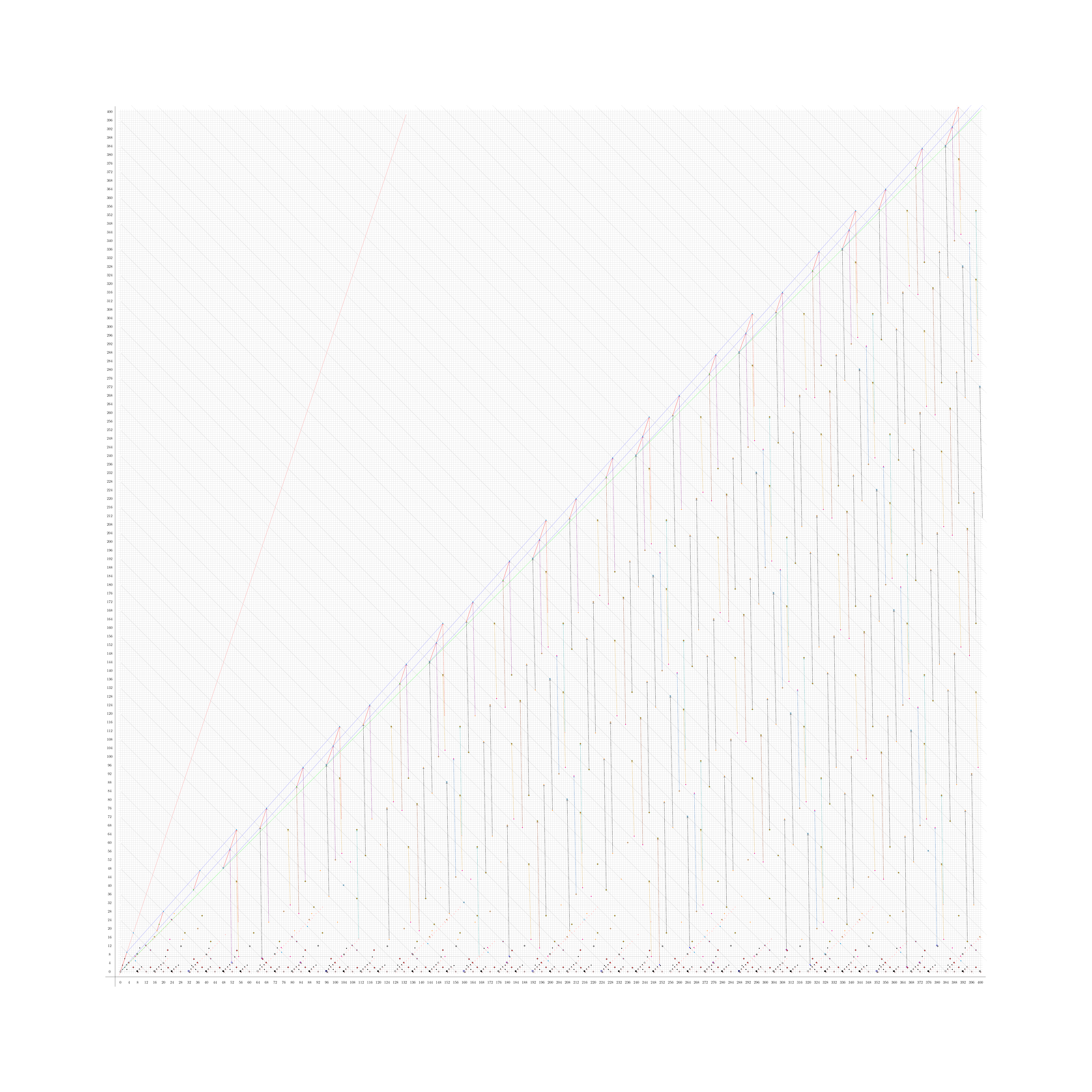}}
\end{center}
\begin{center}
\caption{$d_{35}$ to $d_{61}$-differentials in $\SliceSS(\BPtwo)$.}
\hfill
\label{fig:E4C4d35-d61Differentials}
\end{center}
\end{figure}

\begin{figure}
\begin{center}
\makebox[\textwidth]{\includegraphics[trim={0cm 10cm 0cm 10cm}, clip, page = 2, scale = 0.23]{PaperE4C4wholeSSEinfty}}
\end{center}
\begin{center}
\caption{$E_\infty$-page of $\SliceSS(\BPtwo)$.}
\hfill
\label{fig:E4C4EinftyPage}
\end{center}
\end{figure}

\bibliographystyle{alpha}
\bibliography{Bibliography}

\begin{thebibliography}{GHMR05}

\bibitem[Ara79]{Araki}
Sh\^or\^o Araki.
\newblock Orientations in {$\tau $}-cohomology theories.
\newblock {\em Japan. J. Math. (N.S.)}, 5(2):403--430, 1979.

\bibitem[Ati66]{AtiyahKR}
M.~F. Atiyah.
\newblock {$K$}-theory and reality.
\newblock {\em Quart. J. Math. Oxford Ser. (2)}, 17:367--386, 1966.

\bibitem[Bau08]{BauerTMF}
Tilman Bauer.
\newblock Computation of the homotopy of the spectrum {\tt tmf}.
\newblock In {\em Groups, homotopy and configuration spaces}, volume~13 of {\em
  Geom. Topol. Monogr.}, pages 11--40. Geom. Topol. Publ., Coventry, 2008.

\bibitem[BB19]{AgnesToby}
Agn\'es Beaudry and Tobias Barthel.
\newblock Chromatic structures in stable homotopy theory.
\newblock {\em Arxiv 1901.09004}, 2019.

\bibitem[Beh12]{BehrensSE2}
Mark Behrens.
\newblock The homotopy groups of {$S_{E(2)}$} at {$p\geq 5$} revisited.
\newblock {\em Adv. Math.}, 230(2):458--492, 2012.

\bibitem[BH15]{BlumbergHill}
Andrew~J. Blumberg and Michael~A. Hill.
\newblock Operadic multiplications in equivariant spectra, norms, and
  transfers.
\newblock {\em Adv. Math.}, 285:658--708, 2015.

\bibitem[BHSZ20]{BHSZ}
Agn\'es Beaudry, Michael~A. Hill, XiaoLin~Danny Shi, and Mingcong Zeng.
\newblock Models of {L}ubin--{T}ate spectra via {R}eal bordism theory.
\newblock {\em Arxiv 2001.08295}, 2020.

\bibitem[BL10]{BehrensLawsonTAF}
Mark Behrens and Tyler Lawson.
\newblock Topological automorphic forms.
\newblock {\em Mem. Amer. Math. Soc.}, 204(958):xxiv+141, 2010.

\bibitem[BO16]{BehrensOrmsby}
Mark Behrens and Kyle Ormsby.
\newblock On the homotopy of {$Q(3)$} and {$Q(5)$} at the prime 2.
\newblock {\em Algebr. Geom. Topol.}, 16(5):2459--2534, 2016.

\bibitem[Bou79]{Bousfield}
A.~K. Bousfield.
\newblock The localization of spectra with respect to homology.
\newblock {\em Topology}, 18(4):257--281, 1979.

\bibitem[Bru05]{Brun}
Morten Brun.
\newblock Witt vectors and {T}ambara functors.
\newblock {\em Adv. Math.}, 193(2):233--256, 2005.

\bibitem[DH04]{DevinatzHopkins}
Ethan~S. Devinatz and Michael~J. Hopkins.
\newblock Homotopy fixed point spectra for closed subgroups of the {M}orava
  stabilizer groups.
\newblock {\em Topology}, 43(1):1--47, 2004.

\bibitem[Dug05]{DuggerKR}
Daniel Dugger.
\newblock An {A}tiyah--{H}irzebruch spectral sequence for {KR}-theory.
\newblock {\em K-theory}, 35(3):213--256, 2005.

\bibitem[Fuj76]{FujiiMUR}
Michikazu Fujii.
\newblock Cobordism theory with reality.
\newblock {\em Math. J. Okayama Univ.}, 18(2):171--188, 1975/76.

\bibitem[GH04]{GoerssHopkins}
P.~G. Goerss and M.~J. Hopkins.
\newblock Moduli spaces of commutative ring spectra.
\newblock In {\em Structured ring spectra}, volume 315 of {\em London Math.
  Soc. Lecture Note Ser.}, pages 151--200. Cambridge Univ. Press, Cambridge,
  2004.

\bibitem[GHMR05]{GoerssHennMahowaldRezk}
P.~Goerss, H.-W. Henn, M.~Mahowald, and C.~Rezk.
\newblock A resolution of the {$K(2)$}-local sphere at the prime 3.
\newblock {\em Ann. of Math. (2)}, 162(2):777--822, 2005.

\bibitem[Hen07]{Henn2007}
Hans-Werner Henn.
\newblock On finite resolutions of {$K(n)$}-local spheres.
\newblock In {\em Elliptic cohomology}, volume 342 of {\em London Math. Soc.
  Lecture Note Ser.}, pages 122--169. Cambridge Univ. Press, Cambridge, 2007.

\bibitem[Hew95]{Hewett}
Thomas Hewett.
\newblock Finite subgroups of division algebras over local fields.
\newblock {\em J. Algebra}, 173(3):518--548, 1995.

\bibitem[HHR10]{HHRCDM1}
Michael~A Hill, Michael~J Hopkins, and Douglas~C Ravenel.
\newblock The arf-kervaire invariant problem in algebraic topology:
  introduction.
\newblock {\em Current developments in mathematics}, 2009:23--57, 2010.

\bibitem[HHR11]{HHRCDM2}
Michael~A Hill, Michael~J Hopkins, and Douglas~C Ravenel.
\newblock The arf-kervaire problem in algebraic topology: Sketch of the proof.
\newblock {\em Current developments in mathematics}, 2010:1--44, 2011.

\bibitem[HHR16]{HHR}
M.~A. Hill, M.~J. Hopkins, and D.~C. Ravenel.
\newblock On the nonexistence of elements of {K}ervaire invariant one.
\newblock {\em Ann. of Math. (2)}, 184(1):1--262, 2016.

\bibitem[HHR17]{HHRKH}
Michael~A. Hill, Michael~J. Hopkins, and Douglas~C. Ravenel.
\newblock The slice spectral sequence for the {$C_4$} analog of real
  {$K$}-theory.
\newblock {\em Forum Math.}, 29(2):383--447, 2017.

\bibitem[Hil15]{HillEtaCubed}
Michael~A. Hill.
\newblock On the fate of {$\eta^3$} in higher analogues of real bordism.
\newblock {\em Arxiv 1507.08083}, 2015.

\bibitem[Hil17]{HillAQ}
Michael~A. Hill.
\newblock On the {A}ndr\'{e}-{Q}uillen homology of {T}ambara functors.
\newblock {\em J. Algebra}, 489:115--137, 2017.

\bibitem[HK01]{HuKriz}
Po~Hu and Igor Kriz.
\newblock Real-oriented homotopy theory and an analogue of the
  {A}dams-{N}ovikov spectral sequence.
\newblock {\em Topology}, 40(2):317 -- 399, 2001.

\bibitem[HM98]{HopkinsMahowald}
Michael~J. Hopkins and Mark Mahowald.
\newblock From elliptic curves to homotopy theory.
\newblock {\em 1998 Preprint, {A}vailable at
  http://hopf.math.purdue.edu/Hopkins-Mahowald/eo2homotopy.pdf}, 1998.

\bibitem[HM17]{HillMeier}
Michael~A. Hill and Lennart Meier.
\newblock The {$C_2$}-spectrum {${\rm Tmf}_1(3)$} and its invertible modules.
\newblock {\em Algebr. Geom. Topol.}, 17(4):1953--2011, 2017.

\bibitem[HS20]{HahnShi}
Jeremy Hahn and XiaoLin~Danny Shi.
\newblock Real orientations of {L}ubin-{T}ate spectra.
\newblock {\em Invent. Math.}, 221(3):731--776, 2020.

\bibitem[Lan68]{LandweberMUR}
Peter~S. Landweber.
\newblock Conjugations on complex manifolds and equivariant homotopy of {$MU$}.
\newblock {\em Bull. Amer. Math. Soc.}, 74:271--274, 1968.

\bibitem[LSWX19]{HurewiczImages}
Guchuan Li, XiaoLin~Danny Shi, Guozhen Wang, and Zhouli Xu.
\newblock Hurewicz images of real bordism theory and real {J}ohnson-{W}ilson
  theories.
\newblock {\em Adv. Math.}, 342:67--115, 2019.

\bibitem[LT66]{LubinTate}
Jonathan Lubin and John Tate.
\newblock Formal moduli for one-parameter formal {L}ie groups.
\newblock {\em Bull. Soc. Math. France}, 94:49--59, 1966.

\bibitem[Mil12]{HaynesKervaire}
Haynes Miller.
\newblock Kervaire invariant one [after {M}. {A}. {H}ill, {M}. {J}. {H}opkins,
  and {D}. {C}. {R}avenel].
\newblock {\em Ast\'{e}risque}, (348):Exp. No. 1029, vii, 65--98, 2012.
\newblock S\'{e}minaire Bourbaki: Vol. 2010/2011. Expos\'{e}s 1027--1042.

\bibitem[MR09]{MahowaldRezk}
Mark Mahowald and Charles Rezk.
\newblock Topological modular forms of level 3.
\newblock {\em Pure Appl. Math. Q.}, 5(2, Special Issue: In honor of Friedrich
  Hirzebruch. Part 1):853--872, 2009.

\bibitem[MRW77]{MillerRavenelWilson}
Haynes~R. Miller, Douglas~C. Ravenel, and W.~Stephen Wilson.
\newblock Periodic phenomena in the {A}dams-{N}ovikov spectral sequence.
\newblock {\em Ann. of Math. (2)}, 106(3):469--516, 1977.

\bibitem[Nav99]{NaveThesis}
Lee~Stewart Nave.
\newblock {\em The cohomology of finite subgroups of {M}orava stabilizer groups
  and {S}mith-{T}oda complexes}.
\newblock ProQuest LLC, Ann Arbor, MI, 1999.
\newblock Thesis (Ph.D.)--University of Washington.

\bibitem[Nav10]{Nave}
Lee~S. Nave.
\newblock The {S}mith-{T}oda complex {$V((p+1)/2)$} does not exist.
\newblock {\em Ann. of Math. (2)}, 171(1):491--509, 2010.

\bibitem[Rav84]{RavenelLocalization}
Douglas~C. Ravenel.
\newblock Localization with respect to certain periodic homology theories.
\newblock {\em Amer. J. Math.}, 106(2):351--414, 1984.

\bibitem[Rav92]{RavenelOrangeBook}
Douglas~C. Ravenel.
\newblock {\em Nilpotence and periodicity in stable homotopy theory}, volume
  128 of {\em Annals of Mathematics Studies}.
\newblock Princeton University Press, Princeton, NJ, 1992.
\newblock Appendix C by Jeff Smith.

\bibitem[Rez98]{HopkinsMiller}
Charles Rezk.
\newblock Notes on the {H}opkins-{M}iller theorem.
\newblock In {\em Homotopy theory via algebraic geometry and group
  representations ({E}vanston, {IL}, 1997)}, volume 220 of {\em Contemp.
  Math.}, pages 313--366. Amer. Math. Soc., Providence, RI, 1998.

\bibitem[SW02a]{ShimomuraWang1}
Katsumi Shimomura and Xiangjun Wang.
\newblock The {A}dams-{N}ovikov {$E_2$}-term for {$\pi_*(L_2S^0)$} at the prime
  2.
\newblock {\em Math. Z.}, 241(2):271--311, 2002.

\bibitem[SW02b]{ShimomuraWang2}
Katsumi Shimomura and Xiangjun Wang.
\newblock The homotopy groups {$\pi_*(L_2S^0)$} at the prime 3.
\newblock {\em Topology}, 41(6):1183--1198, 2002.

\bibitem[SY95]{ShimomuraYabe}
Katsumi Shimomura and Atsuko Yabe.
\newblock The homotopy groups {$\pi_*(L_2S^0)$}.
\newblock {\em Topology}, 34(2):261--289, 1995.

\bibitem[Zen17]{ZengThesis}
Mingcong Zeng.
\newblock Equivariant {E}ilenberg--{M}ac {L}ane spectra in cyclic $p$-groups.
\newblock {\em Arxiv 1710.01769}, 2017.

\end{thebibliography}

\end{document}